\documentclass[12pt]{article}
\usepackage{amsmath,amssymb,amsthm}
\usepackage{mathrsfs,bbm,calligra}

\usepackage{mathtools} %\coloneqとか\dcasesとかshowonlyrefsとか
\mathtoolsset{showonlyrefs=true}

\usepackage{enumitem}
\setlist[enumerate]{label=(\arabic*)}

\usepackage{tikz}
\usetikzlibrary{cd,arrows.meta,calc,intersections,decorations.pathmorphing}

\numberwithin{equation}{section} %数式番号を節ごとにする
\theoremstyle{plain}
\newtheorem{theorem}[equation]{Theorem} %定理番号と数式番号を通しにする
\newtheorem{proposition}[equation]{Proposition}
\newtheorem{lemma}[equation]{Lemma}
\newtheorem{corollary}[equation]{Corollary}
\newtheorem{conjecture}[equation]{Conjecture}
\newtheorem{condition}{Condition}
\numberwithin{condition}{section}

\theoremstyle{definition}
\newtheorem{definition}[equation]{Definition}

\theoremstyle{remark}
\newtheorem{remark}[equation]{Remark}

\usepackage{zref-clever} %賢い参照
\zcsetup{lang=english,cap=true}
\zcRefTypeSetup{conjecture}{
    Name-sg=Conjecture,
    Name-pl=Conjectures,
    name-sg=conjecture,
    name-pl=conjectures
}
\zcRefTypeSetup{condition}{
    Name-sg=Condition,
    Name-pl=Conditions,
    name-sg=condition,
    name-pl=conditions
}
\zcRefTypeSetup{equation}{
    Name-sg=,
    Name-pl=,
    name-sg=,
    name-pl=,
    refbounds={,(,),}
}
\AddToHook{env/theorem/begin}{
    \zcsetup{countertype={equation=theorem}}
}
\AddToHook{env/definition/begin}{
    \zcsetup{countertype={equation=definition}}
}
\AddToHook{env/lemma/begin}{
    \zcsetup{countertype={equation=lemma}}
}
\AddToHook{env/corollary/begin}{
    \zcsetup{countertype={equation=corollary}}
}
\AddToHook{env/proposition/begin}{
    \zcsetup{countertype={equation=proposition}}
}
\AddToHook{env/example/begin}{
    \zcsetup{countertype={equation=example}}
}
\AddToHook{env/remark/begin}{
    \zcsetup{countertype={equation=remark}}
}
\AddToHook{env/conjecture/begin}{
    \zcsetup{countertype={equation=conjecture}}
}
\usepackage[hidelinks]{hyperref}

\newcommand{\Z}{\mathbb{Z}}
\newcommand{\Q}{\mathbb{Q}}
\newcommand{\R}{\mathbb{R}}
\newcommand{\C}{\mathbb{C}}
\newcommand{\id}{\mathrm{id}}
\newcommand{\GL}{\mathrm{GL}}
\newcommand{\SL}{\mathrm{SL}}

\DeclareMathOperator{\Hom}{Hom}
\DeclareMathOperator{\Aut}{Aut}
\DeclareMathOperator{\Ker}{Ker}

\newcommand{\iHom}{\operatorname{\mathscr{H}\text{\kern -4pt\textcalligra{\large om}}}}
\newcommand{\relmiddle}[1]{\mathrel{}\middle#1\mathrel{}}
\DeclarePairedDelimiter{\abs}{\lvert}{\rvert}

\DeclareMathOperator{\Rep}{Rep}
\DeclareMathOperator{\Irr}{Irr}
\newcommand{\Ell}[1]{{\vphantom{#1}}^\mathrm{L}#1}
\DeclareMathOperator{\Ad}{Ad}
\DeclareMathOperator{\LLC}{LLC}
\DeclareMathOperator{\Out}{Out}

\newcommand{\ur}{\mathrm{ur}}
\DeclareMathOperator{\cind}{c-Ind}
\DeclareMathOperator{\Res}{Res}
\DeclareMathOperator{\Ind}{Ind}
\DeclareMathOperator{\End}{End}

\title{The local Langlands correspondence of essentially unipotent supercuspidal representations for disconnected reductive groups}
\date{}
\author{Amoru Fujii}
\begin{document}
\maketitle

\begin{abstract}
We construct the local Langlands correspondence of essentially unipotent supercuspidal representations under the framework of rigid inner forms and prove a certain functoriality and compatibilities. This is stronger than an analogous result of Solleveld, which did not care about rigidifications of inner twists. We also generalize this correspondence for disconnected reductive groups. We expect to use this result for extension of Kaletha's explicit local Langlands correspondence of non-singular supercuspidal representations.
\end{abstract}

\tableofcontents

\section{Introduction}

The local Langlands correspondence is a conjectural parameterization of irreducible smooth representations of $p$-adic reductive groups in terms of \emph{L-parameters}, a variant of Galois representations. Unlike the cases of classical groups, it is difficult to find an idea applicable to all connected reductive groups. One promising approach is to construct an explicit correspondence for certain restricted classes of irreducible representations. In line with this approach, Kaletha \cite{kaletha2021supercuspidallpackets} establishes the local Langlands correspondence of \emph{non-singular supercuspidal representations} under a mild condition on the connected reductive groups. Let us review his method roughly: 

Let $F$ be a $p$-adic local field of characteristic zero and $G$ a connected reductive group over $F$. An irreducible representation of $G(F)$ is then expected to correspond to an L-parameter; a certain group homomorphism $\phi\colon W_F\times\SL_2(\C)\to \Ell{G}$, where $W_F$ is the Weil group of $F$ and $\Ell{G}$ is the Langlands dual of $G$. Now consider the centralizer of the inertia
\[
    \widehat{H}=Z_{\widehat{G}}(\phi|_{I_F}),
\]
on which we can define an action of the Galois group $\Gamma_F$ suitably. Then we obtain an inclusion $i\colon \Ell{H}=\widehat{H}\rtimes W_F\hookrightarrow\Ell{G}$ and $\phi$ factors as $i\circ\phi_H$ for an L-parameter $\phi_H$ of $\Ell{H}$. \cite{kaletha2021supercuspidallpackets} treats the case when $\widehat{H}^\circ=\widehat{S}$ is a torus, so $\phi$ determines a character $\theta$ of $S(F)$ under the local Langlands correspondence for tori. On the group side, a character of $S(F)$ belonging to a certain class determines an irreducible supercuspidal representation of $G$ through Deligne--Lusztig induction and Yu's construction. That is, if we start from such a pair $(S,\theta)$ we would obtain both an irreducible representation and an L-parameter which should correspond to each other. However, there is a problem when the centralizer $\widehat{H}$ is not connected. In this case we need the local Langlands correspondence for ``disconnected tori'' in some sense, which was established in \cite{kal_disconn}. 

Now we consider a generalization of his result. In general, the centralizer $\widehat{H}^\circ$ is not a torus but a reductive group. Then, an L-parameter trivial on $I_F$ (modulo center) should correspond to a(n essentially) \emph{unipotent representations}, which comes from representation theory of finite reductive groups. The local Langlands correspondence for this class is established in \cite{Lus_unip_classif}, \cite{Lus_unip_classf_2}, \cite{FOS} and \cite{Solleveld_unip_for_ramified}. The aim of this paper is to generalize these results for certain disconnected reductive groups so that the method by Kaletha will be applicable to a much larger class of irreducible representations. In order to complete this, we first show that the established correspondence satisfies a lot of compatibilities:

\begin{theorem}[{\zcref{cor:existence_LLC_for_EUC}}]\label{mainthm1}
    Let $G$ be a connected, quasi-split reductive group over $F$ and $(G',\xi,z)$ a rigid inner twist of $G$. We write $\Irr(G'(F))_{\mathrm{euc}}$ for the set of isomorphism classes of essentially unipotent supercuspidal representations of $G'(F)$. We also denote by $\Phi_\mathrm{e}(G;[z])_{\mathrm{euc}}$ the set of conjugacy classes of essentially unipotent cuspidal enhanced L-parameters which are relevant to $[z]$ (see \zcref{ssec:preliminaries} for the definition). There exists a bijection
    \[
        \Irr(G'(F))_{\mathrm{euc}}\to \Phi_\mathrm{e}(G;[z])_{\mathrm{euc}};\quad \pi\mapsto [\phi_\pi,\rho_{(z,\pi)}]
    \]
    for all such $(G,G',\xi,z)$ and satisfies the following:
    \begin{itemize}
        \item Functoriality with respect to homomorphisms with abelian kernel and cokernel (\zcref{cond:functoriality}).
        \item Equivariance with respect to character twists (\zcref{cond:compatibility_char_twist}).
        \item Compatibility with changes of rigidifications (\zcref{cond:compatibility_change_rigidification}).
    \end{itemize}
\end{theorem}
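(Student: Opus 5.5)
We construct the bijection by reduction to the existing correspondence of Lusztig, Feng--Opdam--Solleveld and Solleveld for unipotent supercuspidal representations, and then add the rigid-inner-form data on top. First, by the usual reduction, an essentially unipotent supercuspidal $\pi$ of $G'(F)$ is of the form $\pi_0\otimes\chi$ with $\pi_0$ unipotent supercuspidal and $\chi$ a character of $G'(F)$. Dually, an essentially unipotent cuspidal $\phi$ is $\phi_0\cdot z_\chi$ with $\phi_0$ trivial on $I_F$ and $z_\chi$ a cocycle in $Z^1(W_F,Z(\widehat G))$. We define $\phi_\pi$ and the restriction of $\rho$ to the connected part using Solleveld's bijection for unipotent supercuspidals, extended by twists.

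We then check that the result is independent of the choice of $(\pi_0,\chi)$. This reduces to the equivariance of Solleveld's correspondence under weakly unramified character twists, which is part of his result. Along the way we obtain \zcref{cond:compatibility_char_twist} directly.

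The second step upgrades the enhancement from Solleveld's framework to the rigid one. Solleveld's enhancements are representations of $\pi_0$ of $Z_{\widehat G_{\mathrm{sc}}}(\phi)$, or of the $\mathcal S_\phi$ attached to $G^{\mathrm{ad}}$-inner twists. Kaletha's rigid framework instead uses $\pi_0(S_\phi^+)$, where $S_\phi^+$ is the preimage in $\widehat{\bar G}$ of the centralizer, with $\bar G = G/Z$ for a finite central $Z$. Its central character must be matched with $[z]\in H^1(u\to W,Z\to G)$ through Kaletha's pairing. To do this we:
\begin{enumerate}
\item choose $Z$ containing the relevant torsion of the center;
\item use the compatibility of Kaletha's rigid cohomology with $H^1(F,G^{\mathrm{ad}})$ via Kottwitz's map;
\item define $\rho_{(z,\pi)}$ as the unique irreducible representation of $\pi_0(S^+_\phi)$ that restricts to Solleveld's enhancement and has central character determined by $[z]$.
\end{enumerate}
Uniqueness in (3) requires that the extension of characters from $\pi_0(Z(\widehat{\bar G})^+)$ be controlled. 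For this we use that the cuspidal enhancements in a fixed central-character block are in bijection with those for the adjoint form twisted by a character. Bijectivity then follows from Solleveld's bijectivity, applied rigid-inner-form-class by class, together with the relevance condition. Compatibility with change of rigidification, \zcref{cond:compatibility_change_rigidification}, reduces to the observation that changing $z$ by a cocycle of $Z$ twists $\rho$ by the corresponding character of $\pi_0(Z(\widehat{\bar G})^+)$. Here we apply the Tate--Nakayama-type isomorphism for $Z$.

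The main obstacle is functoriality, \zcref{cond:functoriality}, for $f\colon G_1\to G_2$ with abelian kernel and cokernel. We would factor $f$ as a central isogeny-type map composed with an inclusion with abelian cokernel: $G_1\to G_1/\ker f\hookrightarrow G_2$. We treat the two pieces separately.
\begin{itemize}
\item For the inclusion $G_1/\ker f\hookrightarrow G_2$, whose image contains the derived group, the unipotent part of $\pi$ restricts to a sum of unipotent supercuspidals. We compare the restriction with the restriction of enhancements along $S^+_{\phi_2}\to S^+_{\phi_1}$, using Solleveld's result that his LLC respects such restriction. The extra rigid data must then be tracked through the induced map on $Z$.
\item For the isogeny piece, pulling back along $G_1(F)\to G_2(F)$ is handled similarly, using the dual map $\widehat G_2\to\widehat G_1$.
\end{itemize}
The delicate point is the compatibility of Kaletha's pairing for the centers with these maps. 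If Solleveld's functoriality is only stated up to $\widehat G^{\mathrm{ad}}$-data, I would first prove the rigid statement for $G$ with simply connected derived group via a $z$-extension, and then descend.
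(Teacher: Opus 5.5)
Your first step (twisting down to unipotent representations and using compatibility with weakly unramified twists) is fine. However, the proposal misses the step that carries the actual content of the theorem: equivariance of the existing UC-parameterizations, at the level of \emph{rigid} inner twists, under $G'_{\mathrm{ad}}(F)$ and under $\Out(G)$.

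Your description of \zcref{cond:compatibility_change_rigidification} is not correct. Replacing $z$ by $yz$ does not twist $\rho$ by a character of $\pi_0Z(\widehat{\overline{G}})^+$. It twists it by $\langle [y],\delta_\phi(\text{--})\rangle^{-1}$, a $\phi$-dependent character of all of $\mathcal{S}^+_\phi$, which in general is not determined by its restriction to the image of $\pi_0Z(\widehat{\overline{G}})^+$.

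Consequently your step (3) has no uniqueness. Already for $G=G_{\mathrm{sc}}$ and $Z=Z(G)$, the group $\mathcal{S}^+_\phi$ is exactly the group carrying the FOS/Arthur enhancements. So ``restricting to Solleveld's enhancement'' just means being equal to it, which fixes $\rho$ only for the one central character Solleveld uses. Rigidifications of the same inner twist with different classes $[z]$ require enhancements with different central characters on $\widehat{Z}$, and nothing in your recipe determines those.

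Suppose instead you fix one base rigidification and define the others by the formula of \zcref{cond:compatibility_change_rigidification}. Then the result is well defined only if it is compatible with isomorphisms of rigid inner twists. If $yz={}^gz$ with $g\in G(\overline{F})$, then $\xi(\overline{g})\in G'_{\mathrm{ad}}(F)$, and transporting the parameterization along $\Ad(\xi(\overline{g}))$ must reproduce the twist by $\langle\delta_z(g),\delta_\phi(\text{--})\rangle$ (\zcref{rem:equiv_with_adjoint_action}).

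Likewise, take \zcref{cond:functoriality} for $f=\epsilon\in\Out(G)$. This is an isomorphism, about which your factorization $G_1\to G_1/\ker f\hookrightarrow G_2$ says nothing. Together with \zcref{cond:compatibility_change_rigidification} it forces \zcref{cond:equivariance_sc}, with its $\delta_z(g\rtimes\epsilon)$-twist.

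Solleveld's functoriality is formulated in Arthur's framework, which sees only part of the rigidifications. It gives \zcref{cond:equivariance_adj} for adjoint groups but not the simply connected statement. ``Tracking rigid data through $Z$'' never touches the non-central components where the problem sits. The $z$-extension fallback does not help either, since groups with simply connected derived group include $G_{\mathrm{sc}}$ itself, which is precisely where the difficulty lies.

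The paper's proof is organized around exactly this point.
\begin{itemize}
\item Using the sandwich lemmas \zcref{lem:unique_sandwich_group,lem:unique_sandwich_galois}, it reduces to UC-parameterizations for $G_{\mathrm{ad}}$ and $G_{\mathrm{sc}}$, one inner twist per $\Out(G)$-orbit, satisfying \zcref{cond:equivariance_adj,cond:equivariance_sc} (\zcref{cor:constr_EUC_from_adj_and_sc}).
\item It then chooses a specific rigidification (\zcref{prop:calc_delta_z}) for which $\langle\delta_z(g\rtimes\epsilon),\psi\rangle=\langle\epsilon^{-1}\kappa_G(g),\psi(\mathsf{Fr}^{-1})\rangle$ on unramified cocycles $\psi$.
\item It obtains the inner case from \zcref{lem:pairing_kottwitz_triv} and \cite[Lemma 13.5]{FOS}.
\item It checks the outer case separately for split groups (FOS, with $\kappa_G(n)=1$), non-split unramified groups (\zcref{lem:Frob_stabilize_all_unip}, via Lusztig's categorical centre), Weil restrictions of split groups, and ramified groups (via companion groups).
\end{itemize}
Some such equivariance verification, for a well-chosen base rigidification, is the missing idea in your plan.
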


We need to remark on an analogous result in \cite{Solleveld_Lparameters}. In \cite{FOS}, we obtain a bijective parameterization of unipotent supercuspidal representations with unipotent cuspidal enhanced L-parameters for inner twists of unramified groups. \cite[Theorem 3(b)]{Solleveld_Lparameters} says that this bijection also satisfies \zcref{cond:functoriality}. These results are, however, based on Arthur's formalism of the local Langlands correspondence (\cite{Arthur}), which does not use the notion of rigid inner twists introduced in \cite{Kal_rig_inn}. In particular, they do not treat \zcref{cond:compatibility_change_rigidification}.
According to \cite[Section 4.6]{Kaletha_rigid_inn_vs_Arthr}, Arthur's formalism can treat only a certain part of rigidifications for each inner twist. Without \zcref{cond:compatibility_change_rigidification}, we can extend \cite{FOS} only in a non-canonical way to parameterizations for all rigid inner twists, and the functoriality (\zcref{cond:functoriality}) is lost through such an extension. Hence \zcref{cond:functoriality,cond:compatibility_change_rigidification} are both our novelties.

We then extend this correspondence for disconnected reductive groups. Let $G$ be a connected, quasi-split reductive group over $F$ with an action of a finite group $A$. Suppose that $A$ stabilizes a fixed $F$-splitting of $G$ and write $\widetilde{G}=G\rtimes A$. Given a rigid inner twist $(G',\xi,z)$ of $G$, we can construct a locally profinite group $\widetilde{G}'(F)$ which contains $G'(F)$ and the quotient is canonically embedded into $A$. On the Galois side, we replace $\widehat{G}$ with $\widehat{G}\rtimes A$. We can define a subset $\Irr(\widetilde{G}'(F))_{\mathrm{euc}}$ and $\Phi_\mathrm{e}(\widetilde{G};[z])_{\mathrm{euc}}$ in the same way as the connected case. Now we can state the second main theorem:

\begin{theorem}[{\zcref{thm:LLC_for_disconn}}]\label{mainthm2}
    For any rigid inner twist $(G',\xi,z)$ of $G$ there exists a bijection
    \[
        \Irr(\widetilde{G}'(F))_{\mathrm{euc}}\xrightarrow{\sim}\Phi_\mathrm{e}(\widetilde{G};[z])_{\mathrm{euc}};\quad \widetilde{\pi}\mapsto[\phi_{\widetilde{\pi}},\rho_{(z,\widetilde{\pi})}].
    \]
    Moreover, it extends the established bijection $\Irr(G'(F))_{\mathrm{euc}}\leftrightarrow\Phi_\mathrm{e}(G;[z])_{\mathrm{euc}}$ in the following sense: For $\pi\in \Irr(G'(F))_{\mathrm{euc}}$ and $\widetilde{\pi}\in \Irr(\widetilde{G}'(F))_{\mathrm{euc}}$, the following are equivalent:
    \begin{multline}
        \text{$\pi$ is a constituent of $\widetilde{\pi}|_{G'(F)}$}\\
        \iff\text{$\phi_{\pi}=\phi_{\widetilde{\pi}}$ and $\rho_{(z,\pi)}$ is a constituent of $\rho_{(z,\widetilde{\pi})}|_{\mathcal{S}^+_{\phi_\pi}}$}.
    \end{multline}
    Here $\mathcal{S}^+_\phi$ is a certain finite group determined by an L-parameter $\phi$, see \zcref{ssec:preliminaries} for the definition.
\end{theorem}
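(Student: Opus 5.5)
The plan is to deduce the disconnected statement from the connected one, Theorem~\ref{mainthm1}, by Clifford theory on both sides, and to match the two Clifford theories through the functoriality and rigidification conditions.

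\textbf{Clifford theory on the group side.} Write $A'\subset A$ for the image of $\widetilde{G}'(F)/G'(F)$. For $\widetilde{\pi}\in\Irr(\widetilde{G}'(F))_{\mathrm{euc}}$, the restriction $\widetilde{\pi}|_{G'(F)}$ is a finite sum of $A'$-conjugates of some $\pi\in\Irr(G'(F))_{\mathrm{euc}}$. Let $A'_\pi$ be the stabilizer of the class of $\pi$. Then $\widetilde{\pi}$ corresponds to an irreducible representation of a twisted group algebra $\C[A'_\pi,\kappa_\pi]$, where the $2$-cocycle $\kappa_\pi$ measures the obstruction to extending $\pi$ to its stabilizer. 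So $\Irr(\widetilde{G}'(F))_{\mathrm{euc}}$ is in bijection with pairs consisting of an $A'$-orbit of $\pi$ and an irreducible $\kappa_\pi$-projective representation of $A'_\pi$.

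\textbf{Clifford theory on the Galois side.} The same analysis applies to $\widehat{G}\rtimes A$. An $L$-parameter for $\widetilde{G}$ is a $\widehat{G}\rtimes A$-conjugacy class of $\phi$. The group $\mathcal{S}^+_\phi$ for $\widetilde{G}$ is an extension of the stabilizer $A_\phi\subset A$ of the $\widehat{G}$-class of $\phi$ by the connected-case group $\mathcal{S}^+_\phi$. Its irreducible representations lying over $\rho$ are classified by irreducible projective representations of $A_{\phi,\rho}$ with an induced cocycle $\kappa_{\phi,\rho}$. Relevance to $[z]$ should cut $A_\phi$ down to $A'$, because the image of $\widetilde{G}'(F)\to A$ is detected by the compatibility of $a$ with $[z]$.

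\textbf{Matching the two sides.} Three things must be compared.
\begin{enumerate}
\item The $A'$-orbits. For $a\in A'$, conjugation by a lift of $a$ is an isomorphism of rigid inner twists, up to a change of rigidification. By \zcref{cond:functoriality} together with \zcref{cond:compatibility_change_rigidification}, the correspondence of Theorem~\ref{mainthm1} is equivariant: $\pi\circ a^{-1}\mapsto a\cdot[\phi_\pi,\rho_{(z,\pi)}]$. Hence $A'_\pi=A_{\phi_\pi,\rho}$.
\item The cocycles. We need $\kappa_\pi$ and $\kappa_{\phi,\rho}$ to define the same class, or at least isomorphic twisted group algebras. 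This is the main obstacle.
\item The identification. Once the cocycles match, compose the two Clifford bijections. The compatibility in Theorem~\ref{mainthm2} is then built in, since both sides restrict to the union of constituents over the orbit.
\end{enumerate}

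\textbf{The cocycle comparison.} For euc representations, $\pi$ is compactly induced from the inflation of a cuspidal unipotent representation of a finite reductive group attached to a vertex. By Lusztig, cuspidal unipotent representations of finite groups extend to automorphism stabilizers, apart from a few known exceptions in small rank, which can be handled by a case check. Combined with the fact that $A$ preserves an $F$-splitting, and hence fixes a hyperspecial or compatible vertex, I would first try to show that $\kappa_\pi$ is trivial, and likewise that $\kappa_{\phi,\rho}$ is trivial. The Galois side should follow because $A$ preserves a pinning of $\widehat{G}$, and cuspidal unipotent local systems extend along pinned automorphisms.

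If triviality fails, the fallback is to choose the extensions canonically on both sides. On the group side, use the splitting-fixed vertex and Lusztig's normalization. On the Galois side, use the pinned action on the generalized Springer data. Then show that the bijection is independent of these choices up to tensoring with characters of $A'_\pi$, and define the correspondence through the canonical extensions.
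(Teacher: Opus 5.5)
Your framework is the paper's: Clifford theory on both sides, then equivariance of the connected correspondence (from functoriality and compatibility with changes of rigidification) to get $A^{[z],\pi}=A^{[\phi],\rho}$, then an isomorphism $\C[A;\alpha]\cong\C[A;\beta]$ of twisted group algebras. One minor correction: relevance does not cut $A^{[\phi]}$ down to $A^{[z]}$, since a relevant $\widetilde\rho$ may restrict to conjugates of $\rho$ relevant to other classes $a[z]$. Only the stabilizers match, and that is all Clifford theory needs.

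\textbf{The gap.} The gap is the step you flag yourself: the cocycle comparison is never carried out, and your triviality route aims at the wrong source of the obstruction. Lusztig-type extension results only control the unipotent cuspidal datum. The paper uses them, with a case check, only to fix base points $\pi_{\mathcal O}$ and $\widetilde\pi_{\mathcal O}\leftrightarrow\widetilde\rho_{\mathcal O}$ for the adjoint group. A general $\pi$ is $\chi\cdot\pi_0$ with $\pi_0\subset\pi_{\mathcal O}|_{G'(F)}$ and $\chi$ only $A$-stable on $Z_G(F)$.
\begin{itemize}
\item On the group side, transporting the base-point intertwiners gives $\alpha(a,b)=\chi(\widetilde g_a\widetilde g_b\widetilde g_{ab}^{-1})$ for lifts $\widetilde g_a\in\widetilde G'(F)_x$. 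Nothing forces this to be a coboundary. The vertex $x$ is arbitrary, $G'$ is an arbitrary inner form, and for nontrivial $z$ the extension $1\to G'(F)\to\widetilde G'(F)\to A^{[z]}\to1$ gets no section from the pinning. That $A$ fixes the superspecial vertex of $\mathcal B(G)$ does not help.
\item On the Galois side, $A$ does not fix $\phi=\phi_\chi\cdot\phi_0$, so its action must be corrected by elements $\dot s_a\in Z_{\widehat{\overline G}}$. Also $a^\ast\rho_{\mathrm c}$ differs from $\rho_{\mathrm c}$ by the change-of-rigidification character $\langle[y_{a^{-1}}],\delta_{\overline\phi}(\text{--})\rangle^{-1}$. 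Together these give $\beta(a,b)=\langle[y_{a^{-1}}]^{-1},\psi_b\rangle\cdot\langle[z_{\mathrm c}],\dot s_a\,a(\dot s_b)\,\dot s_{ab}^{-1}\rangle$. Neither factor is seen by pinned actions on generalized Springer data.
\end{itemize}
Only when the stabilizer is cyclic does $H^2(\,\cdot\,,\C^\times)=0$ make this moot. For $G$ a torus you are in Kaletha's setting of disconnected tori, where comparing exactly such 2-cocycles is the main step.

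\textbf{The fallback.} Your fallback does not close the gap. If a class is nonzero there is no extension to normalize. ``Independence up to tensoring with characters of $A'_\pi$'' just restates the ambiguity in choosing extensions; it does not give a matching. What you need is $[\alpha]=[\beta]$ in $H^2(A,\C^\times)$, via an explicit interpolating cochain. The paper gets it as follows:
\begin{itemize}
\item It chooses lifts $\widetilde g_a=t_a\cdot a_{T'}$, with $a_{T'}\in G_{\mathrm{sc}}(F)\rtimes a$ normalizing a maximally split, maximally unramified torus $T'$ in which $z$ takes values.
\item It lifts $z$ to $z_{\mathrm c}$ on $G_{\mathrm c}=G_{\mathrm{sc}}\times Z_G$.
\item It sets $h(a)=\alpha(a^{-1},a)\cdot\langle(z_{\mathrm c}^{-1},t_{a^{-1}}),(\phi_\chi^{-1},\dot s_a)\rangle$, using Kaletha's pairing between $H^1(u\to\mathcal W,Z'\to T'_{\mathrm c}\to T')$ and its dual hypercohomology.
\item The identity $\alpha=\beta\cdot dh$ then follows from the appendix's duality formulas, notably $\langle(z,c),(\eta,1)\rangle=\langle[f_T(z)\,d\dot c^{-1}],\eta\rangle$. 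The leftover factor is identified with $\chi$ evaluated on the cocentral torus of a $z$-extension.
\end{itemize}
This hypercohomological comparison is the missing idea in your proposal.
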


The contents of this paper are the following: In \zcref{sec:parameterization_euc_rep}, we introduce a framework to state the local Langlands correspondence of essentially unipotent supercuspidal representations and present several desiderata for such bijections. In \zcref{sec:red_to_adj_sc} we reduce the construction problem of the correspondence to the case when the group in question is adjoint or simply connected. \zcref{sec:proof_equiv_for_sc_adj} is devoted to construction in such cases. In \cite{FOS} and \cite{Solleveld_unip_for_ramified} we already have a bijective correspondence, so what we need is to verify that the desiderata are satisfied. We will see that the main difficulty is the equivariance under outer automorphisms and check it by case-by-case arguments. Remark that the result on equivariance in \cite{FOS} is not enough in our setting due to the condition on changes of rigidifications.
Then we establish the first main theorem. In \zcref{sec:extension_disconn} we extends this result to disconnected groups. In order to examine the interaction between the group and the Galois sides, we need to use several properties of a certain hypercohomology introduced in \cite{kal_disconn}, which we give proofs in \zcref{app:cohom_rig_inn}.

\subsubsection*{Acknowledgement}
The author would like to thank my advisor Yoichi Mieda for his constant support and encouragement. He also thanks Maarten Solleveld for his helpful comments on previous versions of this article. This work was supported by the WINGS-FMSP program at the Graduate School of Mathematical Sciences, the University of Tokyo.

\subsection*{Notation}

Let $p$ be a prime number and $F$ a $p$-adic local field of characteristic zero. We write $k$ for the residue field, $\Gamma_{F}$ for the absolute Galois group and $W_F$ for the Weil group of $F$. We also denote by $I_F\subset \Gamma_F$ the inertia subgroup and  by $\mathsf{Fr}\in \Gamma_F/I_F$ the Arithmetic Frobenius.

\section{Parameterization of essentially unipotent supercuspidal representations}
\label{sec:parameterization_euc_rep}
In this section, we first recall some preliminaries for considering parameterizations of essentially unipotent supercuspidal representations. We then list three desiderata on such parameterizations, which are necessary for them to be the expected ``local Langlands correspondence''.

\subsection{Preliminaries}
\label{ssec:preliminaries}
Let $G$ be a connected, quasi-split reductive group over $F$. In order to state the local Langlands correspondence for all inner twists of $G$, Kaletha introduced in \cite{Kal_rig_inn} the notion of \emph{rigid inner twists}: We use the notation in \zcref{app:cohom_rig_inn}. Let $Z\subset G$ be a finite central subgroup. A \emph{rigidification} of an inner twist $\xi\colon G\to G'$ is an element $z\in Z^1(u\to\mathcal{W},Z\to G)$ such that $\xi^{-1}\sigma(\xi)=\Ad(\overline{z}_\sigma)$ for $\sigma\in\Gamma_F$, where $\overline{z}\in Z^1(F,\overline{G})$ is the image of $z$. We also call a pair $(G',\xi,z)$ a \emph{rigid inner twist} of $G$.
As in \cite[Section 5.1]{Kal_rig_inn}, isomorphism classes of rigid inner twists of $G$ are classified by $H^1(u\to\mathcal{W},Z\to G)$. 

On the Galois side we have a canonical covering map $\widehat{\overline{G}}\to \widehat{G}$ of the dual groups. 
For an L-parameter $\phi\colon W_F\times\SL_2(\C)\to\Ell{G}$, we write $Z^+_{\widehat{\overline{G}}}(\phi)$ for the preimage of $Z_{\widehat{G}}(\phi)$ along this covering, and put $\mathcal{S}^+_\phi=\pi_0 Z_{\widehat{\overline{G}}}^+(\phi)$. 
If $Z$ is trivial, we simply write $\mathcal{S}_\phi$ for $\mathcal{S}_\phi^+$. We call an \emph{enhancement} of $\phi$ for an irreducible representation $\rho$ of $\mathcal{S}_\phi^+$. We also call a pair $(\phi,\rho)$ an \emph{enhanced L-parameter}.
Since $\pi_0Z(\widehat{\overline{G}})^+$ maps onto a central subgroup of $\mathcal{S}_\phi^+$, an enhancement $\rho$ of $\phi$ determines a character $\omega_\rho$ of $\pi_0Z({\widehat{\overline{G}}})^+$. For $[z]\in H^1(u\to\mathcal{W},Z\to G)$, we say that an enhancement $\rho\in \Irr(\mathcal{S}_\phi)$ (or an enhanced L-parameter $(\phi,\rho)$) is \emph{relevant} to $[z]$ if $\omega_\rho=\langle [z],\text{--}\rangle$ holds. Here $\langle ,\rangle$ means the canonical perfect pairing \zcref{eq:pairing_rigid_inn}. We denote by $\Phi_\mathrm{e}(G;[z])$ the set of $\widehat{G}$-conjugacy classes of $[z]$-relevant enhanced L-parameters.

\begin{conjecture}[The local Langlands correspondence]
    There exists a bijection
    \[
        \LLC_{(G',\xi,z)}\colon \Irr(G'(F))\to \Phi_\mathrm{e}(G;[z]);\quad \pi\mapsto [\phi_\pi,\rho_{(z,\pi)}]
    \]
    and it satisfies a lot of properties.
\end{conjecture}

\begin{remark}
    \begin{enumerate}
        \item The above statement relies on the choice of $Z\subset G$. However, we can interpret it into the correspondence for a larger finite central subgroup $Z'\subset G$ using the canonical inclusion $H^1(u\to\mathcal{W},Z\to G)\hookrightarrow H^1(u\to\mathcal{W},Z'\to G)$.
        \item When $G$ is adjoint, we abbreviate $z$ and simply write $\pi\mapsto [\phi_\pi,\rho_\pi]$. 
    \end{enumerate}
\end{remark}

In this paper we particularly treat the class of \emph{(essentially) unipotent supercuspidal representations}.

\begin{definition}
    We say that $\pi\in \Irr(G'(F))$ is \emph{unipotent} if, for a parahoric subgroup $P\subset G'(F)$, $\pi|_{P}$ contains (the inflation of) a unipotent representation of the reductive quotient of $P$. An \emph{essentially unipotent representation} is an irreducible representation $\pi\in \Irr(G'(F))$ such that there exists a character $\chi\colon G'(F)\to\C^\times$ and $\chi^{-1}\otimes\pi$ is unipotent.
\end{definition}

We write $\Irr(G'(F))_{\mathrm{uc}}$ and $\Irr(G'(F))_{\mathrm{euc}}$ respectively for the subsets of $\Irr(G'(F))$ consisting of unipotent supercuspidal representations and essentially unipotent supercuspidal representations. 
Any unipotent supercuspidal representation of $G'(F)$ is obtained in the following way: 
Let $\mathcal{B}(G')$ be the (reduced) Bruhat--Tits building of $G'$ and $x\in \mathcal{B}(G')$ a vertex. 
The stabilizer $G'(F)_x$ of $x$ contains a maximal parahoric subgroup $G'(F)_{x,0}$ and its pro-unipotent radical $G'(F)_{x,0+}$. 
We have a reductive group scheme $\widetilde{\mathsf{G}}_x$ with the identity component $\mathsf{G}_x$ over the residue field $k$ such that $G'(F)_x/G'(F)_{x,0+}\cong \widetilde{\mathsf{G}}_x(k)$ and $G'(F)_{x,0}/G'(F)_{x,0+}\cong \mathsf{G}_x(k)$. \cite[Lemma 15.7]{FOS} says that any unipotent cuspidal representation $\sigma\in \Irr(\mathsf{G}_x(k))$ extends to an irreducible representation $\widetilde{\sigma}$ of $\widetilde{\mathsf{G}}_x(k)$. 
Then $\pi=\cind_{G'(F)_x}^{G'(F)}\widetilde{\sigma}$ is irreducible, unipotent and supercuspidal.

We can define the counterpart on the Galois side.

\begin{definition}\label{def:essentially_unipotent_L-parameters}
    An L-parameter $\phi\colon W_F\times\SL_2(\C)\to \Ell{G}$ is \emph{unipotent} if $\phi|_{I_F}$ is (cohomologically) trivial. An \emph{essentially unipotent} L-parameter is an L-parameter $\phi$ which is unipotent modulo $Z(\widehat{G})$.
\end{definition}

As in \cite[Definition 6.9]{AMS}, we can define the notion of \emph{cuspidality} for enhanced L-parameters. We denote by $\Phi_\mathrm{e}(G;[z])_{\mathrm{uc}}$ and $\Phi_\mathrm{e}(G;[z])_{\mathrm{euc}}$ respectively the subsets of $\Phi_\mathrm{e}(G;[z])$ consisting of unipotent cuspidal enhanced L-parameters and essentially unipotent cuspidal enhanced L-parameters.

\begin{definition}
    We call any bijection
    \[
        \Irr(G'(F))_{\mathrm{uc}}\to \Phi_\mathrm{e}(G;[z])_{\mathrm{uc}};\quad \pi\mapsto [\phi_\pi,\rho_{(z,\pi)}]
    \]
    a \emph{UC-parameterization}. We also call an \emph{EUC-parameterization} for a bijection $\Irr(G'(F))_{\mathrm{euc}}\to \Phi_\mathrm{e}(G;[z])_{\mathrm{euc}}$ which extends a UC-parameterization.
\end{definition}

\subsection{The desiderata}

Even though the existence of (E)UC-parameterizations is proved for all connected reductive groups by \cite{FOS} and \cite{Solleveld_unip_for_ramified}, we need to know whether their parameterizations satisfy the expected properties of the local Langlands correspondence. In this subsection we list three fundamental conditions among them. Given an EUC-parameterization for $(G',\xi,z)$, we write $[\phi_\pi,\rho_{(z,\pi)}]$ for the enhanced L-parameter associated with $\pi\in \Irr(G'(F))_{\mathrm{euc}}$.

Let $G_1,G_2$ be connected, quasi-split reductive groups over $F$. Consider a homomorphism $f\colon G_1\to G_2$ with the abelian kernel and cokernel. Take a finite central subgroup $Z_1\subset G_1$ and set $Z_2=f(Z_1)$. Given rigid inner twists $(G'_1,\xi_1,z_1)$ and $(G'_2,\xi_2,z_2)$ of $G_1$ and $G_2$ such that $f([z_1])=[z_2]$ in $H^1(u\to\mathcal{W},Z_2\to G_2)$. That is, we have ${}^{g_2}f(z_1)=z_2$ for some $g_2\in G_2(\overline{F})$. Here, for $z\in Z^1(u\to\mathcal{W},Z\to G)$ and $g\in G(\overline{F})$, the cocycle ${}^gz$ is defined as:
\[
    {}^gz(w)=gz(w)w(g)^{-1},\quad w\in \mathcal{W}.
\]
Then $f'=\xi_2\circ\Ad(g)\circ f\circ\xi^{-1}_1\colon G'_1\to G'_2$ is defined over $F$. By \cite[Fact 5.1]{Kal_rig_inn}, $f'$ is independent of the choice of $g$ up to $G'_2(F)$-conjugacy. On the Galois side, $f$ induces a homomorphism $\widehat{f}\colon \widehat{G}_2\to\widehat{G}_1$ up to conjugacy. We also have a lift $\widehat{\overline{f}}\colon \widehat{\overline{G}}_2\to\widehat{\overline{G}}_1$. It induces a homomorphism $\mathcal{S}^+_{\phi_2}\to \mathcal{S}^+_{\widehat{f}\circ\phi_2}$ for an L-parameter $\phi_2$ of $\Ell{G}_2$.

\begin{condition}\label{cond:functoriality}
    For $\pi_i\in \Irr(G'_i(F))_{\mathrm{euc}},\ i=1,2$, the following holds up to conjugacy:
    \begin{multline}
        \text{$\pi_1$ is a constituent of $f'^\ast\pi_2$}\\
        \iff \text{$\phi_{\pi_1}=\widehat{f}\circ\phi_{\pi_2}$ and $\rho_{(z_2,\pi_2)}$ is a constituent of $\widehat{\overline{f}}^\ast\rho_{(z_1,\pi_1)}$}.
    \end{multline}
\end{condition}

We next consider compatibility with character twists. There exists a canonical bijection between the set of characters of $G(F)$ (or $G'(F)$) and $H^1(W_F,Z(\widehat{G}))$; $\chi\leftrightarrow[\phi_\chi]$.

\begin{condition}\label{cond:compatibility_char_twist}
    For any $\pi\in\Irr(G'(F))_{\mathrm{euc}}$ and $\chi\colon G'(F)\to \C^\times$, we have $[\phi_{\chi\cdot\pi},\rho_{(z,\chi\cdot\pi)}]=[\phi_\chi\cdot\phi_\pi,\rho_{(z,\pi)}]$.
\end{condition}

We finally consider compatibility with changes of rigidifications. Let $z_1,z_2\in Z^1(u_F\to\mathcal{W}_F,Z\to G)$ be two rigidifications of an inner twist $(G',\xi)$. Taking $Z$ sufficiently large, we may assume that $\overline{z}_1=\overline{z}_2$ in $Z^1(F,\overline{G})$. Then we have $y\in Z^1(\mathcal{W}_F,Z)$ such that $z_2=y\cdot z_1$. For an L-parameter $\phi\colon W_F\times \SL_2(\C)\to\Ell{G}$, we have an exact sequence:
\[
    1\to Z_{\widehat{\overline{G}}}(\phi)\to Z_{\widehat{\overline{G}}}^+(\phi)\xrightarrow{\delta_\phi} Z^1(W_F,\widehat{Z})=Z^1(F,\widehat{Z}).
\]
Here, we define the 1-cocycle $\delta_\phi(g)$ for $g\in Z_{\widehat{\overline{G}}}^+(\phi)$ as:
\[
    \delta_\phi(g)(w)=g^{-1}\Ad(\phi(w))(g)\in \Ker(\widehat{\overline{G}}\to\widehat{G})=\widehat{Z},\quad w\in W_F.
\]
Recall the canonical pairing \zcref{eq:pairing_W_Z} between $H^1(\mathcal{W}_F,Z)$ and $H^1(F,\widehat{Z})$, which we denote by $\langle ,\rangle$. As in \cite[Lemma 6.2]{Kal_rigid_inn_vs_isoc}, it is natural to impose the following condition in the view of endoscopy theory:

\begin{condition}\label{cond:compatibility_change_rigidification}
    For $\pi\in \Irr(G'(F))_{\mathrm{euc}}$ and $y\in Z^1(\mathcal{W}_F,Z)$, we have
    \[
        \rho_{(yz,\pi)}=\langle [y],\delta_\phi(\text{--})\rangle^{-1}\cdot \rho_{(z,\pi)}.
    \]
\end{condition}

\begin{remark}
    \zcref{cond:functoriality,cond:compatibility_change_rigidification} can be restricted to properties of UC-parameterizations. For \zcref{cond:compatibility_char_twist}, recall the notion of \emph{weakly unramified characters}. A character $\chi\colon G'(F)\to \C^\times$ is \emph{weakly unramified} if it factors through the Kottwitz map $\kappa=\kappa_{G'}\colon G'(F)\to (\pi_1(G)_{I_F})^\mathsf{Fr}$. For any unipotent representation $\pi\in \Irr(G'(F))$ and a character $\chi$, the character twist $\chi\cdot\pi$ is also unipotent if and only if $\chi$ is weakly unramified. Hence, we can interpret \zcref{cond:compatibility_char_twist}  into a condition of a UC-parameterization as we restrict the class of $\chi$ to weakly unramified characters.
\end{remark}

\section{Reductions}
\label{sec:red_to_adj_sc}
The three desiderata proposed above are not only fundamental, but also enable us to reduce the construction problem of parameterizations to adjoint and simply-connected cases. We first observe this reduction process. We then focus on certain equivariance conditions implied by the desiderata and give a further reduction.

\subsection{Reduction to adjoint and simply-connected cases}

We first show the following reduction argument:

\begin{proposition}\label{prop:reduction_to_adjoint_simply_connected}
    Let $G_\mathrm{ad}$ be a connected, quasi-split adjoint reductive group over $F$ and $G_\mathrm{sc}$ its simply-connected cover. Given a UC-parameterization for each inner twist $(G'_\mathrm{ad},\xi)$ of $G_\mathrm{ad}$ and for each rigid inner twist $(G'_\mathrm{sc},\xi,z)$ of $G_\mathrm{sc}$. Suppose that these parameterizations satisfy \zcref{cond:functoriality,cond:compatibility_char_twist,cond:compatibility_change_rigidification}. Then we can extend them uniquely to a collection of EUC-parameterizations for all rigid inner twists $(G',\xi,z)$ of connected reductive groups $G$ such that $G/Z(G)\cong G_\mathrm{ad}$ so that they satisfy \zcref{cond:functoriality,cond:compatibility_char_twist,cond:compatibility_change_rigidification}.
\end{proposition}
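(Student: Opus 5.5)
The plan is to build the parameterization for a general $G$ with $G/Z(G)\cong G_\mathrm{ad}$ by going through the adjoint group and then cutting down with the simply connected group. Uniqueness is forced by the conditions themselves; existence is then a matter of checking consistency.

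\textbf{Uniqueness.} Let $(G',\xi,z)$ be a rigid inner twist. The adjoint quotient $G\to G_\mathrm{ad}$ and the map $G_\mathrm{sc}\to G$ both have abelian kernel and cokernel. Let $\pi\in\Irr(G'(F))_{\mathrm{euc}}$; after twisting by a character we may assume $\pi$ is unipotent, by \zcref{cond:compatibility_char_twist}.
\begin{itemize}
\item Write $\pi=\cind_{G'(F)_x}^{G'(F)}\widetilde\sigma$. Let $x$ also denote its image in $\mathcal{B}(G'_\mathrm{ad})$. Then $\pi$ is a constituent of the pullback $f'^\ast\pi_\mathrm{ad}$ of some $\pi_\mathrm{ad}\in\Irr(G'_\mathrm{ad}(F))_{\mathrm{uc}}$ built from the same unipotent cuspidal $\sigma$ on $\mathsf{G}_x$. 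This uses that the reductive quotients agree up to central isogeny.
\item \zcref{cond:functoriality} then forces $\phi_\pi=\widehat f\circ\phi_{\pi_\mathrm{ad}}$.
\item Apply \zcref{cond:functoriality} to $G_\mathrm{sc}\to G$. The enhancement $\rho_{(z,\pi)}$ must lie in $\widehat{\overline f}^\ast\rho_{(z_\mathrm{sc},\pi_\mathrm{sc})}$ for every constituent $\pi_\mathrm{sc}$ of $\pi|_{G'_\mathrm{sc}(F)}$. Here the rigidification $z_\mathrm{sc}$ is a lift of $z$, whose ambiguity is controlled by \zcref{cond:compatibility_change_rigidification}.
\item It must also restrict compatibly to $\mathcal{S}_{\phi_\pi}$ through $\widehat{G}_\mathrm{sc}\to\widehat G$.
\end{itemize}
The groups satisfy $\mathcal{S}^+_{\phi}$ for $G$ lies between $\mathcal{S}_{\phi_\mathrm{ad}}$ (for $\widehat{G}_\mathrm{sc}$) and $\mathcal{S}^+$ for $G_\mathrm{sc}$ (with $\widehat{G}_\mathrm{ad}$ as the target of the covering). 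Clifford theory on both sides will show that these constraints together with the central character condition (relevance to $[z]$) pin down $\rho_{(z,\pi)}$ uniquely.

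\textbf{Existence.} Define $\phi_\pi$ and $\rho_{(z,\pi)}$ by the recipe above. The steps are:
\begin{enumerate}
\item Show $\phi_\pi$ is independent of the choice of $\pi_\mathrm{ad}$. Two choices differ by a $G'_\mathrm{ad}(F)$-character twist trivial on the image of $G'(F)$. By \zcref{cond:compatibility_char_twist} for $G_\mathrm{ad}$, their parameters differ by a class in $H^1(W_F,Z(\widehat G_\mathrm{ad}))$ that dies in $\widehat G$.
\item Compare the two restriction problems $\pi_\mathrm{ad}|_{G'(F)}$ and $\pi|_{G'_\mathrm{sc}(F)}$ with the corresponding restrictions of enhancements. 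The group side is governed by $G'_\mathrm{ad}(F)/\mathrm{im}\,G'(F)$, an abelian group described by Galois cohomology of $Z(G)$ via Kottwitz/Tate–Nakayama. The Galois side is governed by the quotient $\mathcal{S}^+_{\phi}/\mathrm{im}$, described by $\delta_\phi$. Matching these is exactly what \zcref{cond:compatibility_change_rigidification} and \zcref{cond:compatibility_char_twist} for $G_\mathrm{sc}$ and $G_\mathrm{ad}$ provide.
\item Deduce bijectivity from the bijectivity for $G_\mathrm{ad}$ and $G_\mathrm{sc}$, by counting orbits and stabilizers in Clifford theory.
\item Extend to essentially unipotent representations by twisting. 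This is well defined because weakly unramified characters act compatibly, as in the remark after \zcref{cond:compatibility_change_rigidification}.
\item Verify the three conditions for general $f\colon G_1\to G_2$. Factor $f$ through the adjoint and simply connected covers: $f$ induces $G_{1,\mathrm{sc}}\to G_{2,\mathrm{sc}}$ and $G_{1,\mathrm{ad}}\to G_{2,\mathrm{ad}}$, and the latter is a product of an isomorphism onto factors, which reduces everything to the given data.
\end{enumerate}

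The main obstacle is step 2 of existence: the precise Clifford-theoretic matching between $\pi_\mathrm{ad}|_{G'(F)}$ and $\rho_{\pi_\mathrm{ad}}|$ on the Galois side, including rigidifications. I would first treat $G$ with $Z(G)$ finite, where $G_\mathrm{sc}\to G\to G_\mathrm{ad}$ are isogenies and the pairing $\langle[y],\delta_\phi(\text{--})\rangle$ directly describes both stabilizers. Then I would pass to general $G$ via a $z$-extension-type argument with $G\to G/Z(G)^\circ$.
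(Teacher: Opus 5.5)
Your overall route matches the paper's. You take $\phi_\pi$ from a $\pi_\mathrm{ad}$ containing a unipotent twist of $\pi$, and you take $\rho_{(z,\pi)}$ to be the constituent sandwiched in $\mathcal{S}_{\phi_\mathrm{ad}}\to\mathcal{S}^+_\phi\to\mathcal{S}^+_{\phi_\mathrm{sc}}$ between $\rho_{\pi_\mathrm{ad}}$ and $\rho_{(z_\mathrm{sc},\pi_\mathrm{sc})}$.

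\textbf{The gap.} The sentence carrying all the weight is that ``Clifford theory on both sides'' pins this constituent down, and this is false in general. Take $N\triangleleft H$ with $H/N$ abelian and $N\subset M\subset H$. An irreducible representation of $H$ lying over $\rho_N$ can contain several non-isomorphic constituents on $M$ lying over $\rho_N$ as soon as $\rho_N$ does not extend to its stabilizer. For example, take $H=Q_8$, $N=Z(Q_8)$, $M$ cyclic of order $4$, and the $2$-dimensional representation of $H$. Relevance to $[z]$ does not help: every constituent of $\rho_{(z_\mathrm{sc},\pi_\mathrm{sc})}|_{\mathcal{S}^+_\phi}$ is automatically $[z]$-relevant once $z_\mathrm{sc}$ lifts $z$.

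What is missing is a multiplicity-one input on each side. This is a genuine fact about unipotent cuspidal data, not a consequence of the three conditions.
\begin{itemize}
\item On the Galois side, \cite[Lemma 13.3]{FOS} shows that $\rho_\mathrm{ad}$ extends to its stabilizer in $\mathcal{S}^+_{\phi_\mathrm{sc}}$. This also works for ramified $G$ because $\widehat{G}_\mathrm{ad}^{I_F}$ is connected, and it gives \zcref{lem:unique_sandwich_galois}.
\item On the group side, you need that the unipotent cuspidal $\sigma_\mathrm{ad}$ of $\widetilde{\mathsf{G}}_{\mathrm{ad},x}(k)$ restricts irreducibly to $\mathsf{G}_{\mathrm{sc},x}(k)$. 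The paper gets this from \cite[Proposition 4.6]{Mor_uc_extend} in the split case and from a case check (types $A$, $D$, $E_6$) otherwise, which gives \zcref{lem:unique_sandwich_group}.
\end{itemize}
Without these, your recipe does not define $\rho_{(z,\pi)}$, and neither uniqueness of the extension nor injectivity follows.

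\textbf{What the lemmas buy.} Once both lemmas are available, you no longer need the matching of $G'_\mathrm{ad}(F)/\mathrm{im}\,G'(F)$ against $\mathcal{S}^+_\phi/\mathrm{im}$ that you flag as the main obstacle, nor the $z$-extension detour. Without the lemmas, matching orbits and stabilizers would not suffice anyway: you would also have to match the projective $2$-cocycles that govern multiplicities. That is the kind of comparison the paper only carries out in \zcref{sec:extension_disconn}. With them, well-definedness is cheap:
\begin{itemize}
\item independence of $\pi_\mathrm{ad}$ is \zcref{cond:compatibility_char_twist} for $G_\mathrm{ad}$;
\item independence of $z_\mathrm{sc}$ is \zcref{cond:compatibility_change_rigidification} for $G_\mathrm{sc}$;
\item replacing $\pi_\mathrm{sc}$ by a $G'(F)$-conjugate is absorbed by \zcref{cond:functoriality} for $f=\id$ with $z_2={}^gz_\mathrm{sc}$.
\end{itemize}
Injectivity is then the uniqueness part of \zcref{lem:unique_sandwich_group}, and surjectivity is its existence part.

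\textbf{Two smaller points.} First, for an arbitrary finite $Z$ the rigidification $z$ need not come from $G_\mathrm{sc}$, since its restriction to $u$ can leave $Z(G_\mathrm{der})$. As in the paper, work first with $Z=Z(G_\mathrm{der})$ and reach larger $Z$ via \zcref{cond:compatibility_change_rigidification}. Second, to place $\pi$ inside $\pi_\mathrm{ad}|_{G'(F)}$, the twist must make it trivial on the center, not merely unipotent.
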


We need some lemmas to prove it. Let $G$ be a connected, quasi-split reductive group over $F$.

\begin{lemma}\label{lem:unique_sandwich_group}
    Let $G'$ be an inner form of $G$ and $\pi_\mathrm{ad}\in \Irr(G'_\mathrm{ad}(F))_{\mathrm{uc}}$. We also take $\pi_\mathrm{sc}\in \Irr(G'_\mathrm{sc}(F))_{\mathrm{uc}}$ which is a constituent of $\pi_\mathrm{ad}|_{G'_\mathrm{sc}(F)}$. Then there exists uniquely  $\pi\in \Irr(G'(F))_{\mathrm{uc}}$ such that $\pi|_{G'_\mathrm{sc}(F)}$ contains $\pi_\mathrm{sc}$ and $\pi_\mathrm{ad}|_{G'(F)}$ contains $\pi$.
\end{lemma}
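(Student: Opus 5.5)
We work with the image subgroups of $G'_\mathrm{sc}(F)\to G'(F)\to G'_\mathrm{ad}(F)$; the central kernels act trivially on unipotent representations, so $\pi_\mathrm{ad}|_{G'(F)}$ is a representation of the image of $G'(F)$ in $G'_\mathrm{ad}(F)$. The image $H_\mathrm{sc}$ of $G'_\mathrm{sc}(F)$ is normal in $G'_\mathrm{ad}(F)$ with abelian quotient, and the same holds for the image $H$ of $G'(F)$. We will use Clifford theory for the chain $H_\mathrm{sc}\subset H\subset G'_\mathrm{ad}(F)$, together with the explicit description of unipotent supercuspidal representations as $\cind_{G'(F)_x}^{G'(F)}\widetilde{\sigma}$ recalled above.

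\textbf{Existence.} Write $\pi_\mathrm{ad}=\cind_{G'_\mathrm{ad}(F)_x}^{G'_\mathrm{ad}(F)}\widetilde{\sigma}$. Since $G'_\mathrm{ad}(F)_x$ is open and compact modulo center, $G'_\mathrm{ad}(F)/H$ is a finite abelian group, so $\pi_\mathrm{ad}|_H$ is a finite direct sum of irreducible representations. By Mackey this is $\bigoplus_g \cind_{H\cap {}^gG'_\mathrm{ad}(F)_x}^{H}({}^g\widetilde{\sigma}|)$, and each summand restricted further to $H_\mathrm{sc}$ again decomposes finitely. Since $\pi_\mathrm{sc}$ occurs in $\pi_\mathrm{ad}|_{H_\mathrm{sc}}=(\pi_\mathrm{ad}|_H)|_{H_\mathrm{sc}}$, some irreducible constituent $\pi$ of $\pi_\mathrm{ad}|_H$ contains $\pi_\mathrm{sc}$. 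This $\pi$ is supercuspidal, being a constituent of the restriction of a supercuspidal representation to a finite-index-modulo-center normal subgroup. It is also unipotent, since its restriction to the parahoric $G'(F)_{x,0}$, whose image coincides with that of $G'_\mathrm{sc}(F)_{x,0}$ modulo the pro-unipotent radical, contains the restriction of $\sigma$. Indeed, reductive quotients of parahorics are compatible up to center under isogeny, and unipotent representations are insensitive to central isogenies of finite reductive groups.

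\textbf{Uniqueness.} This is the main obstacle, because Clifford theory by itself only gives that $\pi$ is determined up to twisting by characters of $H/H_\mathrm{sc}$ that are trivial on the stabilizer data. Suppose $\pi,\pi'$ both occur in $\pi_\mathrm{ad}|_H$ and both contain $\pi_\mathrm{sc}$. Restrictions from $G'_\mathrm{ad}(F)$ to $H$ are multiplicity free with constituents permuted transitively by $G'_\mathrm{ad}(F)$, so $\pi'={}^g\pi$ for some $g$. Since $\pi'|_{H_\mathrm{sc}}$ and $\pi|_{H_\mathrm{sc}}$ share $\pi_\mathrm{sc}$, Clifford theory for $H_\mathrm{sc}\subset H$ gives $\pi'\cong\pi\otimes\chi$ for a character $\chi$ of $H/H_\mathrm{sc}$.

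To show $\pi'\cong\pi$, we first reduce to the stabilizers. We replace $g$ by an element of $G'_\mathrm{ad}(F)_x$, the stabilizer of the vertex carrying $\pi_\mathrm{sc}$, which is possible because $\pi_\mathrm{sc}$ determines the $G'_\mathrm{sc}(F)$-orbit of $x$ by uniqueness of the cuspidal type. The question then becomes one about the finite groups $\widetilde{\mathsf{G}}_x(k)$ and their images. This is where the key input enters: \cite[Lemma 15.7]{FOS}, in the form that unipotent cuspidal representations of $\mathsf{G}_x(k)$ are stable under the whole group of automorphisms coming from $G'_\mathrm{ad}(F)_x$ and extend to $\widetilde{\mathsf{G}}_x(k)$. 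Combined with the multiplicity-one property of unipotent cuspidals under restriction along the isogeny $\mathsf{G}_{\mathrm{sc},x}(k)\to\mathsf{G}_x(k)$, this shows that the $G'_\mathrm{ad}(F)_x$-orbit of $\pi_\mathrm{sc}$ and the $G'_\mathrm{ad}(F)_x$-orbit of $\pi$ have the same stabilizer in $G'_\mathrm{ad}(F)_x/H_x$. It follows that ${}^g\pi\cong\pi$, hence $\pi'\cong\pi$. In particular, the counting argument shows that the number of constituents of $\pi_\mathrm{ad}|_H$ equals the number of $H$-orbits on the constituents of $\pi_\mathrm{ad}|_{H_\mathrm{sc}}$, which gives the uniqueness.
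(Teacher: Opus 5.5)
\textbf{There is a genuine gap in the uniqueness step.} Your reduction is a reasonable frame: Clifford theory down to some $g\in G'_\mathrm{ad}(F)_x$ stabilizing $\pi_\mathrm{sc}$, then work on the finite reductive quotients. It lands where the paper does. However, the decisive step, ``the stabilizers agree, hence ${}^g\pi\cong\pi$'', is asserted rather than derived, and the input you invoke is too weak to give it.

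Write $\pi=\cind_{G'(F)_x}^{G'(F)}\sigma$. Your inputs are that the unipotent cuspidal $\tau$ of $\mathsf{G}_x(k)$ is stable under $G'_\mathrm{ad}(F)_x$, that $\tau$ extends to $\widetilde{\mathsf{G}}_x(k)$, and that $\tau|_{\mathsf{G}_{\mathrm{sc},x}(k)}$ is irreducible. These only yield ${}^g\sigma\cong\sigma\otimes\chi$ for some character $\chi$ of $\widetilde{\mathsf{G}}_x(k)/\mathsf{G}_x(k)$. Nothing in them forces $\chi=1$.

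To see this, suppose $\widetilde{\mathsf{G}}_{\mathrm{ad},x}(k)/\mathsf{G}_{\mathrm{ad},x}(k)\cong\Z/2\times\Z/2$. Suppose also that the stable unipotent cuspidal $\tau_\mathrm{ad}$ of $\mathsf{G}_{\mathrm{ad},x}(k)$ had a nontrivial obstruction class in $H^2(\Z/2\times\Z/2,\C^\times)$. Then $\sigma_\mathrm{ad}|_{\mathsf{G}_{\mathrm{ad},x}(k)}=2\tau_\mathrm{ad}$. Restricted to a cyclic intermediate stabilizer, $\sigma_\mathrm{ad}$ would be a sum of two distinct extensions of $\tau$, both containing $\sigma_\mathrm{sc}$ and interchanged by $G'_\mathrm{ad}(F)_x$. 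All your inputs hold in this scenario, yet uniqueness fails.

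Non-cyclic component groups do occur at vertices carrying unipotent cuspidals. An example is split $\mathrm{PSO}_{16}$ at the vertex whose reductive quotient has type $D_4\times D_4$; it is fixed by all of $\pi_1(G_\mathrm{ad})\cong\Z/2\times\Z/2$. So the vanishing of this obstruction is real content, not a formality. Two further points hide the same issue. Your claim that restriction from $G'_\mathrm{ad}(F)$ to $H$ is multiplicity free is false in general for non-cyclic quotients, and here it is essentially equivalent to what must be shown. Your closing ``counting argument'' only restates the conclusion.

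\textbf{The missing ingredient} is exactly the paper's key claim: $\sigma_\mathrm{ad}$ is an \emph{extension} of $\sigma_\mathrm{sc}$. Equivalently, the unipotent cuspidal of $\mathsf{G}_{\mathrm{ad},x}(k)$ extends to the full adjoint stabilizer $\widetilde{\mathsf{G}}_{\mathrm{ad},x}(k)$. Given this, $\sigma_\mathrm{ad}|_{\widetilde{\mathsf{G}}_x(k)}$ is irreducible, so $\sigma=\sigma_\mathrm{ad}|_{\widetilde{\mathsf{G}}_x(k)}$ is forced. Hence $\pi$ is unique, since Mackey summands of $\pi_\mathrm{ad}|_{G'(F)}$ at vertices outside $G'(F)x$ cannot contain $\pi_\mathrm{sc}$.

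The paper proves the claim by reducing to absolutely simple $G$:
\begin{itemize}
\item In the split case it cites \cite[Proposition 4.6]{Mor_uc_extend}.
\item In the non-split case, $(\pi_1(G)_{I_F})^\mathsf{Fr}$ is either trivial (type $E_6$, trialitarian $D_4$) or cyclic. In the cyclic case $\mathsf{G}_{\mathrm{ad},x}$ is classical with at most one unipotent cuspidal, which gives stability and then extendibility.
\end{itemize}
You could also obtain the claim by applying the extension statement of \cite[Lemma 15.7]{FOS} to $G'_\mathrm{ad}$ rather than to $G'$. The point is that it must be the adjoint stabilizer; your version with $\widetilde{\mathsf{G}}_x(k)$ does not suffice.

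\textbf{Minor points in the existence part.} The images of $G'(F)_{x,0}$ and $G'_\mathrm{sc}(F)_{x,0}$ in the reductive quotient differ in general (think $\mathrm{GL}_n$ versus $\mathrm{SL}_n$). Also, $Z(G')(F)$ need not act trivially on unipotent representations. Unipotence of $\pi$ should instead come from the relevant constituent of $\sigma_\mathrm{ad}|_{G'(F)_{x,0}}$ being inflated from a unipotent representation of $\mathsf{G}_{\mathrm{ad},x}(k)$.
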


\begin{proof}
    Take a vertex $x\in \mathcal{B}(G')$ and $\sigma_\mathrm{sc}\in \Irr(\mathsf{G}_{\mathrm{sc},x}(k))_{\mathrm{uc}}$ such that $\pi_\mathrm{sc}=\cind_{G'_\mathrm{sc}(F)_x}^{G'_\mathrm{sc}(F)}\sigma_\mathrm{sc}$. Then we have $\sigma\in \Irr(\widetilde{\mathsf{G}}_x(k))_{\mathrm{uc}}$ and $\sigma_\mathrm{ad}\in \Irr(\widetilde{\mathsf{G}}_{\mathrm{ad},x}(k))_{\mathrm{uc}}$ such that $\pi=\cind_{G'(F)_x}^{G'(F)}\sigma,\ \pi_\mathrm{ad}=\cind_{G'_\mathrm{ad}(F)_x}^{G'_\mathrm{ad}(F)}\sigma_\mathrm{ad}$ and that $\sigma\subset\sigma_\mathrm{ad}|_{\widetilde{\mathsf{G}}_x(k)},\ \sigma_\mathrm{sc}\subset \sigma|_{\mathsf{G}_{\mathrm{sc},x}(k)}$. In fact, $\sigma_\mathrm{ad}$ must be an extension of $\sigma_\mathrm{sc}$; we may assume that $G$ is absolutely simple, then this is proven in \cite[Proposition 4.6]{Mor_uc_extend} when $G$ is split. In non-split cases, $G$ must be of type $A,\ D$ or $E_6$. For $E_6$ and exceptional $D_4$, $(\pi_1(G)_{I_F})^\mathsf{Fr}$ must be trivial, so $\mathsf{G}_{\mathrm{ad},x}(k)=\widetilde{\mathsf{G}}_{\mathrm{ad},x}(k)$ and nothing is to be proven. In the other cases, $\mathsf{G}_{\mathrm{ad},x}$ be a classical group and admits at most one unipotent cuspidal representation. Hence the adjoint action of $\widetilde{\mathsf{G}}_{\mathrm{ad},x}(k)$ stabilizes $\sigma_\mathrm{sc}$. The extendibility then follows from the fact that $(\pi_1(G)_{I_F})^\mathsf{Fr}$ is cyclic. Therefore we must have $\sigma=\sigma_\mathrm{ad}|_{\widetilde{\mathsf{G}}_x(k)}$ and $\pi$ is uniquely determined.
\end{proof}

We can show its analogue on the Galois side: We set finite central subgroups $Z=Z(G_\mathrm{der})\subset G$ and $Z(G_\mathrm{sc})\subset G_\mathrm{sc}$. Let $(\phi_\mathrm{ad},\rho_\mathrm{ad})$ be a unipotent cuspidal enhanced L-parameter of $\Ell{(G_\mathrm{ad})}$. Composing with $\Ell{(G_\mathrm{ad})}\to \Ell{G}\to \Ell{(G_\mathrm{sc})}$, we obtain L-parameters $\phi$ of $\Ell{G}$ and $\phi_\mathrm{sc}$ of $\Ell{(G_\mathrm{sc})}$. Moreover we have a sequence of homomorphisms $\mathcal{S}_{\phi_\mathrm{ad}}\hookrightarrow\mathcal{S}^+_\phi\to\mathcal{S}^+_{\phi_\mathrm{sc}}$.

\begin{lemma}\label{lem:unique_sandwich_galois}
    Let $\rho_\mathrm{sc}\in\Irr(\mathcal{S}_{\phi_\mathrm{sc}}^+)$ be such that $\rho_\mathrm{ad}\subset \rho_\mathrm{sc}|_{\mathcal{S}_{\phi_\mathrm{ad}}}$. Then there exists uniquely $\rho\in \Irr(\mathcal{S}^+_\phi)$ such that $\rho_\mathrm{ad}\subset \rho|_{\mathcal{S}_{\phi_\mathrm{ad}}}$ and $\rho\subset\rho_\mathrm{sc}|_{\mathcal{S}^+_\phi}$.
\end{lemma}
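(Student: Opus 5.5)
Lemma~\ref{lem:unique_sandwich_galois} is the Galois-side mirror of \zcref{lem:unique_sandwich_group}, and I will prove it by the same strategy. The first step is to recognise all three component groups as pieces of a single group. Write $\widehat{G}_{\mathrm{sc}}=\widehat{G_\mathrm{ad}}$ for the simply connected group dual to $G_\mathrm{ad}$. The maps $\widehat{\overline{G_\mathrm{sc}}}\to\widehat{\overline{G}}\to\widehat{G_\mathrm{ad}}$ become identified (up to the central torus of $\widehat{\overline G}$) with quotients of $\widehat{G_\mathrm{ad}}$ by central subgroups.

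Since $\phi_\mathrm{ad}$ is unipotent cuspidal, $Z_{\widehat{G_\mathrm{ad}}}(\phi_\mathrm{ad})$ is finite modulo $Z(\widehat{G_\mathrm{ad}})^{W_F}$. It follows that $\mathcal{S}_{\phi_\mathrm{ad}}\subset\mathcal{S}^+_\phi\subset\mathcal{S}^+_{\phi_\mathrm{sc}}$ should all be realised inside $\mathcal{S}^+_{\phi_\mathrm{sc}}=\pi_0$ of the preimage of $Z_{\widehat{G_\mathrm{sc}}^{\mathrm{ad}}}(\phi)$ in $\widehat{G_\mathrm{ad}}$. I will check that each inclusion is normal with abelian quotient. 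The quotients are controlled by $\delta_\phi$, whose image lies in $Z^1(F,\widehat Z)$, and $\widehat Z$ is abelian.

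Next I will use Clifford theory. Set $A=\mathcal{S}_{\phi_\mathrm{ad}}$, $B=\mathcal{S}^+_\phi$ and $C=\mathcal{S}^+_{\phi_\mathrm{sc}}$, so that $A\trianglelefteq B\trianglelefteq C$ and $C/A$ is abelian.

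Existence of $\rho$ is formal. Take any irreducible constituent $\rho$ of $\rho_\mathrm{sc}|_B$ that contains $\rho_\mathrm{ad}$. This is possible because $\rho_\mathrm{sc}|_A\supset\rho_\mathrm{ad}$, and we can decompose $\rho_\mathrm{sc}|_A$ through $\rho_\mathrm{sc}|_B$.

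Uniqueness is the real point, and it is the analogue of the statement ``$\sigma_\mathrm{ad}$ extends $\sigma_\mathrm{sc}$'' in \zcref{lem:unique_sandwich_group}. It suffices to show that $\rho_\mathrm{sc}|_A$ is multiplicity-free and that its constituents are pairwise distinct $C$-conjugates of $\rho_\mathrm{ad}$. In the cleanest case $\rho_\mathrm{sc}|_A=\rho_\mathrm{ad}$ is irreducible, and then $\rho=\rho_\mathrm{sc}|_B$ is forced. More generally, Clifford theory for the abelian quotient $C/A$ gives
\[
\rho_\mathrm{sc}|_A=e\bigoplus_{c\in C/C_{\rho_\mathrm{ad}}}{}^c\rho_\mathrm{ad},
\]
with ramification index $e$. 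The constituents of $\rho_\mathrm{sc}|_B$ containing $\rho_\mathrm{ad}$ are then unique exactly when $e=1$ relative to $B$, that is, when the multiplicity of $\rho_\mathrm{ad}$ in $\rho_\mathrm{sc}|_B$ splits into a single $B$-constituent.

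The main obstacle is proving this multiplicity-one statement. I expect it to need the explicit description of unipotent cuspidal enhanced L-parameters, namely the classification of cuspidal pairs for the component groups, as in \cite{FOS} and \cite{AMS}.

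To handle it, I will first reduce to $G$ absolutely simple, exactly as in \zcref{lem:unique_sandwich_group}. For the cases where $Z(\widehat{G_\mathrm{sc}})^{\text{relevant}}$ is trivial ($E_6$ non-split, trialitarian $D_4$), the groups coincide and there is nothing to prove. For classical types, the cuspidal unipotent enhancements are characterised by the Lusztig cuspidal local system on a distinguished unipotent class. There $A_{\phi}$ is an elementary abelian $2$-group, or cyclic, so all groups involved are abelian, or have a cyclic central extension. Each irreducible of $C$ then restricts to $A$ with multiplicity one, because $C/A$ is cyclic (it is a quotient of $(\pi_1(G)_{I_F})^{\mathsf{Fr}}$ dual). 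I will exploit this cyclicity directly: for a cyclic quotient every $C_{\rho_\mathrm{ad}}$-invariant $\rho_\mathrm{ad}$ extends, hence $e=1$.

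For the split exceptional types I will appeal to the Galois-side analogue of \cite[Proposition 4.6]{Mor_uc_extend}, or else check directly from Lusztig's list that the cuspidal pair's component group is a symmetric group with trivial centre contribution. With multiplicity one in hand, uniqueness follows from Clifford correspondence.
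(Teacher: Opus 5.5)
\textbf{Gap: the extension of $\rho_\mathrm{ad}$ is not established in the non-cyclic cases.}

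Your Clifford-theoretic frame agrees with the paper's. $\mathcal{S}_{\phi_\mathrm{ad}}$ is normal in $\mathcal{S}^+_{\phi_\mathrm{sc}}$ with abelian quotient (via $\delta$), and existence is formal. Uniqueness follows once $\rho_\mathrm{ad}$ occurs with multiplicity one in $\rho_\mathrm{sc}|_{\mathcal{S}_{\phi_\mathrm{ad}}}$, and that holds if $\rho_\mathrm{ad}$ extends to its stabilizer in $\mathcal{S}^+_{\phi_\mathrm{sc}}$. (Minor point: $\mathcal{S}^+_\phi\to\mathcal{S}^+_{\phi_\mathrm{sc}}$ need not be injective, because of the central torus in $\widehat{\overline{G}}$, so $B\trianglelefteq C$ is not literally right. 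This is harmless, since only the composite $A\to C$ enters.)

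The gap is your proof of the extension, which relies on $C/A$ being cyclic. It is not cyclic in general. Modulo the image of the central subgroup $Z(\widehat{G}_\mathrm{sc})$, $C/A$ is a subquotient of $Z(\widehat{G}_\mathrm{sc})^{I_F}$. That group is dual to $\pi_1(G_\mathrm{ad})_{I_F}$, not to its Frobenius invariants. For split $G_\mathrm{ad}=\mathrm{PSO}_{4n}$ it is $Z(\mathrm{Spin}_{4n}(\C))\cong(\Z/2\Z)^2$, and the whole group does occur. Take split $\mathrm{PSO}_8$ and the unipotent supercuspidal representation attached to a hyperspecial vertex. Its restriction to $\mathrm{Spin}_8(F)$ has four constituents, since $\Omega\cong(\Z/2\Z)^2$ permutes the four hyperspecial vertices simply transitively. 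By functoriality, four distinct enhancements then lie over one $\rho_\mathrm{ad}$, which forces $|C/A|\geq 4$.

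Over such a quotient a stable irreducible representation need not extend; the obstruction lies in $H^2((\Z/2\Z)^2,\C^\times)\cong\Z/2\Z$. Your fallback, \emph{all groups involved are abelian}, also fails. For $G$ of type $C$ or $D$, $\widehat{G}_\mathrm{sc}$ is a Spin group and $\mathcal{S}_{\phi_\mathrm{ad}}$ need not be abelian. So exactly the non-formal case is left open. Appealing to a Galois-side analogue of \cite[Proposition~4.6]{Mor_uc_extend}, or to Lusztig's tables, does not supply the argument.

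Separately, the groups do not coincide for non-split $E_6$ or trialitarian $D_4$. There $Z(\widehat{G}_\mathrm{sc})$ injects into $\mathcal{S}^+_{\phi_\mathrm{sc}}$ and meets $\mathcal{S}_{\phi_\mathrm{ad}}$ only in $Z(\widehat{G}_\mathrm{sc})^{W_F}=1$. These cases are still easy, because the extra part is central.

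\textbf{What the paper uses instead.} The missing input is \cite[Lemma~13.3]{FOS}: $\rho_\mathrm{ad}$ extends to an irreducible representation of its stabilizer $(\mathcal{S}^+_{\phi_\mathrm{sc}})^{\rho_\mathrm{ad}}$. The paper's proof is this citation plus one remark. Although \cite{FOS} assume $G$ unramified, the lemma applies to ramified $G$ after replacing $\widehat{G}_\mathrm{ad}$ by the connected group $\widehat{G}_\mathrm{ad}^{I_F}$. Multiplicity one and uniqueness then follow exactly by your Clifford argument.

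To complete your proposal, either invoke that lemma or prove the extension in the non-cyclic cases (split type $D_{2n}$ and groups built from it). The latter needs information about the cuspidal enhancements themselves, not only about the shape of $C/A$.
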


\begin{proof}
    According to \cite[Lemma 13.3]{FOS}, $\rho_\mathrm{ad}$ extends to an irreducible representation of $(\mathcal{S}_{\phi_\mathrm{sc}}^+)^{\rho_\mathrm{ad}}$, the stabilizer of $\rho_\mathrm{ad}$ in $\mathcal{S}_{\phi_\mathrm{sc}}^+$. Remark that, even though they assume that $G$ is unramified, we can apply this result for ramified $G$ by replacing $\widehat{G_\mathrm{sc}}=\widehat{G}_\mathrm{ad}$ with $\widehat{G}_\mathrm{ad}^{I_F}$, which is connected. Hence the multiplicity of $\rho_\mathrm{ad}$ in $\rho_\mathrm{sc}$ is one and there exists a unique constituent of $\rho_\mathrm{sc}|_{\mathcal{S}^+_\phi}$ which contains $\rho_\mathrm{ad}$.
\end{proof}

\begin{proof}[Proof of \zcref{prop:reduction_to_adjoint_simply_connected}]
    We first assume that $Z=Z(G_\mathrm{der})\subset G$. Let $(G',\xi,z)$ be any rigid inner twist of $G$. For $\pi\in \Irr(G'(F))_{\mathrm{euc}}$, take a character $\chi\colon G'(F)\to \C^\times$ so that $\pi_0=\chi^{-1}\cdot \pi\in\Irr(G'(F))_{\mathrm{uc}}$. As we choose $\chi$ suitably, we may assume that $\pi_0$ is trivial on $Z(G)$. Thus there exists $\pi_\mathrm{ad}\in \Irr(G'_\mathrm{ad}(F))_{\mathrm{uc}}$ which contains $\pi_0$. We also take a constituent $\pi_\mathrm{sc}\in \Irr(G'_\mathrm{sc}(F))_{\mathrm{uc}}$ of $\pi|_{G'_\mathrm{sc}(F)}$. We denote by $f\colon G_\mathrm{sc}\to G$ and $p\colon G\to G_\mathrm{ad}$ the canonical homomorphisms. By \zcref{cond:functoriality} we have $\phi_{\pi_\mathrm{sc}}=\widehat{f}\circ\widehat{p}(\phi_{\pi_\mathrm{ad}})$. We set $\phi_{\pi_0}\coloneq \widehat{p}(\phi_{\pi_\mathrm{ad}})$. We also take a lift $z_\mathrm{sc}\in Z^1(u\to\mathcal{W},Z(G_\mathrm{sc})\to G_\mathrm{sc})$ of $z$. Then  $\rho_{\pi_\mathrm{ad}}$ is a constituent of $\rho_{(z_\mathrm{sc},\pi_\mathrm{sc})}|_{\mathcal{S}_{\phi_{\pi_\mathrm{ad}}}}$. By \zcref{lem:unique_sandwich_galois}, we have a unique $\rho\in \mathcal{S}^+_{\phi_{\pi_0}}$ which is contained in $\rho_{(z_\mathrm{sc},\pi_\mathrm{sc})}$ and contains $\rho_{\pi_{\mathrm{ad}}}$. We set $\rho_{(z,\pi)}\coloneq \rho$. Since $z_\mathrm{sc}$ is a lift of $z$, $\rho_{(z,\pi)}$ is indeed relevant to $[z]$. We set $\phi_{\pi}=\phi_\chi\cdot\phi_{\pi_0}$.

    We can show that the assignment $\pi\mapsto[\phi_{\pi},\rho_{(z,\pi)}]$ does not depend on the choice of $\chi,\ \pi_\mathrm{ad},\ \pi_\mathrm{sc}$ and $z_\mathrm{sc}$; we only show the independence of $\pi_\mathrm{sc}$ as the remaining directly follow from \zcref{cond:compatibility_char_twist} for $G_\mathrm{ad}$ and \zcref{cond:compatibility_change_rigidification} for $G_\mathrm{sc}$. If $\pi'_\mathrm{sc}$ is another choice, we have $g'\in G'(F)$ such that $\pi_\mathrm{sc}'=\pi_\mathrm{sc}^{g'}$. Put $g=\xi^{-1}(g')\in G(\overline{F})$. Then $z_\mathrm{sc}'={}^gz_\mathrm{sc}$ is another lift of $z$. Applying \zcref{cond:functoriality} to $f=\id,\ z_1=z_\mathrm{sc}$ and $z_2=z_\mathrm{sc}'$ we get $\rho_{(z'_\mathrm{sc},\pi_\mathrm{sc})}=\rho_{(z_\mathrm{sc},\pi_\mathrm{sc}')}$. Hence we can reduce to independence of the choice of $z_\mathrm{sc}$. Therefore we obtain  well-defined maps $\pi\mapsto [\phi_\pi,\rho_{(z,\pi)}]$ when $Z=Z(G_\mathrm{der})$, which satisfy the three desiderata by construction. As we replace $Z$ with larger ones and use \zcref{cond:compatibility_change_rigidification}, we can extend them to all rigidifications $z$ of $(G',\xi)$. Their injectivity follows from \zcref{lem:unique_sandwich_group}. For surjectivity, take any $[\phi,\rho]\in \Phi_\mathrm{e}(G;[z])_{\mathrm{euc}}$. We can take a lift $\phi_\mathrm{ad}$ of $\widehat{f}(\phi)$ which is unipotent. Then $\phi_0=\widehat{p}(\phi_\mathrm{ad})$ is also unipotent and $\phi=\phi_\chi\cdot\phi_0$ for a character $\chi$ of $G'(F)$. Hence we may replace $\phi$ with $\phi_0$. We also use \zcref{cond:compatibility_change_rigidification} and may assume that $z\in Z^1(u\to\mathcal{W},Z(G_\mathrm{der})\to G)$. In such a situation, we can reverse the construction of the maps $\pi\mapsto[\phi_\pi,\rho_{(z,\pi)}]$ and the surjectivity is verified by the ``existence'' part of \zcref{lem:unique_sandwich_group}.
\end{proof}

\subsection{Equivariance conditions}

In \zcref{prop:reduction_to_adjoint_simply_connected}, we need UC-parameterizations for \emph{all} (rigid) inner twists of adjoint or simply-connected groups. However, the desiderata enable us to recover them from much fewer data: Given a UC-parameterization for a rigid inner twist $(G',\xi,z)$ of $G=G_\mathrm{ad}$ or $G_\mathrm{sc}$, \zcref{cond:functoriality,cond:compatibility_change_rigidification} determine UC-parameterizations for all isomorphic inner twists with all rigidifications. Conversely, if we start from a UC-parameterization of $(G',\xi,z)$, it extends to all such rigid inner twists validly when it satisfies certain equivariance conditions implied by the desiderata. When $G=G_\mathrm{ad}$ is adjoint, we do not have to take a rigidification and the condition is simpler: Let $(G',\xi)$ be an inner twist. We fix an $F$-splitting of $G$ so that $\Out(G)$ acts on $G$ itself. Remark that $\epsilon\in \Out(G)$ acts on $\widehat{G}$ by $\widehat{\epsilon}^{-1}$.

\begin{condition}\label{cond:equivariance_adj}
    Let  $\epsilon\in \Out(G)$ stabilize the isomorphism class of $(G',\xi)$ and denote by $\epsilon'$ the associated automorphism of $G'$ determined up to $G'(F)$-conjugacy. For $\pi\in \Irr(G'(F))_{\mathrm{uc}}$, we have $[\phi_{\epsilon'^\ast\pi},\rho_{\epsilon'^\ast\pi}]=[\epsilon^{-1}(\phi_\pi),\epsilon^\ast\rho_{\pi}]$.
\end{condition}

The simply-connected case $G=G_\mathrm{sc}$ is slightly more complicated. We set $Z=Z(G)\subset G$ so that $\overline{G}=G_\mathrm{ad}$. Let $(G',\xi,z)$ be a rigid inner twist of $G$. Suppose that $\epsilon\in \Out(G)$ stabilizes $[\overline{z}]\in H^1(F,G_\mathrm{ad})$. Then there exists $g\in G_\mathrm{ad}(\overline{F})$ such that ${}^g\epsilon(\overline{z})=\overline{z}$. Consider the following exact sequence:
\[
    1\to Z(F)\to (G\rtimes\epsilon)^z\to (G_\mathrm{ad}\rtimes\epsilon)^{\overline{z}}\xrightarrow{\delta_z} H^1(\mathcal{W}_F,Z),
\]
where we put $(G_\mathrm{ad}\rtimes\epsilon)^{\overline{z}}=\{g\rtimes \epsilon\mid g\in G(\overline{F}),\ {}^g\epsilon(\overline{z})=\overline{z}\}$ and define $(G\rtimes\epsilon)^z$ analogously. We also define the map $\delta_z$ as follows: for $g\rtimes\epsilon\in (G_\mathrm{ad}\rtimes\epsilon)^{\overline{z}}$, take a lift $\dot{g}\in G(\overline{F})$ of $g$ and 
\[
    \delta_z(g\rtimes\epsilon)\coloneq \epsilon^{-1}[z\cdot({}^{\dot{g}}\epsilon(z))^{-1}].
\]
Remark that, for $h\in G(\overline{F})$, $\Ad(h)(g\rtimes\epsilon)=hg\epsilon(h)^{-1}\rtimes\epsilon$ belongs to $(G\rtimes\epsilon)^{{}^hz}$ and we have $\delta_{{}^hz}(hg\epsilon(h)^{-1}\rtimes\epsilon)=\delta_z(g\rtimes\epsilon)$.
We put $\epsilon'_g\coloneq \xi\circ\Ad(g)\circ\epsilon\circ\xi^{-1}\in \Aut(G')$. Applying \zcref{cond:functoriality,cond:compatibility_change_rigidification} to $\pi_1=\epsilon_g'^\ast\pi,\ \pi_2=\pi,\ z_1=z$ and $z_2={}^g\epsilon(z)$, we obtain $\phi_{\epsilon_g'^\ast\pi}=\epsilon^{-1}\phi_\pi$ and
\[
    (\epsilon^{-1})^\ast\rho_{(z,\epsilon'^\ast_g\pi)}=\rho_{({}^g\epsilon(z),\pi)}=\langle \epsilon(\delta_z(g\rtimes\epsilon)),\delta_{\phi_\pi}(\text{--})\rangle\cdot \rho_{(z,\pi)}.
\]
Then straightforward computation shows that:

\begin{condition}\label{cond:equivariance_sc}
    Suppose $g\rtimes\epsilon\in (G_\mathrm{ad}\rtimes\epsilon)^{\overline{z}}$ and denote by $\epsilon'_g$ the associated automorphism of $G'$. For $\pi\in\Irr(G'(F))$, we have the following up to conjugacy:
    \[
        \phi_{\epsilon'^\ast_g\pi}=\epsilon^{-1}\phi_\pi,\quad \rho_{(z,\epsilon'^\ast_g\pi)}=\langle \delta_z(g\rtimes\epsilon),\delta_{\epsilon^{-1}\phi_\pi}(\text{--})\rangle\cdot\epsilon^\ast\rho_{(z,\pi)}.
    \]
\end{condition}
\begin{remark}\label{rem:equiv_with_adjoint_action}
    When $\epsilon=\id$, $\xi(g)$ must belong to $G'_\mathrm{ad}(F)$ and the above statement describes how a parameterization behaves with respect to adjoint actions.
\end{remark}

From the above, we finally obtain:

\begin{corollary}\label{cor:constr_EUC_from_adj_and_sc}
    Let $G_\mathrm{ad}$ be a connected, quasi-split adjoint reductive group over $F$ and $G_\mathrm{sc}$ its simply-connected cover. Given a UC-parameterization for an inner twist $(G'_\mathrm{ad},\xi)$ of $G_\mathrm{ad}$ and a rigid inner twist $(G'_\mathrm{sc},\xi,z)$ of $G_\mathrm{sc}$ in each $\Out(G)$-orbit of $H^1(F,G_\mathrm{ad})$. Suppose that these parameterizations satisfy:
    \begin{itemize}
        \item \zcref{cond:functoriality} for the covering map $G_\mathrm{sc}\to G_\mathrm{ad}$,
        \item \zcref{cond:compatibility_char_twist,cond:equivariance_adj} for $G_\mathrm{ad}$,
        \item \zcref{cond:equivariance_sc} for $G_\mathrm{sc}$.
    \end{itemize}
    Then we can extend them uniquely to a collection of EUC-parameterizations for all rigid inner twists $(G',\xi,z)$ of connected reductive groups $G$ such that $G/Z(G)\cong G_\mathrm{ad}$ so that they satisfy \zcref{cond:functoriality,cond:compatibility_char_twist,cond:compatibility_change_rigidification}.
\end{corollary}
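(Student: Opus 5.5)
The strategy is to reduce to \zcref{prop:reduction_to_adjoint_simply_connected}. For that we need UC-parameterizations for \emph{all} inner twists of $G_\mathrm{ad}$ and \emph{all} rigid inner twists of $G_\mathrm{sc}$ satisfying \zcref{cond:functoriality,cond:compatibility_char_twist,cond:compatibility_change_rigidification}. So the plan is to propagate the given data from one representative per $\Out(G)$-orbit to everything, and then check the three desiderata. Uniqueness is immediate. Any extension satisfying the desiderata must agree with the propagated one on each rigid inner twist, because \zcref{cond:functoriality} for isomorphisms $f=\epsilon$ or $f=\id$, combined with \zcref{cond:compatibility_change_rigidification}, forces the formulas derived just before \zcref{cond:equivariance_sc}. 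Uniqueness then carries over to all $G$ by \zcref{prop:reduction_to_adjoint_simply_connected}.

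\textbf{Adjoint case.} Let $(G'_1,\xi_1)$ be an inner twist with $[\xi_1]=\epsilon[\xi]$ for the chosen representative $(G',\xi)$, where $\epsilon\in\Out(G)$. An isomorphism $\xi_1\circ\epsilon\circ\xi^{-1}$ (after conjugating by $g$) identifies $G'$ with $G'_1$ over $F$. Define $\LLC_{(G'_1,\xi_1)}$ by transporting along it and twisting the parameter by $\widehat{\epsilon}$. This is independent of the choice of $\epsilon$ and $g$ precisely because of \zcref{cond:equivariance_adj}: two choices differ by an element of the stabilizer of $[\xi]$ in $\Out(G)$, composed with an inner automorphism by $G'_\mathrm{ad}(F)$. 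The latter acts trivially on isomorphism classes of representations, and $\widehat{G}$-conjugacy takes care of the parameter side. \zcref{cond:functoriality} for isomorphisms then holds by construction, and \zcref{cond:compatibility_char_twist} transfers because $\epsilon$ acts compatibly on characters and on $H^1(W_F,Z(\widehat{G}))$.

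\textbf{Simply connected case.} Here the propagation has two steps.
\begin{enumerate}
\item Within a fixed inner twist class, change the rigidification by $y\in Z^1(\mathcal{W}_F,Z)$ and define $\rho_{(yz,\pi)}$ by the formula in \zcref{cond:compatibility_change_rigidification}. Also transport along ${}^hz$ for $h\in G(\overline{F})$.
\item Move across the $\Out(G)$-orbit using $\epsilon$, as in the adjoint case.
\end{enumerate}
Well-definedness is the main obstacle. A given rigid inner twist $(G'_1,\xi_1,z_1)$ can be reached from $(G',\xi,z)$ by several routes. Any two routes differ by an element $g\rtimes\epsilon\in(G_\mathrm{ad}\rtimes\epsilon)^{\overline{z}}$ composed with a change of rigidification. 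I would show that the discrepancy is exactly the character $\langle\delta_z(g\rtimes\epsilon),\delta_{\epsilon^{-1}\phi}(\text{--})\rangle$ times $\epsilon^\ast$, which \zcref{cond:equivariance_sc} says is absorbed. The proof reverses the ``straightforward computation'' preceding \zcref{cond:equivariance_sc}. Its key inputs are the bilinearity and $\Out$-equivariance of the pairing \zcref{eq:pairing_W_Z}, the identity $\delta_{{}^hz}(\Ad(h)(g\rtimes\epsilon))=\delta_z(g\rtimes\epsilon)$ noted above, and the cocycle-type relation satisfied by $\delta_z$ under composition of two elements $g\rtimes\epsilon$ and $g'\rtimes\epsilon'$. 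I expect this bookkeeping with signs and inverses (the $\epsilon^{-1}$ in $\delta_z$ and $\widehat{\epsilon}^{-1}$ on the dual side) to be the delicate point. Once well-definedness is established, \zcref{cond:compatibility_change_rigidification} holds by construction, and \zcref{cond:functoriality} holds for isomorphisms of rigid inner twists.

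\textbf{Functoriality for $G_\mathrm{sc}\to G_\mathrm{ad}$ everywhere.} Next, \zcref{cond:functoriality} for the covering $G_\mathrm{sc}\to G_\mathrm{ad}$ must be checked on all twists, not only the chosen ones. The condition is assumed on representatives. Propagating both sides by the same $\epsilon$ preserves it, since $\epsilon$ commutes with the covering and with the map $\mathcal{S}_{\phi_\mathrm{ad}}\to\mathcal{S}^+_{\phi_\mathrm{sc}}$. Changing $z_\mathrm{sc}$ by $y$ twists $\rho_\mathrm{sc}$ by a character that is trivial on the image of $\mathcal{S}_{\phi_\mathrm{ad}}$, because $\delta_\phi$ vanishes there. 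So the constituent relation is preserved.

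With all hypotheses of \zcref{prop:reduction_to_adjoint_simply_connected} now verified, that proposition yields the extension to all $G$ with $G/Z(G)\cong G_\mathrm{ad}$ satisfying the three desiderata.
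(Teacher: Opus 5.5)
Your proposal is correct and takes essentially the same route as the paper. The paper gets the corollary by combining \zcref{prop:reduction_to_adjoint_simply_connected} with the observation (made just before \zcref{cond:equivariance_adj,cond:equivariance_sc}) that \zcref{cond:functoriality,cond:compatibility_change_rigidification} propagate a UC-parameterization from one representative to all isomorphic (rigid) inner twists, and that this propagation is well defined precisely because of those equivariance conditions; you have written out in more detail the propagation, the well-definedness check, and the extension of functoriality for $G_\mathrm{sc}\to G_\mathrm{ad}$ to all twists, which the paper leaves implicit.
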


\section{Proof of the equivariance}
\label{sec:proof_equiv_for_sc_adj}
Now we verify the assumption in \zcref{cor:constr_EUC_from_adj_and_sc}. We already have UC-parameterizations for all connected reductive groups by \cite{FOS} and $\cite{Solleveld_unip_for_ramified}$, which satisfy \zcref{cond:functoriality} for the covering map $G_\mathrm{sc}\to G_\mathrm{ad}$ and the compatibility with character twists (\zcref{cond:compatibility_char_twist}) by construction. We prove the following in this section:

\begin{theorem}\label{thm:FOS_satisfy_equiv}
    Let $G$ be a connected, quasi-split reductive group over $F$ and $(G',\xi)$ an inner twist.
    \begin{enumerate}
        \item When $G=G_\mathrm{ad}$, \zcref{cond:equivariance_adj} holds for
        the UC-parameterization for $(G',\xi)$ constructed in \cite{FOS} and \cite{Solleveld_unip_for_ramified}.
        \item When $G=G_\mathrm{sc}$, we can choose a suitable rigidification $z$ (see \zcref{prop:calc_delta_z}) so that \zcref{cond:equivariance_sc} holds for the UC-parameterizations for $(G',\xi,z)$ constructed in \cite{FOS} and \cite{Solleveld_unip_for_ramified}.
    \end{enumerate}
\end{theorem}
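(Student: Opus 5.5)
We reduce to the case where $G$ is absolutely simple, or more precisely $F$-simple. Both sides of \zcref{cond:equivariance_adj} and \zcref{cond:equivariance_sc} factor over products. Restriction of scalars is compatible with the constructions of \cite{FOS} and \cite{Solleveld_unip_for_ramified} (Shapiro's lemma on the Galois side). An automorphism $\epsilon$ permuting isomorphic factors is then handled by the evident symmetry of the construction. After this reduction, $\Out(G)$ is a small explicit group, namely a subgroup of the diagram automorphisms. We split the proof according to its two kinds of elements.

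\textbf{Elements acting through $\Omega$.} These are the elements coming from $(\pi_1(G)_{I_F})^{\mathsf{Fr}}$: adjoint actions by $G'_\mathrm{ad}(F)$, the case $\epsilon=\id$ of \zcref{rem:equiv_with_adjoint_action}. For these, $\epsilon^{-1}\phi_\pi=\phi_\pi$ up to conjugacy. The statement then reduces to the following claim: the FOS map intertwines the action of $G'_\mathrm{ad}(F)/\mathrm{Im}\,G'(F)$ on $\Irr(G'(F))_{\mathrm{uc}}$ with multiplication by the character $\langle\delta_z(g),\delta_{\phi_\pi}(\text{--})\rangle$ on enhancements.

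\begin{itemize}
\item \emph{Adjoint group.} Here the statement is nearly tautological, because $\rho_\pi$ is built from the Kottwitz invariant of the inner twist together with the stabilizer of the vertex $x$.
\item \emph{Simply-connected group.} We use the explicit description in \cite[\S 13--15]{FOS}. There, $\pi=\cind\widetilde{\sigma}$ and the $\Omega$-orbit of the vertex $x$ corresponds to the restriction of $\rho$ to $Z(\widehat{G}_\mathrm{sc})^{\Gamma}$-twisted components. We then compute $\delta_z(g)$ explicitly for a well-chosen rigidification. We choose $z$ so that $\overline{z}$ is a basic cocycle represented by an element of the normalizer of a fixed maximally split torus, that is, by the Kottwitz representative in $\Omega$. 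With this choice, $\delta_z(g\rtimes\epsilon)$ becomes a computable element of $H^1(\mathcal{W}_F,Z)$; this is what \zcref{prop:calc_delta_z} should provide. Checking \zcref{cond:equivariance_sc} is then a comparison of two characters of $\mathcal{S}^+_\phi/\mathcal{S}_\phi$.
\end{itemize}

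\textbf{Genuinely outer automorphisms.} These are the diagram automorphisms fixed by Frobenius and stabilizing $[\overline{z}]$, which we may take to preserve the fixed splitting and, after conjugation, a chosen vertex $x$. On the group side, $\epsilon'^\ast\pi=\cind(\epsilon^\ast\widetilde{\sigma})$. For classical types there is at most one unipotent cuspidal representation of $\mathsf{G}_x(k)$, so $\epsilon'^\ast$ fixes $\pi$ up to the $\Omega$-part already treated. The Galois side is similarly rigid: for classical types the cuspidal pair $(\phi,\rho)$ is unique given the relevant data, so $\epsilon^{-1}\phi_\pi\simeq\phi_\pi$ and $\epsilon^\ast\rho_\pi$ is forced. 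What remains are types $D_n$ (triality and the order-two automorphism, including the quasi-split ${}^2D_n$ and ${}^3D_4$), $A_n$ (duality), and $E_6$. Here we argue case by case, using the tables of unipotent cuspidal representations (Lusztig) and the explicit Galois-side cuspidal enhancements from \cite{FOS} and \cite{Solleveld_unip_for_ramified}. In each case we compare the labels of the enhancements, in particular for the few exceptional-group cuspidals that are not $\epsilon$-invariant, such as those of $E_6$ with complex character values.

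\textbf{Main obstacle.} We expect the hard part to be combining these two pieces in the simply-connected case. There, \zcref{cond:equivariance_sc} includes the twist $\langle\delta_z(g\rtimes\epsilon),\delta_{\epsilon^{-1}\phi}(\text{--})\rangle$ for mixed elements $g\rtimes\epsilon$, and FOS only proved equivariance up to an unspecified character. The choice of rigidification must be made so that this character is exactly the one predicted. Our first attempt will be to pick $z$ as the canonical lift of the Kottwitz representative, with values in the normalizer of a torus stable under the splitting. We then verify, type by type (especially $D_n$ with $n$ even, where $Z$ is non-cyclic), that $\epsilon$ acts on $\delta_z$ compatibly with its action on $\pi_0 Z(\widehat{G}_\mathrm{sc})^+$.
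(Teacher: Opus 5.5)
There is a genuine gap. The proposal postpones exactly the steps that carry the content, and the shortcuts offered in their place do not work. Your treatment of $\epsilon=\id$ is essentially the paper's: \cite[Lemma 13.5]{FOS} plus the Kottwitz-pairing computation for $\delta_z(g_0)$, which needs no choice of $z$.

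\textbf{The rigidification.} For part (2) with $\epsilon\neq\id$, everything hinges on a rigidification for which $\langle\delta_z(g\rtimes\epsilon),\delta_\phi(\text{--})\rangle$ is computable. You leave this as ``what \zcref{prop:calc_delta_z} should provide'' plus a type-by-type check. ``The canonical lift of the Kottwitz representative'' does not single out an element of $Z^1(u\to\mathcal{W},Z\to G)$: lifts differ by $H^1(\mathcal{W}_F,Z)$, and this is exactly what \zcref{cond:equivariance_sc} detects. The paper's argument is uniform in the type. Since $\widehat{G}^{I_F}$ is connected, $\operatorname{Im}\delta_\phi\subset Z^1(F^\ur/F,\widehat{Z}^{I_F})\cdot B^1(F,\widehat{Z})$, and on $B^1$ the pairing only sees $z|_{u}$ and $\epsilon$. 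For the unramified part, the paper
\begin{itemize}
\item puts $z$ in a maximally split, maximally unramified torus $T'$ with $x_0\in\mathcal{A}(T',G)$;
\item uses \zcref{lem:out_stab_T'} to replace $g\rtimes\epsilon$ by $t\rtimes\epsilon'_g$ with $t\in\overline{T}'$, and passes to the unramified torus $T'_\ur$;
\item starts from Kaletha's cocycle $z_\lambda$ with $\lambda=\kappa(\overline{z}(\mathsf{Fr}))$ and obtains $(1-\epsilon'_g)z_\lambda\cdot d\dot{t}_\mu^{-1}=y_{\overline{\mu}}$ from \zcref{lem:rigidif_differ};
\item evaluates $\langle y_{\overline{\mu}},\psi\rangle$ via the cocycle $c_\Z$;
\item corrects $z$ by a class of $H^1(\mathcal{W}_F,Z)$ lifting a prescribed class of $H^1(\mathcal{W}_F,Z_\ur)$.
\end{itemize}
The result is the pair of formulas $\langle[z],\text{--}\rangle|_{\widehat{Z}^{I_F}}=\langle\kappa(\overline{z}(\mathsf{Fr})),\text{--}\rangle$ and $\langle\delta_z(g\rtimes\epsilon),\psi\rangle=\langle\epsilon^{-1}\kappa(g),\psi(\mathsf{Fr}^{-1})\rangle$. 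These close every remaining case.

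\textbf{Outer automorphisms and ramified groups.} Your treatment of outer automorphisms would not go through.
\begin{itemize}
\item \emph{Split groups.} \cite[Theorem 2(3)]{FOS} already gives exact equivariance $\rho_{(\epsilon'_n)^\ast\pi}=(\epsilon^{-1})^\ast\rho_\pi$ for split $G$ and a lift $n$ with $\kappa(n)=1$. What FOS do not control is the rigidification, and the $\delta_z$-twist is trivial for the $z$ above precisely because $\kappa(n)=1$.
\item \emph{Your substitute fails on the group side.} ``At most one unipotent cuspidal of $\mathsf{G}_x(k)$'' says nothing about how an outer automorphism permutes the extensions $\widetilde{\sigma}$ to $\widetilde{\mathsf{G}}_x(k)$. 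In the adjoint case these extensions give distinct $\pi$'s.
\item \emph{It also fails on the Galois side, which is not rigid.} The $G'_\mathrm{ad}(F)$-conjugates of a unipotent supercuspidal of $G'_\mathrm{sc}(F)$ supported on vertices that are not $G'_\mathrm{sc}(F)$-conjugate share $\phi$ and relevance. So $\rho\cong\langle\delta_z(g\rtimes\epsilon),\delta_\phi(\text{--})\rangle\cdot\epsilon^\ast\rho^s$ is not forced; it is the actual content.
\item \emph{Non-split unramified groups.} The needed input is Lusztig's categorical-centre result $\epsilon^\ast\cong\id$ on unipotent representations of the possibly disconnected $\widetilde{\mathsf{G}}_x(k)$, for the Frobenius-twisting $\epsilon$ (\zcref{lem:Frob_stabilize_all_unip}). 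On the Galois side this is combined with $\epsilon^\ast(\rho^s)=\langle[z],\delta_\phi(\text{--})(\mathsf{Fr})\rangle\cdot\rho$ for $\phi(\mathsf{Fr})=s\rtimes\mathsf{Fr}$, and with the formulas above. No table comparison is needed. Also, $E_6[\theta]$ and $E_6[\theta^2]$ are graph-invariant, being separated by Frobenius eigenvalues, so they are not exceptions.
\item \emph{Ramified groups are missing.} Products and Weil restriction never reach absolutely simple ramified groups, which is where the parameterization of \cite{Solleveld_unip_for_ramified} lives. The paper passes to the companion group $H$ with $\widehat{H}\cong\widehat{G}^{I_F}$. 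Equivariance transfers because the bijection $\Irr(G'(F))_{\mathrm{uc}}\leftrightarrow\Irr(H'(F))_{\mathrm{uc}}$ is built from Lusztig's categorical centre (\zcref{lem:equiv_of_unip_rep_when_weyl_same}), hence is canonical and $\Out$-equivariant. Matching $[z_H]=[z]_H$ and the two $\delta$-twists uses both formulas of \zcref{prop:calc_delta_z}.
\end{itemize}
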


Remark that \cite[Theorem 3(b)]{Solleveld_Lparameters} shows an analogous condition to \zcref{cond:functoriality} under Arthur's setup of the local Langlands correspondence (\cite{Arthur}). Since we do not need rigidifications of inner twists when $G$ is adjoint, we can deduce \zcref{cond:equivariance_adj} from this result directly. Hence the novelty of the above theorem is only (2), but we will prove both for the sake of clarity. Combined with \zcref{cor:constr_EUC_from_adj_and_sc}, \zcref{thm:FOS_satisfy_equiv} implies the following:

\begin{corollary}\label{cor:existence_LLC_for_EUC}
    There is a collection of EUC-parameterizations for all rigid inner twists of connected, quasi-split reductive groups which satisfies \zcref{cond:functoriality,cond:compatibility_char_twist,cond:compatibility_change_rigidification}.
\end{corollary}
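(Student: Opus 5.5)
The corollary follows by feeding \zcref{thm:FOS_satisfy_equiv} into \zcref{cor:constr_EUC_from_adj_and_sc}, one adjoint group at a time. Fix a connected, quasi-split reductive group $G$ over $F$ and put $G_\mathrm{ad}=G/Z(G)$, with simply connected cover $G_\mathrm{sc}$. Every connected quasi-split $G$ appears in exactly one family $\{G : G/Z(G)\cong G_\mathrm{ad}\}$. So it suffices to produce, for each $G_\mathrm{ad}$, the input data required by \zcref{cor:constr_EUC_from_adj_and_sc}. That corollary then yields a unique collection of EUC-parameterizations for all rigid inner twists of all $G$ in this family, satisfying \zcref{cond:functoriality,cond:compatibility_char_twist,cond:compatibility_change_rigidification}. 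Taking the union over all isomorphism classes of $G_\mathrm{ad}$ gives the desired collection.

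For the input data, I choose a set of representatives of the $\Out(G)$-orbits in $H^1(F,G_\mathrm{ad})$. For each representative I take an inner twist $(G'_\mathrm{ad},\xi)$ of $G_\mathrm{ad}$ and equip it with the UC-parameterization of \cite{FOS} and \cite{Solleveld_unip_for_ramified}. For the simply connected side, the same class in $H^1(F,G_\mathrm{ad})$ gives an inner twist $(G'_\mathrm{sc},\xi)$ of $G_\mathrm{sc}$. I choose the rigidification $z$ as in \zcref{prop:calc_delta_z} (lifting the given $\overline{z}$), and again use the UC-parameterization of \cite{FOS}, \cite{Solleveld_unip_for_ramified}.

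Then I check the three hypotheses of \zcref{cor:constr_EUC_from_adj_and_sc}:
\begin{enumerate}
    \item \zcref{cond:functoriality} for the covering $G_\mathrm{sc}\to G_\mathrm{ad}$ holds by the construction in \cite{FOS}, as recalled at the start of this section. The parameter of a constituent of the restriction is the composite parameter, and the enhancements are related by restriction along $\mathcal{S}_{\phi_\mathrm{ad}}\to\mathcal{S}^+_{\phi_\mathrm{sc}}$.
    \item \zcref{cond:compatibility_char_twist} for $G_\mathrm{ad}$, restricted to weakly unramified characters, likewise holds by construction.
    \item \zcref{cond:equivariance_adj} is \zcref{thm:FOS_satisfy_equiv}(1).
    \item \zcref{cond:equivariance_sc} is \zcref{thm:FOS_satisfy_equiv}(2), for the chosen $z$.
\end{enumerate}

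The only point needing care is the interaction between the functoriality hypothesis and the choice of $z$. The functoriality condition for $G_\mathrm{sc}\to G_\mathrm{ad}$ involves the rigidification $z$ of $G'_\mathrm{sc}$. Its image in $H^1(F,G_\mathrm{ad})$ is the fixed class $[\overline{z}]$, so the condition only requires the enhancements to have central character $\langle[z],-\rangle$ on $\pi_0Z(\widehat{\overline{G}})^+$. I must check that the FOS parameterization, which is defined relative to Arthur-type normalizations, matches this relevance normalization for the $z$ chosen in \zcref{prop:calc_delta_z}. If it does not match directly, I would twist the FOS enhancements by the character $\langle[y],\delta_\phi(-)\rangle$ for the appropriate $y$, as in \zcref{cond:compatibility_change_rigidification}. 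Twisting by such a character preserves both functoriality with respect to the adjoint group and the equivariance condition, since \zcref{cond:equivariance_sc} is stated uniformly in $z$.

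With these verified, \zcref{cor:constr_EUC_from_adj_and_sc} applies and the corollary follows. The main substantive input is \zcref{thm:FOS_satisfy_equiv}(2); the corollary itself is bookkeeping.
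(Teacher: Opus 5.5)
Your proposal is correct and follows the paper's own route: the paper obtains this corollary by feeding \zcref{thm:FOS_satisfy_equiv} into \zcref{cor:constr_EUC_from_adj_and_sc}, using that the parameterizations of \cite{FOS} and \cite{Solleveld_unip_for_ramified} satisfy functoriality for $G_\mathrm{sc}\to G_\mathrm{ad}$ and character-twist compatibility by construction. Your fallback of re-twisting the enhancements is unnecessary, because \zcref{thm:FOS_satisfy_equiv}(2) is already stated for the rigidification $z$ of \zcref{prop:calc_delta_z}, and part (1) of that proposition pins down $\langle [z],\text{--}\rangle$ on $\widehat{Z}^{I_F}$ via the Kottwitz map.
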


\subsection{Lemmas on representation theory of finite reductive groups}

In this subsection, we prove several lemmas on unipotent representations of finite reductive groups. Let $\mathsf{G}$ be a connected reductive group of $k$. We also consider a possibly disconnected reductive group $\widetilde{\mathsf{G}}$ over $k$ such that $\widetilde{\mathsf{G}}^\circ=\mathsf{G}$. We put $\Omega=\pi_0(\widetilde{\mathsf{G}})$.
Recall that the absolute Galois group $\Gamma_k$ of $k$ is topologically generated by $\mathsf{Fr}$.

\begin{lemma}
    We have $\widetilde{\mathsf{G}}(k)/\mathsf{G}(k)=\Omega^\mathsf{Fr}$.
\end{lemma}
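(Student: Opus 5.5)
The plan is to use the short exact sequence of algebraic groups $1\to\mathsf{G}\to\widetilde{\mathsf{G}}\to\Omega\to1$ over $k$, where $\Omega$ is the finite étale group scheme $\pi_0(\widetilde{\mathsf{G}})$. We then take $\Gamma_k$-invariants of the associated sequence of $\overline{k}$-points, which is exact because $\widetilde{\mathsf{G}}(\overline{k})\to\Omega(\overline{k})$ is surjective. Nonabelian Galois cohomology gives the exact sequence of pointed sets
\[
    1\to\mathsf{G}(k)\to\widetilde{\mathsf{G}}(k)\to\Omega^{\mathsf{Fr}}\to H^1(k,\mathsf{G}).
\]
From it, $\widetilde{\mathsf{G}}(k)/\mathsf{G}(k)$ injects into $\Omega^{\mathsf{Fr}}=\Omega(\overline{k})^{\Gamma_k}$. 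Here we use that $\Gamma_k$ is topologically generated by $\mathsf{Fr}$, so invariants under $\Gamma_k$ are the same as $\mathsf{Fr}$-fixed points.

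It remains to prove surjectivity, that is, that the connecting map $\Omega^{\mathsf{Fr}}\to H^1(k,\mathsf{G})$ has trivial image. By Lang's theorem, $H^1(k,\mathsf{G})=1$ since $\mathsf{G}$ is connected over the finite field $k$.

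To avoid citing the cohomological formalism, we can argue directly. Given $\omega\in\Omega^{\mathsf{Fr}}$, choose a lift $x\in\widetilde{\mathsf{G}}(\overline{k})$. Then $x^{-1}\mathsf{Fr}(x)$ maps to $\omega^{-1}\mathsf{Fr}(\omega)=1$, so it lies in $\mathsf{G}(\overline{k})$. Lang's theorem for the connected group $\mathsf{G}$ (surjectivity of $h\mapsto h^{-1}\mathsf{Fr}(h)$) gives $h\in\mathsf{G}(\overline{k})$ with $h^{-1}\mathsf{Fr}(h)=x^{-1}\mathsf{Fr}(x)$. Then $xh^{-1}$ is $\mathsf{Fr}$-fixed, hence lies in $\widetilde{\mathsf{G}}(k)$, and it maps to $\omega$.

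There is no real obstacle beyond invoking Lang's theorem. The only point requiring care is that the identity component $\mathsf{G}$ is connected, so that Lang's theorem applies to it.
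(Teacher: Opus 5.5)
Your proposal is correct and follows the paper's proof: the same exact sequence $1\to\mathsf{G}(k)\to\widetilde{\mathsf{G}}(k)\to\Omega^{\mathsf{Fr}}\to H^1(k,\mathsf{G})$, with surjectivity coming from $H^1(k,\mathsf{G})=1$ (Lang's theorem). Your explicit Lang-map lifting argument just unpacks this vanishing, and it checks out.
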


\begin{proof}
    We have an exact sequence
    \[
        1\to \mathsf{G}(k)\to\widetilde{\mathsf{G}}(k)\to \Omega^\mathsf{Fr}\to H^1(k,\mathsf{G}).
    \]
    Since $H^1(k,\mathsf{G})=1$, the homomorphism $\widetilde{\mathsf{G}}(k)\to \Omega^\mathsf{Fr}$ is surjective and the claim is shown.
\end{proof}

As we replace $\widetilde{\mathsf{G}}$ with the preimage of $\Omega^\mathsf{Fr}$ along $\widetilde{\mathsf{G}}\to \Omega$, we may and will assume that any element of $\Omega$ is fixed by $\mathsf{Fr}$.

\begin{lemma}\label{lem:equiv_when_take_adj}
    Put $\widetilde{\mathsf{G}}_\mathrm{ad}=\widetilde{\mathsf{G}}/Z(\mathsf{G})$. Then the restriction along $\widetilde{\mathsf{G}}(k)\to \widetilde{\mathsf{G}}_\mathrm{ad}(k)$ gives an equivalence
    \[
        \Rep(\widetilde{\mathsf{G}}_\mathrm{ad}(k))_\mathrm{u}\xrightarrow{\sim}\Rep(\widetilde{\mathsf{G}}(k))_\mathrm{u}
    \]
    between the categories of unipotent representations of $\widetilde{\mathsf{G}}(k)$ and $\widetilde{\mathsf{G}}_\mathrm{ad}(k)$.
\end{lemma}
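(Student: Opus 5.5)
The plan is to compare $\widetilde{\mathsf{G}}(k)$ and $\widetilde{\mathsf{G}}_\mathrm{ad}(k)$ through their identity components, and then to upgrade the classical connected statement to the disconnected one using that both groups have the same component group. First, what "unipotent" means in the disconnected setting: a representation of $\widetilde{\mathsf{G}}(k)$ is unipotent if its restriction to $\mathsf{G}(k)$ is a sum of unipotent representations, and similarly for $\widetilde{\mathsf{G}}_\mathrm{ad}(k)$ with $\mathsf{G}_\mathrm{ad}(k)$. The map $p\colon \widetilde{\mathsf{G}}\to\widetilde{\mathsf{G}}_\mathrm{ad}$ has central kernel $Z(\mathsf{G})$ contained in $\mathsf{G}$, so it induces an isomorphism $\Omega\cong\pi_0(\widetilde{\mathsf{G}}_\mathrm{ad})$. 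By the preceding lemma (and our convention that $\mathsf{Fr}$ acts trivially on $\Omega$) we get $\widetilde{\mathsf{G}}(k)/\mathsf{G}(k)=\Omega=\widetilde{\mathsf{G}}_\mathrm{ad}(k)/\mathsf{G}_\mathrm{ad}(k)$, and $p$ respects these identifications.

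The key steps, in order, are these:
\begin{enumerate}
\item Recall the connected case. Inflation along $\mathsf{G}(k)\to\mathsf{G}_\mathrm{ad}(k)$ gives a bijection on unipotent irreducibles. Unipotent representations are constituents of $R_{\mathsf{T}}^{\mathsf{G}}(1)$, and Deligne--Lusztig induction is compatible with the central isogeny (Lusztig). In particular, $Z(\mathsf{G})(k)$ acts trivially on every unipotent representation of $\mathsf{G}(k)$.
\item Every unipotent $\widetilde{\mathsf{G}}(k)$-representation $V$ is trivial on $\Ker(p)(k)=Z(\mathsf{G})(k)$, since this subgroup lies in $\mathsf{G}(k)$. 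Hence $V$ factors through the image $p(\widetilde{\mathsf{G}}(k))\subset\widetilde{\mathsf{G}}_\mathrm{ad}(k)$. Restriction is therefore fully faithful onto representations trivial on the kernel, provided the image is all of $\widetilde{\mathsf{G}}_\mathrm{ad}(k)$. It is not in general: $\mathsf{G}_\mathrm{ad}(k)/p(\mathsf{G}(k))\cong H^1(k,Z(\mathsf{G}))$ can be nontrivial.
\item Handle the missing part. Set $H=p(\widetilde{\mathsf{G}}(k))$, a normal subgroup of $\widetilde{\mathsf{G}}_\mathrm{ad}(k)$ with $\widetilde{\mathsf{G}}_\mathrm{ad}(k)=\mathsf{G}_\mathrm{ad}(k)\cdot H$, by surjectivity on $\Omega$. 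I must show that restriction from $\widetilde{\mathsf{G}}_\mathrm{ad}(k)$ to $H$ is an equivalence on unipotent representations. Let $T=\mathsf{G}_\mathrm{ad}(k)$ and $h=p(\mathsf{G}(k))$, so that $T$ acts on $H$-representations through $T/h\cong\widetilde{\mathsf{G}}_\mathrm{ad}(k)/H$. I claim that $\mathsf{G}_\mathrm{ad}(k)$-conjugation fixes each unipotent irreducible of $p(\mathsf{G}(k))$, and moreover that each one extends uniquely to a unipotent representation of $\mathsf{G}_\mathrm{ad}(k)$ which is the original restricted. This is exactly the connected statement of step 1.
\end{enumerate}

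So given a unipotent $V$ of $H$, I define the action of $g\in\mathsf{G}_\mathrm{ad}(k)$ isotypic-componentwise on $V|_{p(\mathsf{G}(k))}$ via the unique unipotent extension. I then check compatibility with the $H$-action, using that $H$-conjugation permutes the unipotent irreducibles of $\mathsf{G}_\mathrm{ad}(k)$ compatibly with their restrictions, and that the extensions are unique. The result is a unique extension of $V$ to $\widetilde{\mathsf{G}}_\mathrm{ad}(k)$. Morphisms are preserved because the extension is canonical, which gives full faithfulness and essential surjectivity.

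I expect the main obstacle to be step 3. There one must make precise that the unipotent extension from $p(\mathsf{G}(k))$ to $\mathsf{G}_\mathrm{ad}(k)$ is canonical, that is, unique as an action on the given space and not just up to a character of $\mathsf{G}_\mathrm{ad}(k)/p(\mathsf{G}(k))$. The tool I would use first is that unipotent representations restricted to $p(\mathsf{G}(k))$ remain irreducible and pairwise distinct (Lusztig's result that unipotent characters are constant on this quotient, i.e.\ inflated from $\mathsf{G}_\mathrm{ad}$). Among the twists by characters of $\mathsf{G}_\mathrm{ad}(k)/p(\mathsf{G}(k))$, only one is unipotent, since a nontrivial twist changes the Deligne--Lusztig series.
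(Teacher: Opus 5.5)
Your argument is correct, but it takes a different route from the paper's.

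The paper never passes through the image $H=p(\widetilde{\mathsf{G}}(k))$. It fixes $\sigma_\mathrm{ad}\in\Irr(\mathsf{G}_\mathrm{ad}(k))_\mathrm{u}$ and puts $\sigma=\sigma_\mathrm{ad}|_{\mathsf{G}(k)}$. It then uses $\widetilde{\mathsf{G}}(k)/\mathsf{G}(k)=\widetilde{\mathsf{G}}_\mathrm{ad}(k)/\mathsf{G}_\mathrm{ad}(k)=\Omega$ to get a restriction map $\End(\Ind_{\mathsf{G}_\mathrm{ad}(k)}^{\widetilde{\mathsf{G}}_\mathrm{ad}(k)}\sigma_\mathrm{ad})\to\End(\Ind_{\mathsf{G}(k)}^{\widetilde{\mathsf{G}}(k)}\sigma)$. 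By Frobenius reciprocity this map is an isomorphism: both sides decompose over $\Omega$ into Hom spaces between conjugates, and these are identified by full faithfulness in the connected case. Comparing simple modules gives a bijection between the irreducibles lying over $\sigma_\mathrm{ad}$ and those lying over $\sigma$, and semisimplicity gives the equivalence.

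You instead factor restriction as inflation along $\widetilde{\mathsf{G}}(k)\to H$ followed by restriction from $\widetilde{\mathsf{G}}_\mathrm{ad}(k)$ to $H$. You then build the quasi-inverse by hand, gluing the canonical extensions $\sigma\mapsto\sigma_\mathrm{ad}$ on isotypic components with the given $H$-action.

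The two approaches buy different things:
\begin{itemize}
\item The paper's version is shorter. It hides the non-surjectivity of $\widetilde{\mathsf{G}}(k)\to\widetilde{\mathsf{G}}_\mathrm{ad}(k)$ inside a Mackey identity, and it stays in the $\End(\Ind)$ formalism that is reused later for disconnected groups.
\item Your version works with whole categories rather than one $\sigma$ at a time. It also makes explicit both the failure of surjectivity and the rigidity everything rests on: the extension is unique as an action on the given space, not just up to isomorphism.
\end{itemize}

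One statement in your step 3 is false and should be removed: $H$ need not be normal in $\widetilde{\mathsf{G}}_\mathrm{ad}(k)$. So $\mathsf{G}_\mathrm{ad}(k)$ does not act on $\Rep(H)$ in general, and the phrase about $\mathsf{G}_\mathrm{ad}(k)/p(\mathsf{G}(k))\cong\widetilde{\mathsf{G}}_\mathrm{ad}(k)/H$ as groups is unjustified. For a counterexample, take $\widetilde{\mathsf{G}}=\SL_3\rtimes\langle\theta\rangle$ with $\theta$ the pinned graph automorphism, and assume $k^\times\neq k^{\times 3}$. Let $y\in\mathrm{PGL}_3(k)$ be the image of $\mathrm{diag}(a,1,1)$ with $a\notin k^{\times 3}$. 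Then $y\theta y^{-1}=\mathrm{diag}(a,1,a)\theta\notin H$, because $\mathrm{diag}(a,1,a)$ has determinant class $a^2\neq 1$ in $k^\times/k^{\times 3}$.

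This does no damage to your argument. The gluing you actually carry out only needs three facts, all of which hold:
\begin{itemize}
\item $\mathsf{G}_\mathrm{ad}(k)$ is normal in $\widetilde{\mathsf{G}}_\mathrm{ad}(k)$;
\item $\mathsf{G}_\mathrm{ad}(k)\cap H=p(\mathsf{G}(k))$;
\item $\widetilde{\mathsf{G}}_\mathrm{ad}(k)=\mathsf{G}_\mathrm{ad}(k)H$.
\end{itemize}
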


\begin{proof}
    Recall that any irreducible unipotent representation of $\mathsf{G}(k)$ extends uniquely to that of $\mathsf{G}_\mathrm{ad}(k)$. This implies that the above claim holds in the connected case $\widetilde{\mathsf{G}}=\mathsf{G}$. Take any irreducible unipotent representation $\sigma_\mathrm{ad}\in \Irr(\mathsf{G}_\mathrm{ad}(k))_\mathrm{u}$ and put $\sigma=\sigma_\mathrm{ad}|_{\mathsf{G}(k)}$. Since $\widetilde{\mathsf{G}}(k)/\mathsf{G}(k)=\widetilde{\mathsf{G}}_\mathrm{ad}(k)/\mathsf{G}_\mathrm{ad}(k)=\Omega$, we obtain the following homomorphism of rings:
    \[
        \mathrm{Res}\colon \End_{\widetilde{\mathsf{G}}_\mathrm{ad}(k)}(\Ind_{\mathsf{G}_\mathrm{ad}(k)}^{\widetilde{\mathsf{G}}_\mathrm{ad}(k)}\sigma_\mathrm{ad})\to \End_{\widetilde{\mathsf{G}}(k)}(\Ind_{\mathsf{G}(k)}^{\widetilde{\mathsf{G}}(k)}\sigma).
    \]
    Using the Frobenius reciprocity and the equivalence $\mathrm{Res}\colon \Rep(\mathsf{G}_\mathrm{ad}(k))\xrightarrow{\sim} \Rep(\mathsf{G}(k))$, we can deduce that the above homomorphism is isomorphic. Hence the restriction gives a bijection
    \[
        \Irr(\widetilde{\mathsf{G}}_\mathrm{ad}(k);\sigma_\mathrm{ad})\to \Irr(\widetilde{\mathsf{G}}(k);\sigma),
    \]
    where $\Irr(\widetilde{\mathsf{G}}(k);\sigma)\subset \Irr(\widetilde{\mathsf{G}}(k))$ consists of $\widetilde{\sigma}\in \Irr(\widetilde{\mathsf{G}}(k))$ such that $\sigma\subset \widetilde{\sigma}|_{\mathsf{G}(k)}$, and $\Irr(\widetilde{\mathsf{G}}_\mathrm{ad}(k);\sigma_\mathrm{ad})$ is defined analogously. Varying $\sigma$, we obtain that the restriction gives a bijection $\Irr(\widetilde{\mathsf{G}}_\mathrm{ad}(k))_\mathrm{u}\to\Irr(\widetilde{\mathsf{G}}(k))_\mathrm{u}$. Thus the claim holds even when $\widetilde{\mathsf{G}}$ is disconnected.
\end{proof}

\begin{corollary}\label{cor:equiv_to_equivariant_obj}
    Fix a $\mathsf{Fr}$-stable splitting of $\mathsf{G}$ so that $\pi_0(\widetilde{\mathsf{G}})$ acts on $\mathsf{G}$. Then we have a canonical equivalence of categories
    \[
        \Rep(\widetilde{\mathsf{G}}(k))_\mathrm{u}\simeq \Rep(\mathsf{G}(k))_\mathrm{u}^\Omega,
    \]
    where $\Rep(\mathsf{G}(k))_\mathrm{u}^\Omega$ is the category of $\Omega$-equivariant objects in $\Rep(\mathsf{G}(k))_\mathrm{u}$.
\end{corollary}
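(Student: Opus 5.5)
The plan is to reduce everything to the adjoint group, where $\widetilde{\mathsf{G}}_\mathrm{ad}(k)$ is a genuine semidirect product, and then identify representations of a semidirect product with equivariant objects.

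\emph{Step 1: pass to the adjoint group.} By \zcref{lem:equiv_when_take_adj}, restriction along $\widetilde{\mathsf{G}}(k)\to\widetilde{\mathsf{G}}_\mathrm{ad}(k)$ gives $\Rep(\widetilde{\mathsf{G}}(k))_\mathrm{u}\simeq\Rep(\widetilde{\mathsf{G}}_\mathrm{ad}(k))_\mathrm{u}$. In the connected case it gives $\Rep(\mathsf{G}(k))_\mathrm{u}\simeq\Rep(\mathsf{G}_\mathrm{ad}(k))_\mathrm{u}$. The $\Omega$-action on $\mathsf{G}$ through the fixed splitting descends to $\mathsf{G}_\mathrm{ad}$, and the second equivalence is $\Omega$-equivariant. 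So it suffices to treat $\mathsf{G}_\mathrm{ad}$.

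\emph{Step 2: split the extension.} For adjoint $\mathsf{G}_\mathrm{ad}$, the fixed $\mathsf{Fr}$-stable splitting splits $\widetilde{\mathsf{G}}_\mathrm{ad}\to\Omega$. The reason is that each component of $\widetilde{\mathsf{G}}_\mathrm{ad}$ contains a unique element preserving the splitting. Indeed, the stabilizer of a splitting in $\mathsf{G}_\mathrm{ad}$ is $Z(\mathsf{G}_\mathrm{ad})=1$, and by conjugacy of splittings every component contains at least one such element. These splitting-preserving elements form a subgroup mapping isomorphically onto $\Omega$. Since the splitting and every element of $\Omega$ are $\mathsf{Fr}$-fixed, this subgroup consists of $k$-points. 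Hence $\widetilde{\mathsf{G}}_\mathrm{ad}(k)\cong\mathsf{G}_\mathrm{ad}(k)\rtimes\Omega$, with $\Omega$ acting through the splitting.

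\emph{Step 3: semidirect products.} For a finite semidirect product $H\rtimes\Omega$, representations are the same as $H$-representations $V$ together with isomorphisms $\omega^\ast V\cong V$ satisfying the cocycle condition. These are exactly the $\Omega$-equivariant objects of $\Rep(H)$, and morphisms match. Unipotence is a property of the underlying $\mathsf{G}_\mathrm{ad}(k)$-representation, so the equivalence restricts to the unipotent subcategories. Composing with Step 1 gives the claim.

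\emph{Main obstacle: canonicity.} The resulting equivalence should depend only on the splitting, not on choices of lifts. By Step 2 this holds on the adjoint side, since the section $\Omega\to\widetilde{\mathsf{G}}_\mathrm{ad}(k)$ is uniquely determined by the splitting. The remaining point is to check that the $\Omega$-action on $\mathsf{G}$ is compatible with the one on $\mathsf{G}_\mathrm{ad}$ through Step 1. This I expect to be routine once the actions are both defined via the splitting.
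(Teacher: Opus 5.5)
Your proposal is correct and is essentially the paper's proof: reduce to the adjoint group via \zcref{lem:equiv_when_take_adj}, use the fixed splitting to identify $\widetilde{\mathsf{G}}_\mathrm{ad}(k)\cong\mathsf{G}_\mathrm{ad}(k)\rtimes\Omega$, and then use that representations of a semidirect product are the same as $\Omega$-equivariant representations of the normal factor. Your Step 2 (each component contains a unique splitting-preserving element, and that element is $\mathsf{Fr}$-rational) supplies the justification for the semidirect-product decomposition, which the paper asserts without comment.
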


\begin{proof}
    Using \zcref{lem:equiv_when_take_adj}, we may assume that $\mathsf{G}$ is adjoint. Then the fixed splitting gives an isomorphism
    \[
        \widetilde{\mathsf{G}}\cong \mathsf{G}\rtimes\Omega.
    \]
    It is clear that $\Rep(\mathsf{G}(k)\rtimes\Omega)\simeq \Rep(\mathsf{G}(k))^{\Omega}$, so the claim holds.
\end{proof}

\begin{lemma}\label{lem:equiv_of_unip_rep_when_weyl_same}
    Let $\widetilde{\mathsf{G}}'$ be a reductive group over $k$ with the identity component $\mathsf{G}'$ such that $\pi_0(\widetilde{\mathsf{G}}')=\Omega$. We denote by $W$ and $W'$ the Weyl group of $\mathsf{G}$ and $\mathsf{G}'$, respectively. Suppose that we have an isomorphism of Coxeter groups $W\cong W'$ which is equivariant under $\Omega\times\Gamma_k$. Then we have a canonical equivalence of categories:
    \[
        \Rep(\widetilde{\mathsf{G}}(k))_\mathrm{u}\simeq \Rep(\widetilde{\mathsf{G}}'(k))_\mathrm{u}.
    \]
    Moreover, this equivalence preserves cuspidality.
\end{lemma}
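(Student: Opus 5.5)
By \zcref{cor:equiv_to_equivariant_obj} applied to both groups, it suffices to produce an $\Omega$-equivariant equivalence $\Rep(\mathsf{G}(k))_\mathrm{u}\simeq\Rep(\mathsf{G}'(k))_\mathrm{u}$ that preserves cuspidality. More precisely, we need an equivalence intertwining the actions of $\Omega$ through the fixed $\mathsf{Fr}$-stable splittings; it then induces an equivalence on categories of $\Omega$-equivariant objects. By \zcref{lem:equiv_when_take_adj} (connected case) we may replace $\mathsf{G},\mathsf{G}'$ by their adjoint quotients. For adjoint groups the data $(W,\text{Coxeter generators},\mathsf{Fr}\text{-action})$ together with $q$ determine the root datum only up to isogeny type and up to the $B_n/C_n$ ambiguity. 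However, Lusztig's classification of unipotent representations depends only on $(W,\mathsf{Fr}$-action on $W)$ and $q$. Both categories are semisimple, so an equivalence amounts to a bijection of irreducible objects, and an equivariant equivalence amounts to an $\Omega$-equivariant bijection of irreducibles together with a compatible identification of stabilizer actions.

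The steps are as follows.

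\textbf{Step 1.} Use Lusztig's parameterization of $\Irr(\mathsf{G}(k))_\mathrm{u}$. Decompose it into Harish-Chandra series indexed by pairs $(L,\tau)$, where $L$ is a $\mathsf{Fr}$-stable standard Levi subgroup (equivalently, a $\mathsf{Fr}$-stable subset $J$ of the simple reflections) and $\tau$ is a cuspidal unipotent representation. Inside a series, the irreducibles are parameterized by irreducible characters of the relative Hecke algebra. By Lusztig, this algebra is an Iwahori--Hecke algebra of $W(J)=N_W(W_J)/W_J$ whose parameters are given in terms of $q$ and of the combinatorial type of $(W_J,\tau)$. The cuspidal unipotents themselves, together with their parameters, depend only on the $\mathsf{Fr}$-twisted Coxeter type of $W_J$. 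Hence the given $\Omega\times\Gamma_k$-equivariant isomorphism $W\cong W'$ matches $\mathsf{Fr}$-stable subsets $J\leftrightarrow J'$. Choosing Lusztig's canonical labelling of cuspidal unipotents by families and Fourier data, it matches $\tau\leftrightarrow\tau'$ and identifies the Hecke algebras. This yields a bijection $\Irr(\mathsf{G}(k))_\mathrm{u}\cong\Irr(\mathsf{G}'(k))_\mathrm{u}$ that preserves cuspidality (the case $J=S$).

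\textbf{Step 2.} Check $\Omega$-equivariance. $\Omega$ acts through diagram automorphisms preserving the splitting, hence it permutes the $J$'s and acts on Hecke algebras compatibly with the isomorphism $W\cong W'$. The delicate point is the action on the cuspidal $\tau$ and on the unipotent characters themselves. Lusztig showed that a diagram automorphism fixes every unipotent character, except for a few exceptional cases in types $D_{4}$ (triality) and $E_6,E_7,E_8$ involving non-rational characters, where the statement is about Frobenius eigenvalues. So we must show that the bijection commutes with the automorphism action in those cases.

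\textbf{Step 3.} Upgrade the bijection to an equivalence of $\Omega$-equivariant categories. Here one must match not only the orbits but also the projective representations of the stabilizers $\Omega_\sigma$ on $\sigma$, that is, the extensions to $\mathsf{G}(k)\rtimes\Omega_\sigma$. I would use Malle's and Lusztig's result that unipotent characters extend to $\mathsf{G}(k)\rtimes\Omega_\sigma$ (compare \cite[Lemma 15.7]{FOS}). I would then normalise the extensions canonically, for instance via the $\Omega_\sigma$-action on Deligne--Lusztig cohomology $H^\ast_c(X_w)$ for $\Omega$-stable $w$. This normalisation depends only on $(W,\mathsf{Fr},\Omega)$ through the combinatorics of the Hecke algebra action on $\bigoplus_w H^\ast_c(X_w)$.

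I expect this canonical normalisation of extensions in Step 3 to be the main obstacle, because the equivalence must be canonical rather than merely a bijection on isomorphism classes. I would first try to define the functor uniformly as $\Hom$ into the $\Omega$-equivariant $\mathsf{G}(k)$-module $\bigoplus_{w}H^\ast_c(X_w)$. This module carries commuting actions of $\mathsf{Fr}$-twisted braid monoid endomorphisms, and Lusztig's results identify these actions for $\mathsf{G}$ and $\mathsf{G}'$ in terms of $(W,\mathsf{Fr})$ alone. If that fails, I would fall back on a case-by-case check over the simple types, where $\Omega$ is small.
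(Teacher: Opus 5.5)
Your reduction to producing an equivalence $\Rep(\mathsf{G}(k))_\mathrm{u}\simeq\Rep(\mathsf{G}'(k))_\mathrm{u}$ compatible with the $\Omega$-actions is sound, and it is how the paper finishes, via \zcref{cor:equiv_to_equivariant_obj}. The reduction to the adjoint case via \zcref{lem:equiv_when_take_adj} is also fine. The proposal, however, never produces such an equivalence.

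\textbf{Steps 1--2.} These give at best a bijection of isomorphism classes, and it is not evidently canonical. Identifying $\End(R_L^G\tau)$ with an Iwahori--Hecke algebra of $W(J)$, and then its simple modules with $\Irr(W(J))$, involves normalisations. The $\Omega$-equivariance is only asserted (``we must show''). Your recollection of the exceptions is also off. The graph automorphism of $E_6$ fixes every unipotent character, and $E_7,E_8$ have no graph automorphisms. The relevant exceptions are the two characters attached to a degenerate symbol in type $D_{2n}$, interchanged by the order-two graph automorphism, and the triality orbits in $D_4$. That is exactly where type-$D$ components with $\Omega$ acting by a graph automorphism need care.

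\textbf{Step 3 is the real gap.} This is where the content of the lemma lies. If $\sigma$ is fixed by $\Omega_\sigma$, its $\Omega_\sigma$-equivariant structures (extensions to $\mathsf{G}(k)\rtimes\Omega_\sigma$) form a torsor under $\Hom(\Omega_\sigma,\C^\times)$. A bijection on isomorphism classes gives no way to match these torsors for $\mathsf{G}$ and $\mathsf{G}'$. You leave this open: the functor ``$\Hom$ into $\bigoplus_w H^\ast_c(X_w)$ with braid-monoid endomorphisms'' is not shown to be an equivalence, nor to depend only on $(W,\mathsf{Fr},\Omega)$. The case-by-case fallback is not carried out either.

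\textbf{The missing ingredient.} The paper supplies exactly this with Lusztig's categorical-centre theorem, and the argument needs no case analysis.
\begin{enumerate}
\item Decompose $\Rep(\mathsf{G}(k))_\mathrm{u}$ along the $\epsilon$-stable two-sided cells $\mathbf{c}\subset W$. Each block $\Rep^{\mathbf{c}}(\mathsf{G}(k))_\mathrm{u}$ is canonically equivalent to the $\epsilon$-twisted centre $\mathcal{Z}^{\mathbf{c}}_\epsilon$ of a monoidal category depending only on $(W,\mathbf{c})$.
\item Composing these equivalences for $\mathsf{G}$ and $\mathsf{G}'$ gives an equivalence of categories, not merely a bijection. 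Its canonicity makes it commute with pinned automorphisms, so it passes to $\Omega$-equivariant objects. This settles your Steps 2 and 3 at once.
\item Cuspidality is detected inside $\mathcal{Z}^{\mathbf{c}}_\epsilon$. The multiplicities of $\sigma$ in the characters $R^{n_z}_{\epsilon,z}$ are computed there, and those with $z$ in the Weyl group of a proper Levi subgroup are parabolically induced.
\end{enumerate}
Your braid-monoid idea points in this direction, but the statement you actually need is this centre theorem.
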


\begin{proof}
    Recall the result of \cite{Lus_unip_cat_centre}. We have a decomposition of $W=W'$ into \emph{two-sided cells}
    \[
        W=\coprod_\mathbf{c}\mathbf{c},
    \]
    which comes from the Kazhdan--Lusztig canonical basis of the Hecke algebra for $W$. Let $\epsilon$ denote the action of $\mathsf{Fr}$ on $W$. Each $\epsilon$-stable two-sided cell $\mathbf{c}\subset W$ determines a subset $\Irr^\mathbf{c}(\mathsf{G}(k))_\mathrm{u}\subset \Irr(\mathsf{G}(k))_\mathrm{u}$ and hence a direct summand $\Rep^\mathbf{c}(\mathsf{G}(k))\subset \Rep(\mathsf{G}(k))$, so that we obtain a decomposition
    \[
        \Rep(\mathsf{G}(k))_\mathrm{u}\simeq \bigoplus_{\text{$\mathbf{c}$: $\epsilon$-stable}} \Rep^\mathbf{c}(\mathsf{G}(k))_\mathrm{u}.
    \]
    As in \cite[Section 7.1]{Lus_unip_cat_centre}, there exists a canonical equivalence of categories:
    \[
        \Rep^\mathbf{c}(\mathsf{G}(k))_\mathrm{u}\xrightarrow{\sim}\mathcal{Z}^\mathbf{c}_\epsilon,
    \]
    where $\mathcal{Z}^\mathbf{c}_\epsilon$ is the categorical $\epsilon$-center of 
    an abelian monoidal category $C_\mathbf{c}$ whose structure is determined only from $\mathbf{c}$ and $W$. Hence we have a canonical equivalence $\Rep^\mathbf{c}(\mathsf{G}(k))_\mathrm{u}\simeq \Rep^\mathbf{c}(\mathsf{G}'(k))_\mathrm{u}$. Varying all $\epsilon$-stable $\mathbf{c}$, we obtain an equivalence
    \[
        \Rep(\mathsf{G}(k))_\mathrm{u}\simeq\Rep(\mathsf{G}'(k))_\mathrm{u}.
    \]
    Canonicity of the equivalences $\Rep^\mathbf{c}(\mathsf{G}(k))_\mathrm{u}\simeq \mathcal{Z}_\epsilon^\mathbf{c}\simeq \Rep^\mathbf{c}(\mathsf{G}'(k))$ deduces that the resulting equivalence is equivariant under $\Out(\mathsf{G})$ when we fix a $\mathsf{Fr}$-stable splitting for each of $\mathsf{G}$ and $\mathsf{G}'$. Using \zcref{cor:equiv_to_equivariant_obj}, we obtain the desired equivalence. 

    Moreover, combining \cite[Section 1.8(a) and Theorem 6.6]{Lus_unip_cat_centre} we find that the multiplicity of any $\sigma\in \Irr^\mathbf{c}(\mathsf{G}(k))$ in (intersection-cohomological variants of) Deligne--Lusztig characters $R^{n_z}_{\epsilon,z},\ z\in W$ can be computed in terms of the image of $\sigma$ in $\mathcal{Z}^\mathbf{c}_\epsilon$. If $z$ belongs to the Weyl group of a Levi subgroup $\mathsf{L}$ of $\mathsf{G}$, the character $R^{n_z}_{\epsilon,z}$ is parabolically induced from the counterpart for $\mathsf{L}$. Hence we can distinguish whether $\sigma$ is cuspidal or not by means of its image in $\mathcal{Z}^\mathbf{c}_\epsilon$. Hence the above equivalence preserves cuspidality.
\end{proof}

Let $\mathsf{G}_s$ be the split form of $\mathsf{G}$. Then the Frobenius action on $\mathsf{G}$ is twisted from that on $\mathsf{G}_s$ by $\epsilon\in \Out(\mathsf{G}_s)=\Out_{\overline{k}}(\mathsf{G})$ and we have $\Out(\mathsf{G})=Z_{\Out_{\overline{k}}(\mathsf{G})}(\epsilon)$. We fix a $\mathsf{Fr}$-stable splitting of $\mathsf{G}$ so that $\epsilon$ acts on $\mathsf{G}$.

\begin{lemma}\label{lem:Frob_stabilize_all_unip}
    Suppose that $\epsilon$ extends to an action on $\widetilde{\mathsf{G}}$ so that it acts on $\Omega$ trivially. Then, for any $\widetilde{\sigma}\in \Rep(\widetilde{\mathsf{G}}(k))_\mathrm{u}$, we have a canonical isomorphism $\widetilde{\sigma}\cong \epsilon^\ast\widetilde{\sigma}$.
\end{lemma}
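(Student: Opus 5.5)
We reduce to the connected, adjoint case and then use the rationality of the Frobenius action on unipotent characters. By \zcref{lem:equiv_when_take_adj}, restriction along $\widetilde{\mathsf{G}}(k)\to\widetilde{\mathsf{G}}_\mathrm{ad}(k)$ is an equivalence on unipotent representations. Since $\epsilon$ preserves $Z(\mathsf{G})$, this equivalence commutes with $\epsilon^\ast$, so we may assume $\mathsf{G}$ is adjoint. By \zcref{cor:equiv_to_equivariant_obj}, the fixed $\mathsf{Fr}$-stable splitting then gives $\widetilde{\mathsf{G}}\cong\mathsf{G}\rtimes\Omega$ and an identification $\Rep(\widetilde{\mathsf{G}}(k))_\mathrm{u}\simeq\Rep(\mathsf{G}(k))^\Omega_\mathrm{u}$. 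Because $\epsilon$ is defined through the same splitting, it commutes with $\Omega$; this uses that $\epsilon$ acts trivially on $\Omega$, so that the extended action is, up to inner automorphisms, $\epsilon\times\id_\Omega$. Hence it suffices to construct, for each unipotent $\sigma\in\Rep(\mathsf{G}(k))_\mathrm{u}$, a canonical isomorphism $\sigma\cong\epsilon^\ast\sigma$ that is natural in $\sigma$ and compatible with the $\Omega$-equivariant structures.

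For connected $\mathsf{G}$ the key input is that $\epsilon$ is the automorphism through which $\mathsf{Fr}$ twists the split form. Concretely, $\epsilon|_{\mathsf{G}(k)}$ coincides with the action of the Frobenius endomorphism $F$ of $\mathsf{G}$ composed with an inner automorphism by an element of $\mathsf{G}_\mathrm{ad}(k)$ fixing the splitting. Since $F$ restricts to the identity on $\mathsf{G}(k)$, $\epsilon$ acts on $\mathsf{G}(k)$ as an inner-type automorphism with respect to the pinned torus.

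A cleaner route uses the cell description from the proof of \zcref{lem:equiv_of_unip_rep_when_weyl_same}. Through $\Rep^\mathbf{c}(\mathsf{G}(k))_\mathrm{u}\simeq\mathcal{Z}^\mathbf{c}_\epsilon$, the functor $\epsilon^\ast$ corresponds to the action of $\epsilon$ on the $\epsilon$-center. On the $\epsilon$-center this action is canonically isomorphic to the identity, via the half-braiding with respect to $\epsilon$; this is the categorical version of the fact that $\epsilon$ centralizes itself. Transporting this gives a canonical natural isomorphism $\id\Rightarrow\epsilon^\ast$ on $\Rep(\mathsf{G}(k))_\mathrm{u}$. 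Because the construction is canonical, it commutes with the action of $\Out(\mathsf{G})=Z_{\Out_{\overline{k}}(\mathsf{G})}(\epsilon)$, and in particular with $\Omega$. It therefore lifts to $\Omega$-equivariant objects and yields $\widetilde{\sigma}\cong\epsilon^\ast\widetilde{\sigma}$.

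The main obstacle is justifying that $\epsilon^\ast$ on $\mathcal{Z}^\mathbf{c}_\epsilon$ is canonically trivialized compatibly with $\Omega$. If that is hard to extract from \cite{Lus_unip_cat_centre}, I would argue instead at the level of characters. Unipotent characters are $\epsilon$-stable (Lusztig), so $\epsilon^\ast\sigma\cong\sigma$ for irreducible $\sigma$. Since $\sigma$ is irreducible this isomorphism is unique up to scalar, so the remaining task is to fix the scalar by a normalization. I would take the normalization for which $\epsilon$ acts on the $\sigma$-isotypic part of $H^\ast_c$ of Deligne--Lusztig varieties through the identification $\epsilon=F$ on $\mathsf{G}(k)$. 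This choice is visibly $\Omega$-equivariant, because $\Omega$ commutes with $F$.
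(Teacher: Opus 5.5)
Your main line is the paper's proof. You pass to $\Rep(\mathsf{G}(k))^\Omega_\mathrm{u}$ via \zcref{cor:equiv_to_equivariant_obj}, decompose along $\epsilon$-stable two-sided cells, and use Lusztig's equivalence $\Rep^\mathbf{c}(\mathsf{G}(k))_\mathrm{u}\simeq\mathcal{Z}^\mathbf{c}_\epsilon$ to get a canonical isomorphism of functors $\epsilon^\ast\cong\id$. You then extend it to $\Omega$-equivariant objects because $\epsilon$ commutes with $\Omega$ and acts trivially on it. The only difference is that the paper does not derive the trivialization on the $\epsilon$-centre (your half-braiding heuristic). It simply cites \cite[Section 6.9]{Lus_unip_cat_centre}, which disposes of the ``main obstacle'' you name.

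You should, however, delete your second paragraph: its claim is false, and it resurfaces in your fallback. Write the Frobenius of $\mathsf{G}$ as $F=\epsilon\circ F_0$, where $F_0$ is the split Frobenius attached to the pinning; it commutes with $\epsilon$. On $\mathsf{G}(k)=\mathsf{G}(\overline{k})^F$ this gives $\epsilon=F_0^{-1}$, a field-type automorphism, which is in general not inner. For a unitary group it is conjugation by the quadratic Galois automorphism on matrix entries, and it already moves semisimple conjugacy classes. Moreover, an inner automorphism fixing a pinning is trivial, so your sentence literally asserts $\epsilon|_{\mathsf{G}(k)}=\id$. If $\epsilon$ did act on $\mathsf{G}(k)$ by an inner automorphism, the lemma would be trivial and no input from \cite{Lus_unip_cat_centre} would be needed. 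The $\epsilon$-stability of unipotent representations is a genuine theorem precisely because $\epsilon$ is not inner.

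For the same reason, the normalization ``through the identification $\epsilon=F$ on $\mathsf{G}(k)$'' in your fallback is meaningless, since $F$ is the identity there. A corrected fallback would use the action of $F_0$ on $H^\ast_c(X_w)$, which does intertwine $\sigma$ with $\epsilon^\ast\sigma$. Even then, fixing the scalar canonically, independently of $w$, the cohomological degree and multiplicities, is real work rather than something visible. Your categorical argument uses neither of these claims, so it stands on its own.
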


\begin{proof}
    Consider the equivalence
    \[
        \Rep(\widetilde{\mathsf{G}}(k))_\mathrm{u}\simeq \Rep(\mathsf{G}(k))_\mathrm{u}^\Omega
    \]
    as in \zcref{cor:equiv_to_equivariant_obj}. By construction, this commutes with the automorphic functor $\epsilon^\ast$ on both sides. Hence we may replace $\Rep(\widetilde{\mathsf{G}}(k))_\mathrm{u}$ with $\Rep(\mathsf{G}(k))_\mathrm{u}^\Omega$.
    As in the proof of \zcref{lem:equiv_of_unip_rep_when_weyl_same}, we have a decomposition
    \[
        \Rep(\mathsf{G}(k))_\mathrm{u}\simeq \bigoplus_{\text{$\mathbf{c}$: $\epsilon$-stable}} \Rep^\mathbf{c}(\mathsf{G}(k))_\mathrm{u}.
    \]
    According to \cite[Section 6.9]{Lus_unip_cat_centre} with the equivalence $\Rep^\mathbf{c}(\mathsf{G}(k))_\mathrm{u}\simeq \mathcal{Z}^\mathbf{c}_\epsilon$, we have an isomorphism of functors $\epsilon^\ast\cong\id\colon \Rep^\mathbf{c}(\mathsf{G}(k))_\mathrm{u}\to\Rep^\mathbf{c}(\mathsf{G}(k))_\mathrm{u}$. Summing up for all $\mathbf{c}$, we obtain an isomorphism $\epsilon^\ast\cong\id\colon \Rep(\mathsf{G}(k))_\mathrm{u}\to\Rep(\mathsf{G}(k))_\mathrm{u}$. Since $\epsilon$ acts on $\Omega$ trivially, this isomorphism extends to $\epsilon^\ast\cong\id\colon \Rep(\mathsf{G}(k))_\mathrm{u}^\Omega\to \Rep(\mathsf{G}(k))_\mathrm{u}^\Omega$.
\end{proof}

\subsection{Characters on the component group \texorpdfstring{$\mathcal{S}_\phi^+$}{S}}

In this subsection, we suppose that $G=G_\mathrm{sc}$. Let $\phi$ be a unipotent L-parameter of $\Ell{G}$. 

\begin{lemma}
    The image of $\delta_\phi$ is contained in $Z^1(F^\ur/F,\widehat{Z}^{I_F})\cdot B^1(F,\widehat{Z})$.
\end{lemma}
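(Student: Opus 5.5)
We have to show that for every $g\in Z^+_{\widehat{\overline{G}}}(\phi)$ the cocycle $\delta_\phi(g)\colon w\mapsto g^{-1}\Ad(\phi(w))(g)$ becomes, after modifying by a coboundary of $\widehat{Z}$, a cocycle that is trivial on $I_F$. Here $G=G_\mathrm{sc}$, so $\widehat{G}=\widehat{G}_\mathrm{ad}$, $\widehat{\overline{G}}=\widehat{G}_\mathrm{sc}$ and $\widehat{Z}=Z(\widehat{G}_\mathrm{sc})$.

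\emph{Step 1: put $\phi$ in standard form.} Since $\phi$ is unipotent, $\phi|_{I_F}$ is cohomologically trivial. After replacing $\phi$ by a $\widehat{G}$-conjugate, I will assume that $\phi(w)=1\rtimes w$ for all $w\in I_F$. Conjugating $\phi$ by $h\in\widehat{G}$ lifted to $\dot h\in\widehat{\overline{G}}$ conjugates $Z^+_{\widehat{\overline{G}}}(\phi)$ accordingly. It replaces $\delta_\phi(g)$ by $\delta_{\phi}(g)$ multiplied by the coboundary of $\dot h^{-1}\cdot{}^{\dot h}(\cdot)$-type term, namely an element of $B^1(F,\widehat{Z})$ coming from the failure of $\dot h$ to commute with the Galois action up to $\widehat{Z}$. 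This is harmless for the statement. I will check this carefully but expect it to be formal.

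\emph{Step 2: restriction to inertia.} With $\phi|_{I_F}$ trivial, for $w\in I_F$ we have $\delta_\phi(g)(w)=g^{-1}w(g)$, with $w$ acting on $\widehat{\overline{G}}$ through the (finite) Galois action. Thus $\delta_\phi(g)|_{I_F}$ is the coboundary-type cocycle $w\mapsto g^{-1}w(g)$ valued in $\widehat{Z}$. In other words, the image $\bar g\in\widehat{G}$ lies in $\widehat{G}^{I_F}$, and $g$ lies in the preimage of $\widehat{G}^{I_F}$ in $\widehat{\overline{G}}$.

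\emph{Step 3: the key input, which I expect to be the main obstacle.} I need $g=z_0 g_0$ with $z_0\in\widehat{Z}$ and $g_0\in\widehat{\overline{G}}^{I_F}$. Then $w\mapsto g^{-1}w(g)=z_0^{-1}w(z_0)$ on $I_F$, so subtracting the coboundary of $z_0$ makes the cocycle trivial on $I_F$. Because $\widehat{Z}^{I_F}$-valued cocycles on $W_F$ that are trivial on $I_F$ are exactly inflations from $W_F/I_F$, the result lies in $Z^1(F^\ur/F,\widehat{Z}^{I_F})$. (Here I use that the cocycle takes values in $\widehat{Z}$ and, being trivial on $I_F$ and a cocycle, is $I_F$-invariant.)

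To get the decomposition, I would use that the map $\widehat{\overline{G}}^{I_F}\to\widehat{G}^{I_F}$ has image containing $(\widehat{G}^{I_F})^\circ$, and that the obstruction is measured by $\pi_0(\widehat{G}^{I_F})$. Since $\widehat{\overline{G}}$ is simply connected, Steinberg's theorem gives that $\widehat{\overline{G}}^{I_F}$ is connected (the $I_F$-action factors through a finite group of diagram-type automorphisms), as in the remark in the proof of \zcref{lem:unique_sandwich_galois}. The exact sequence $1\to\widehat{Z}\to\widehat{\overline{G}}\to\widehat{G}\to1$ then gives
\[
\widehat{\overline{G}}^{I_F}\to\widehat{G}^{I_F}\to H^1(I_F,\widehat{Z}).
\]
The hard point is that $\bar g$ may lie in a non-identity component of $\widehat{G}^{I_F}$, so $g$ itself need not be $\widehat{Z}\cdot\widehat{\overline{G}}^{I_F}$. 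In that case I would instead show directly that the class of $w\mapsto g^{-1}w(g)$ in $H^1(I_F,\widehat{Z})$, which is the connecting image of $\bar g$, is killed after allowing coboundaries from $\widehat{Z}$ over all of $W_F$.

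If that fails, I would use the weaker target: inflation–restriction $0\to H^1(W_F/I_F,\widehat{Z}^{I_F})\to H^1(W_F,\widehat{Z})\to H^1(I_F,\widehat{Z})^{\mathsf{Fr}}$. I would then argue that the restriction class vanishes because $g$ also centralizes $\phi(\mathsf{Fr})$ and $\phi(\SL_2)$. This forces $\bar g$ into a component of $\widehat{G}^{I_F}$ meeting the image of $\widehat{\overline{G}}^{I_F}$, possibly after using the cuspidal/unipotent structure of $Z_{\widehat G}(\phi)$. I would verify this last claim type-by-type for the few ramified cases (types $A$, $D$, $E_6$), where $\pi_0(\widehat{G}^{I_F})$ is small and explicit.
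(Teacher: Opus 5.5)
Your reduction is correct as far as it goes. Everything hinges on writing each $g\in Z^+_{\widehat{\overline{G}}}(\phi)$ as $g=z_0g_0$ with $z_0\in\widehat{Z}$ and $g_0\in\widehat{\overline{G}}^{I_F}$. Once you have that, $\delta_\phi(g)=dz_0\cdot\delta_\phi(g_0)$ with $\delta_\phi(g_0)$ trivial on $I_F$. But you never prove this factorization, and it is the entire content of the lemma.

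Your fallback does not help. By inflation--restriction, $\delta_\phi(g)$ lies in $Z^1(F^\ur/F,\widehat{Z}^{I_F})\cdot B^1(F,\widehat{Z})$ if and only if $\delta_\phi(g)|_{I_F}$ is a coboundary. Moreover, the class of $\delta_\phi(g)|_{I_F}$ in $H^1(I_F,\widehat{Z})$ is exactly the image of $\bar g$ under the connecting map $\widehat{G}^{I_F}\to H^1(I_F,\widehat{Z})$. So the ``weaker target'' is the same problem. Your suggested appeals to $\phi(\mathsf{Fr})$, $\phi|_{\SL_2(\C)}$, cuspidality, or a type-by-type check are left unjustified, and they are unnecessary. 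A minor point about Step 1: conjugating $\phi$ by $h$ does not change $\delta$ at all. Indeed $\delta_{\Ad(h)\phi}(\dot h g\dot h^{-1})(w)=\dot h\,\delta_\phi(g)(w)\,\dot h^{-1}=\delta_\phi(g)(w)$, by centrality of $\widehat{Z}$.

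The missing idea is that $\widehat{G}^{I_F}$ itself is connected. You applied Steinberg's theorem to $\widehat{\overline{G}}=\widehat{G}_\mathrm{sc}$, but the relevant group is $\widehat{G}=\widehat{G}_\mathrm{ad}$. Since $\widehat{G}$ is adjoint and $I_F$ acts through a finite group of pinning-preserving automorphisms, $\widehat{G}^{I_F}$ is connected. One way to see this: every component meets $\widehat{T}^{I_F}$, and $\widehat{T}^{I_F}$ is a torus because $X^\ast(\widehat{T})$ is the root lattice, a permutation module on the simple roots. This is precisely what the remark in the proof of \zcref{lem:unique_sandwich_galois} asserts about $\widehat{G}_\mathrm{ad}^{I_F}$, so you had the fact available but attached it to the wrong group.

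With connectedness in hand, the image of the isogeny $\widehat{\overline{G}}^{I_F,\circ}\to\widehat{G}^{I_F}$ is a closed connected subgroup of full dimension, hence all of $\widehat{G}^{I_F}$. Every element of $Z_{\widehat{G}}(\phi)\subset\widehat{G}^{I_F}$ therefore lifts to $\widehat{\overline{G}}^{I_F,\circ}$, that is, $Z^+_{\widehat{\overline{G}}}(\phi)=Z^+_{\widehat{\overline{G}}^{I_F,\circ}}(\phi)\cdot\widehat{Z}$. Your Steps 2 and 3 then conclude exactly as in the paper's proof.
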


\begin{proof}
    Since $\widehat{G}$ is adjoint, $Z_{\widehat{G}}(\phi|_{I_F})=\widehat{G}^{I_F}$ is connected. Hence any element of $ Z_{\widehat{G}}(\phi)$ has a lift in $\widehat{\overline{G}}^{I_F,\circ}$ and we have
    \[
        Z_{\widehat{\overline{G}}}^+(\phi)=Z_{\widehat{\overline{G}}^{I_F,\circ}}^+(\phi)\cdot \widehat{Z}.
    \]
    Now $\delta_\phi(Z_{\widehat{\overline{G}}^{I_F,\circ}}^+(\phi))$ is contained in $Z^1(F^\ur/F,\widehat{Z}^{I_F})$ and the claim holds.
\end{proof}

Let us consider the restriction of $\langle \delta_z(g\rtimes \epsilon),\text{--}\rangle$ to $Z^1(F^\ur/F,\widehat{Z}^{I_F})$ and $B^1(F,\widehat{Z})$, respectively. According to \cite[Section 6.1]{Kal_rigid_inn_vs_isoc}, when restricted to $B^1(F,\widehat{Z})\subset Z^1(F,\widehat{Z})$ the canonical pairing with $H^1(\mathcal{W},Z)$ factors through the restriction map $H^1(\mathcal{W},Z)\to\Hom(u,Z)$ with the canonical isomorphism
\[
    \Hom(u,Z)\cong \Hom_\Z(X^\ast(Z),X^\ast(u))^{\Gamma_F}\cong \Hom_\Z(\widehat{Z},\Q/\Z).
\]
Then 
\begin{align}
    \langle \delta_z(g\rtimes\epsilon),\text{--}\rangle|_{B^1(F,\widehat{Z})}&=\langle \delta_z(g\rtimes\epsilon)|_{u_F},\text{--}\rangle_{\Hom(u,Z)\times \widehat{Z}}\\
    &=\langle {}^{\epsilon^{-1}-1}(z|_{u_F}),\text{--}\rangle_{\Hom(u,Z)\times \widehat{Z}}.
\end{align}
In particular, the pairing of $\delta_z(g\rtimes\epsilon)$ with elements of $B^1(F,\widehat{Z})$ depends only on $\epsilon$.

We next consider the space $Z^1(F,\widehat{Z})$. If we take $g_0\in G_\mathrm{ad}^{\overline{z}}$, we have $g_0g\rtimes\epsilon\in (G_\mathrm{ad}\rtimes\epsilon)^{\overline{z}}$. Moreover, straightforward computation shows that $\delta_z(g_0g\rtimes\epsilon)=\epsilon^{-1}(\delta_z(g_0))\delta_z(g\rtimes \epsilon)$. Here, $\epsilon^{-1}(\delta_z(g_0))\in H^1(F,Z)$ and the pairing with $Z^1(F,\widehat{Z})$ factors through $Z^1(F,\widehat{Z})\to H^1(F,\widehat{Z})$.

\begin{lemma}\label{lem:pairing_kottwitz_triv}
    Put $g'_0=\xi(g_0)\in G_\mathrm{ad}'(F)$. For $\psi\in Z^1(F^\ur/F,\widehat{Z}^{I_F})$, we have
    \[
        \langle \delta_z(g_0),\psi\rangle=\langle \kappa(g'_0),\psi(\mathsf{Fr})\rangle^{-1}_{\pi_1(\overline{G})_{I_F}\times \widehat{Z}^{I_F}}=\langle \kappa(g'_0),\psi(\mathsf{Fr}^{-1})\rangle.
    \]
    where $\kappa=\kappa_{\overline{G}'}$ is the Kottwitz map.
\end{lemma}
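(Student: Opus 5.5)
We will reduce the statement to the compatibility of the Kottwitz map with the Tate--Nakayama-type pairings, using that $\delta_z(g_0)$ is the connecting map of the exact sequence of $F$-points of $1\to Z\to G'\to G'_\mathrm{ad}\to 1$, transported to rigidified cohomology.

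\emph{Step 1: identify $\delta_z(g_0)$.} When $\epsilon=\id$ we have $\delta_z(g_0)=[z\cdot({}^{\dot g_0}z)^{-1}]$, where $\dot g_0\in G(\overline{F})$ lifts $g_0$. Transporting through $\xi$, this class equals $\sigma\mapsto \dot g_0'^{-1}\sigma(\dot g_0')$ (up to the sign convention), with $\dot g_0'=\xi(\dot g_0)$ a lift of $g_0'$ to $G'=G'_\mathrm{sc}$. So $\delta_z(g_0)$ is, up to inversion, the image of $g'_0$ under the coboundary $G'_\mathrm{ad}(F)\to H^1(F,Z)$. It lies in $H^1(F,Z)\subset H^1(\mathcal{W},Z)$ because the restriction to $u$ cancels. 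The pairing of this class with $Z^1(F,\widehat{Z})$ is therefore the Tate local duality pairing $H^1(F,Z)\times H^1(F,\widehat Z)\to\C^\times$ composed with $Z^1\to H^1$, which the rigid pairing \zcref{eq:pairing_W_Z} restricts to by \cite{Kal_rig_inn}.

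\emph{Step 2: Kottwitz compatibility.} We will use the standard fact, following Kottwitz, that the composite
\[
G'_\mathrm{ad}(F)\to H^1(F,Z)\to H^1(F,\widehat Z)^\vee
\]
coincides, up to sign, with
\[
G'_\mathrm{ad}(F)\xrightarrow{\kappa}(\pi_1(\overline G)_{I_F})^{\mathsf{Fr}}\to \Hom(H^1(F^\ur/F,\widehat Z^{I_F}),\C^\times)
\]
when restricted to unramified classes. The second map sends $\psi$ to $\langle x,\psi(\mathsf{Fr})\rangle$, using $\pi_1(\overline G)_{I_F}=X^*(\widehat Z)_{I_F}$ and $H^1(F^\ur/F,\widehat Z^{I_F})=(\widehat Z^{I_F})_{\mathsf{Fr}}$.

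To prove it, we reduce via $z$-extensions, or directly by functoriality of $\kappa$ and of the connecting map, to the case where $g'_0$ lies in the image of $T'_\mathrm{ad}(F)$ for a maximal torus $T'$ of $G'$. Such a torus exists since elements of $G'_\mathrm{ad}(F)$ modulo the image of $G'(F)$ are represented by torus elements, e.g.\ an unramified elliptic or minisotropic maximal torus; when $\epsilon=\id$ only the class modulo image of $G'(F)$ matters because both sides kill that image. We can then compare the torus statement $T_\mathrm{ad}(F)\to H^1(F,Z)$ against Kottwitz's $T(F)\to X_*(T)_{\Gamma}$, restricted to unramified characters. Here the Tate pairing with inflated unramified classes becomes evaluation at Frobenius of the image in $X_*(T)_{I_F}$, which is the classical local class field theory compatibility (the Kottwitz map for tori being dual to the LLC restricted to unramified characters).

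\emph{Step 3: signs.} The final equality $\langle\kappa(g_0'),\psi(\mathsf{Fr})\rangle^{-1}=\langle\kappa(g_0'),\psi(\mathsf{Fr}^{-1})\rangle$ is immediate from $\psi(\mathsf{Fr}^{-1})=\mathsf{Fr}^{-1}(\psi(\mathsf{Fr}))^{-1}$ and the $\mathsf{Fr}$-invariance of $\kappa(g'_0)$.

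The main obstacle is Step 2's sign bookkeeping: arithmetic versus geometric Frobenius, the inverse in the definition of $\delta_z$, and Kaletha's normalization of \zcref{eq:pairing_W_Z}. I would first settle these by checking $G=\SL_2$ with $g_0'=\mathrm{diag}(\varpi,1)$ explicitly, then propagate by functoriality.
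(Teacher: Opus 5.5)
Your architecture matches the paper's. You identify $\delta_z(g_0)$ with the coboundary of $g_0'$ in $H^1(F,Z)$, reduce to tori by functoriality of $\kappa$ and of the connecting map, and then use the compatibility of the Kottwitz map with the Langlands correspondence for unramified characters of tori. Step 3 is fine.

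The reductions to tori differ. The paper passes to a $z$-extension and then to the cotorus $\overline{G}'/\overline{G}'_\mathrm{der}$, which handles every $g_0'$ with one torus. Your element-wise reduction also works, but only if the torus depends on $g_0'$, e.g.\ a torus through a regular semisimple element of the open coset $g_0'\,p(G'(F))$. Your suggested fixed unramified elliptic torus fails. For $G'_\mathrm{ad}=D^\times/F^\times$ the coboundary to $F^\times/F^{\times n}$ is (up to inversion) the reduced norm, and $E^\times/F^\times$ with $E/F$ unramified misses the class of $\varpi$. So you genuinely need the torus statement for ramified tori.

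The real gap is the exponent $-1$, which is the content of the lemma beyond ``up to sign''. You never explain how the Tate pairing of the Kummer-type class $\delta(t)\in H^1(F,Z)$ with $\psi$ becomes a Langlands character of $\overline{T}$, and that bridge is exactly where the inverse arises. The paper builds it in three steps:
\begin{enumerate}
\item It uses the dual sequence $0\to X^\ast(\overline{T})\to X^\ast(T)\to X^\ast(Z)\to 0$.
\item It applies the anticommutativity $\langle\delta(t),\psi\rangle+\langle t,\delta(\psi)\rangle=0$ of the two connecting maps under Tate--Nakayama duality.
\item It factors $\delta(\psi)$ through $H^1(F,\widehat{\overline{T}})$ via the exponential sequence, reducing to $\langle\kappa(t),\phi(\mathsf{Fr})\rangle=\langle t,\delta(\phi)\rangle$ for unramified $\phi$. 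This identity is checked for $\mathbb{G}_m$ and transported by corestriction, restriction and norms.
\end{enumerate}

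Your proposed sign check cannot work. For $\SL_2$ one has $\widehat{Z}=\mu_2$ and $\pi_1(\mathrm{PGL}_2)=\Z/2\Z$, so every value is $\pm1$ and $\langle\kappa(g_0'),\psi(\mathsf{Fr})\rangle$ is its own inverse. You would need $n\geq 3$, e.g.\ $\mu_n\to\mathbb{G}_m\xrightarrow{n}\mathbb{G}_m$. Even then, ``propagate by functoriality'' fixes the sign only after every case has been reduced functorially to that single one. The paper's split-torus and norm argument supplies that reduction; your element-wise reduction to arbitrary, possibly ramified tori does not.
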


\begin{remark}
    The second equality follows from $\psi(\mathsf{Fr}^{-1})=\mathsf{Fr}^{-1}\psi(\mathsf{Fr})^{-1}$ and the fact that $\kappa(g_0')\in \pi_1(\overline{G})_{I_F}$ is $\mathsf{Fr}$-stable.
\end{remark}

\begin{proof}
    We will prove a more general fact: Let $G'\to \overline{G}'$ be an isogeny with a finite kernel $Z\subset G'$. Then $\pi_1(\overline{G})_{I_F}/\pi_1(G)_{I_F}\cong X^\ast(\widehat{Z})_{I_F}$. We prove that, for $g'\in \overline{G}'(F)$ and $\psi\in Z^1(F^\ur/F,\widehat{Z}^{I_F})$, we have
    \[
        \langle \delta(g'),\psi\rangle=\langle \kappa(g'),\psi(\mathsf{Fr})\rangle^{-1}.
    \]
    As we take a $z$-extension $\widetilde{\overline{G}}'\to \overline{G}'$ and set $\widetilde{G}'=\widetilde{\overline{G}}'\times_{\overline{G}'}G'$, we may assume that $\overline{G}'_\mathrm{der}=G'_\mathrm{der}$ is simply connected. We may further replace $G',\overline{G}'$ with $G'/G'_\mathrm{der},\ \overline{G}'/\overline{G}'_\mathrm{der}$ so that $G'=T,\ \overline{G}'=\overline{T}$ are tori.

    We have an exact sequence
    \[
        0\to X^\ast(\overline{T})\to X^\ast(T)\to X^\ast(Z)\to 0.
    \]
    With the canonical isomorphism $X^\ast(Z)\cong \widehat{Z}$, we have a connecting map $\delta\colon H^1(F,\widehat{Z})\to H^2(F,X^\ast(\overline{T}))$. Moreover, for $t\in \overline{T}(F)$ and $\psi\in H^1(F,\widehat{Z})$ we have $\langle \delta(t),\psi\rangle+\langle t,\delta(\psi)\rangle=0$. We have another exact sequence
    \[
        1\to X^\ast(\overline{T})\to X^\ast(\overline{T})_\C\xrightarrow{\exp(2\pi\sqrt{-1})}\widehat{\overline{T}}\to 1,
    \]
    and the connecting map $H^1(F,\widehat{Z})\to H^2(F,X_\ast(\overline{T}))$ factors as
    \[
        H^1(F,\widehat{Z})\to H^1(F,\widehat{\overline{T}})\xrightarrow{\delta} H^2(F,X^\ast(\overline{T})).
    \]
    Hence it suffices to show that, for $\phi\in Z^1(F^\ur/F,\widehat{\overline{T}}^{I_F})$ and $t\in \overline{T}(F)$, we have $\langle \kappa(t),\phi(\mathsf{Fr})\rangle=\langle t,\delta(\phi)\rangle$.

    If $T$ is a split torus, we may assume $T=\mathbb{G}_m$. We then show the equality by direct computation with the fact that the isomorphism
    \[
        H^2(F^\ur/F,(F^\ur)^\times)\xrightarrow{\sim}\Q/\Z
    \]
    factors through the map $H^2(F^\ur/F,(F^\ur)^\times)\to H^2(F^\ur/F,\Z)$ which is induced by the valuation map (i.e.\  the Kottwitz map for $\mathbb{G}_m$) $(F^\ur)^\times\twoheadrightarrow \Z$.

    Now we consider the general case. Remark that the inverse of evaluation at $\mathsf{Fr}$ gives an isomorphism $\widehat{\overline{T}}^{I_F}_\mathrm{tors}\to H^1(F^\ur/F,\widehat{\overline{T}}^{I_F});\ \widehat{s}\mapsto \phi_{\widehat{s}}=\phi_{F,\widehat{s}}$. We will show that
    \[
        \langle \kappa(t),\widehat{s}\rangle=\langle t,\delta(\phi_{F,\widehat{s}})\rangle.
    \]
    If we take a finite unramified extension $F'/F$, the following diagram commutes:
    \[
        \begin{tikzcd}[row sep=tiny]
            &H^1(F^\ur/F',\widehat{\overline{T}}^{I_F})\arrow[dd,"\mathrm{Cor}_{F'/F}"]\\
            \widehat{\overline{T}}^{I_F}_\mathrm{tors}\arrow[ur,"\sim" sloped]\arrow[dr,"\sim" sloped]&\\
            &H^1(F^\ur/F,\widehat{\overline{T}}^{I_F}).
        \end{tikzcd}
    \]
    Thus we can replace $F$ with $F'$. For each $t\in \overline{T}(F)$, we can take a finite Galois extension $E/F$ with $F'=E\cap F^\ur$ such that $\overline{T}$ splits over $E$ and $t$ belongs to the image of the norm map $N_{E/F'}\colon\overline{T}(E)\to \overline{T}(F')$. Take $t_E\in \overline{T}(E)$ such that $t=N_{E/F'}(t_E)$. We have $\kappa(t_E)\equiv\kappa(t)$ in $X_\ast(\overline{T})_{I_F}$. Moreover, the following is commutative:
    \[
        \begin{tikzcd}
            \widehat{\overline{T}}^{I_E}_\mathrm{tors}\arrow[r,"\sim"]&H^1(E^\ur/E,\widehat{\overline{T}}^{I_E})\\
            \widehat{\overline{T}}^{I_{F'}}_\mathrm{tors}\arrow[u,phantom,"\subset" sloped]\arrow[r,"\sim"]&H^1(F^\ur/F',\widehat{\overline{T}}^{I_F})\arrow[u,"\operatorname{Res}_{E/F'}"']
        \end{tikzcd}
    \]
    Hence we have
    \[
        \langle \kappa(t),\widehat{s}\rangle=\langle \kappa(t_E),\widehat{s}\rangle=\langle t_E,\phi_{E,\widehat{s}}\rangle=\langle N_{E/F'}t_E,\phi_{F',\widehat{s}}\rangle=\langle t,\phi_{F',\widehat{s}}\rangle.
    \]
\end{proof}

This lemma describes the behavior of adjoint actions on $\Irr(G'(F))$; they factor through the Kottwitz map and use the pairing $\pi_1(G)_{I_F}$ between $\widehat{Z}^{I_F}$. Combining it with \cite[Lemma 13.5]{FOS}, we obtain the following corollary:

\begin{corollary}
    The UC-parameterizations for simply-connected reductive groups constructed in \cite{FOS} satisfy \zcref{cond:equivariance_sc} when $\epsilon$ is trivial.
\end{corollary}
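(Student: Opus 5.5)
When $\epsilon=\id$, every element of $(G_\mathrm{ad}\rtimes\epsilon)^{\overline{z}}$ has the form $g_0\rtimes\id$ with $g_0\in G_\mathrm{ad}^{\overline{z}}$. By \zcref{rem:equiv_with_adjoint_action}, $g'_0=\xi(g_0)$ then lies in $G'_\mathrm{ad}(F)$, and $\epsilon'_{g_0}=\Ad(g'_0)$. So \zcref{cond:equivariance_sc} reduces to two assertions for $\pi\in\Irr(G'(F))_{\mathrm{uc}}$:
\[
    \phi_{\Ad(g'_0)^\ast\pi}=\phi_\pi,\qquad \rho_{(z,\Ad(g'_0)^\ast\pi)}=\langle\delta_z(g_0),\delta_{\phi_\pi}(\text{--})\rangle\cdot\rho_{(z,\pi)}.
\]
The first holds because the FOS parameterization assigns the same L-parameter to all constituents of $\pi_\mathrm{ad}|_{G'(F)}$. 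The parameters are built from the adjoint group and the covering $G_\mathrm{sc}\to G_\mathrm{ad}$, and conjugation by $G'_\mathrm{ad}(F)$ permutes these constituents. The content is therefore the second assertion.

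The plan is to compute both sides as characters of $\mathcal{S}^+_{\phi_\pi}$ and compare them.

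\textbf{Group side.} \cite[Lemma 13.5]{FOS} describes how the enhancement changes when $\pi$ is conjugated by $g'_0\in G'_\mathrm{ad}(F)$. The enhancement is twisted by the character of $\mathcal{S}^+_{\phi_\pi}$ obtained by pairing $\kappa(g'_0)\in(\pi_1(G_\mathrm{ad})_{I_F})^{\mathsf{Fr}}$ with the image of $\mathcal{S}^+_{\phi_\pi}$ in $\widehat{Z}^{I_F}$. That image is given by $h\mapsto\delta_{\phi_\pi}(h)(\mathsf{Fr}^{\pm1})$ on the connected part $Z^+_{\widehat{\overline{G}}^{I_F,\circ}}(\phi)$.

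\textbf{Galois side.} By the preceding lemma, $\delta_{\phi_\pi}$ takes values in $Z^1(F^\ur/F,\widehat{Z}^{I_F})\cdot B^1(F,\widehat{Z})$. Since $\epsilon=\id$, the element $\delta_z(g_0)$ lies in $H^1(F,Z)$, so its pairing factors through $H^1(F,\widehat{Z})$. It is therefore trivial on $B^1(F,\widehat{Z})$, which is consistent with the formula ${}^{\epsilon^{-1}-1}(z|_{u_F})=1$. On the unramified part, \zcref{lem:pairing_kottwitz_triv} gives
\[
    \langle\delta_z(g_0),\psi\rangle=\langle\kappa(g'_0),\psi(\mathsf{Fr}^{-1})\rangle.
\]
Taking $\psi=\delta_{\phi_\pi}(h)$ shows that the character $\langle\delta_z(g_0),\delta_{\phi_\pi}(\text{--})\rangle$ is exactly the Kottwitz-pairing character appearing on the group side.

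\textbf{Main obstacle.} The delicate step is matching conventions with FOS: the direction of conjugation ($\pi^{g}$ versus $\Ad(g)^\ast\pi$), the sign and inverse in the pairing, whether FOS evaluate at $\mathsf{Fr}$ or at $\mathsf{Fr}^{-1}$, and the identification $\pi_1(\overline{G})_{I_F}/\pi_1(G)_{I_F}\cong X^\ast(\widehat{Z})_{I_F}$. I would fix these by checking a single nontrivial case, for instance an inner form of $\SL_2$ with $g'_0$ of Kottwitz invariant $1$, and then transport the identity to the general case.
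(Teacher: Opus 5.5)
Your proposal is correct and follows the paper's route. The paper obtains this corollary by combining \cite[Lemma 13.5]{FOS}, which gives the group-side effect of $G'_\mathrm{ad}(F)$-conjugation through the Kottwitz map, with \zcref{lem:pairing_kottwitz_triv}, using that $\delta_{\phi}$ lands in $Z^1(F^\ur/F,\widehat{Z}^{I_F})\cdot B^1(F,\widehat{Z})$ and that the $B^1$-part pairs trivially when $\epsilon=\id$; the sign and Frobenius-convention matching you flag is left implicit in the paper too.
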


\begin{remark}
    Even though $\cite{FOS}$ treats only inner twists of unramified groups, \zcref{prop:reduction_to_companion_sc} in the next subsection extends the above result to all connected reductive groups.
\end{remark}

For a general $\epsilon$, we will prove the following:

\begin{proposition}\label{prop:calc_delta_z}
    Let $(G',\xi)$ be an inner twist with the associated cocycle $\overline{z}$ belonging to $Z^1(F^\ur/F,G_\mathrm{ad})$. There exists a rigidification $z\in Z^1(u\to\mathcal{W},Z\to G)$ which satisfies the following:
    \begin{enumerate}
        \item On $\widehat{Z}^{I_F}$, the associated character $\langle [z],\text{--}\rangle$ coincides with the pairing $\langle \kappa_G(\overline{z}(\mathsf{Fr})),\text{--}\rangle$.
        \item For any $g\rtimes\epsilon\in (G_\mathrm{ad}\rtimes\epsilon)^{\overline{z}}$ and $\psi\in Z^1(F^\ur/F,\widehat{Z}^{I_F})$, we have
        \[
            \langle \delta_z(g\rtimes\epsilon),\psi\rangle=\langle \epsilon^{-1}\kappa_G(g),\psi(\mathsf{Fr}^{-1})\rangle.
        \]
    \end{enumerate}
\end{proposition}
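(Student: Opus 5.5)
The plan is to construct $z$ explicitly as an ``unramified'' rigidification lifting $\overline{z}$, so that both (1) and (2) reduce to the torus computation already carried out in the proof of \zcref{lem:pairing_kottwitz_triv}. Since $\overline{z}$ is inflated from $Z^1(F^\ur/F,G_\mathrm{ad})$, it is determined by $\overline{z}(\mathsf{Fr})\in G_\mathrm{ad}(F^\ur)$. By Steinberg's theorem and the surjectivity of the Kottwitz map, we may conjugate within the class so that $\overline{z}(\mathsf{Fr})$ normalizes the standard Iwahori determined by the fixed $F$-splitting. Concretely, $\overline{z}(\mathsf{Fr})$ is a lift $n_\omega$ of an element $\omega\in\Omega_{\mathrm{ad}}=\pi_1(G_\mathrm{ad})_{I_F}$ in the normalizer of a maximally split maximal torus $S$ of $G_\mathrm{ad}$ defined over $F$. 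We then choose a lift $\dot n_\omega$ to $G(F^\ur)$, or more precisely to the preimage torus $T\subset G$ together with a Tits-type lift of the Weyl part.

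Next we define $z$ on $\mathcal{W}$ by the recipe of \cite{Kal_rig_inn} for tori. The obstruction $\dot n_\omega\,\mathsf{Fr}(\dot n_\omega)\cdots$ to $\dot n_\omega$ defining a cocycle lies in $Z$. We trivialize it using the torus-valued class in $H^1(u\to\mathcal{W},Z\to T)$ whose image in $H^1(F,T_\mathrm{ad})$ is the unramified class of $\omega$. Part (1) then becomes a statement about this torus class. Using the Tate--Nakayama-type description of $\langle[z],\text{--}\rangle$ on $\widehat Z$ (via $\widehat Z\to\widehat{\overline T}\to\widehat T$), restricted to $\widehat Z^{I_F}$, we reduce to showing that the restriction agrees with $\kappa$. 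This is exactly the statement ``$\langle\kappa(t),\widehat s\rangle=\langle t,\delta(\phi_{F,\widehat s})\rangle$'' from the proof of \zcref{lem:pairing_kottwitz_triv}, applied now to $T\to T_\mathrm{ad}$. We then reduce to $\mathbb G_m$ by unramified base change and corestriction, as done there.

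For (2), we first factor the problem. By the formula $\delta_z(g_0g\rtimes\epsilon)=\epsilon^{-1}(\delta_z(g_0))\delta_z(g\rtimes\epsilon)$ and \zcref{lem:pairing_kottwitz_triv}, both sides of (2) transform identically under left multiplication by $G_\mathrm{ad}^{\overline z}$. This uses that $\kappa_G$ is a homomorphism, that it is compatible with $\xi$, and that $\epsilon$ acts equivariantly on $\pi_1$. It therefore suffices to verify (2) for a single representative $g\rtimes\epsilon$. Because the splitting is $\epsilon$-stable, $\epsilon$ maps $S$, the standard Iwahori, and the Tits lifts to themselves. Hence ${}^{g}\epsilon(\overline z)=\overline z$ for the natural choice $g=n_\omega\,\epsilon(n_\omega)^{-1}$, suitably adjusted by $S(F^\ur)$ to account for the fact that $\epsilon(\omega)$ and $\omega$ are Frobenius-conjugate in $\Omega$. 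With this $g$, the cocycle $z\cdot({}^{\dot g}\epsilon(z))^{-1}$ is again a torus-valued unramified class. It is essentially the difference of the torus classes attached to $\omega$ and $\epsilon(\omega)$, corrected by $\kappa_G(g)$. The same $\mathbb G_m$ computation then gives $\langle\epsilon^{-1}\kappa_G(g),\psi(\mathsf{Fr}^{-1})\rangle$.

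The main obstacle is the $u$-part. The pairing on $B^1(F,\widehat Z)$ depends on $z|_{u}$, by the computation preceding \zcref{lem:pairing_kottwitz_triv}. For the formula in (2) to hold only on $Z^1(F^\ur/F,\widehat Z^{I_F})$ consistently, we need $z|_u$ to be chosen $\epsilon$-invariant, i.e.\ ${}^{\epsilon^{-1}-1}(z|_u)=0$. I would first try to arrange this by taking $z|_u$ to be the canonical element of $\Hom(u,Z)$ coming from the torus $T$ with its $\epsilon$-stable $\Gamma_F$-structure, and then check $\epsilon$-invariance via the functoriality of the isomorphism $\Hom(u,Z)\cong\Hom_\Z(\widehat Z,\Q/\Z)$. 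Tracking the sign conventions for $\mathsf{Fr}$ versus $\mathsf{Fr}^{-1}$, as in the remark after \zcref{lem:pairing_kottwitz_triv}, will need care.
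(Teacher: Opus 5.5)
There is a genuine gap, and it sits in what you call the main obstacle: the normalization ${}^{\epsilon^{-1}-1}(z|_u)=0$ that you propose is impossible in general, and it would falsify both (1) and (2).

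From ${}^g\epsilon(\overline z)=\overline z$ one gets $(1-\mathsf{Fr})\kappa_G(g)=(1-\epsilon)\kappa_G(\overline z(\mathsf{Fr}))$. So for a coboundary $\psi=ds$ with $s\in\widehat Z^{I_F}$, the right-hand side of (2) is the character $s\mapsto\langle(1-\epsilon^{-1})\kappa_G(\overline z(\mathsf{Fr})),s\rangle^{\pm1}$, the sign depending on the convention for $ds$. The left-hand side is $\langle{}^{\epsilon^{-1}-1}(z|_u),s\rangle$. There is therefore no separate consistency problem on $B^1(F^\ur/F,\widehat Z^{I_F})$. Condition (1) is what makes the two sides agree there, and it forces $z|_u$ to be \emph{non}-$\epsilon$-invariant whenever $\epsilon$ does not fix $\kappa_G(\overline z(\mathsf{Fr}))\in\pi_1(G_\mathrm{ad})_{I_F}$.

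A concrete case is unramified $\mathrm{SU}_n$ with $n\ge4$ even and $\epsilon$ the Frobenius twist. Here $\epsilon$ acts by $-1$ on $\pi_1(G_\mathrm{ad})=\Z/n\Z$ and by inversion on $\widehat Z=\widehat Z^{I_F}=\mu_n$. Take $\kappa_G(\overline z(\mathsf{Fr}))=1$. Then (1) pins $z|_u$ to the tautological character of $\mu_n$, which $\epsilon$ does not fix. An $\epsilon$-invariant $z|_u$ would make (2) fail at $\psi=ds$ with $s$ a generator.

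The same point shows that specifying $z$ only as a class lying over the class of $\omega$ in $H^1(F,T_\mathrm{ad})$ is not enough for (1). That fibre is a torsor under the image of $H^1(\mathcal W_F,Z)$, which in general moves $z|_u$ on $\widehat Z^{I_F}$. You must fix the specific lift $z_\lambda$ with $\lambda=\kappa(\overline z(\mathsf{Fr}))$. Then (1) is read off from Kaletha's description of the pairing through $X_\ast(\overline T)\times\widehat{\overline T}$, not from the Tate-duality argument of \zcref{lem:pairing_kottwitz_triv}.

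Separately, the heart of (2) is missing: the value of the pairing on non-coboundary $\psi$ when $\epsilon\neq1$. The class $\delta_z(g\rtimes\epsilon)\in H^1(\mathcal W_F,Z)$ restricts to ${}^{\epsilon^{-1}-1}(z|_u)$ on $u$. So it is in general neither a torus-valued unramified class nor in the image of $H^1(F,Z)$. The $\mathbb G_m$ argument in \zcref{lem:pairing_kottwitz_triv} only handles the connecting map $\overline G'(F)\to H^1(F,Z)$ and cannot evaluate it.

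The missing idea is the explicit cochain-level computation in Kaletha's model, which runs as follows.
\begin{enumerate}
\item Make $z$ genuinely torus-valued. Your $n_\omega$ lies in the normalizer, not in a torus. The paper transfers a maximally unramified torus $T'$ of $G'$ into $G$, and uses \zcref{lem:out_stab_T'} so that $\epsilon'_g$ preserves $T'$.
\item Pass to $T'_\ur$ with $X_\ast(T'_\ur)=X_\ast(T')_{I_F}$, and take $z_\lambda$ with $\lambda=\kappa(\overline z(\mathsf{Fr}))$.
\item Write $(1-\epsilon'_g)_\ast\lambda=d\mu$. Then \zcref{lem:rigidif_differ} gives $(1-\epsilon'_g)z_\lambda\cdot d\dot t_\mu^{-1}=y_{\overline\mu}$, that is, $\delta_{z_\lambda}(t_\mu^{-1}\rtimes\epsilon'_g)=\epsilon^{-1}[y_{\overline\mu}]$.
\item Compute $\langle[y_{\overline\mu}],\psi\rangle$ directly from the unbalanced cup product. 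The cocycle $c_\Z$ (the valuation image of the fundamental class) identifies $c_\Z\cup\mu$ with $\kappa(t_\mu)$, and this yields $\langle\kappa(t_\mu)^{-1},\psi(\mathsf{Fr}^{-1})\rangle$.
\item Only the remaining rational discrepancy $t''$ is handled by \zcref{lem:pairing_kottwitz_triv}.
\item Lift back from $Z_\ur$ to $Z$ via the surjection $H^1(\mathcal W_F,Z)\to H^1(\mathcal W_F,Z_\ur)$. This step is needed because $G$ may be ramified, which your sketch does not address.
\end{enumerate}

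Your reduction of (2) to a single representative $g$, via $\delta_z(g_0g\rtimes\epsilon)=\epsilon^{-1}(\delta_z(g_0))\delta_z(g\rtimes\epsilon)$ and \zcref{lem:pairing_kottwitz_triv}, is correct. It does not replace this computation.
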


In order to show the above proposition, we need some group-theoretic arguments.

\begin{lemma}\label{lem:max_spl_unram_conj}
    Let $G'$ be a connected reductive group over $F$. Given two maximal tori $T'_1,T'_2\subset G'$ which are maximally split and maximally unramified. Then they are conjugate with each other under $G'(F)$.
\end{lemma}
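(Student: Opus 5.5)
We reduce to the maximal unramified extension, use Steinberg's theorem there, and then descend along $\mathsf{Fr}$ using Lang's theorem for a connected group over the residue field. The definitions in use are the standard ones: $T'$ is maximally unramified if $T'_{F^\ur}$ is a maximal $F^\ur$-split torus of $G'_{F^\ur}$ (i.e.\ $T'$ contains a maximal $F^\ur$-split torus $S'$ defined over $F$), and maximally split if it contains a maximal $F$-split torus. By Steinberg's theorem $G'_{F^\ur}$ is quasi-split, so $T'_i = Z_{G'}(S'_i)$, where $S'_i$ is the maximal $F^\ur$-split subtorus of $T'_i$; this $S'_i$ is $\mathsf{Fr}$-stable, hence defined over $F$. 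It therefore suffices to show that $S'_1$ and $S'_2$ are $G'(F)$-conjugate.

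First step, work in the building. The $F$-structures on $S'_i$ give $\mathsf{Fr}$-stable apartments $\mathcal{A}_i=\mathcal{A}(S'_i,F^\ur)\subset\mathcal{B}(G',F^\ur)$. The hypothesis that $T'_i$ is maximally split means $S'_i$ contains a maximal $F$-split torus $A'_i$. Then $\mathcal{A}_i^{\mathsf{Fr}}$ contains the apartment of $A'_i$ in $\mathcal{B}(G',F)=\mathcal{B}(G',F^\ur)^{\mathsf{Fr}}$, so it is a full apartment of $\mathcal{B}(G',F)$. Since $G'(F)$ acts transitively on apartments of $\mathcal{B}(G',F)$, and transitively on the chambers within them via the affine Weyl group, we may conjugate by $G'(F)$ so that $A'_1=A'_2$. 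We may further arrange that $\mathcal{A}_1^{\mathsf{Fr}}$ and $\mathcal{A}_2^{\mathsf{Fr}}$ share an $F$-alcove (a facet of maximal dimension) $C$. Take $x$ in the interior of $C$. Then $\mathcal{A}_1$ and $\mathcal{A}_2$ both contain a $\mathsf{Fr}$-stable alcove of $\mathcal{B}(G',F^\ur)$ whose closure contains $x$, namely a minimal $F^\ur$-facet containing $x$.

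Second step, reduce to the finite field. Both $S'_1$ and $S'_2$ then give maximal split tori of the reductive quotient $\mathsf{G}_x$ over $k$ (the connected parahoric quotient at $x$), since their apartments pass through $x$. More precisely, the integral models $\mathcal{S}'_i$ of $S'_i$ inside the parahoric $\mathcal{G}_x^\circ$ have special fibres $\mathsf{S}_i$, which are maximal tori of $\mathsf{G}_x$, defined over $k$ and maximally split. We must check that $\mathsf{S}_i$ is contained in an $\mathsf{Fr}$-stable Borel subgroup, the one coming from the chosen alcove; this uses that $x$ lies in an $F$-alcove, so that $\mathsf{G}_x$ is ``minimal'' in the sense that the $F$-rank condition forces the special fibres to be maximally split. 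In a connected reductive group over a finite field, maximal tori contained in $k$-rational Borel subgroups are $\mathsf{G}_x(k)$-conjugate, by Lang's theorem applied twice (Borels, then tori inside the Borel). Lift the conjugating element to the parahoric $G'(F)_{x,0}$ using smoothness of $\mathcal{G}_x^\circ$ and Hensel's lemma; then by conjugacy of lifts of maximal tori in smooth group schemes over the henselian ring $\mathcal{O}_{F}$ (SGA3, lifting of tori with a given special fibre up to conjugation by the kernel of reduction, i.e.\ $G'(F)_{x,0+}$), we obtain conjugacy of the integral models $\mathcal{S}'_1,\mathcal{S}'_2$, hence of $S'_1,S'_2$, hence of $T'_1=Z(S'_1)$ and $T'_2$.

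The main obstacle is the middle of the argument: guaranteeing that a common point $x$ can be chosen so that both $F^\ur$-apartments pass through $x$ with maximally split special fibres. I would first try the approach above of choosing $x$ generic in an $F$-alcove of the common $F$-apartment. The fallback is the known result of Prasad--Yu / DeBacker (``maximally unramified maximally split tori correspond to $\mathsf{Fr}$-stable alcoves'') and Kaletha's lemma in the regular supercuspidal paper, whose proof follows exactly these lines.
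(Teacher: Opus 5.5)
Your proposal is correct and is essentially the paper's argument. In both, you first conjugate so that the maximal $F$-split tori coincide. You then produce a point $x$ lying in both $\mathsf{Fr}$-stable $F^{\ur}$-apartments at which the parahoric is an Iwahori subgroup, and conclude by conjugacy of the integral models under $G'(F)_{x,0}$. The only difference is how $x$ is found: the paper gets it for free by passing to the minimal Levi $Z_{G'}(S)$, which is anisotropic modulo centre, so its building is a single point; you instead take $x$ generic in an $F$-alcove of the common apartment.

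In either version the $F^{\ur}$-facet of $x$ is itself a $\mathsf{Fr}$-stable alcove, so your ``minimal facet'' is just the facet of $x$. Moreover $\mathsf{G}_x$ is a torus, since over the finite field it is quasi-split with no proper $k$-parabolic. Hence $\mathsf{S}_1=\mathsf{S}_2=\mathsf{G}_x$, your Lang's-theorem step and the worry about maximally split special fibres are vacuous, and only the SGA3 lifting of tori with equal special fibre is needed.
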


\begin{proof}
    Since all maximal split tori are conjugate to each other, we may assume that $T'_1$ and $T'_2$ share a maximal split torus $S$ in common. We replace $G'$ with $Z_{G'}(S)$ so that $G'$ is anisotropic modulo center. Then $\mathcal{B}(G')$ consists of a unique vertex $x$. Hence $x\in \mathcal{A}(T'_1,G'),\ \mathcal{A}(T'_2,G')$ and the integral models of $T'_1$ and $T'_2$ must be conjugate under the Iwahori subgroup $G'(F)_{x,0}$.
\end{proof}

Let $(G',\xi,z)$ be any rigid inner twist of $G$ and $T'\subset G'$ a maximally split and maximally unramified maximal torus. Since the $G(\overline{F})$-conjugacy class of the embedding $\xi^{-1}|_{T'}\colon T'\hookrightarrow G$ is $\Gamma_F$-stable, there exists $g\in G(\overline{F})$ such that $i=\Ad(g)\circ\xi^{-1}|_{T'}\colon T'\hookrightarrow G$ is defined over $F$, and that $\mathcal{A}(T',G)$ contains the superspecial vertex $x_0$ determined by the fixed $F$-pinning of $G$. Moreover, such an embedding is uniquely determined up to $G(F)_{x_0}$-conjugacy, see \cite[Lemma 3.4.12]{Kal_reg_sc} (only stated for elliptic tori, but the same proof works in this case). 
We replace $\xi$ and $z$ by $\xi\circ\Ad(g)^{-1}$ and ${}^{g}z$. We regard $T'$ as a maximal torus of $G$ along the embedding $i=\xi^{-1}|_{T'}$. Then $\xi$ is an identity when restricted to $T'$, so $z$ has values in $T'(\overline{F})$. 

\begin{lemma}\label{lem:out_stab_T'}
    Suppose that $\epsilon\in \Out(G)$ stabilizes $[\overline{z}]\in H^1(F,\overline{G})$. Then there exists $g\in \overline{G}(\overline{F})$ such that $\epsilon'_g=\xi\circ \Ad(g)\circ\epsilon\circ\xi^{-1}\colon G'\to G'$ is defined over $F$ and stabilizes $T'$. Moreover, there exists $g_0\in G(F)_{x_0}$ such that $gg_0^{-1}\in \overline{T}'=T'/Z$. In particular, $\Ad(g_0)\circ\epsilon\in \Aut(G)$ stabilizes $T'$ and $\Ad(g_0)\circ\epsilon|_{T'}=\epsilon'_g|_{T'}$.
\end{lemma}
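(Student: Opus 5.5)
First I will produce some $g$ with $\epsilon'_g$ defined over $F$. Since $\epsilon$ stabilizes $[\overline{z}]$, there is $h\in\overline{G}(\overline{F})$ with ${}^h\epsilon(\overline{z})=\overline{z}$. For this $h$ the automorphism $\epsilon'_h=\xi\circ\Ad(h)\circ\epsilon\circ\xi^{-1}$ is $\Gamma_F$-equivariant: the cocycle relation $\xi^{-1}\sigma(\xi)=\Ad(\overline{z}_\sigma)$ together with ${}^h\epsilon(\overline{z})=\overline{z}$ is exactly what makes $\sigma(\epsilon'_h)=\epsilon'_h$. The torus $\epsilon'_h(T')$ is again maximally split and maximally unramified in $G'$, because $\epsilon'_h$ is an $F$-automorphism. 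By \zcref{lem:max_spl_unram_conj} there is $x'\in G'(F)$ with $\Ad(x')\epsilon'_h(T')=T'$. Put $g=\xi^{-1}(x')h$, read in $\overline{G}(\overline{F})$. Then $\epsilon'_g=\Ad(x')\circ\epsilon'_h$ is defined over $F$ and stabilizes $T'$, and ${}^g\epsilon(\overline{z})=\overline{z}$ still holds. This gives the first assertion.

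Next I compare with $\Ad(g_0)\circ\epsilon$ for some $g_0\in G(F)_{x_0}$. Since $\xi|_{T'}=\id$, the map $\epsilon'_g|_{T'}=\Ad(g)\circ\epsilon|_{T'}$ is an $F$-rational embedding $T'\hookrightarrow G$ onto $T'$. Because the fixed $F$-pinning is $\epsilon$-stable, $\epsilon$ fixes the superspecial vertex $x_0$. Hence $\epsilon(T')$ is a maximally split, maximally unramified maximal torus of $G$ whose apartment contains $x_0$. Both $\epsilon(T')$ and $T'=\Ad(g)\epsilon(T')$ are then $F$-tori of $G$ of this kind with $x_0$ in their apartments. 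The embedding $\Ad(g)|_{\epsilon(T')}\colon\epsilon(T')\to G$ is defined over $F$, since it equals $\epsilon'_g\circ\epsilon^{-1}$ on that torus and both maps are rational. By the uniqueness statement quoted from \cite[Lemma 3.4.12]{Kal_reg_sc}, the rational embeddings of a given torus whose apartment contains $x_0$ are $G(F)_{x_0}$-conjugate. Applied to the inclusion of $\epsilon(T')$ and to $\Ad(g)|_{\epsilon(T')}$, this yields $g_0\in G(F)_{x_0}$ with $\Ad(g_0)|_{\epsilon(T')}=\Ad(g)|_{\epsilon(T')}$.

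Then $g_0^{-1}g$ centralizes $\epsilon(T')$ in $\overline{G}$, so it lies in the image of $\epsilon(T')$ in $\overline{G}$. Equivalently $gg_0^{-1}=g_0(g_0^{-1}g)g_0^{-1}$ lies in the image of $\Ad(g_0)\epsilon(T')=T'$, which is $\overline{T}'=T'/Z$. The final claims follow at once: $\Ad(g_0)\circ\epsilon$ maps $T'$ to $T'$, and on $T'$ it agrees with $\Ad(g)\circ\epsilon=\epsilon'_g$, because $gg_0^{-1}\in\overline{T}'$ acts trivially on $T'$. Note that $g_0$ lies in $G(F)$ (simply connected), while $g$ only lives in $\overline{G}(\overline{F})$, so the comparison is made in $\overline{G}$.

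I expect the second step to be the main obstacle. It needs the conjugacy-of-embeddings statement for non-elliptic, maximally split and unramified tori with $x_0$ in the apartment, which the paper asserts only by analogy with \cite{Kal_reg_sc}. It also needs to handle the $F$-rationality of $\Ad(g)|_{\epsilon(T')}$ carefully when $g$ is merely an element of $\overline{G}(\overline{F})$. If a direct appeal fails, I would first reduce to $Z_G(S)$ for the common maximal split torus $S$, as in the proof of \zcref{lem:max_spl_unram_conj}. There the building is a single point, and the conjugacy can be taken in the parahoric.
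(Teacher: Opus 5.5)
Your proposal is correct and is essentially the paper's proof. The first step is identical: you adjust $g$ by $\xi^{-1}$ of an element of $G'(F)$ supplied by \zcref{lem:max_spl_unram_conj}. In the second step, applying the $G(F)_{x_0}$-uniqueness of embeddings to the inclusion of $\epsilon(T')$ and to $\Ad(g)|_{\epsilon(T')}$ is just the paper's comparison of $i$ with $i'=\epsilon\circ i\circ\epsilon'^{-1}_g=\Ad(g)^{-1}\circ i$, transported along the $F$-isomorphism $i'\colon T'\to\epsilon(T')$. The paper relies on the same extended form of \cite[Lemma 3.4.12]{Kal_reg_sc}, applied to embeddings in a single stable class as yours are, so your fallback reduction is not needed.
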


\begin{proof}
    Take $g\in \overline{G}(\overline{F})$ such that ${}^g\epsilon(\overline{z})=\overline{z}$. Then $\epsilon'_g$ is an $F$-automorphism of $G'$. Since $\epsilon'_g(T')\subset G'$ is also maximally split and maximally unramified, we have $g'\in G'(F)$ such that $\Ad(g')\epsilon'_g(T')=T'$ by \zcref{lem:max_spl_unram_conj}. As we replace $g$ with $\xi^{-1}(g')\cdot g$, we obtain $\epsilon'_g\in \Aut(G')$ which stabilizes $T'$.

    Consider another embedding $i'=\epsilon\circ i\circ\epsilon_g'^{-1}\colon T'\hookrightarrow G$ of $T'$. Since $\epsilon$ stabilizes $x_0$, we have $x_0\in \mathcal{A}(i'(T'),G)$. Moreover we have $i'=\Ad(g)^{-1}\circ i$. The uniqueness property of such an embedding then implies that there exists $g_0\in G(F)_{x_0}$ and $i'=\Ad(g_0)^{-1}\circ i$. Then $t=gg_0^{-1}\in Z_{\overline{G}}(T')=\overline{T}'$.
\end{proof}

Let $g\in \overline{G}(\overline{F})$ and $g_0\in G(F)_{x_0}$ be as above. We put $t=gg_0^{-1}\in \overline{T}'$. Since 
\[
    \overline{z}={}^g\epsilon(\overline{z})={}^{tg_0}\epsilon(\overline{z})={}^t\epsilon'_g(\overline{z}),
\]
we have $t\rtimes\epsilon'_g\in (\overline{T}'\rtimes\epsilon'_g)^{\overline{z}}$ and $\delta_z(g\rtimes\epsilon)=\delta_z(t\rtimes \epsilon'_g)$.
Now we assume that $\xi$ is defined over $F^\ur$, i.e.\  $\overline{z}$ belongs to $Z^1(F^\ur/F,\overline{T}')$.
Let $T'_\ur$ be an unramified torus over $F$ with an isomorphism $X_\ast(T'_\ur)\cong X_\ast(T')_{I_F}$. We similarly define $\overline{T}'_\ur$ and $Z_\ur$. We have a morphism of sequences:
\[
    \begin{tikzcd}
        1\arrow[r]&Z\arrow[r]\arrow[d,"p"]&T'\arrow[r]\arrow[d,"p"]&\overline{T}'\arrow[r]\arrow[d,"p"]&1\\
        1\arrow[r]&Z_\ur\arrow[r]&T'_\ur\arrow[r]&\overline{T}'_\ur\arrow[r]&1
    \end{tikzcd}
\]
Consider the pairing $H^1(\mathcal{W}_F,Z)\times Z^1(F,\widehat{Z})\to\C^\times$. When the second term is restricted to $Z^1(F^\ur/F,\widehat{Z}^{I_F})$, the first term factors through $p\colon H^1(\mathcal{W}_F,Z)\to H^1(\mathcal{W}_F,Z_\ur)$. Hence $\delta_z(t\rtimes \epsilon'_g)=\delta_{p(z)}(p(t)\rtimes\epsilon'_g)$ on $Z^1(F^\ur/F,\widehat{Z}^{I_F})$.

We write $\overline{z}_\ur=p(\overline{z})$ and $t_\ur=p(t)$. We use the notation in \zcref{app:cohom_rig_inn}. Take $k$ sufficiently large and abbreviate $E=E_k,\ c=c_k$, etc. Since $T'_\ur$ is an unramified torus, we may assume that $E/F$ is a sufficiently large unramified extension.  We set $\lambda=\kappa_{\overline{T}'}(\overline{z}(\mathsf{Fr}))=\kappa_{\overline{T}'_\ur}(\overline{z}_\ur(\mathsf{Fr}))\in \widehat{Z}^{-1}(E/F,X_\ast(\overline{T}'_\ur))$. We can take $\mu\in \widehat{Z}^{-2}(E/F,X_\ast(\overline{T}'_\ur))$ such that $(1-\epsilon'_g)_\ast\lambda=d\mu$ because $[\overline{z}]$ is stabilized by $\epsilon'_g$. Applying \zcref{lem:rigidif_differ}, we obtain that
\[
    (1-\epsilon'_g)z_\lambda\cdot d\dot{t}_\mu^{-1}=y_{\overline{\mu}}.
\]
Hence, if we put $t_\mu=c\cup\mu\in \overline{T}_\ur'(E)$, we have $t^{-1}_\mu\rtimes\epsilon'_g\in (\overline{T}'\rtimes\epsilon'_g)^{\overline{z}_\lambda}$ and $\delta_{z_\lambda}(t_\mu^{-1}\rtimes\epsilon'_g)=\epsilon'^{-1}_g[y_{\overline{\mu}}]=\epsilon^{-1}[y_{\overline{\mu}}]$. 
It is easy to see that $\kappa_{\overline{T}'_\ur}(\overline{z}_\lambda(\mathsf{Fr}))=\lambda=\kappa_{\overline{T}'_\ur}(\overline{z}_\ur(\mathsf{Fr}))$. Therefore we have $t'\in \overline{T}'_\ur(E)$ such that
\[
    \kappa_{\overline{T}'_\ur}(t')=1,\text{ and } \overline{z}_\ur={}^{t'}\overline{z}_\lambda.
\]
We take a lift $\dot{t}'\in T'_\ur(\overline{F})$ and set $z_\ur'={}^{\dot{t}'}z_\lambda$. 
Then $\delta_{z'_\ur}(t' t^{-1}_\mu\epsilon'_g(t')^{-1}\rtimes\epsilon'_g)=\epsilon^{-1}[y_{\overline{\mu}}]$. We put $t''=t't^{-1}_\mu\epsilon'_g(t')^{-1}t_\ur^{-1}\in \overline{T}'_\ur(F)$ and we have
\[
    \delta_{z'_\ur}(t_\ur\rtimes \epsilon'_g)=\epsilon'^{-1}_g(\delta_{z'_\ur}(t'')^{-1}[y_{\overline{\mu}}]).
\]
Let $c_\Z\in Z^2(E/F,\Z)$ be the cocycle defined by:
\[
    c_\Z(\mathsf{Fr}^i,\mathsf{Fr}^j)=\begin{cases}
        0&\text{if $i+j<[E:F]$},\\
        1&\text{if $i+j\geq [E:F]$},
    \end{cases}
\]
where $0\leq i,j<[E:F]$. Its cohomology class $[c_\Z]$ is the image of $[c]\in H^2(E/F,E^\times)$ along the valuation map $E^\times\to\Z$. Hence $\kappa_{\overline{T}'_\ur}(t_\mu)\equiv c_\Z\cup \mu$ modulo $\widehat{B}^0(E/F,X_\ast(\overline{T}'_\ur))$. As we take $E/F$ sufficiently large, they have a common image in $\pi_1(G_\mathrm{ad})_{I_F}\cong X_\ast(\widehat{Z}^{I_F})$. Now we have
\[
    c_\Z\cup\mu=\sum_{i,j=0}^{[E:F]-1}c_\Z(\mathsf{Fr}^i,\mathsf{Fr}^j)\cdot \mathsf{Fr}^i(\mu(\mathsf{Fr}^j))=\sum_{i=0}^{[E:F]-1}\sum_{j=[E:F]-i}^{[E:F]-1} \mathsf{Fr}^i(\mu(\mathsf{Fr}^j)).
\]
Take $\psi\in Z^1(E/F,\widehat{Z}^{I_F})$ with $\widehat{s}=\psi(\mathsf{Fr})$. Then $\psi(\mathsf{Fr}^{-1})^{-1}=\mathsf{Fr}^{-1}(\widehat{s})$. We can compute:
\begin{align}
        \langle [y_{\overline{\mu}}],\psi\rangle&=\sum_{i=0}^{[E:F]-1}\langle \mu(\mathsf{Fr}^i),\psi(\mathsf{Fr}^i)\rangle\\
        &=\sum_i \sum_{j=0}^{i-1}\langle \mu(\mathsf{Fr}^i),\mathsf{Fr}^j(\widehat{s})\rangle\\
        &=\left\langle \sum_{i}\sum_{j=0}^{i-1}\mathsf{Fr}^{n-j-1}\mu(\mathsf{Fr}^i),\mathsf{Fr}^{-1}(\widehat{s})\right\rangle\\
        &=\langle c_\Z\cup\mu,\psi(\mathsf{Fr}^{-1})^{-1}\rangle\\
        &=\langle \kappa_{\overline{T}'_\ur}(t_\mu)^{-1},\psi(\mathsf{Fr}^{-1})\rangle.
\end{align}

With \zcref{lem:pairing_kottwitz_triv}, we obtain:
\begin{align}
    \langle \delta_{z'_\ur}(t_\ur\rtimes\epsilon'_g),\psi\rangle&=\langle \epsilon^{-1}(\kappa(t'')^{-1}\kappa(t_\mu)^{-1}),\psi(\mathsf{Fr}^{-1})\rangle\\
    &=\langle \kappa(t_\ur),\psi(\mathsf{Fr}^{-1})\rangle.
\end{align}
Here we use the fact that $\kappa_{\overline{T}_\ur'}(t')$ is trivial. Put $y_\ur=(z_\ur)^{-1}\cdot z'_\ur\in Z^1(\mathcal{W}_F,Z_\ur)$. Since $Z^1(F,\widehat{Z}^{I_F})\to Z^1(F,\widehat{Z})$ is injective, the canonical map $H^1(\mathcal{W}_F,Z)\to H^1(\mathcal{W}_F,Z_\ur)$ is surjective. Hence we can take $y\in Z^1(\mathcal{W}_F,Z)$ such that $p(y)y_\ur^{-1}\in B^1(F,Z)$. We set a rigidification $z'=y\cdot z$ of $\overline{z}$. Then we have $\delta_{z'_\ur}=\delta_{p(z')}$ and thus
\begin{equation}
    \langle \delta_{z'}(t\rtimes\epsilon'_g),\psi\rangle=\langle \epsilon^{-1}(\kappa_{\overline{T}'}(t)),\psi(\mathsf{Fr}^{-1})\rangle\tag{$\ast$}\label{eq:delta_z_kottwitz}
\end{equation}
for $t\rtimes\epsilon'_g\in (\overline{T}'\rtimes\epsilon'_g)^{\overline{z}}$.

\begin{proof}[Proof of \zcref{prop:calc_delta_z}]
    Recall the arguments under \zcref{lem:max_spl_unram_conj}. If $\xi$ is defined over $F^\ur$, we can find $g$ in $G(F^\ur)_0$ such that $\Ad(g)\circ\xi^{-1}|_{T'}$ is defined over $F$ and $x_0\in\mathcal{A}(T',G)$ because $T'$ is maximally unramified. Then we have $\kappa_{G_\mathrm{ad}}(\overline{z}(\mathsf{Fr}))=\kappa_{G_\mathrm{ad}}({}^g\overline{z}(\mathsf{Fr}))$ and $\kappa_{G_\mathrm{ad}}(h)=\kappa_{G_\mathrm{ad}}(gh\epsilon(g)^{-1})$. With the equality $\delta_{z}(h\rtimes \epsilon)=\delta_{{}^gz}(gh\epsilon(g)^{-1}\rtimes\epsilon)$, we can see that the replacement $z\mapsto {}^gz$ does not make a difference. 

    Also, we took $\dot{g}_0\in G(F)_{x_0}=G(F)_{x_0,0}$ for each $g\rtimes\epsilon\in (G\rtimes\epsilon)^{\overline{z}}$ such that $t=gg_0^{-1}\in \overline{T}'$ and $\delta_z(g\rtimes\epsilon)=\delta_z(t\rtimes \epsilon'_g)$.
    Then $\kappa_{G_\mathrm{ad}}(g)=\kappa_{G_\mathrm{ad}}(t)$ and the above equality \zcref{eq:delta_z_kottwitz} deduces the second claim. For the first claim, the pairing with $\widehat{Z}^{I_F}\subset \widehat{Z}$ factors through
    \[
        H^1(u\to\mathcal{W},Z\to G)\to \Hom(u,Z)\to \Hom(u,Z_\ur).
    \]
    Hence we can use $z'_\ur={}^{\dot{t}}z_\lambda$ for the computation instead. Then the claim holds by the diagram in \cite[p.~25]{Kal_rig_inn}.
\end{proof}

\subsection{Reduction to unramified groups}

Let $G$ be a connected, quasi-split reductive group which is either adjoint or simply connected. We first reduce to the case when $G$ is unramified by using the \emph{companion group}, which is the same strategy as in \cite{Solleveld_unip_for_ramified}.

\begin{definition}
    The \emph{companion group} of $G$ is a connected quasi-split reductive group $H$ over $F$ with a $\Gamma_F$-equivariant isomorphism $\widehat{H}\cong \widehat{G}^{I_F}$. In particular, $H$ must be unramified.
\end{definition}

\begin{remark}
    Since $G$ is adjoint or simply connected, $\widehat{G}^{I_F}$ is indeed a connected reductive group.
\end{remark}

Let $(B,T,\{X_\alpha\}_\alpha)$ and $(B_H,T_H,\{X_{\alpha_H}\}_{\alpha_H})$ be $F$-splittings of $G$ and its companion group $H$. Through the isomorphism $\widehat{H}\cong\widehat{G}^{I_F}$, we obtain an identification $X^\ast(T_H)\cong X_\ast(\widehat{T}_H)\cong X_\ast(\widehat{T})^{I_F}\cong X^\ast(T)^{I_F}$. In particular we have a homomorphism $T\to T_H$. We denote respectively by $\mathcal{A}^\ur(T,G)$ and $\mathcal{A}^\ur(T_H,H)$ the apartments of the Bruhat--Tits buildings $\mathcal{B}^\ur(G)$ and $\mathcal{B}^\ur(H)$ over $F^\ur$ associated with $T$ and $T_H$. The fixed pinnings determine superspecial vertices $x_0$ and $x_{H,0}$. Moreover both $\mathcal{A}^\ur(T,G)$ and $\mathcal{A}^\ur(T_H,H)$ are affine spaces over $X_\ast(T)^{I_F}_\R=X_\ast(T_H)$. We have an isomorphism of affine spaces
\[
    \mathcal{A}^\ur(T,G)\xrightarrow{\sim}\mathcal{A}^\ur(T_H,H);\quad x_0\mapsto x_{H,0}.
\]
Since the pinnings are $\Gamma_F$-stable, the vertices $x_0$ and $x_{H_0}$ are also stable under the Frobenius action. Hence the above isomorphism of affine spaces is equivariant under the Frobenius action. Put $N=N_G(T)$ and $N_H=N_H(T_H)$. We have an action of $N(F^\ur)$ (resp.\ $N_H(F^\ur)$) on $\mathcal{A}^\ur(T,G)\cong \mathcal{A}^\ur(T_H,H)$, which is trivial on the kernel of the Kottwitz map $T(F^\ur)_0$ (resp.\ $T_H(F^\ur)_0$). 

\begin{lemma}\label{lem:isom_ext_weyl}
    There exists a canonical isomorphism
    \[
        N(F^\ur)/T(F^\ur)_0\cong N_H(F^\ur)/T_H(F^\ur)_0,
    \]
    under which the isomorphism $\mathcal{A}^\ur(T,G)\xrightarrow{\sim}\mathcal{A}^\ur(T_H,H)$ is equivariant.
\end{lemma}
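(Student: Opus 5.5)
The plan is to describe both sides as extensions of a common group by a common lattice, and to compare them through their actions on the identified apartment. On the $G$ side, we have an exact sequence
\[
    1\to T(F^\ur)/T(F^\ur)_0\to N(F^\ur)/T(F^\ur)_0\to W_0\to 1,
\]
where $W_0=N(F^\ur)/T(F^\ur)$ is the relative Weyl group over $F^\ur$. Since $G$ is quasi-split over $F^\ur$, $T_{F^\ur}$ contains the maximal $F^\ur$-split torus, so $W_0=W^{I_F}$. Via the Kottwitz map, $T(F^\ur)/T(F^\ur)_0\cong X_\ast(T)_{I_F}$. Because $G$ is adjoint or simply connected, $X_\ast(T)_{I_F}$ is torsion-free, and it is identified with $X_\ast(T_H)$ by the dual of $X^\ast(T_H)\cong X^\ast(T)^{I_F}$. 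On the $H$ side, $H$ is unramified, $T_H$ splits over $F^\ur$, and $N_H(F^\ur)/T_H(F^\ur)_0$ is the extended affine Weyl group $X_\ast(T_H)\rtimes W_H$. The identification $\widehat{H}\cong\widehat{G}^{I_F}$ gives $W_H\cong W(\widehat{G}^{I_F},\widehat{T}^{I_F,\circ})=W^{I_F}$, compatibly with the actions on $X_\ast(T_H)=X_\ast(T)_{I_F}$. So both groups are extensions of $W^{I_F}$ by the same lattice with the same action.

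Next I will show both groups act faithfully on the common apartment. For $H$ this is clear: $X_\ast(T_H)$ acts by translations, and $W_H$ acts linearly fixing $x_{H,0}$. For $G$, the element $t\in T(F^\ur)$ acts by translation by $-\kappa(t)$ (up to sign convention) in $X_\ast(T)_{I_F,\R}=X_\ast(T)^{I_F}_\R$. The stabilizer of $x_0$ in $N(F^\ur)$ surjects onto $W^{I_F}$, since representatives of simple reflections can be taken from the pinning and lie in the parahoric at the superspecial vertex $x_0$. These representatives act linearly on $\mathcal{A}^\ur(T,G)$ with origin $x_0$ through the natural action of $W^{I_F}$. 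An element acting trivially must map to $1\in W^{I_F}$, hence lies in $T(F^\ur)$, and then acts by translation by its Kottwitz image, which is therefore zero. So it lies in $T(F^\ur)_0$. This shows that both quotients embed into the affine automorphism group of the common affine space.

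The main step, and the expected obstacle, is to check that the two images coincide. Each image is generated by the translation lattice $X_\ast(T)_{I_F}$ together with the linear group $W^{I_F}$ fixing the base point. The linear parts agree once I confirm that the action of $W^{I_F}$ on $X_\ast(T)^{I_F}_\R$ coming from $N(F^\ur)$ matches the dual of its action on $X^\ast(T)^{I_F}=X^\ast(T_H)$. This reduces to the statement that the relative reflections of $G$ over $F^\ur$ (for the échelonnage roots) correspond to the reflections for the roots of $\widehat{G}^{I_F}$. That is a standard rank-one check, using the case $\mathrm{SU}_3$ for non-reduced roots, and it is where I expect the real work to be.

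Finally, since the translations match and the fixed linear groups match, the two images in the affine automorphism group are equal. This gives the canonical isomorphism, and equivariance holds by construction. Canonicity follows because the construction uses only the fixed pinnings, through $x_0$, $x_{H,0}$ and the identification of lattices.
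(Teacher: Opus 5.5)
Your proposal is correct, but it organizes the argument differently from the paper.

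\textbf{The paper's route.} It uses Tits lifts of $W^{I_F}$ inside $G(F^\ur)_{x_0,0}$, together with $T(F^\ur)\cap G(F^\ur)_{x_0,0}=T(F^\ur)_0$, to split the extension. This gives $N(F^\ur)/T(F^\ur)_0\cong X_\ast(T)_{I_F}\rtimes W^{I_F}$, and likewise for $H$. It then matches the two semidirect products factor by factor and declares equivariance to hold ``by construction''.

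\textbf{Your route.} You embed both quotients faithfully into the group of affine transformations of the common apartment and show that the images coincide. The isomorphism is then \emph{characterized} by equivariance, so its canonicity is automatic.

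\textbf{Comparison.} The key input is the same in both: lifts of $W^{I_F}$ fixing $x_0$, i.e.\ $N(F^\ur)=T(F^\ur)\cdot\operatorname{Stab}_{N(F^\ur)}(x_0)$.
\begin{itemize}
\item The paper's construction is purely algebraic and never needs faithfulness.
\item Yours needs torsion-freeness of $X_\ast(T)_{I_F}$ and faithfulness of $W^{I_F}$ on $X_\ast(T)^{I_F}_\R$. You correctly supply both, and the former is needed anyway for $X_\ast(T)_{I_F}\cong X_\ast(T_H)$.
\item In exchange, you make explicit the compatibility of translation and linear parts that the paper's ``by construction'' leaves implicit.
\end{itemize}

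\textbf{The step you flagged.} The step you single out as the main obstacle is essentially free.
\begin{itemize}
\item The linear part of $n\in N(F^\ur)$ acting on $\mathcal{A}^\ur(T,G)$ is the restriction of the $W$-action on $X_\ast(T)$ to $X_\ast(T)^{I_F}$.
\item The identification $W_H\cong W(\widehat{G}^{I_F},\widehat{T}^{I_F,\circ})\cong W^{I_F}$, which you already invoked, is by construction the restriction of the $W$-action on $X_\ast(\widehat{T})=X^\ast(T)$ to $I_F$-invariants.
\end{itemize}
So the two linear actions agree by duality. No rank-one or $\mathrm{SU}_3$ analysis of \'echelonnage roots is needed here; that would only matter for comparing affine root systems, which this lemma does not assert.
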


\begin{proof}
    We have an exact sequence:
    \[
        1\to T(F^\ur)\to N(F^\ur)\to W^{I_F}\to 1,
    \]
    where $W=N/T$ is the Weyl group. Using the Tits lift, we can take a lift of each $w\in W^{I_F}$ in the parahoric group $G(F^\ur)_{x_0}=G(F^\ur)_{x_0,0}$. Since $T(F^\ur)\cap G(F^\ur)_{x_0,0}=T(F^\ur)_0$, the above exact sequence splits modulo $T(F^\ur)_0$. Similarly we get $N_H(F^\ur)/T_H(F^\ur)_0\cong T_H(F^\ur)/T_H(F^\ur)_0\rtimes W_H$, where $W_H=N_H/T_H\cong W^{I_F}$. We then use the isomorphism
    \[
        T(F^\ur)/T(F^\ur)_0\cong X_\ast(T)_{I_F}\cong X_\ast(T_H)\cong T_H(F^\ur)/T_H(F^\ur)_0
    \]
    and get $N(F^\ur)/T(F^\ur)_0\cong N_H(F^\ur)/T_H(F^\ur)_0$. The equivariance follows from the construction.
\end{proof}

When $G$ is simply connected, so is $H$ and \zcref{lem:isom_ext_weyl} gives an identification of their affine Weyl groups $W^\mathrm{aff}=W^\mathrm{aff}_H$. Then we  may identify facets contained in $\mathcal{A}(T,G)\cong \mathcal{A}(T_H,H)$. Moreover, for such a facet $\mathcal{F}$ the Weyl group of the reductive quotients $G(F^\ur)_{\mathcal{F},0}/G(F^\ur)_{\mathcal{F},0+}$ and $H(F^\ur)_{\mathcal{F},0}/H(F^\ur)_{\mathcal{F},0+}$ coincide under the identification $W^\mathrm{aff}=W^\mathrm{aff}_H$, which we denote by $W_\mathcal{F}$. Thus the affine Dynkin diagram of $G$ and $H$ (with respect to the standard chamber $\mathcal{C}$ associated with the fixed pinnings) are in common up to the direction of arrows. Considering the case when $G$ is adjoint, we find that this identification of the diagrams is compatible with the action of
\[
    \pi_1(G)_{I_F}\cong N(F^\ur)_\mathcal{C}/T(F^\ur)_0\cong N_H(F^\ur)_\mathcal{C}/T_H(F^\ur)_0\cong \pi_1(H),
\]
where $N(F^\ur)_\mathcal{C}=N(F^\ur)\cap G(F^\ur)_\mathcal{C}$ and $N_H(F^\ur)_\mathcal{C}$ is similarly defined.

Let $(G',\xi)$ be an inner twist of $G$ with the associated cocycle $\overline{z}$. Replacing it with an isomorphic one, we may assume that $\xi$ is defined over $F^\ur$ and $\overline{z}$ has values in
$N_\mathrm{ad}(F^\ur)_\mathcal{C}$ where $N_\mathrm{ad}=N/Z(G)$. Indeed, 
\[
    H^1(F,G_\mathrm{ad})\cong H^1(F^\ur/F,G_\mathrm{ad}(F^\ur))\xrightarrow{\kappa} H^1(F^\ur/F,\pi_1(G_\mathrm{ad})_{I_F})
\] 
is isomorphic, where $\kappa=\kappa_G$ is the Kottwitz map. Moreover, we have an exact sequence
\begin{multline}
        H^1(F^\ur/F,T_\mathrm{ad}(F^\ur)_0)\to H^1(F^\ur/F,N_\mathrm{ad}(F^\ur)_\mathcal{C})\\
        \xrightarrow{\kappa} H^1(F^\ur/F,\pi_1(G_\mathrm{ad})_{I_F})\to H^2(F^\ur/F,T_\mathrm{ad}(F^\ur)_0),
\end{multline}
and $H^i(F^\ur/F,T_\mathrm{ad}(F^\ur)_0)=1$ for $i=1,2$. Thus $H^1(F^\ur/F,N_\mathrm{ad}(F^\ur)_\mathcal{C})\to H^1(F^\ur/F,G_\mathrm{ad}(F^\ur))\to H^1(F,G_\mathrm{ad})$ is isomorphic.
Take an inner twist $(H',\xi_H)$ of $H$ defined over $F^\ur$ so that the associated cocycle $\overline{z}_H$ has values in $N_{H,\mathrm{ad}}(F^\ur)_\mathcal{C}$ and that $\kappa_G\circ\overline{z}(\mathsf{Fr})=\kappa_H\circ\overline{z}_H(\mathsf{Fr})\eqcolon \omega$ in $\pi_1(G)_{I_F}=\pi_1(H)$. Now $T'=\xi(T)$ and $T'_H=\xi_H(T_H)$ are defined over $F$ and we can identify $\mathcal{A}^\ur(T,G)=\mathcal{A}^\ur(T',G')$ and $\mathcal{A}^\ur(T_H,H)=\mathcal{A}^\ur(T'_H,H')$. Under this identification, the Frobenius actions $\mathsf{Fr}'$ on $\mathcal{A}^\ur(T',G')$ and $\mathcal{A}^\ur(T'_H,H')$ are both written as $\mathsf{Fr}'=\omega\circ\mathsf{Fr}$.

\begin{proposition}\label{prop:bij_irrep}
    There exists a canonical bijection
    \[
        \Irr(G'(F))_{\mathrm{uc}}\leftrightarrow\Irr(H'(F))_{\mathrm{uc}};\quad \pi\leftrightarrow\pi_H
    \]
\end{proposition}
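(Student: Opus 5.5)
We will build the bijection vertex by vertex, using the description of unipotent supercuspidal representations as compact inductions $\cind_{G'(F)_x}^{G'(F)}\widetilde{\sigma}$ with $\widetilde{\sigma}$ an extension of a unipotent cuspidal representation of $\mathsf{G}_x(k)$.

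\emph{Vertices.} We first match the vertices. Via the identification $\mathcal{A}^\ur(T',G')=\mathcal{A}^\ur(T,G)\cong\mathcal{A}^\ur(T_H,H)=\mathcal{A}^\ur(T'_H,H')$, both Frobenius actions equal $\mathsf{Fr}'=\omega\circ\mathsf{Fr}$, and this identification is equivariant for $N(F^\ur)/T(F^\ur)_0\cong N_H(F^\ur)/T_H(F^\ur)_0$ by \zcref{lem:isom_ext_weyl}. Every vertex of $\mathcal{B}(G')$ is $G'(F)$-conjugate to a $\mathsf{Fr}'$-stable point of the closure of the standard chamber $\mathcal{C}$, which is the barycenter of a $\mathsf{Fr}'$-stable facet $\mathcal{F}\subset\overline{\mathcal{C}}$. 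Two such points are $G'(F)$-conjugate if and only if they are conjugate under the $\mathsf{Fr}'$-fixed part of the stabilizer $\Omega=\pi_1(G_\mathrm{ad})_{I_F}$ of $\mathcal{C}$, and the same statement holds for $H'$. Since the affine Dynkin diagrams agree up to arrows compatibly with $\Omega$ and $\mathsf{Fr}'$, this gives a bijection between $G'(F)$-conjugacy classes of vertices $x$ and $H'(F)$-conjugacy classes of vertices $x_H$.

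\emph{Reductive quotients.} Next we compare the reductive quotients. For matched $x\leftrightarrow x_H$, the groups $\mathsf{G}_x$ and $\mathsf{H}_{x_H}$ have the common Weyl group $W_\mathcal{F}$, with the same $\mathsf{Fr}'$-action and the same action of the component group $\Omega_x=\pi_0(\widetilde{\mathsf{G}}_x)$. The latter is the stabilizer of $\mathcal{F}$ in $\Omega^{\mathsf{Fr}'}$, which is common to both sides. Hence \zcref{lem:equiv_of_unip_rep_when_weyl_same} gives a canonical, cuspidality-preserving equivalence $\Rep(\widetilde{\mathsf{G}}_x(k))_\mathrm{u}\simeq\Rep(\widetilde{\mathsf{H}}_{x_H}(k))_\mathrm{u}$, and in particular a bijection $\widetilde{\sigma}\leftrightarrow\widetilde{\sigma}_H$ of irreducible unipotent cuspidal representations. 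We then set $\pi_H=\cind_{H'(F)_{x_H}}^{H'(F)}\widetilde{\sigma}_H$.

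\emph{Well-definedness and bijectivity.} It remains to check that the construction is well defined and bijective. By the classification (\cite{FOS}, Moy--Prasad theory), $\pi\in\Irr(G'(F))_{\mathrm{uc}}$ determines the pair $(x,\widetilde{\sigma})$ up to $G'(F)$-conjugacy, and conversely. So we must check that changing the representative $x$ by an element of $\Omega^{\mathsf{Fr}'}$, or by $N(F^\ur)$, transports the bijection $\widetilde{\sigma}\leftrightarrow\widetilde{\sigma}_H$ compatibly. This follows from the canonicity, and hence the $\Out$-equivariance, of Lusztig's equivalence with the categorical center. Here we fix $\mathsf{Fr}$-stable splittings induced from the $F$-pinnings, so that the identifications match on both sides, and the elements of $N(F^\ur)_\mathcal{C}$ act by diagram automorphisms.

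\emph{Main obstacle.} The hard part is making the identification of stabilizers $G'(F)_x/G'(F)_{x,0+}\cong\widetilde{\mathsf{G}}_x(k)$ with $\widetilde{\mathsf{H}}_{x_H}(k)$ at the level of $\Omega_x$ and the Frobenius twist precise. The subtle point is that $\widetilde{\mathsf{G}}_x$ need not split as $\mathsf{G}_x\rtimes\Omega_x$ unless we pass to the adjoint group. We handle this with \zcref{lem:equiv_when_take_adj} and \zcref{cor:equiv_to_equivariant_obj}, reducing to equivariant objects under the common group $\Omega_x$.
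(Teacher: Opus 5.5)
Your proposal is correct and follows the paper's proof essentially step for step. You identify vertices through $\mathcal{A}^\ur(T',G')\cong\mathcal{A}^\ur(T'_H,H')$, where both sides carry the same twisted Frobenius $\mathsf{Fr}'=\omega\circ\mathsf{Fr}$; you apply \zcref{lem:equiv_of_unip_rep_when_weyl_same} at each vertex using the common $W_x$ with matching Frobenius and component-group actions; and you compactly induce. Your explicit well-definedness check spells out what the paper compresses into ``the $G'(F)$-conjugacy class of $(x,\sigma)$ is determined by $\pi$, and likewise for $H'$''.

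One correction is needed. The group controlling $G'(F)$-conjugacy of vertices in $\overline{\mathcal{C}}$, and the component groups $\pi_0(\widetilde{\mathsf{G}}_x)$, is $(\pi_1(G)_{I_F})^{\mathsf{Fr}}$, as in the paper, not $\pi_1(G_\mathrm{ad})_{I_F}^{\mathsf{Fr}'}$. The two agree for adjoint $G$. For simply connected $G$, however, the former is trivial, so $\widetilde{\mathsf{G}}_x=\mathsf{G}_x$ and distinct vertices of $\overline{\mathcal{C}}$ are never $G'(F)$-conjugate.
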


\begin{proof}
    For a vertex $x\in \mathcal{A}(T',G')=\mathcal{A}^\ur(T',G')^{\mathsf{Fr}'}$, we put 
    \[
        \mathsf{G}_x(k)=G'(F)_{x,0}/G'(F)_{x,0+},\quad \widetilde{\mathsf{G}}_x(k)=G'(F)_x/G'(F)_{x,0+}.
    \]
     Choose a chamber $\mathcal{C}'\subset \mathcal{A}(T',G')$ adjacent to $x$, i.e.\ $x\in \overline{\mathcal{C}}'$. Then we have $(\pi_1(G)_{I_F})^\mathsf{Fr}=(\pi_1(G')_{I_F})^{\mathsf{Fr}'}\cong N_{G'}(T')(F)_{\mathcal{C}'}/T'(F)_0$ and it acts on the set of vertices adjacent to $\mathcal{C}'$. We have a canonical exact sequence
    \[
        1\to \mathsf{G}_x(k)\to \widetilde{\mathsf{G}}_x(k)\to (\pi_1(G)_{I_F})^\mathsf{Fr}_x\to 1,
    \]
    where the subscript $x$ means the stabilizer of $x$. We similarly define $\mathsf{H}_x,\ \widetilde{\mathsf{H}}_x$ and obtain
    \[
        1\to\mathsf{H}_x(k)\to\widetilde{\mathsf{H}}_x(k)\to \pi_1(H)^\mathsf{Fr}_x\to 1.
    \]
    Since the Weyl group of $\mathsf{G}_x$ and $\mathsf{H}_x$ are both $W_x$ and the actions of $\mathsf{Fr}$ and $(\pi_1(G)_{I_F})_x^\mathsf{Fr}=\pi_1(H)_x^\mathsf{Fr}$ on it coincide, we can apply \zcref{lem:equiv_of_unip_rep_when_weyl_same} and obtain a canonical bijection
    \[
        \Irr(\widetilde{\mathsf{G}}_x(k))_\mathrm{uc}\leftrightarrow\Irr(\widetilde{\mathsf{H}}_x(k))_\mathrm{uc}.
    \]
    Any $\pi\in \Irr(G'(F))_{\mathrm{uc}}$ can be described as
    \[
        \pi=\cind_{G'(F)_x}^{G'(F)}\sigma
    \]
    for a vertex $x\in \mathcal{A}(T',G')$ and $\sigma\in \Irr(\widetilde{\mathsf{G}}_x(k))_{\mathrm{uc}}$. Moreover the $G'(F)$-conjugacy class of $(x,\sigma)$ is uniquely determined by $\pi$ itself. Since the same holds for $H'$, the above bijection extends to
    \[
        \Irr(G'(F))_{\mathrm{uc}}\leftrightarrow\Irr(H'(F))_{\mathrm{uc}}.
    \]
\end{proof}

Now we consider the Galois side. An inclusion $Z(\widehat{H}_\mathrm{sc})=Z(\widehat{G}_\mathrm{sc})^{I_F}\subset Z(\widehat{G}_\mathrm{sc})$ induces a surjection
\[
    H^1(u\to\mathcal{W},Z(G_\mathrm{sc})\to G_\mathrm{sc})\twoheadrightarrow H^1(u\to\mathcal{W},Z(H_\mathrm{sc})\to H_\mathrm{sc}),\ [z]\mapsto [z]_H
\]
which is a lift of the canonical bijection $H^1(F,G_\mathrm{ad})\cong H^1(F,H_\mathrm{ad})$. 

\begin{lemma}\label{lem:bij_enh_Lpar}
    There exists a bijection
    \[
        \Phi_\mathrm{e}(H;[z]_H)_{\mathrm{uc}}\to \Phi_\mathrm{e}(G;[z])_{\mathrm{uc}}.
    \]
\end{lemma}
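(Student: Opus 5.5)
The plan is to construct the bijection on L-parameters first, then on enhancements, and only then match the relevance conditions. A unipotent L-parameter of $\Ell{G}$ is, up to $\widehat{G}$-conjugacy, trivial on $I_F$. It therefore factors through $\widehat{G}^{I_F}\rtimes W_F/I_F$. Via the fixed $\Gamma_F$-equivariant isomorphism $\widehat{H}\cong\widehat{G}^{I_F}$, it becomes an unramified L-parameter $\phi_H$ of $\Ell{H}$. Conversely, composing an unramified $\phi_H$ with $\Ell{H}=\widehat{G}^{I_F}\rtimes W_F\hookrightarrow \Ell{G}$ gives a unipotent $\phi$.

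Next I must check that this induces a bijection on conjugacy classes. Injectivity on the $G$-side amounts to the statement that two $I_F$-trivial parameters conjugate under $\widehat{G}$ are conjugate under $\widehat{G}^{I_F}$. I would argue this as in \cite{Solleveld_unip_for_ramified}. If $g\phi g^{-1}=\phi'$ with both parameters trivial on $I_F$, then for $w\in I_F$ the element $g^{-1}w(g)$ is centralized appropriately. Since $\widehat{G}$ is adjoint or simply connected, the fixed-point group $\widehat{G}^{I_F}$ is connected. A standard argument on the torsor $\{g\in\widehat{G} : g\phi g^{-1}=\phi'\}$ under the finite group $I_F/I_E$ (Steinberg/Lang-type vanishing for connected groups, or directly \cite[Lemma]{Solleveld_unip_for_ramified}) then lets me replace $g$ by an element of $\widehat{G}^{I_F}$.

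For the enhancements, I would show that $Z_{\widehat{G}}(\phi)=Z_{\widehat{H}}(\phi_H)$ when $\widehat{G}$ is adjoint; this is immediate because $\phi$ is trivial on $I_F$. In the simply-connected case $G=G_\mathrm{sc}$, we have $\widehat{\overline{G}}=\widehat{G}_\mathrm{sc}$ and $\widehat{\overline{H}}=\widehat{H}_\mathrm{sc}=(\widehat{G}_\mathrm{sc})^{I_F,\circ}$-cover. By the computation in the proof of the lemma preceding \zcref{lem:pairing_kottwitz_triv}, we have
\[
    Z^+_{\widehat{\overline{G}}}(\phi)=Z^+_{\widehat{\overline{G}}^{I_F,\circ}}(\phi)\cdot\widehat{Z}.
\]
This yields a homomorphism $\mathcal{S}^+_{\phi_H}\to\mathcal{S}^+_{\phi}$, and together with central translation by $\pi_0\widehat{Z}$ it is surjective. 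Its kernel meets the center. Pulling back irreducible representations along this map gives an injection $\Irr(\mathcal{S}^+_\phi)\hookrightarrow\Irr(\mathcal{S}^+_{\phi_H})$ whose image consists of those $\rho_H$ whose central character on $\widehat{Z}^{I_F}$ extends compatibly. Fixing the central character $\omega=\langle[z],\text{--}\rangle$ on the $G$-side and its restriction $\langle[z]_H,\text{--}\rangle$ on $\widehat{Z}^{I_F}$, the irreducibles of $\mathcal{S}^+_\phi$ with character $\omega$ correspond bijectively to those of $\mathcal{S}^+_{\phi_H}$ with character $\omega|_{\widehat{Z}^{I_F}}$. This holds because $\mathcal{S}^+_\phi$ is generated by the image of $\mathcal{S}^+_{\phi_H}$ and the central $\pi_0\widehat{Z}$, and the character prescribed on $\widehat{Z}$ is fixed by $[z]$. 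I would verify that the definition of $[z]\mapsto[z]_H$ via $Z(\widehat{H}_\mathrm{sc})\subset Z(\widehat{G}_\mathrm{sc})$ is exactly this restriction of pairings.

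Finally, cuspidality in the sense of \cite[Definition 6.9]{AMS} must be preserved. I plan to use that cuspidality is detected by the quasi-Levi $Z_{\widehat{G}}(Z(\mathcal{S}^+_\phi)^\circ)$ being all of $\widehat{G}$ together with the generalized Springer condition on $Z^+(\phi|_{W_F})^\circ$. Because $\phi$ is $I_F$-trivial, both $Z_{\widehat{G}}(\phi|_{W_F})$ and its identity component coincide with the $H$-side objects, and the unipotent element $\phi(\SL_2)$ lies in $\widehat{H}$. I expect this comparison, and in particular that the Springer-cuspidality datum sees only $Z^\circ$, to be the main obstacle. If the AMS cuspidality cannot be transported directly, I will fall back on \cite{Solleveld_unip_for_ramified}, where this exact comparison is made.
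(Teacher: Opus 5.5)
Your proposal is correct and follows the paper's own argument. You view $\phi$ in $\Ell{G}$ via $\Ell{H}\hookrightarrow\Ell{G}$, use $Z_{\widehat{G}}(\phi)=Z_{\widehat{H}}(\phi)$ (because $\widehat{H}=Z_{\widehat{G}}(\phi|_{I_F})$) to write the $+$-centralizer as its $H$-part times the central subgroup $\widehat{Z}$, and match enhancements by their prescribed central characters; the paper treats the conjugacy-class and cuspidality bookkeeping you spell out as clear.

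The torsor/Lang-type argument for injectivity on conjugacy classes is not needed. If $g(1\rtimes w)g^{-1}=1\rtimes w$ for all $w\in I_F$, then $w(g)=g$, so $g\in\widehat{G}^{I_F}=\widehat{H}$ directly.
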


\begin{proof}
    Take $[\phi,\rho_H]\in \Phi_\mathrm{e}(H;[z]_H)_{\mathrm{uc}}$. Through the canonical inclusion $\Ell{H}\hookrightarrow \Ell{G}$ we can see $\phi$ as a unipotent L-parameter of $\Ell{G}$. We have
    \[
        Z_{\widehat{G}_\mathrm{sc}}^+(\phi)=Z_{\widehat{H}_\mathrm{sc}}^+(\phi)\cdot Z(\widehat{G}_\mathrm{sc})^+.
    \]
    Indeed, we have $Z_{\widehat{G}}(\phi)=Z_{\widehat{H}}(\phi)$ because $\widehat{H}=Z_{\widehat{G}}(\phi|_{I_F})$.
    Moreover $Z^+_{\widehat{H}_\mathrm{sc}}(\phi)\cap Z(\widehat{G}_\mathrm{sc})^+=Z(\widehat{H}_\mathrm{sc})^+$. Since $\zeta_{[z]}\in \Irr(Z(\widehat{G}_\mathrm{sc})^+)$ is an extension of $\zeta_{[z]_H}\in\Irr(Z(\widehat{H}_\mathrm{sc})^+)$, $\rho_H$ must extend uniquely to an irreducible representation $\rho$ of $Z^+_{\widehat{G}_\mathrm{sc}}(\phi)$ relevant to $[z]$. Now we obtain a map $\Phi_\mathrm{e}(H;[z]_H)_{\mathrm{uc}}\to \Phi_\mathrm{e}(G;[z])_{\mathrm{uc}}$ as we assign $[\phi,\rho_H]\mapsto [\phi,\rho]$, which is clearly bijective.
\end{proof}

\begin{proposition}\label{prop:red_to_comp_adj}
    A UC-parameterization $\LLC_{(H'_\mathrm{ad},\xi_H)}$ for $(H'_\mathrm{ad},\xi_H)$ induces a UC-parameterization $\LLC_{(G'_\mathrm{ad},\xi)}$ for $(G'_\mathrm{ad},\xi)$ through the bijections in \zcref{prop:bij_irrep} and \zcref{lem:bij_enh_Lpar}.
    Moreover, if $\LLC_{(H'_\mathrm{ad},\xi_H)}$ satisfies \zcref{cond:compatibility_char_twist,cond:equivariance_adj}, then so does $\LLC_{(G'_\mathrm{ad},\xi)}$.
\end{proposition}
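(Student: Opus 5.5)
The plan is to define $\LLC_{(G'_\mathrm{ad},\xi)}$ as the composite
\[
    \Irr(G'_\mathrm{ad}(F))_{\mathrm{uc}}\xrightarrow{\pi\mapsto\pi_H}\Irr(H'_\mathrm{ad}(F))_{\mathrm{uc}}\xrightarrow{\LLC_{(H'_\mathrm{ad},\xi_H)}}\Phi_\mathrm{e}(H_\mathrm{ad})_{\mathrm{uc}}\to\Phi_\mathrm{e}(G_\mathrm{ad})_{\mathrm{uc}},
\]
where the first map is \zcref{prop:bij_irrep} and the last is \zcref{lem:bij_enh_Lpar} with trivial rigidification data. The adjoint case is the simpler one: $Z$ is trivial and $\mathcal{S}_\phi$ is the same for $H$ and $G$. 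This is because $Z_{\widehat{G}}(\phi)=Z_{\widehat{H}}(\phi)$ for unipotent $\phi$, since $\widehat{H}=\widehat{G}^{I_F}$ is the centralizer of $\phi|_{I_F}$ (trivial up to conjugation). The composite is a bijection because each map is. Relevance is automatic. We choose the twists so that $\kappa_G(\overline{z}(\mathsf{Fr}))=\kappa_H(\overline{z}_H(\mathsf{Fr}))$, and the restriction of $\rho$ to $\pi_0 Z(\widehat{G}_\mathrm{sc})^+$ is then governed by this common Kottwitz invariant, which sees only $Z(\widehat{G}_\mathrm{sc})^{I_F}$ on the relevant part. This gives the first assertion.

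For \zcref{cond:compatibility_char_twist}, the relevant characters are the weakly unramified ones. These factor through $\kappa\colon G'_\mathrm{ad}(F)\to(\pi_1(G)_{I_F})^{\mathsf{Fr}}$, and likewise for $H'_\mathrm{ad}$ with the same target $\pi_1(H)^{\mathsf{Fr}}=(\pi_1(G)_{I_F})^{\mathsf{Fr}}$. So characters $\chi\leftrightarrow\chi_H$ correspond canonically. On the dual side they correspond to the same class in $H^1(W_F,Z(\widehat{G})^{I_F})$ unramified, and via $Z(\widehat{H})=Z(\widehat{G})^{I_F}$ we get $\phi_\chi=\phi_{\chi_H}$. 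I would then check that $(\chi\cdot\pi)_H=\chi_H\cdot\pi_H$. Write $\pi=\cind_{G'(F)_x}^{G'(F)}\sigma$. Then $\chi\cdot\pi$ is induced from $\chi|_{G'(F)_x}\otimes\sigma$, and $\chi|_{G'(F)_x}$ is inflated from a character of $\widetilde{\mathsf{G}}_x(k)/\mathsf{G}_x(k)=(\pi_1(G)_{I_F})^{\mathsf{Fr}}_x$. The equivalence in \zcref{lem:equiv_of_unip_rep_when_weyl_same} is built from \zcref{cor:equiv_to_equivariant_obj}, that is, as $\Omega$-equivariant objects with $\Omega=(\pi_1)_x^{\mathsf{Fr}}$ identified on both sides. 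Twisting by a character of $\Omega$ changes only the equivariant structure, so the equivalence commutes with it. Combined with \zcref{cond:compatibility_char_twist} for $H$ and the identification of parameters, this gives the condition for $G$.

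For \zcref{cond:equivariance_adj}, let $\epsilon\in\Out(G)$ fix $[\overline{z}]$. On the Galois side, $\epsilon$ acts on $\widehat{G}$ preserving the pinning and commuting with $\Gamma_F$. Hence it preserves $\widehat{G}^{I_F}=\widehat{H}$ and defines $\epsilon_H\in\Out(H)$ fixing $[\overline{z}_H]$, because the Kottwitz invariants match. The bijection of \zcref{lem:bij_enh_Lpar} is visibly $\epsilon$-equivariant. On the group side, take $\epsilon'$ stabilizing $T'$ and, after conjugating, a chamber $\mathcal{C}'$. Then $\epsilon'$ acts on $\mathcal{A}(T',G')$ through an element of $N(F^\ur)/T(F^\ur)_0\rtimes\langle\epsilon\rangle$ compatibly with the action of $\epsilon_H'$ on $\mathcal{A}(T'_H,H')$, using \zcref{lem:isom_ext_weyl} and its evident $\epsilon$-equivariance, since both isomorphisms there are built from pinning-fixed data. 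So $\epsilon'$ and $\epsilon'_H$ send $x$ to the same vertex $\epsilon(x)$ and induce compatible isomorphisms $\widetilde{\mathsf{G}}_x\to\widetilde{\mathsf{G}}_{\epsilon x}$, $\widetilde{\mathsf{H}}_x\to\widetilde{\mathsf{H}}_{\epsilon x}$, which act identically on $W_x\to W_{\epsilon x}$ and on $\Omega$.

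The main obstacle is to check that the canonical equivalence of \zcref{lem:equiv_of_unip_rep_when_weyl_same} intertwines $\epsilon'^\ast$ with $\epsilon_H'^\ast$. These induced maps on reductive quotients are determined only up to inner automorphisms and elements of $\widetilde{\mathsf{G}}_x(k)$, which act trivially on isomorphism classes. So the question reduces to naturality of Lusztig's equivalence $\Rep^\mathbf{c}\simeq\mathcal{Z}^\mathbf{c}_\epsilon$ under isomorphisms of Coxeter data, twisted by Frobenius-compatible diagram automorphisms. I would argue as in \zcref{lem:equiv_of_unip_rep_when_weyl_same}: this naturality is exactly the canonicity already used there for $\Out(\mathsf{G})$-equivariance. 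Where a pinning mismatch appears, I would absorb it using \zcref{lem:Frob_stabilize_all_unip}. Then $(\epsilon'^\ast\pi)_H=\epsilon_H'^\ast\pi_H$, and \zcref{cond:equivariance_adj} for $H$ transfers to $G$.
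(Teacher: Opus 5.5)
Your proposal is correct and follows essentially the same route as the paper. The paper likewise treats bijectivity and \zcref{cond:compatibility_char_twist} as clear. It proves \zcref{cond:equivariance_adj} by choosing $n\in N(F^\ur)_\mathcal{C}$ and $n_H\in N_H(F^\ur)_\mathcal{C}$ with $\overline{z}={}^{n}\epsilon(\overline{z})$, $\overline{z}_H={}^{n_H}\epsilon(\overline{z}_H)$ and $\kappa_G(n)=\kappa_H(n_H)$. Then $\epsilon'_n$ and $\epsilon'_{n_H}$ both stabilize $\mathcal{C}'$ and act identically on the affine Dynkin diagrams, and the paper concludes from the canonicity of $\Irr(\widetilde{\mathsf{G}}_x(k))_\mathrm{uc}\leftrightarrow\Irr(\widetilde{\mathsf{H}}_x(k))_\mathrm{uc}$ for $x\in\overline{\mathcal{C}}'$.

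Your ``compatible choice of $\epsilon'_H$ via \zcref{lem:isom_ext_weyl}'' is exactly this matching of Kottwitz invariants. It would be clearer to state it in that explicit form.
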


\begin{proof}
    We only show the equivariance (\zcref{cond:equivariance_adj}) as the remaining are clear. Let $\epsilon \in \Out(G)$ stabilize $[\overline{z}]$ (and thus $[\overline{z}_H]$). We can take $n\in N(F^\ur)_\mathcal{C}$ such that $\overline{z}={}^{n}\epsilon(\overline{z})$. Let $n_H\in N_H(F^\ur)_\mathcal{C}$ be such that $\overline{z}_H={}^{n_H} \epsilon(\overline{z}_H)$ and that $\kappa_G(n)=\kappa_H(n_H)$ in $\pi_1(G_\mathrm{ad})_{I_F}=\pi_1(H_\mathrm{ad})$. Then both of $\epsilon_{n}'$ and $\epsilon'_{n_H}$ stabilize the chamber $\mathcal{C}'=\mathcal{C}\cap \mathcal{A}(T'_\mathrm{ad},G'_\mathrm{ad})$ and the action on the affine Dynkin diagrams coincide. Now the equivariance follows from the canonicity of bijections
    \[
        \Irr(\widetilde{\mathsf{G}}_x(k))_\mathrm{uc}\leftrightarrow\Irr(\widetilde{\mathsf{H}}_x(k))_\mathrm{uc}
    \]
    for vertices $x\in \overline{\mathcal{C}}'$.
\end{proof}

\begin{proposition}\label{prop:reduction_to_companion_sc}
    Let $z\in Z^1(u\to\mathcal{W},Z(G_\mathrm{sc})\to G_\mathrm{sc})$ and $z_H\in Z^1(u\to\mathcal{W},Z(H_\mathrm{sc})\to H_\mathrm{sc})$ be the rigidifications of $\overline{z}$ and $\overline{z}_H$ as in \zcref{prop:calc_delta_z}. Then a UC-parameterization $\LLC_{(H'_\mathrm{sc},\xi_H,z_H)}$ for $(H'_\mathrm{sc},\xi_H,z_H)$ induces a UC-parameterization $\LLC_{(G'_\mathrm{sc},\xi,z)}$ for $(G'_\mathrm{sc},\xi,z)$ through the bijections \zcref{prop:bij_irrep} and \zcref{lem:bij_enh_Lpar}. Moreover, if $\LLC_{(H'_\mathrm{sc},\xi_H,z_H)}$ satisfies \zcref{cond:equivariance_sc}, then so does $\LLC_{(G'_\mathrm{sc},\xi,z)}$.
\end{proposition}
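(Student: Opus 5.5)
First, the induced map is a bijection. Compose the inverse of the bijection $\Irr(G'_\mathrm{sc}(F))_{\mathrm{uc}}\leftrightarrow\Irr(H'_\mathrm{sc}(F))_{\mathrm{uc}}$ of \zcref{prop:bij_irrep}, the given $\LLC_{(H'_\mathrm{sc},\xi_H,z_H)}$, and the bijection $\Phi_\mathrm{e}(H_\mathrm{sc};[z_H])_{\mathrm{uc}}\to\Phi_\mathrm{e}(G_\mathrm{sc};[z])_{\mathrm{uc}}$ of \zcref{lem:bij_enh_Lpar}. For \zcref{lem:bij_enh_Lpar} to apply we need $[z]_H=[z_H]$ in $H^1(u\to\mathcal{W},Z(H_\mathrm{sc})\to H_\mathrm{sc})$. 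Both rigidify the same adjoint class, because $\kappa_G(\overline{z}(\mathsf{Fr}))=\kappa_H(\overline{z}_H(\mathsf{Fr}))=\omega$. By \zcref{prop:calc_delta_z}(1), both characters on $\widehat{Z}^{I_F}=Z(\widehat{H}_\mathrm{sc})$ equal $\langle\omega,\text{--}\rangle$. Since $[z]\mapsto [z]_H$ is restriction of the character to $\widehat{Z}^{I_F}$, this gives $[z]_H=[z_H]$. The composite is then a UC-parameterization for $(G'_\mathrm{sc},\xi,z)$.

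Next, \zcref{cond:equivariance_sc}. Take $g\rtimes\epsilon\in(G_\mathrm{ad}\rtimes\epsilon)^{\overline{z}}$. Following the proof of \zcref{prop:red_to_comp_adj}, we may take $g=n\in N_\mathrm{ad}(F^\ur)_\mathcal{C}$, i.e.\ $\epsilon'_n$ stabilizes $T'$ and the chamber $\mathcal{C}'$. A general $g$ differs from such an $n$ by an element $g_0\in G_\mathrm{ad}^{\overline{z}}$, i.e.\ by $\xi(g_0)\in G'_\mathrm{ad}(F)$. For this adjoint-action part, the $\epsilon=\id$ case (the corollary after \zcref{lem:pairing_kottwitz_triv}, combined with the formula $\delta_z(g_0g\rtimes\epsilon)=\epsilon^{-1}(\delta_z(g_0))\delta_z(g\rtimes\epsilon)$) reduces us to the chamber-stabilizing representatives. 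The step for $\xi(g_0)$ is also compatible with the bijections: conjugation by $G'_\mathrm{ad}(F)$ is detected via the Kottwitz map on the extended affine Weyl groups, which \zcref{lem:isom_ext_weyl} identifies. Choose $n_H\in N_{H,\mathrm{ad}}(F^\ur)_\mathcal{C}$ with $\overline{z}_H={}^{n_H}\epsilon(\overline{z}_H)$ and $\kappa_G(n)=\kappa_H(n_H)$. Then $\epsilon'_n$ and $\epsilon'_{n_H}$ act identically on the common affine Dynkin diagram and on $(\pi_1)^{\mathsf{Fr}}_x$. The canonicity in \zcref{lem:equiv_of_unip_rep_when_weyl_same} then shows that $\pi\mapsto\pi_H$ intertwines $\epsilon'^\ast_n$ with $\epsilon'^\ast_{n_H}$. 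On the Galois side, $\phi\mapsto\epsilon^{-1}\phi$ commutes with $\Ell{H}\hookrightarrow\Ell{G}$, because $\epsilon$ preserves the pinning and commutes with $I_F$. Hence the extension $\rho_H\mapsto\rho$ of \zcref{lem:bij_enh_Lpar} commutes with $\epsilon^\ast$.

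It remains to match the twisting characters $\langle\delta_z(n\rtimes\epsilon),\delta_{\epsilon^{-1}\phi}(\text{--})\rangle$ and $\langle\delta_{z_H}(n_H\rtimes\epsilon),\delta_{\epsilon^{-1}\phi}(\text{--})\rangle$. By the lemma preceding \zcref{lem:pairing_kottwitz_triv}, $Z^+_{\widehat{G}_\mathrm{sc}}(\phi)=Z^+_{\widehat{H}_\mathrm{sc}}(\phi)\cdot Z(\widehat{G}_\mathrm{sc})^+$. On $Z(\widehat{G}_\mathrm{sc})^+$ both sides are determined by relevance to $[z]$, and $\epsilon$ stabilizes this central character. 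So it suffices to compare on $Z^+_{\widehat{H}_\mathrm{sc}}(\phi)$, where $\delta_\phi$ lands in $Z^1(F^\ur/F,\widehat{Z}^{I_F})$. There \zcref{prop:calc_delta_z}(2) gives the values $\langle\epsilon^{-1}\kappa_G(n),\psi(\mathsf{Fr}^{-1})\rangle$ and $\langle\epsilon^{-1}\kappa_H(n_H),\psi(\mathsf{Fr}^{-1})\rangle$, which agree since $\kappa_G(n)=\kappa_H(n_H)$ under $\pi_1(G_\mathrm{ad})_{I_F}=\pi_1(H_\mathrm{ad})$. Hence the equivariance for $H$ transports to $G$.

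I expect the main obstacle to be this last matching. One must check that the two $\delta_\phi$'s are compatible when $\psi$ is viewed inside $Z^1(F,\widehat{Z})$ versus $Z^1(F,\widehat{Z}^{I_F})$. One must also handle the contribution from $B^1(F,\widehat{Z})$, which depends only on $\epsilon$ through $z|_{u}$. I would check this directly using the computation of $\langle\delta_z(g\rtimes\epsilon),\text{--}\rangle|_{B^1}$ given before \zcref{lem:pairing_kottwitz_triv}. This part is absorbed into the central character, so it should be consistent.
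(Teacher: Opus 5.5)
Your proposal is correct and is essentially the paper's argument, spelled out in more detail. \zcref{prop:calc_delta_z}(1) gives $[z]_H=[z_H]$, so that \zcref{lem:bij_enh_Lpar} applies. The group side is transported exactly as in \zcref{prop:red_to_comp_adj}, using representatives $n,n_H$ with $\kappa_G(n)=\kappa_H(n_H)$. Finally, \zcref{prop:calc_delta_z}(2) matches the twists by $\delta_z$ and $\delta_{z_H}$ on $Z^+_{\widehat{H}_\mathrm{sc}}(\phi)$.

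One small correction: $\epsilon$ need not fix the central character $\langle[z],\text{--}\rangle$ on $Z(\widehat{G}_\mathrm{sc})$. What happens instead is that the restriction of $\langle\delta_z(n\rtimes\epsilon),\delta_{\epsilon^{-1}\phi}(\text{--})\rangle$ to the center exactly compensates for $\epsilon^\ast$; this is the $B^1(F,\widehat{Z})$ computation preceding \zcref{lem:pairing_kottwitz_triv}. That compensation is why both sides are $[z]$-relevant, and it is the check you already flag at the end.
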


\begin{proof}
    The first condition of \zcref{prop:calc_delta_z} verifies that $[z_H]=[z]_H$ and we can apply \zcref{lem:bij_enh_Lpar}. The equivariance is also proved in a similar way as \zcref{prop:red_to_comp_adj}, but we need the second condition of \zcref{prop:calc_delta_z} to compare the character twists by $\delta_z$ and $\delta_{z_H}$.
\end{proof}

\subsection{Split groups}

Now we suppose that $G=G_\mathrm{ad}$ or $G_\mathrm{sc}$ is a split group. In this case, we can use the result of $\cite{FOS}$ on the equivariance. Fix an $F$-splitting $(B,T,\{X_\alpha\}_\alpha)$ of $G$ which determines a standard chamber $\mathcal{C}\subset\mathcal{A}^\ur(T,G)$. Let $\xi\colon G\to G'$ be an inner twist defined over $F^\ur$ with the associated 1-cocycle $\overline{z}$ valued in $N_\mathrm{ad}(F^\ur)_\mathcal{C}=N_{G_\mathrm{ad}}(T)(F^\ur)\cap G(F^\ur)_\mathcal{C}$. Suppose that $\epsilon\in \Out(G)$ fixes $[\overline{z}]\in H^1(F,G_\mathrm{ad})$ and take $n\in N_\mathrm{ad}(F^\ur)_\mathcal{C}$ such that $\overline{z}={}^{n}\epsilon(\overline{z})$. Since $G$ is split, $\mathsf{Fr}$ acts on $\pi_1(G_\mathrm{ad})$ trivially. Thus $\kappa(\overline{z})$ must coincide with $\epsilon(\kappa(\overline{z}))$ and we can choose $n$ such that $\kappa_G(n)=1$. Then \cite{FOS} shows the following:

\begin{theorem}[{\cite[Theorem 2(3) and (1.13)]{FOS}}]
    We take a rigidification $z\in Z^1(u\to\mathcal{W},Z(G)\to G)$ of $\overline{z}$.
    Then there exists a UC-parameterization $\pi\mapsto[\phi_\pi,\rho_\pi]$ 
    for $(G',\xi,z)$ 
    which satisfies $\phi_{\epsilon_n'^\ast\pi}=\epsilon^{-1}(\phi_\pi)$ and $\rho_{\epsilon_n'^\ast\pi}=(\epsilon^{-1})^\ast\rho_\pi$.
\end{theorem}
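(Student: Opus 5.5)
This statement is quoted from \cite{FOS}, so the plan is to recall how the UC-parameterization there is built and check that each step commutes with $\epsilon'_n$ and $\epsilon$.

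\textbf{Group side.} Since $n\in N_\mathrm{ad}(F^\ur)_\mathcal{C}$ and $\kappa_G(n)=1$, the automorphism $\epsilon'_n=\xi\circ\Ad(n)\circ\epsilon\circ\xi^{-1}$ stabilizes the chamber $\mathcal{C}'=\mathcal{C}\cap\mathcal{A}(T',G')$. It acts on the affine Dynkin diagram of $G'$ through the diagram automorphism $\epsilon$ alone, with no rotation by $\pi_1(G_\mathrm{ad})$. Every $\pi\in\Irr(G'(F))_{\mathrm{uc}}$ is $\cind_{G'(F)_x}^{G'(F)}\sigma$ for a vertex $x\in\overline{\mathcal{C}}'$ and $\sigma\in\Irr(\widetilde{\mathsf{G}}_x(k))_{\mathrm{uc}}$. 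Hence $\epsilon'^\ast_n\pi$ corresponds to the pair $(\epsilon^{-1}x,\epsilon^\ast\sigma)$. In \cite{FOS}, $\phi_\pi$ is obtained from the affine Hecke algebra of the Bernstein block through the Kazhdan--Lusztig/Lusztig geometric classification. The unipotent cuspidal $\sigma$ matches a cuspidal local system on a nilpotent orbit of a pseudo-Levi subgroup of $\widehat{G}$, built from the dual affine diagram. That matching is canonical, so it intertwines $\epsilon$ with $\widehat{\epsilon}^{-1}$; this is \cite[(1.13)]{FOS}.

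\textbf{Galois side.} First I would show $\phi_{\epsilon'^\ast_n\pi}=\epsilon^{-1}\phi_\pi$ by transporting the Hecke algebra isomorphism of \cite{FOS} along $\epsilon$. Both sides are equivariant for diagram automorphisms: on the group side the Iwahori--Matsumoto presentation is attached to the affine Dynkin diagram, and on the dual side the graded Hecke algebras come from cuspidal supports. Next I would treat the enhancement. Since $\kappa_G(n)=1$, the twisting character $\langle\delta_z(n\rtimes\epsilon),\delta_\phi(\text{--})\rangle$ is trivial on $Z^1(F^\ur/F,\widehat{Z}^{I_F})$ by \zcref{prop:calc_delta_z}. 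On $B^1$ it depends only on $\epsilon$, so it is absorbed by choosing $z$ as in that proposition. What remains is $\rho_{\epsilon'^\ast_n\pi}=(\epsilon^{-1})^\ast\rho_\pi$. By the construction in \cite[Theorem 2]{FOS}, $\rho_\pi$ is read off from the cuspidal pair $(\mathcal{O},\mathcal{E})$ under the generalized Springer correspondence for $Z_{\widehat{G}_\mathrm{sc}}(\phi(\mathsf{Fr}))$. The same correspondence is equivariant under pinned automorphisms.

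\textbf{Main obstacle.} The hardest step is the \emph{canonicity} of the matching between unipotent cuspidal representations of $\widetilde{\mathsf{G}}_x(k)$ and cuspidal enhanced parameters. It must commute with $\epsilon$ even when $\epsilon$ fixes $x$ but acts nontrivially on $\mathsf{G}_x$. This bites when there are several cuspidal unipotent representations with the same parameter, as for exceptional groups, e.g. the two cuspidals of $E_7$ differing by $\pm\sqrt{-q}$ eigenvalues. There I would use \zcref{lem:Frob_stabilize_all_unip}: when $\epsilon$ induces an inner-type action, every unipotent representation is $\epsilon$-stable, and the Galois side likewise has $\epsilon^\ast\rho=\rho$. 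In the remaining cases, $\epsilon$ is a nontrivial diagram automorphism of type $A$, $D$ or $E_6$ moving $x$. The required identity is then forced because both sides are determined by the position of $x$ and by $\epsilon$-stable data. This would be checked case by case following the tables of \cite{FOS}.
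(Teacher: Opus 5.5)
The paper gives no proof of this statement; it is imported from \cite{FOS}. The paper only uses it downstream: with $\kappa_G(n)=1$ and $z$ chosen as in \zcref{prop:calc_delta_z}, the twisting character $\langle\delta_z(n\rtimes\epsilon),\delta_\phi(\text{--})\rangle$ in \zcref{cond:equivariance_sc} is trivial, so this theorem yields \zcref{cond:equivariance_sc}. Your paragraph on that twisting character is that downstream deduction, not a step toward the statement. The statement itself involves no twist and is made for an arbitrary rigidification $z$, so there is nothing to ``absorb''.

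Where your sketch goes beyond the citation, it has a genuine gap, and it sits exactly at the content of the theorem: the existence of an $\epsilon$-equivariant matching between pairs $(x,\widetilde\sigma)$ and cuspidal enhanced parameters.

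\textbf{Canonicity.} The matching coming from Lusztig's classification, which \cite{FOS} refine, is not canonical in general; note that the statement only asserts that \emph{some} UC-parameterization is equivariant. So ``the matching is canonical, hence intertwines $\epsilon$ with $\widehat\epsilon^{-1}$'' and ``transport the Hecke algebra isomorphism along $\epsilon$'' assume what is to be proved.

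\textbf{The fallback case analysis.} It does not repair this, for two reasons.
\begin{enumerate}
\item \zcref{lem:Frob_stabilize_all_unip} concerns only the particular automorphism by which the Frobenius of $\mathsf{G}_x$ is twisted from the split one, and it requires trivial action on $\Omega$. It says nothing about the automorphism induced by $\epsilon'_n$.
\item The dichotomy ``$\epsilon'_n$ acts on $\widetilde{\mathsf{G}}_x$ in an inner way, or it moves $x$'' is false. Take $G'=G$ split $\mathrm{PSO}_{16}$, $\epsilon=\tau$ the diagram automorphism (so $\epsilon'_n=\tau$), and $x$ the vertex of $\mathcal{C}$ with $\mathsf{G}_x$ of type $D_4\times D_4$. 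Then $\tau$ fixes $x$, but $\Omega_x=\pi_1(G)\cong(\Z/2\Z)^2$ and $\tau$ swaps its two half-spin elements. Hence $\tau$ permutes the four extensions of the unique cuspidal unipotent $\sigma_1\boxtimes\sigma_1$ of $\mathsf{G}_x(k)$ by an affine map with nontrivial linear part. It therefore moves at least two of the resulting supercuspidals. Compare the lemma in \zcref{ssec:partial_corr}, which only claims that $\tau$ fixes \emph{at least one} extension.
\end{enumerate}
So ``the Galois side likewise has $\epsilon^\ast\rho=\rho$'' is wrong in this example. What must be shown is that $\widehat\tau$ permutes the corresponding enhancements in exactly the same way. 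Equivariance under weakly unramified twists reduces this to one base point per class. But matching a $\tau$-fixed base point with a $\widehat\tau$-fixed enhanced parameter still needs the explicit analysis of \cite{FOS}.

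In the end your argument rests on the citation just as the paper's does, and the reasoning you add around it is not valid as stated. Your $E_7$ example is also beside the point: $\Out(E_7)=1$, so $\epsilon$ is trivial there.
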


When $G$ is adjoint, \zcref{cond:equivariance_adj} follows from this theorem. When $G$ is simply connected, the triviality of $\kappa_G(n)$ verifies \zcref{cond:equivariance_sc} for non-trivial $\epsilon$ when we choose the rigidification $z$ as in \zcref{prop:calc_delta_z}. Also, by the construction in \cite[Section 14]{FOS} we see that \zcref{cond:functoriality} holds for the covering $G_\mathrm{sc}\to G_\mathrm{ad}$.

\subsection{Non-split unramified groups}

Next we suppose that $G$ is unramified but not split. We may assume that $G$ is $F$-simple. 
The Frobenius action on $G$ differs from that on the split form of $G$ by a nontrivial outer automorphism $\epsilon\in\Out(G)$. We have:
\begin{equation}
    \Out(G)\cong\begin{cases}
        \Out(H)\times\langle \epsilon\rangle&\text{if $G=\Res_{E/F} H$ and $H$ is split},\\
        \langle \epsilon\rangle&\text{otherwise}.
    \end{cases}\label{eq:out_of_unramified_G}
\end{equation}
We check the equivariance with respect to $\epsilon$. Remark that $H^1(F,G_\mathrm{ad})=\pi_1(G_\mathrm{ad})_\epsilon=\pi_1(G_\mathrm{ad})/(1-\epsilon)\pi_1(G_\mathrm{ad})$, so $\epsilon$ stabilizes all isomorphism classes of inner twists. In the adjoint case, we have:

\begin{proposition}\label{prop:equiv_epsilon_adj}
    Suppose $G=G_\mathrm{ad}$ is adjoint. For any inner twist $(G',\xi)$ of $G$ with the associated 1-cocycle $\overline{z}$, the automorphism $\epsilon$ acts trivially on both $\Irr(G'(F))_{\mathrm{uc}}$ and $\Phi_\mathrm{e}(G;[\overline{z}])$. In particular, any UC-parameterization for $(G',\xi)$ satisfies \zcref{cond:equivariance_adj} with respect to $\epsilon$.
\end{proposition}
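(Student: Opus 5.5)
The plan is to treat the group side and the Galois side separately, and in each case to reduce to statements about Frobenius-twisted objects. The Frobenius of $G$ itself is twisted from the split form by $\epsilon$.

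\emph{Group side.} Since $H^1(F,G_\mathrm{ad})=\pi_1(G_\mathrm{ad})_\epsilon$, as in the discussion above we may take $\xi$ defined over $F^\ur$ with $\overline{z}$ valued in $N_\mathrm{ad}(F^\ur)_\mathcal{C}$. We then choose $n\in N_\mathrm{ad}(F^\ur)_\mathcal{C}$ with ${}^n\epsilon(\overline{z})=\overline{z}$. The natural choice is $n=\overline{z}(\mathsf{Fr})^{-1}$ or a suitable power, using $\kappa(\overline{z})\equiv\epsilon\kappa(\overline{z})$ modulo $(1-\epsilon)\pi_1(G_\mathrm{ad})$. The point is that $\epsilon'_n$ then acts on $\mathcal{A}(T',G')$ and on the chamber $\mathcal{C}'$ exactly as the twisted Frobenius $\mathsf{Fr}'=\omega\circ\mathsf{Fr}$ does, possibly composed with an element of $(\pi_1(G)_{I_F})^{\mathsf{Fr}}$ that is realized by $G'_\mathrm{ad}(F)$.

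Now take $\pi=\cind_{G'(F)_x}^{G'(F)}\sigma$ with $x\in\overline{\mathcal{C}}'$. Then $\epsilon'^\ast_n\pi=\cind\,\epsilon'^\ast_n\sigma$, and $x$ is fixed because $\mathsf{Fr}'$ fixes $x$ and adjoint elements of $G'_\mathrm{ad}(F)$ only conjugate. On the finite group $\widetilde{\mathsf{G}}_x(k)$, the map $\epsilon'_n$ induces (up to inner automorphisms) the Frobenius twist $\epsilon_x$ of $\mathsf{G}_x$, and it acts trivially on $\Omega=(\pi_1(G)_{I_F})^\mathsf{Fr}_x$, since this group is abelian and $\epsilon$ acts on $\pi_1$ compatibly with $\mathsf{Fr}$. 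By \zcref{lem:Frob_stabilize_all_unip} we get $\epsilon_x^\ast\widetilde{\sigma}\cong\widetilde{\sigma}$, and hence $\epsilon'^\ast_n\pi\cong\pi$.

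\emph{Galois side.} The Frobenius acts on $\widehat{G}$ through $\widehat{\epsilon}$ composed with the pinned action, so for a unipotent $\phi$ the element $\phi(\mathsf{Fr})=s\rtimes\mathsf{Fr}$ conjugates $\widehat{G}$ by $\Ad(s)\circ\widehat{\epsilon}$. Hence $\widehat{\epsilon}^{-1}\circ\phi$ is conjugate to $\phi$ via $\phi(\mathsf{Fr})$ itself, after the usual identification of $\epsilon^{-1}(\phi)$. On the enhancement, conjugation by $\phi(\mathsf{Fr})$ lies in $Z_{\widehat{G}}(\phi)$ when restricted appropriately, so it is inner on $\mathcal{S}_\phi$. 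Therefore $\epsilon^\ast\rho\cong\rho$ for the corresponding conjugate, and $[\epsilon^{-1}\phi,\epsilon^\ast\rho]=[\phi,\rho]$. Since $G$ is adjoint there is no $+$-cover to worry about.

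Combining the two sides yields \zcref{cond:equivariance_adj} for $\epsilon$, and the case of $\Res_{E/F}H$ reduces to this.

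\emph{Main obstacle.} The delicate step is making precise that $\epsilon'_n$ induces the Frobenius twist on $\mathsf{G}_x$ and acts trivially on $\Omega$, so that \zcref{lem:Frob_stabilize_all_unip} applies. I would first check this through the identification with the Frobenius action on the affine Dynkin diagram, and then, if necessary, verify it case by case for types $A$, $D$ and $E_6$.
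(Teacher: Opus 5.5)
Your outline is essentially the paper's proof. On the Galois side, $\Ad(s)\circ\epsilon=\Ad(\phi(\mathsf{Fr}))|_{\widehat G}$ fixes $\phi$ and restricts to the identity on $Z_{\widehat G}(\phi)$, so it is not merely inner on $\mathcal{S}_\phi$. On the group side, one picks $n$ so that $\epsilon'_n$ acts on the affine Dynkin diagram of $G'$ as $\mathsf{Fr}'$. Then $\epsilon'_n$ fixes the vertices of $\overline{\mathcal{C}}'$, induces the Frobenius action on the Weyl group of $\mathsf{G}_x$, and acts trivially on $\widetilde{\mathsf{G}}_x(k)/\mathsf{G}_x(k)\subset\pi_1(G)^{\mathsf{Fr}}$, so \zcref{lem:Frob_stabilize_all_unip} finishes.

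The one thing to fix is the choice of $n$. The paper takes $n'=t\,\overline z(\mathsf{Fr})$ with $t\in T(F^\ur)_0$. This works because $\mathsf{Fr}$ acts on $\pi_1(G)$ as $\epsilon$, so $n\epsilon(n)\mathsf{Fr}(n)^{-1}\in T(F^\ur)_0\,n$ for $n=\overline z(\mathsf{Fr})$, and $t$ absorbs the $T(F^\ur)_0$-discrepancy. Your $\overline z(\mathsf{Fr})^{-1}$ would need $2(1-\epsilon)\kappa(\overline z(\mathsf{Fr}))=0$, so in general it does not satisfy ${}^{n}\epsilon(\overline z)=\overline z$.
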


\begin{proof}
    On the Galois side, the action of $\mathsf{Fr}$ on $\widehat{G}$ is the same as $\epsilon=\widehat{\epsilon}^{-1}$. For any unipotent enhanced L-parameter $(\phi,\rho)$ of $\Ell{G}$, put $\phi(\mathsf{Fr})=s\rtimes\mathsf{Fr}$. Then $\Ad(s)\circ\epsilon(\phi)=\phi$ and $(\Ad(s)\circ\epsilon)^\ast\rho=\rho$. Hence $[\phi,\rho]=[\epsilon^{-1}(\phi),\epsilon^\ast\rho]$.

    On the group side, let $\xi\colon G\to G'$ be an inner twist defined over $F^\ur$ such that the associated 1-cocycle $\overline{z}$ is valued in $N(F^\ur)_\mathcal{C}$.
    Put $n=\overline{z}(\mathsf{Fr})$. 
    Since the Frobenius action on $\pi_1(G)=N(F^\ur)_\mathcal{C}/T(F^\ur)_0$ is the same as $\epsilon$, we have $n\epsilon(n)\mathsf{Fr}(n)^{-1}\in T(F^\ur)_0\cdot n$. 
    Take $t\in T(F^\ur)_0$ such that $tn\epsilon(n)\mathsf{Fr}(tn)^{-1}=n$, i.e.\ $\overline{z}={}^{n'}\epsilon(\overline{z})$ with $n'=tn$. 
    Then the Frobenius action $\mathsf{Fr}'$ on the affine Dynkin diagram of $G'$ is the same as the action of $\epsilon'=\epsilon'_{n'}\in \Aut(G')$. 
    Hence $\epsilon'$ fixes all vertices $x$ adjacent to $\mathcal{C'}=\mathcal{C}\cap \mathcal{A}(T',G')$ and its action on the Weyl group of the reductive quotient $\mathsf{G}_x$ coincides with the Frobenius action. Moreover, $\epsilon'$ acts trivially on $\widetilde{\mathsf{G}}_x(k)/\mathsf{G}_x(k)\subset \pi_1(G)^\mathsf{Fr}$. Now we can apply \zcref{lem:Frob_stabilize_all_unip} to $\widetilde{\mathsf{G}}_x$ and show that $\epsilon'$ stabilizes all unipotent cuspidal representations of $\widetilde{\mathsf{G}}_x(k)$. Hence $\epsilon'$ acts on $\Irr(G'(F))_\mathrm{uc}$ trivially.
\end{proof}

Suppose that $G=G_\mathrm{sc}$ is simply connected. Let $(G',\xi)$ be an inner twist of $G_\mathrm{sc}$ such that the associated 1-cocycle $\overline{z}$ belongs to $Z^1(F^\ur/F,N(F^\ur)_\mathcal{C})$. Let $z\in Z^1(u\to\mathcal{W},Z\to G)$ be the rigidification stated in \zcref{prop:calc_delta_z}. We also take $\epsilon'=\epsilon'_{n'}$ so that $\kappa_{G_\mathrm{ad}}(n')=\kappa_{G_\mathrm{ad}}(\overline{z}(\mathsf{Fr}))$ as in the above proof. Then we have:

\begin{proposition}
    Any UC-parameterization for $(G',\xi,z)$ satisfies \zcref{cond:equivariance_sc} with respect to $\epsilon'$.
\end{proposition}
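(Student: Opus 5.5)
The plan is to show that both sides of \zcref{cond:equivariance_sc} for $\epsilon'=\epsilon'_{n'}$ are the trivial operation. On the group side, $\epsilon'$ acts trivially on $\Irr(G'(F))_\mathrm{uc}$. On the Galois side, the combined twist $\langle \delta_z(n'\rtimes\epsilon),\delta_{\epsilon^{-1}\phi}(\text{--})\rangle\cdot\epsilon^\ast\rho$ gives back $\rho$ up to $\widehat{G}$-conjugacy.

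\textbf{Group side.} I would follow the proof of \zcref{prop:equiv_epsilon_adj} verbatim. The choice $n'=tn$ with $t\in T(F^\ur)_0$ and $\overline{z}={}^{n'}\epsilon(\overline{z})$ makes $\epsilon'$ act on $\mathcal{A}(T',G')$ like $\mathsf{Fr}'$. Hence it fixes every vertex $x$ of $\overline{\mathcal{C}}'$ and induces the Frobenius on $W_x$. Since $G$ is simply connected, $\widetilde{\mathsf{G}}_x=\mathsf{G}_x$, so \zcref{lem:Frob_stabilize_all_unip} applies directly. It gives $\epsilon'^\ast\sigma\cong\sigma$ for every unipotent cuspidal $\sigma$. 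Compact induction then yields $\epsilon'^\ast\pi\cong\pi$.

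\textbf{Galois side.} Write $\phi(\mathsf{Fr})=s\rtimes\mathsf{Fr}$ with $s\in\widehat{G}$, and choose a lift $\dot{s}\in\widehat{\overline{G}}=\widehat{G}_\mathrm{sc}$. As in \zcref{prop:equiv_epsilon_adj}, $\Ad(s)\circ\epsilon$ carries $\phi$ to $\epsilon^{-1}\phi$ up to conjugacy. The point is that $\Ad(\dot{s})\circ\epsilon$ does not act trivially on $\mathcal{S}^+_\phi$. For $h\in Z^+_{\widehat{\overline{G}}}(\phi)$, the commutator $h^{-1}\Ad(\dot{s})\epsilon(h)$ differs from $h$ by the element $\delta_\phi(h)(\mathsf{Fr})\in\widehat{Z}$. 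Hence $(\Ad(\dot{s})\circ\epsilon)^\ast\rho=\chi\cdot\rho$, where $\chi(h)$ is essentially evaluation of $\omega_\rho$, or of a pairing, at $\delta_\phi(h)(\mathsf{Fr}^{\pm1})$. By the earlier lemma, $\delta_\phi(h)$ lies in $Z^1(F^\ur/F,\widehat{Z}^{I_F})\cdot B^1(F,\widehat{Z})$. I will compute $\chi$ on each of the two pieces.

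I must then show that $\chi$ cancels $\langle\delta_z(n'\rtimes\epsilon),\delta_{\epsilon^{-1}\phi}(\text{--})\rangle$. On the $Z^1(F^\ur/F,\widehat{Z}^{I_F})$ part, \zcref{prop:calc_delta_z}(2) gives $\langle\epsilon^{-1}\kappa_G(n'),\psi(\mathsf{Fr}^{-1})\rangle$. Since $\kappa(n')=\kappa(\overline{z}(\mathsf{Fr}))$, \zcref{prop:calc_delta_z}(1) turns this into $\omega_\rho$ evaluated at $\psi(\mathsf{Fr}^{-1})$, which is exactly the character produced by $\Ad(\dot{s})\epsilon$. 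On the $B^1$ part, the computation before \zcref{lem:pairing_kottwitz_triv} shows the pairing depends only on ${}^{\epsilon^{-1}-1}(z|_u)$. This must match the contribution from the lift $\dot{s}$ through $\widehat{Z}$. Changing the lift of $s$ by $\widehat{Z}$ multiplies $\chi$ by a pairing with a coboundary, so I would choose $\dot{s}$ to make this part trivial.

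The main obstacle is the sign and normalisation bookkeeping. Specifically, one must check that the inverse in $\psi(\mathsf{Fr}^{-1})$ versus $\psi(\mathsf{Fr})$, the convention $\epsilon\leftrightarrow\widehat{\epsilon}^{-1}$, and the relevance condition $\omega_\rho=\langle[z],\text{--}\rangle$ all align, so that $\chi$ equals the inverse of the $\delta_z$-twist and not its square. I would first verify this in the simplest case $G=\Res$-free type $A_{n-1}$, with $\mathcal{S}^+_\phi$ abelian and $\widehat{Z}=\mu_n$, and then argue in general using the two-part decomposition above.
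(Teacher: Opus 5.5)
Your strategy is the paper's. On the group side, $\epsilon'$ fixes every element of $\Irr(G'(F))_{\mathrm{uc}}$ by \zcref{lem:Frob_stabilize_all_unip}; your observation that $\widetilde{\mathsf{G}}_x=\mathsf{G}_x$ because $G'$ is simply connected makes this immediate. On the Galois side, $\Ad(\phi(\mathsf{Fr}))=\Ad(\dot s)\circ\epsilon$ twists $\rho$ by $\chi(h)=\omega_\rho(\delta_\phi(h)(\mathsf{Fr}))$, and \zcref{prop:calc_delta_z} must cancel this twist. However, the two steps where you go beyond that outline do not work as written.

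First, the split into a $Z^1(F^\ur/F,\widehat{Z}^{I_F})$-part and a $B^1(F,\widehat{Z})$-part is vacuous here. Since $G$ is unramified, $I_F$ acts trivially on $\widehat{\overline{G}}$. Writing $\phi(\mathsf{Fr})=s\rtimes\mathsf{Fr}$ presupposes $\phi|_{I_F}$ trivial, so $\delta_\phi(h)|_{I_F}=1$. Hence $\delta_\phi(h)\in Z^1(F^\ur/F,\widehat{Z})$ with $\widehat{Z}=\widehat{Z}^{I_F}$, and \zcref{prop:calc_delta_z}(2) applies to it directly; $B^1(F,\widehat{Z})$ already lies in this group. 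Your proposed remedy for that part is also impossible. Replacing $\dot s$ by $\dot s\zeta$ with $\zeta\in\widehat{Z}$ cannot change $\chi$, because $\widehat{Z}$ is central in $\widehat{\overline{G}}$ and so $\Ad(\dot s\zeta)=\Ad(\dot s)$. If a mismatch on coboundaries existed, you would have no freedom to repair it.

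Second, the identification you state is wrong: you dropped the $\epsilon^{-1}$ on $\kappa(n')$. The quantity $\langle\epsilon^{-1}\kappa(n'),\psi(\mathsf{Fr}^{-1})\rangle$ is neither $\omega_\rho(\psi(\mathsf{Fr}^{-1}))$ nor ``exactly'' $\chi$. The missing computation, which is the entire content of the Galois side, goes as follows with $\psi=\delta_\phi(h)$. Use $\psi(\mathsf{Fr}^{-1})=\mathsf{Fr}^{-1}(\psi(\mathsf{Fr}))^{-1}$, the fact that $\mathsf{Fr}$ acts on $\widehat{Z}$ as $\epsilon$, and $\epsilon$-invariance of the pairing:
\[
\langle \delta_z(n'\rtimes\epsilon),\psi\rangle=\langle \epsilon^{-1}\kappa(n'),\epsilon^{-1}\psi(\mathsf{Fr})^{-1}\rangle=\langle\kappa(n'),\psi(\mathsf{Fr})\rangle^{-1}=\omega_\rho(\psi(\mathsf{Fr}))^{-1}=\chi(h)^{-1}.
\]
The third equality uses three facts: \zcref{prop:calc_delta_z}(1), the equality $\kappa(n')=\kappa(\overline{z}(\mathsf{Fr}))$, and relevance. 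For relevance, note $Z(\widehat{\overline{G}})^+=\widehat{Z}$ because $\widehat{G}$ is adjoint. Therefore
\[
\rho=\langle\delta_z(n'\rtimes\epsilon),\delta_\phi(\text{--})\rangle\cdot(\Ad(\dot s)\circ\epsilon)^\ast\rho,
\]
which is exactly the paper's concluding identity. The twist is the inverse of $\chi$, not its square, so no type-$A$ test case is needed.
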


\begin{proof}
    As in the proof of \zcref{prop:equiv_epsilon_adj}, $\epsilon'$ stabilizes all unipotent supercuspidal representations of $G'(F)$. Let us see the Galois side. Take $[\phi,\rho]\in \Phi_\mathrm{e}(G;[z])_{\mathrm{uc}}$. Recall that $\mathsf{Fr}$ acts on $\widehat{G}$ in the same way as $\epsilon$. We put $\phi(\mathsf{Fr})=s\rtimes\mathsf{Fr}$. We have $\Ad(s)\circ\epsilon(\phi)=\phi$.
    For $g\in Z_{\widehat{\overline{G}}}^+(\phi)$, we have
    \[
        \Ad(s\rtimes\epsilon)(g)=\Ad(\phi(\mathsf{Fr}))(g)=\delta_\phi(g)(\mathsf{Fr})\cdot g.
    \]
    Since $\delta_\phi(g)(\mathsf{Fr})\in \widehat{Z}$, we have
    \[
        \rho\circ\Ad(s\rtimes\epsilon)=\epsilon^\ast(\rho^s)=\langle [z],\delta_\phi(\text{--})(\mathsf{Fr})\rangle\cdot\rho.
    \]
    By \zcref{prop:calc_delta_z}, $\langle [z],\text{--}\rangle=\langle \kappa_{G_\mathrm{ad}}(\overline{z}(\mathsf{Fr})),\text{--}\rangle=\langle \kappa_{G_\mathrm{ad}}(n'),\text{--}\rangle$. Hence
    \begin{align}
        \rho=\langle \kappa_{G_\mathrm{ad}}(n'),\delta_\phi(\text{--})(\mathsf{Fr})\rangle^{-1}\epsilon^\ast(\rho^s)&=\langle \epsilon^{-1}\kappa_{G_\mathrm{ad}}(n'),\epsilon^{-1}\delta_\phi(\text{--})(\mathsf{Fr})^{-1}\rangle\epsilon^\ast(\rho^s)\\
        &=\langle \delta_z(n'\rtimes\epsilon),\delta_\phi(\text{--})\rangle\cdot\epsilon^\ast(\rho^s).
    \end{align}
\end{proof}

\subsection{The Weil restrictions of split groups}

Remaining is the case when $G=\Res_{E/F}H$ with a split reductive group $H$. It suffices to show the equivariance for outer automorphisms $\tau$ which belong to $\Out(H)$. If $G$ is adjoint (and so is $H$), we have a canonical map
\[
    Z^1(F,G)\to Z^1(E,H);\quad \overline{z}\mapsto \overline{z}_H\coloneq p\circ \overline{z}|_{\Gamma_E},
\]
where $p\colon G_E\cong \prod_{\sigma\in \Gamma_{E/F}}H\to H$ is the projection to the factor at $e\in \Gamma_{E/F}$. Also, given a unipotent L-parameter $\phi_H\colon W_E\times\SL_2(\C)\to \Ell{H}$, we define a unipotent L-parameter $\phi\colon W_F\times\SL_2(\C)\to \Ell{G}$ by setting
\[
    \phi(\mathsf{Fr})=\Delta\circ\phi_H(\mathsf{Fr}_E),\quad \phi|_{\SL_2(\C)}=\Delta\circ\phi_H|_{\SL_2(\C)},
\]
where $\Delta\colon \widehat{H}\to \widehat{G}\cong \prod\widehat{H}$ is the diagonal map. Then $Z_{\widehat{G}}(\phi)=\Delta Z_{\widehat{H}}(\phi)$ and this correspondence induces a bijection $\Phi_\mathrm{e}(H;[\overline{z}_H])_{\mathrm{uc}}\leftrightarrow\Phi_\mathrm{e}(G;[\overline{z}])_{\mathrm{uc}}$. Hence a UC-parameterization $\LLC_{(H',\xi_H)}$ for an inner twist $(H',\xi_H)$ of $H$ extends to a UC-parameterization $\LLC_{(G',\xi)}$ for an inner twist $(G'=\Res_{E/F}H',\xi)$ of $G$. It is then clear that, if $\LLC_{(H',\xi_H)}$ is equivariant under $\Out(H)$, so is $\LLC_{(G',\xi)}$.

Suppose that $G=G_\mathrm{sc}$ is simply connected. We will construct an analogous map
\[
    Z^1(u_F\to\mathcal{W}_F,Z(G)\to G)\to Z^1(u_E\to\mathcal{W}_E,Z(H)\to H).
\]
Recall from \zcref{app:cohom_rig_inn} that we have $u_F=\varprojlim_{F',n}\Res_{F'/F}\mu_n$ when $F$ is a non-Archimedean local field. In particular, we have an isomorphism $u_F\cong \Res_{E/F}u_E$. By Shapiro's lemma, we have $H^2(F,u_F)\cong H^2(E,u_E)$. Moreover, as we choose representatives of $\Gamma_{E/F}$ in $\Gamma_F$ we have an isomorphism $(u_F)_E\cong (u_E)^{[E:F]}$, under which
\[
    H^2(F,u_F)\xrightarrow{\Res}H^2(E,u_F)\cong H^2(E,u_E)^{[E:F]}\xrightarrow[\text{Shapiro}]{\sim} H^2(F,u_F)^{[E:F]}
\]
is the diagonal map. Fix a 2-cocycle $\xi_F\in Z^2(F,u_F)$ whose cohomology class is $-1\in \widehat{\Z}\cong H^2(F,u_F)$. We define $\mathcal{W}_F=u_F\boxtimes_{\xi_F} \Gamma_F$, the extension of $\Gamma_F$ by $u_F$ associated with $\xi_F$. We similarly fix $\xi_E\in Z^2(E,u_E)$. Let $\Delta\colon u_E\to (u_F)_E$ be the diagonal map. The restriction $\xi_F|_E$ is then cohomologous to $\Delta\xi_E$. Hence we have an inclusion
\[
    \mathcal{W}_E\coloneq u_E\boxtimes_{\xi_E}\Gamma_E\hookrightarrow u_F\boxtimes_{\Delta\xi_E}\Gamma_E\xrightarrow{\sim} u_F\boxtimes_{\xi_F|_E}\Gamma_E\subset \mathcal{W}_F.
\] 
Here, the isomorphism $u_F\boxtimes_{\Delta\xi_E}\Gamma_E\cong u_F\boxtimes_{\xi_F|_E}\Gamma_E$ is determined up to inner automorphisms by $u_F$ because $H^1(E,u_F)\cong H^1(E,u_E)^{[E:F]}=1$ as in \cite[Theorem 3.1 and Section 3.2]{Kal_rig_inn}. Using this inclusion with the projection $p\colon G_E\to H$, we obtain a map
\[
    \begin{aligned}
    Z^1(u_F\to \mathcal{W}_F,Z(G)\to G)&\to Z^1(u_E\to \mathcal{W}_E,Z(H)\to H);\\
    \quad z&\mapsto z_H\coloneq p\circ z|_{\mathcal{W}_E}.
\end{aligned}\tag{$\ast$}\label{eq:hom_betw_rigid_cocycle}
\]
It is by construction equivariant under $\Out(H)\subset \Out(G)$. Since $H$ is split, the canonical map $H^1(u_E\to \mathcal{W}_E,Z(H)\to H)\to H^1(F,H_\mathrm{ad})$ is bijective. Hence the map
\[
    H^1(u_F\to\mathcal{W}_F,Z(G)\to G)\to H^1(u_E\to\mathcal{W}_E,Z(H)\to H)
\]
induced by \zcref{eq:hom_betw_rigid_cocycle} factors as
\[
    H^1(u_F\to \mathcal{W}_F,Z(G)\to G)\to H^1(F,G_\mathrm{ad})\xrightarrow{\sim} H^1(E,H_\mathrm{ad}).
\]
On the Galois side, we have
\[
    Z_{\widehat{G}_\mathrm{sc}}^+(\phi)=Z(\widehat{G}_\mathrm{sc})\cdot \Delta Z_{\widehat{H}_\mathrm{sc}}^+(\phi_H).
\]
This enables us to construct a bijection
\[
    \Phi_\mathrm{e}(H;[z_H])_{\mathrm{uc}}\xrightarrow{\sim}\Phi_\mathrm{e}(G;[z])_{\mathrm{uc}}
\]
similarly to \zcref{lem:bij_enh_Lpar}, which is equivariant under $\Out(H)$. We then obtain an equivariant UC-parameterization for $(G'=\Res_{E/F}H',\xi,z)$ from that for $(H',\xi_H,z_H)$. Now the proof of \zcref{thm:FOS_satisfy_equiv} is completed.

\section{Extension to disconnected cases}
\label{sec:extension_disconn}
Let $G$ be a connected quasi-split reductive group over $F$. Suppose that a finite group $A$ acts on $G$ and preserves a fixed $F$-splitting. Put $\widetilde{G}=G\rtimes A$. Let $(G',\xi,z)$ be a rigid inner twist of $G$ with respect to an $A$-stable finite central subgroup $Z\subset G$. We define a locally profinite group $\widetilde{G}'(F)$ by the pushout
\[
    \begin{tikzcd}
        G^{\overline{z}}\arrow[d,"\xi","\sim"' sloped]\arrow[r,hook]&\widetilde{G}^z\arrow[d,"\sim" sloped]\\
        G'(F)\arrow[r,hook]&\widetilde{G}'(F).
    \end{tikzcd}
\]
We have $\widetilde{G}'(F)/G'(F)\cong A^{[z]}\coloneq \operatorname{Stab}_A([z])$. We call $\pi\in \Irr(\widetilde{G}'(F))$ a \emph{unipotent representation} (resp.\ \emph{essentially unipotent representation}) if $\pi|_{G'(F)}$ is a sum of unipotent representations (resp.\ essentially unipotent representations). We define the subsets $\Irr(\widetilde{G}'(F))_{\mathrm{uc}}$ and $\Irr(\widetilde{G}'(F))_{\mathrm{euc}}$ of $\Irr(\widetilde{G}'(F))$ similarly to $G'(F)$.

On the Galois side, we consider $\Ell{\widetilde{G}}=(\widehat{G}\rtimes A)\rtimes W_F$. For an L-parameter $\phi\colon W_F\times \SL_2(\C)\to \Ell{G}$, we define $Z_{\widehat{\overline{G}}\rtimes A}^+(\phi)$ as the preimage of $Z_{\widehat{G}\rtimes A}(\phi)$ in $\widehat{\overline{G}}\rtimes A$. Put
\[
    \widetilde{\mathcal{S}}_\phi^+=\pi_0Z^+_{\widehat{\overline{G}}\rtimes A}(\phi).
\]
It contains $\mathcal{S}^+_\phi$ and $\widetilde{\mathcal{S}}^+_\phi/\mathcal{S}^+_\phi\cong A^{[\phi]}\coloneq \operatorname{Stab}_A([\phi])$.
An \emph{enhanced L-parameter} of $\Ell{\widetilde{G}}$ is a pair $(\phi,\rho)$ where $\phi$ is an L-parameter of $\Ell{G}$ and $\rho\in \Irr(\widetilde{\mathcal{S}}^+_\phi)$. We say it is \emph{relevant} to $[z]$ if $\rho|_{\mathcal{S}^+_\phi}$ has an irreducible component which is relevant to $[z]$. We define the subsets $\Phi_\mathrm{e}(\widetilde{G};[z])_{\mathrm{uc}}$ and $\Phi_\mathrm{e}(\widetilde{G};[z])_{\mathrm{euc}}$ of the set of $\widehat{G}\rtimes A$-conjugacy classes of enhanced L-parameters  in the same way as $G$. 

\begin{remark}
    Similarly to the connected case, we abbreviate $\widetilde{\mathcal{S}}_\phi=\widetilde{\mathcal{S}}_\phi^+$ when $Z$ is trivial.
\end{remark}

Our second theorem is that the local Langlands correspondence of essentially unipotent supercuspidal representations $\LLC_{(G',\xi,z)}\colon \pi\mapsto[\phi_\pi,\rho_{(z,\pi)}]$ extends to $\widetilde{G}'(F)$:

\begin{theorem}\label{thm:LLC_for_disconn}
    There exists a bijection
    \[
        \Irr(\widetilde{G}'(F))_{\mathrm{euc}}\xrightarrow{\sim}\Phi_\mathrm{e}(\widetilde{G};[z])_{\mathrm{euc}};\quad \widetilde{\pi}\mapsto [\phi_{\widetilde{\pi}},\rho_{(z,\widetilde{\pi})}]
    \]
    which satisfies the following: For $\widetilde{\pi}\in \Irr(\widetilde{G}'(F))_{\mathrm{euc}}$, up to conjugacy we have:
    \begin{multline}
        \text{$\pi$ is a constituent of $\widetilde{\pi}|_{G'(F)}$}\\
        \iff \text{$\phi_{\widetilde{\pi}}=\phi_\pi$ and $\rho_{(z,\pi)}$ is a constituent of $\rho_{(z,\widetilde{\pi})}|_{\mathcal{S}^+_\phi}$. }
    \end{multline}
\end{theorem}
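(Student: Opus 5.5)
The approach is Clifford theory on both sides, glued together by an equivariance statement for the connected correspondence $\LLC_{(G',\xi,z)}$ of \zcref{cor:existence_LLC_for_EUC}. On the group side, $G'(F)\subset\widetilde{G}'(F)$ is open normal with finite quotient $A^{[z]}$. Hence $\Irr(\widetilde{G}'(F))_{\mathrm{euc}}$ is in bijection with pairs $(\pi,\eta)$ up to $\widetilde{G}'(F)$-conjugacy. Here $\pi\in\Irr(G'(F))_{\mathrm{euc}}$, and $\eta$ is an irreducible representation of a twisted group algebra $\C^\natural[\widetilde{G}'(F)_\pi/G'(F)]$ attached to $\pi$, where $\widetilde{G}'(F)_\pi$ is the stabilizer. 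On the Galois side, $\mathcal{S}^+_\phi\subset\widetilde{\mathcal{S}}^+_\phi$ is normal with quotient $A^{[\phi]}$, so $[z]$-relevant $\rho$ are parameterized by pairs $(\rho_0,\eta')$ in the same way. So it suffices to do three things. First, show that the connected correspondence is equivariant for the action of $A^{[z]}$; this identifies the stabilizers and matches $\widetilde{G}'(F)$-orbits of $\pi$ with $\widehat{G}\rtimes A$-orbits of $(\phi,\rho_0)$. Second, identify the $2$-cocycles defining the two twisted group algebras. Third, check that the bijections are compatible with the essentially unipotent and cuspidal conditions. The constituent characterization in the statement is then built into the construction.

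For the first step I would use \zcref{cond:functoriality} with $f=a\in A$, which is an automorphism and so has trivial kernel and cokernel. An element $\tilde a\in\widetilde{G}^z$ lifting $a\in A^{[z]}$ has the form $g\rtimes a$ with ${}^g a(z)=z$. Its action on $G'(F)$ is $\xi\circ\Ad(g)\circ a\circ\xi^{-1}$, which is exactly the $f'$ attached to $f=a$, $z_1=z_2=z$. Functoriality gives $\phi_{\tilde a^\ast\pi}=\widehat a\circ\phi_\pi$ and $\rho_{(z,\tilde a^\ast\pi)}=\widehat{\overline a}^\ast\rho_{(z,\pi)}$ up to conjugacy. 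Since $A$ fixes the splitting, $\widehat{a}$ is the action of $a$ inside $\widehat G\rtimes A$. Therefore $\widetilde{G}'(F)_\pi/G'(F)$ equals the stabilizer of $[\phi_\pi,\rho_{(z,\pi)}]$ in $A^{[z]}$. I also need this stabilizer to equal the stabilizer in $\widetilde{\mathcal{S}}^+_\phi/\mathcal{S}^+_\phi=A^{[\phi]}$, i.e. that elements of $A$ stabilizing $(\phi,\rho_0)$ automatically stabilize $[z]$. This should follow from relevance: $\omega_{\rho_0}=\langle[z],-\rangle$ is $a$-invariant, and the pairing \zcref{eq:pairing_rigid_inn} is perfect and $A$-equivariant.

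The main obstacle is the second step: comparing the $2$-cocycles, so that $\C^\natural[\widetilde{G}'(F)_\pi/G'(F)]\cong\C^\natural[(\widetilde{\mathcal{S}}^+_\phi)_{\rho_0}/\mathcal{S}^+_\phi]$ canonically. On the Galois side, the cocycle obstructs extending $\rho_0$ to its stabilizer in $\widetilde{\mathcal{S}}^+_\phi$. On the group side, we have $\pi=\cind\widetilde\sigma$ with $\widetilde\sigma$ a unipotent cuspidal representation of some $\widetilde{\mathsf{G}}_x(k)$. I would first try to show that both cocycles are trivial, so that $\pi$ and $\rho_0$ extend to their stabilizers. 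For the group side I would argue as in \zcref{lem:unique_sandwich_group}: enlarge $\widetilde{\mathsf{G}}_x$ by the stabilizing elements of $\widetilde{G}'(F)_x$, giving a disconnected finite reductive group whose identity component is $\mathsf{G}_x$, and apply \zcref{cor:equiv_to_equivariant_obj}. Because $A$ fixes a pinning, the automorphism acts through a pinned diagram action, and the canonical isomorphism of \zcref{lem:Frob_stabilize_all_unip} (via Lusztig's categorical centre) produces a compatible family of intertwiners. I expect this to give a canonical extension, hence a trivialized cocycle. On the Galois side, $\rho_0$ is a cuspidal enhancement of a unipotent parameter. I would extend it using the argument of \cite[Lemma 13.3]{FOS}, replacing $\widehat G$ by $\widehat G^{I_F}\rtimes A$, again normalized by the pinning. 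If genuine nontrivial cocycles appear, the fallback is to show that both equal the same explicit class, computed from $\delta_z$ and $\delta_\phi$ as in \zcref{prop:calc_delta_z}, after reducing to adjoint and simply connected groups via \zcref{prop:reduction_to_adjoint_simply_connected}.

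Once the canonical extensions are chosen, the bijection is $\widetilde\pi=\Ind(\widetilde{\pi}_0\otimes\eta)\mapsto(\phi_\pi,\Ind(\widetilde\rho_0\otimes\eta))$. Here $\eta$ runs over $\Irr$ of the common quotient group $\widetilde{G}'(F)_\pi/G'(F)\cong(\widetilde{\mathcal{S}}^+_\phi)_{\rho_0}/\mathcal{S}^+_\phi$. The remaining checks are routine:
\begin{enumerate}
\item Cuspidality and essential unipotence are conditions on $G'(F)$-constituents, resp. $\mathcal{S}^+_\phi$-constituents, so they are preserved.
\item Relevance is preserved by the definition of relevance for $\widetilde G$.
\item Independence of the choices of representatives follows from the equivariance established in the first step.
\item The constituent equivalence in \zcref{thm:LLC_for_disconn} holds by Mackey/Clifford theory, since restriction of both inductions to the normal subgroup recovers the orbit of $\pi$, resp. $\rho_0$.
\end{enumerate}
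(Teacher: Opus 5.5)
Your first step is the same as the paper's. Equivariance of $\LLC_{(G',\xi,z)}$ comes from \zcref{cond:functoriality} with $f=a$, and perfectness of \zcref{eq:pairing_rigid_inn} then gives $A^{[z],\pi}=A^{[\phi],\rho}$.

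\textbf{The gap is in the second step, which is the real content of the theorem.} Your main plan is to show that both Clifford obstructions vanish by producing canonical extensions. This is false in general, already for tori, which are allowed here since every character is essentially unipotent supercuspidal. Take the following data:
\begin{itemize}
\item $E/F$ unramified quadratic, $T=\Res_{E/F}\mathbb{G}_m$ and $Z=\Res_{E/F}\mu_2$;
\item $A=\{\pm1,\pm\sigma\}\cong(\Z/2\Z)^2$, generated by inversion and Galois conjugation;
\item $[z]$ the nontrivial element of $H^1(u\to\mathcal{W},Z\to T)\cong\Z/2\Z$;
\item $\chi$ the unramified quadratic character of $E^\times$, so that $\pi=\chi$ is even unipotent.
\end{itemize}
In coordinates $\widehat{T}=\widehat{\overline{T}}=(\C^\times)^2$, the map $\widehat{\overline{T}}\to\widehat{T}$ is squaring and $\mathcal{S}^+_\phi=\pi_0\{(y_1,y_2): y_1^2=y_2^2\}\cong\Z/2\Z$. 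Since $\chi$ is $-1$ on a Frobenius element of $W_E$, for a suitable representative $\phi$ you can lift $-1$ and $\sigma$ to $(1,1)\rtimes(-1)$ and $(1,\sqrt{-1})\rtimes\sigma$ in $Z^+_{\widehat{\overline{T}}\rtimes A}(\phi)$. Their commutator is $(1,-1)$, the nontrivial element of $\mathcal{S}^+_\phi$. So $\widetilde{\mathcal{S}}^+_\phi$ is nonabelian of order $8$, and the $[z]$-relevant character of $\mathcal{S}^+_\phi$ does not extend. By the very bijection you want, $\chi$ cannot extend to $\widetilde{T}^z$ either.

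In general the group-side cocycle is $\alpha(a,b)=\chi(\widetilde{g}_a\widetilde{g}_b\widetilde{g}_{ab}^{-1})$. It comes from the twist $\chi$ combined with the non-split extension $\widetilde{G}'(F)\to A^{[z]}$, and your tools cannot see it:
\begin{itemize}
\item $\sigma=\chi\cdot\sigma_0$ need not be inflated from $\widetilde{\mathsf{G}}_x(k)$.
\item \zcref{lem:Frob_stabilize_all_unip} concerns only the Frobenius twist $\epsilon$. It yields single intertwiners, not a family satisfying the cocycle condition for the lifts $\xi\circ\Ad(g)\circ a\circ\xi^{-1}$, which are not pinned automorphisms of $G'$.
\end{itemize}

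\textbf{What is needed is a comparison of the two classes, not their vanishing.} The paper uses vanishing only for base points. For each weakly inertial class $\mathcal{O}$ of unipotent supercuspidals of $G'_{\mathrm{ad}}(F)$, it shows case by case that some $\pi_{\mathcal{O}}$ is $A^{\mathcal{O}}$-fixed and extends, and it fixes extensions $\widetilde{\pi}_{\mathcal{O}}$ and $\widetilde{\rho}_{\mathcal{O}}$. Writing $\pi=\chi\cdot\pi_0$, these base points normalize intertwiners on both sides. This produces the explicit $\alpha$ above. On the Galois side, \zcref{cond:functoriality} and \zcref{cond:compatibility_change_rigidification}, applied to a lift $z_{\mathrm{c}}$ of $z$ to $G_{\mathrm{sc}}\times Z(G)^\circ$, give (\zcref{lem:calc_of_beta})
$\beta(a,b)=\langle[y_{a^{-1}}]^{-1},\psi_b\rangle\langle[z_{\mathrm{c}}],\dot{s}_a\,a(\dot{s}_b)\,\dot{s}_{ab}^{-1}\rangle$.
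The key new ingredient is the explicit $1$-cochain
$h(a)=\alpha(a^{-1},a)\langle(z_{\mathrm{c}}^{-1},t_{a^{-1}}),(\phi_\chi^{-1},\dot{s}_a)\rangle$.
It is built from Kaletha's hypercohomology of $Z'\to T'_{\mathrm{c}}\xrightarrow{1-a_{T'}}T'$ and its duality pairing, and the proof of $\alpha=\beta\cdot dh$ uses a $z$-extension to evaluate $\chi$. Only then does $\C[A;\alpha]\cong\C[A;\beta]$ yield the bijection. Your formula $\Ind(\widetilde{\pi}_0\otimes\eta)$ must be replaced by simple modules of these twisted group algebras.

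Your fallback names the right goal but gives no mechanism. Moreover, the reduction to adjoint and simply connected groups you propose discards exactly the data the class depends on: $\chi$, $\phi_\chi$, $Z(G)^\circ$ and $z_{\mathrm{c}}$. In the torus example both of those groups are trivial, yet the obstruction is not.
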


We first describe the strategy to prove \zcref{thm:LLC_for_disconn}. Take any $\pi\in \Irr(G'(F))_\mathrm{euc}$ and put $[\phi,\rho]=[\phi_\pi,\rho_{(z,\pi)}]$. We need to construct a bijection
\[
    \Irr(\widetilde{G}'(F),\pi)\leftrightarrow\Irr(\widetilde{\mathcal{S}}_\phi^+,\rho),
\]
where $\Irr(\widetilde{G}'(F);\pi)$ consists of $\widetilde{\pi}\in \Irr(\widetilde{G}'(F))$ such that $\pi\subset \widetilde{\pi}|_{G'(F)}$, and $\Irr(\widetilde{\mathcal{S}}^+_\phi,\rho)$ is defined similarly. In order to do this, it is enough to find an isomorphism of $\C$-algebras
\[
    \End\Ind_{G'(F)}^{\widetilde{G}'(F)}\pi\cong \End\Ind_{\mathcal{S}^+_\phi}^{\widetilde{\mathcal{S}}^+_\phi}\rho.
\]
Using the Frobenius reciprocity, we have
\[
    \End\Ind_{G'(F)}^{\widetilde{G}'(F)}\pi\cong \bigoplus_{a\in A^{[z]}}\Hom(\pi,\pi^a)=\bigoplus_{a\in A^{[z],\pi}}\Hom(\pi,\pi^a)
\]
and
\[
    \End\Ind_{\mathcal{S}_\phi^+}^{\widetilde{\mathcal{S}}_\phi^+}\rho\cong\bigoplus_{a\in A^{[\phi]}}\Hom(\rho,\rho^a)=\bigoplus_{a\in A^{[\phi],\rho}}\Hom(\rho,\rho^a).
\]
Here, the equivariance of our correspondence $\LLC$ implies the coincidence of stabilizers $A^{[z],\pi}=A^{[\phi],\rho}$. We may replace $A$ with it so that $\widetilde{G}'(F)/G'(F)\cong A\cong \widetilde{\mathcal{S}}_\phi^+/\mathcal{S}_\phi^+$ stabilizes $\pi$ and $\rho$. Now we can choose a $\C$-basis of each endomorphism algebra parametrized by $A$, which induces an isomorphism to the twisted group algebra $\C[A;\kappa]$ for a certain 2-cocycle $\kappa\colon A^2\to \C^\times$. Then comparison of these 2-cocycles leads to an isomorphism between the endomorphism algebras.

\subsection{A partial correspondence for adjoint groups}
\label{ssec:partial_corr}
Suppose that $G=G_\mathrm{ad}$ is adjoint and take an inner twist $(G',\xi)$ with the associated cocycle $\overline{z}$. Put $A=\Out(G)^{[\overline{z}]}$. We pick up several elements from $\Irr(\widetilde{G}'(F))_\mathrm{uc}$ and $\Phi_\mathrm{e}(\widetilde{G};[\overline{z}])_{\mathrm{uc}}$ and match them, which we will use as base-points when we compute the 2-cocycles.

\begin{definition}
    We say that $\pi,\pi'\in \Irr(G'(F))$ are \emph{weakly inertially equivalent} if there exists a weakly unramified character $\chi\colon G'(F)\to \C^\times$ such that $\pi'=\chi\cdot\pi$.
\end{definition}

\begin{lemma}
    Let $\mathcal{O}\subset \Irr(G'(F))_{\mathrm{uc}}$ be a weakly inertial equivalence class and $A^\mathcal{O}$ the stabilizer of $\mathcal{O}$ in $A$. Then there exists $\pi\in\mathcal{O}$ which is fixed by $A^\mathcal{O}$. Moreover, $\pi$ extends to an irreducible representation of $\widetilde{G}'(F)^{\pi}=\operatorname{Stab}_{\widetilde{G}'(F)}(\pi)$.
\end{lemma}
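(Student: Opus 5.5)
We prove the two assertions separately: a fixed point in $\mathcal{O}$ via an explicit base point, then extendibility via depth-zero induction from a vertex stabilizer.

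\emph{Existence of a fixed point.} Every element of $\mathcal{O}$ has the form $\cind_{G'(F)_x}^{G'(F)}\widetilde{\sigma}$, where $x$ is a vertex and $\widetilde{\sigma}\in\Irr(\widetilde{\mathsf{G}}_x(k))_{\mathrm{uc}}$ restricts to a sum of conjugates of a unipotent cuspidal $\sigma\in\Irr(\mathsf{G}_x(k))$. Twisting by a weakly unramified character $\chi$ only twists $\widetilde{\sigma}$ by a character of $\widetilde{\mathsf{G}}_x(k)/\mathsf{G}_x(k)\cong(\pi_1(G)_{I_F})^{\mathsf{Fr}}_x$. Thus $\mathcal{O}$ is determined by the $G'(F)$-conjugacy class of the pair $(x,\sigma)$, and its elements above that pair are the extensions of $\sigma$ to $G'(F)_x$ up to twist.

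Normalize as in the preceding sections: $\xi$ is defined over $F^\ur$, $\overline{z}$ takes values in $N_\mathrm{ad}(F^\ur)_\mathcal{C}$, and each $a\in A$ is realized by some $\epsilon'_{n}$ stabilizing $T'$ and the chamber $\mathcal{C}'$ (\zcref{lem:out_stab_T'}). Since $a\in A^\mathcal{O}$ preserves the conjugacy class of $(x,\sigma)$, we may take $x\in\overline{\mathcal{C}}'$, and then $\epsilon'_n$ fixes the $G'(F)$-orbit of $x$. Because two vertices of $\overline{\mathcal{C}}'$ in the same orbit differ by $(\pi_1(G)_{I_F})^{\mathsf{Fr}}$, composing with an element of $N_{G'}(T')(F)_{\mathcal{C}'}$ lets us assume each $a$ fixes $x$. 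This yields an action of $A^\mathcal{O}$ on $\widetilde{\mathsf{G}}_x$, well defined modulo inner automorphisms.

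It then suffices to find $\widetilde{\sigma}\in\Irr(\widetilde{\mathsf{G}}_x(k))_\mathrm{uc}$ above $\sigma$ that is fixed by $A^\mathcal{O}$. Via \zcref{cor:equiv_to_equivariant_obj} (after passing to the adjoint quotient by \zcref{lem:equiv_when_take_adj}), $\widetilde{\mathsf{G}}_x(k)\cong\mathsf{G}_x(k)\rtimes\Omega_x$ with $\Omega_x$ stabilizing a fixed $\mathsf{Fr}$-stable pinning. The pinning fixes a canonical extension of $\sigma$ to $\mathsf{G}_x(k)\rtimes\Omega_x^\sigma$: in the classical cases $\sigma$ is unique and is extended by letting $\Omega_x^\sigma$ act through Lusztig's canonical normalization on the Harish-Chandra/Deligne–Lusztig model. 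If we choose the action of $A^\mathcal{O}$ to preserve the same pinning, this canonical extension is $A^\mathcal{O}$-invariant by transport of structure. Inducing it to $\widetilde{\mathsf{G}}_x(k)$ gives the desired $\widetilde{\sigma}$, and $\pi=\cind\widetilde{\sigma}$ is then fixed by $A^\mathcal{O}$.

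\emph{Extendibility.} Let $\widetilde{G}'(F)^\pi$ be the stabilizer of $\pi$. By the uniqueness of $(x,\sigma)$ up to conjugacy, $\widetilde{G}'(F)^\pi=G'(F)\cdot\widetilde{G}'(F)^\pi_x$, where the second factor is the stabilizer of $x$ and $\widetilde{\sigma}$. We extend $\widetilde{\sigma}$ to $\widetilde{G}'(F)^\pi_x/G'(F)_{x,0+}$, an extension of $A^\pi$ by $\widetilde{\mathsf{G}}_x(k)$, and then compactly induce. The extension reduces to two facts: a unipotent cuspidal representation of a connected finite reductive group extends to its pinning-preserving semidirect product with graph automorphisms, and $\widetilde{\mathsf{G}}_x(k)$ splits pinning-compatibly over the relevant finite group. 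The first follows from Lusztig's results (\cite[Lemma 15.7]{FOS}-type arguments), or case by case: for classical groups $\sigma$ is unique, and for exceptional groups the graph automorphisms fix unipotent cuspidals and extend (\zcref{lem:Frob_stabilize_all_unip}).

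The main obstacle is the second fact, namely making the $A^\mathcal{O}$-action on $\widetilde{\mathsf{G}}_x$ a genuine group action preserving a pinning, rather than an action only up to inner automorphisms. This is needed to kill the obstruction cocycle in $H^2(A^\pi,\C^\times)$. I would try to use the Tits lifts in $G(F^\ur)_{x_0}$ as in \zcref{lem:isom_ext_weyl} to produce a pinning-preserving section.
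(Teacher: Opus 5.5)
\paragraph{There is a genuine gap, and you have located it yourself.} Your extendibility step ends with an unresolved ``main obstacle'', and your existence step depends on the same point.

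\begin{itemize}
\item The fixed point in $\mathcal{O}$ comes from a ``canonical extension'' of $\sigma$ to $\mathsf{G}_x(k)\rtimes\Omega_x^\sigma$. This extension is supposed to be natural under every pinning-preserving automorphism normalizing $\Omega_x$. No such naturality statement is proved or precisely cited, and it is not a formal consequence of the lemmas in the paper. Note that $\Omega_x$ can be non-cyclic, e.g.\ $(\Z/2\Z)^2$ at the middle vertex of split $D_{2m}$.
\item Your transport-of-structure step presupposes that the $A^\mathcal{O}$-action has already been made pinning-compatible. That is exactly your unresolved ``second fact''.
\item You misapply \zcref{lem:Frob_stabilize_all_unip}. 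It concerns only the particular automorphism $\epsilon$ by which $\mathsf{Fr}$ is twisted from the split form, and it needs $\epsilon$ to act trivially on $\Omega$. It says nothing about, e.g., the graph automorphism of split $E_6$ or triality on $D_4$.
\end{itemize}

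\paragraph{The missing idea.} No pinning-preserving section is needed: a case analysis makes the $H^2(A^\pi,\C^\times)$ obstruction trivial. The paper proceeds as follows.
\begin{itemize}
\item It first reduces via the companion group (\zcref{prop:red_to_comp_adj}) to unramified $F$-simple $G$. There is nothing to prove when $G'(F)_x=G'(F)_{x,0}$.
\item Split $G$ with $A\neq 1$:
  \begin{itemize}
  \item Type $A$ is excluded: unipotent supercuspidals exist only on anisotropic forms, which the reflection does not fix.
  \item Triality and $E_6$ are vacuous. Only the split form is fixed, and vertices with $G(F)_x\neq G(F)_{x,0}$ have reductive quotient $(\mathrm{PGL}_2)^4$, resp.\ $(\mathrm{PGL}_3)^3$, which have no unipotent cuspidals.
  \item For $D_n$, the unique unipotent cuspidal of $\mathrm{PSO}_{2m}(k)$ extends to $\mathrm{PO}_{2m}(k)$. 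The reflection $\tau$ acts on $\mathsf{G}_x$ through an element of $\mathrm{PO}_{2m}\times\mathrm{PO}_{2(n-m)}$ (resp.\ $\mathrm{PO}_{2m}\wr\Z/2\Z$). Hence it fixes the restriction of such an extension. This is the concrete substitute for your canonical extension.
  \end{itemize}
\item Non-split $G$: $\Out(G)$ is $\langle\epsilon\rangle$, or $\Out(H)\times\langle\epsilon\rangle$ for $G=\Res_{E/F}H$. Applying \zcref{lem:Frob_stabilize_all_unip} to $\epsilon$ itself gives stability and reduces to the split group $H$.
\item Extendibility is then elementary. When $A$ is cyclic, a stable irreducible representation extends by rescaling a single intertwiner. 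In the non-cyclic case $G=\Res_{E/F}H$, the canonical isomorphism $\epsilon^\ast\cong\id$ splits off the $\langle\epsilon\rangle$-factor and reduces to $H$.
\end{itemize}

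Your vertex-level setup is compatible with this route. Without the cyclicity observation, or an actual proof of your naturality claim, neither half of the lemma is established.
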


\begin{proof}
    \zcref{prop:red_to_comp_adj} reduces to the case when $G$ is unramified. We may further assume that $G$ is $F$-simple. We first prove the existence of $\pi$ in the claim.

    Each weakly inertial class of unipotent supercuspidal representations determines a vertex $x\in \mathcal{B}(G')$ and $\sigma\in \Irr(\mathsf{G}_x(k))_{\mathrm{uc}}$ up to $G'(F)$-conjugacy so that we have a bijection
    \[
        \Ind_{G'(F)_x}^{G(F)}\colon \Irr(\widetilde{\mathsf{G}}_x(k);\sigma)\to \mathcal{O}.
    \]
    In particular, when $G'(F)_x=G'(F)_{x,0}$ the statement clearly holds. Now we suppose that $\mathcal{O}$ corresponds to a vertex $x$ with $G'(F)_x\supsetneq G'(F)_{x,0}$.

    We first treat the case when $G$ is split. We may assume that $A\subset\Out(G)$ is not trivial. Then $G$ must be of type $D$ or $E_6$. Here, we can exclude the case of type $A$ because $\Irr(G'(F))_{\mathrm{uc}}=\emptyset$ unless $G'$ is an anisotropic form, which is not stable under the reflection. Suppose that $G$ is of type $D_n$, i.e.\ $G=\mathrm{PSO}_{2n}$, $n\geq 4$. The reflection $\tau$ on $G$ fixes only one non-split inner form $G'$. In this case, for any vertex $x\in \mathcal{B}(G')$, we can easily check that $\tau$ acts on $(\widetilde{\mathsf{G}}_x)_\mathrm{ad}=\widetilde{\mathsf{G}}_x/Z(\mathsf{G}_x)$ trivially up to inner automorphisms. When $G'=G$ is the split form, we have $(\mathsf{G}_x)_\mathrm{ad}\cong \mathrm{PSO}_{2m}\times \mathrm{PSO}_{2(n-m)}$. Moreover, $(\widetilde{\mathsf{G}}_x)_\mathrm{ad}\subset \mathrm{PO}_{2m}\times \mathrm{PO}_{2(n-m)}$ when $n\neq 2m$ and $(\widetilde{\mathsf{G}}_x)_\mathrm{ad}\subset \mathrm{PO}_{2m}\wr\Z/2\Z$ when $n=2m$. Since $\#\Irr(\mathrm{PSO}_{2m}(k))_{\mathrm{uc}}$ is at most one, the element is, if exists, stabilized by the reflection and thus extends to $\mathrm{PO}_{2m}(k)$. Hence $\sigma\in \Irr(\mathsf{G}_x(k))_{\mathrm{uc}}$ extends to $\mathrm{PO}_{2m}(k)\times \mathrm{PO}_{2(n-m)}(k)$ or $\mathrm{PO}_{2m}(k)\wr \Z/2\Z$, and the action of $\tau$ on $\mathsf{G}_x$ is the same as $(1,\tau)\in\mathrm{PO}_{2m}\times\mathrm{PO}_{2(n-m)}$. This shows that $\tau$ stabilizes at least one element in $\Irr(\widetilde{\mathsf{G}}_x(k);\sigma)$. When $n=4$, exceptional automorphisms of $G$ fix only the split form, and then $(\mathsf{G}_x)_\mathrm{ad}\cong (\mathrm{PGL}_2)^4$ if $G(F)_x\neq G(F)_{x,0}$, which admits no unipotent cuspidal representations. Also, if $G$ is of type $E_6$, the outer automorphism does not stabilize non-split inner forms. Hence we may assume $G'=G$, in which case $(\mathsf{G}_x)_\mathrm{ad}=(\mathrm{PGL}_3)^3$ as long as $G(F)_x\neq G(F)_{x,0}$, and it admits no unipotent cuspidal representations.

    Suppose that $G$ is not split. Then $\Out(G)$ is as \zcref{eq:out_of_unramified_G}. In the former case, \zcref{lem:Frob_stabilize_all_unip} verifies the $A^\mathcal{O}$-stability. In the latter case, we have $A^\mathcal{O}=A^\mathcal{O}_H\times\langle \epsilon\rangle$ where $A_H=\Out(H)^{[\overline{z}_H]}$. Then \zcref{lem:Frob_stabilize_all_unip} reduces the problem to the split group $H$ and the stability holds.

    Now the extendibility is clear as long as $A$ is cyclic. If $A$ is not cyclic, we must have $G=\Res_{E/F}H$ for a split group $H$ and $A^\mathcal{O}=A^\mathcal{O}_H\times\langle \epsilon\rangle$ as above. Again, we apply \zcref{lem:Frob_stabilize_all_unip} and can reduce the problem to $H$.
\end{proof}

For each weakly inertial equivalence class $\mathcal{O}$, we choose an element $\pi=\pi_\mathcal{O}\in \mathcal{O}$ stable under $A^\mathcal{O}=\Out(G)^{[\overline{z}],\mathcal{O}}$ and its extension $\widetilde{\pi}_\mathcal{O}\in \Irr(\widetilde{G}'(F)^\pi)_{\mathrm{uc}}$. Put $[\phi_\mathcal{O},\rho_\mathcal{O}]=\LLC_{(G',\xi)}(\pi_\mathcal{O})$. Similarly to the above arguments, we can extend $\rho_\mathcal{O}$ to an irreducible representation $\widetilde{\rho}_\mathcal{O}$ of $\widetilde{\mathcal{S}}_\phi^{[\overline{z}],\pi,+}$. We will construct the correspondence so that $\widetilde{\pi}_\mathcal{O}$ is sent to $[\phi_\mathcal{O},\widetilde{\rho}_\mathcal{O}]$.

\begin{remark}
    Here we choose extensions $\widetilde{\pi}_\mathcal{O}$ and $\widetilde{\rho}_\mathcal{O}$ in the case of $A=\Out(G)^{[\overline{z}]}$. However, for any finite group $A$, its pinning-preserving action on $G$ factors through $\Out(G)$ and thus we have a homomorphism $G\rtimes A^{[\overline{z}]}\to G\rtimes \Out(G)^{[\overline{z}]}$. We may therefore consider $\widetilde{\pi}_\mathcal{O}$ and $\widetilde{\rho}_\mathcal{O}$ as extensions of $\pi_\mathcal{O}$ and $\rho_\mathcal{O}$ for a general $A$ by pulling them back.
\end{remark}

\subsection{The group side}

Let $(G',\xi,z)$ be a rigid inner twist of $G$ and $\pi\in \Irr(G'(F))_{\mathrm{euc}}$. In this subsection we examine the set $\Irr(\widetilde{G}'(F);\pi)$. Take a maximally split and maximally unramified maximal torus $T'\subset G'$ and a transfer $T'\hookrightarrow G$ so that the superspecial vertex $x_0\in \mathcal{B}(G)$ belongs to $\mathcal{A}(T',G)$. We may assume that $z$ has values in $T'$ by replacing $\xi$ with $\xi\circ\Ad(g)$ for some $g\in G$.

Let $\chi\colon G'(F)\to \C^\times$ be a character such that $\pi_0=\chi^{-1}\cdot\pi$ is a constituent of $\pi_\mathcal{O}|_{G'(F)}$ for a weakly inertial equivalence class $\mathcal{O}\subset \Irr(G'_\mathrm{ad}(F))_{\mathrm{uc}}$. Remark that $\mathcal{O}$ and $\pi_0$ are determined only from $\pi$ while $\chi$ is not.
Choose a vertex $x\in \mathcal{A}(T',G')$ and $\sigma_0\in \Irr(\widetilde{\mathsf{G}}_x(k))_\mathrm{uc}$ such that $\pi_0=\cind_{G'(F)_x}^{G'(F)}\sigma_0$. Then $\pi=\cind_{G'(F)_x}^{G'(F)}\sigma$ where $\sigma=\chi\cdot \sigma_0$. If we put $\sigma_{0,\mathrm{sc}}\coloneq \sigma_0|_{G'_\mathrm{sc}(F)_x}=\sigma|_{G'_\mathrm{sc}(F)_x}$, then $\pi_{0,\mathrm{sc}}=\cind_{G'_\mathrm{sc}(F)_x}^{G'_\mathrm{sc}(F)}\sigma_{0,\mathrm{sc}}$ is an irreducible component of $\pi_0|_{G'_\mathrm{sc}(F)}$. Also $\sigma_0$ extends to a unipotent representation $\sigma_\mathcal{O}$ of $\widetilde{\mathsf{G}}_{\mathrm{ad},x}(k)=G'_\mathrm{ad}(F)_x/G'_\mathrm{ad}(F)_{x,0+}$ and we have $\pi_\mathcal{O}=\cind_{G'_\mathrm{ad}(F)_x}^{G'_\mathrm{ad}(F)}\sigma_\mathcal{O}$.
We may assume that $A\curvearrowright G$ stabilizes $[z]$ and $\pi$. Then its action stabilizes the $G'(F)$-orbit of $x$. Hence we have
\[
    \widetilde{G}'(F)_x/G'(F)_x=\widetilde{G}'_\mathrm{ad}(F)_x/G'_\mathrm{ad}(F)_x=A,
\]
and the compact induction gives an isomorphism
\[
    \End\Ind_{G'(F)_x}^{\widetilde{G}'(F)_x}\sigma\xrightarrow{\sim} \End\Ind_{G'(F)}^{\widetilde{G}'(F)}\pi.
\]
Also we have an extension $\widetilde{\sigma}_\mathcal{O}$ of $\sigma_\mathcal{O}$ to an irreducible representation of $\widetilde{G}'_\mathrm{ad}(F)_x$ such that
\[
    \widetilde{\pi}_\mathcal{O}=\cind_{\widetilde{G'}(F)_x}^{\widetilde{G}'(F)}\widetilde{\sigma}_\mathcal{O}.
\]
For each $a\in A$, we fix a lift $\widetilde{g}_a\in \widetilde{G}'(F)_x$ which normalizes $T'$. We particularly choose $\widetilde{g}_1=1$.

\begin{lemma}\label{lem:outer_action_belong_to_Hom_space}
    Let $V$ be the representation space of $\sigma$. Then, for $a\in A$, $\widetilde{\sigma}_\mathcal{O}(\widetilde{g}_a)\in \GL(V)$ belongs to $\Hom(\sigma,\sigma^{\widetilde{g}_a})$.
\end{lemma}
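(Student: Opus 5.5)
The plan is to verify directly that for every $h\in G'(F)_x$ one has
\[
    \widetilde{\sigma}_\mathcal{O}(\widetilde{g}_a)\,\sigma(h)=\sigma^{\widetilde{g}_a}(h)\,\widetilde{\sigma}_\mathcal{O}(\widetilde{g}_a),
\]
where $\sigma^{\widetilde{g}_a}(h)=\sigma(\widetilde{g}_a h\widetilde{g}_a^{-1})$ (or the inverse convention, to be matched with the one used in $\Hom(\pi,\pi^a)$). The idea is to transport the problem to the adjoint side. The representation $\widetilde{\sigma}_\mathcal{O}$ is a representation of $\widetilde{G}'_\mathrm{ad}(F)_x$ on the same space $V$, because $\sigma_0$ extends to $\sigma_\mathcal{O}$. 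On that group the relation $\widetilde{\sigma}_\mathcal{O}(\widetilde{g}_a)\sigma_\mathcal{O}(\bar h)\widetilde{\sigma}_\mathcal{O}(\widetilde{g}_a)^{-1}=\sigma_\mathcal{O}(\widetilde{g}_a\bar h\widetilde{g}_a^{-1})$ is automatic for $\bar h\in G'_\mathrm{ad}(F)_x$. So the real content is comparing $\sigma$ with $\sigma_\mathcal{O}$ along $G'(F)_x\to G'_\mathrm{ad}(F)_x$, and checking this comparison is compatible with conjugation by $\widetilde{g}_a$.

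First, I will make precise that $\widetilde{g}_a$, which lies in $\widetilde{G}'(F)_x$, acts on $G'_\mathrm{ad}(F)_x$ through its image in $\widetilde{G}'_\mathrm{ad}(F)_x$, and that the projection $p\colon G'(F)_x\to G'_\mathrm{ad}(F)_x$ is $\widetilde{g}_a$-equivariant. This follows since $A$ acts compatibly on $G$ and $G_\mathrm{ad}$ via the fixed splitting, and the pushout construction of $\widetilde{G}'(F)$ is functorial in $G\to G_\mathrm{ad}$.

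Second, I will write $\sigma=\chi\cdot\sigma_0=\chi\cdot(\sigma_\mathcal{O}\circ p)$ on $G'(F)_x$. Then
\[
    \widetilde{\sigma}_\mathcal{O}(\widetilde{g}_a)\sigma(h)\widetilde{\sigma}_\mathcal{O}(\widetilde{g}_a)^{-1}=\chi(h)\,\sigma_\mathcal{O}(p(\widetilde{g}_ah\widetilde{g}_a^{-1})).
\]
This equals $\sigma(\widetilde{g}_ah\widetilde{g}_a^{-1})$ exactly when $\chi(\widetilde{g}_ah\widetilde{g}_a^{-1})=\chi(h)$ on $G'(F)_x$, that is, when $\chi$ restricted to $G'(F)_x$ is $a$-invariant.

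Third, and this is the main obstacle, I must establish this $a$-invariance of $\chi|_{G'(F)_x}$. We have assumed $A$ stabilizes $\pi$, and $\pi_0$ is canonically determined by $\pi$. However, $\pi_0$ being $a$-stable only gives $a$-stability of the class $\mathcal{O}$, not of $\pi_0$ itself, and $\chi$ is only determined up to weakly unramified characters. My approach is as follows. Since $\pi\cong\pi^a$, restricting to $G'(F)_x$ and comparing the $\sigma$-isotypic parts shows $\sigma^{\widetilde{g}_a}\cong\sigma$ (the pair $(x,\sigma)$ is unique up to $G'(F)_x$-conjugacy, by the uniqueness used in \zcref{prop:bij_irrep}). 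Restricting further to $G'(F)_{x,0+}$, where $\sigma_0$ is trivial, gives $\chi^a=\chi$ on $G'(F)_{x,0+}$. On $G'(F)_{x,0}$ and on the rest of $G'(F)_x$, the two characters $\chi$ and $\chi^a$ differ by a character $\eta$ with $\eta\cdot\sigma_0\cong\sigma_0^a$, where $\sigma_0^a$ is again unipotent. Since $\sigma_\mathcal{O}$ was chosen $A^\mathcal{O}$-stable, $\sigma_0^{a}$ is isomorphic to $\sigma_0$ via $\widetilde{\sigma}_\mathcal{O}(\widetilde{g}_a)$, so $\eta\cdot\sigma_0\cong\sigma_0$. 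I would then argue that a unipotent cuspidal $\sigma_0$ of $\widetilde{\mathsf{G}}_x(k)$ admits no nontrivial twist-stabilizing character that factors through $\chi$-type characters, which are trivial on the image of $G'_\mathrm{sc}(F)_x$. By \zcref{lem:equiv_when_take_adj}, such twists correspond to the component group action. If this fails in general, I would instead adjust the choice of $\chi$ by a weakly unramified character to make it $A$-invariant on $G'(F)_x$, and accept $\widetilde{\sigma}_\mathcal{O}(\widetilde{g}_a)$ as an intertwiner up to this correction.
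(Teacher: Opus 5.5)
\textbf{There is a genuine gap in your third step.} Your first two steps are a correct reformulation. Since $\sigma=\chi\cdot(\sigma_\mathcal{O}\circ p)$, the operator $\widetilde{\sigma}_\mathcal{O}(\widetilde{g}_a)$ automatically intertwines $\sigma_0$ with $\sigma_0^{\widetilde{g}_a}$. So the lemma is \emph{equivalent} to $\chi|_{G'(F)_x}$ being $\widetilde{g}_a$-invariant, i.e.\ to $\eta=1$. The third step therefore carries the entire content of the lemma, and you do not prove it.

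The principle you invoke, that a unipotent cuspidal representation of $\widetilde{\mathsf{G}}_x(k)$ is fixed by no nontrivial character twist, is not a general fact about disconnected finite groups. For example, $\Ind_{H}^{\widetilde{\mathsf{G}}(k)}\tau$ with $\mathsf{G}(k)\subset H\subsetneq\widetilde{\mathsf{G}}(k)$ is fixed by every character trivial on $H$. \zcref{lem:equiv_when_take_adj} only compares $\widetilde{\mathsf{G}}$ with $\widetilde{\mathsf{G}}/Z(\mathsf{G})$ and does not exclude this.

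Your fallback does not help either. By the argument below, $\chi|_{G'(F)_x}$ is already determined by $\sigma\cong\chi\cdot\sigma_0$. Hence no weakly unramified modification of $\chi$ that preserves $\pi=\chi\cdot\pi_0$ can change it. Moreover, ``an intertwiner up to a correction'' is not the statement of the lemma.

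\textbf{The missing idea} is that $\sigma_{0,\mathrm{sc}}=\sigma|_{G'_\mathrm{sc}(F)_x}$ is irreducible. This is the fact, established in the proof of \zcref{lem:unique_sandwich_group}, that a unipotent cuspidal representation of $\widetilde{\mathsf{G}}_{\mathrm{ad},x}(k)$ is an extension of its restriction to $\mathsf{G}_{\mathrm{sc},x}(k)$. With it, your step closes by Schur's lemma. Take an isomorphism $T\colon\sigma_0\to\eta\cdot\sigma_0$. Since $\eta$ is trivial on the image of $G'_\mathrm{sc}(F)_x$, $T$ commutes with $\sigma_{0,\mathrm{sc}}$. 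So $T$ is a scalar, and then $\eta=1$.

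The paper uses the same fact but avoids $\chi$ altogether:
\begin{itemize}
\item $\widetilde{\sigma}_\mathcal{O}(\widetilde{g}_a)$ lies in $\Hom(\sigma_{0,\mathrm{sc}},\sigma_{0,\mathrm{sc}}^{\widetilde{g}_a})$, which is at most one-dimensional.
\item $\Hom(\sigma,\sigma^{\widetilde{g}_a})$ is a subspace of it, since both consist of operators on $V$.
\item This subspace is nonzero because $\sigma^{\widetilde{g}_a}\cong\sigma$.
\end{itemize}
Hence the two spaces coincide.
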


\begin{proof}
    Since $\sigma_{0,\mathrm{sc}}$ is the restriction of $\widetilde{\sigma}_\mathcal{O}$, we have $\widetilde{\sigma}_\mathcal{O}(\widetilde{g}_a)\in \Hom(\sigma_{0,\mathrm{sc}},\sigma_{0,\mathrm{sc}}^{\widetilde{g}_a})$. Moreover, the restriction
    \[
        \Hom(\sigma,\sigma^{\widetilde{g}_a})\hookrightarrow \Hom(\sigma_{0,\mathrm{sc}},\sigma_{0,\mathrm{sc}}^{\widetilde{g}_a})
    \]
    is an isomorphism; indeed, $\sigma$ and $\sigma_{0,\mathrm{sc}}$ are irreducible, so both sides are of dimension at most one. Since $A$ stabilizes $\pi=\cind\sigma$, $\sigma^{\widetilde{g}_a}$ must be isomorphic to $\sigma$. Hence $\dim\Hom(\sigma,\sigma^{\widetilde{g}_a})=1$ and the above map is bijective.
\end{proof}

It is easy to see that
\[
    \widetilde{\sigma}_\mathcal{O}(\widetilde{g}_{ab})\widetilde{\sigma}_\mathcal{O}(\widetilde{g}_b)^{-1}\widetilde{\sigma}_\mathcal{O}(\widetilde{g}_a)^{-1}=\chi(\widetilde{g}_a\widetilde{g}_b\widetilde{g}_{ab}^{-1})\sigma(\widetilde{g}_{ab}\widetilde{g}_b^{-1}\widetilde{g}_a^{-1}).
\]
Hence, we obtain an isomorphism of $\C$-algebras:
\[
    \psi_\mathrm{gp}\colon\End(\Ind_{G'(F)_x}^{\widetilde{G}'(F)_x}\sigma)\to \C[A;\alpha],
\]
where the right hand side is the \emph{twisted group algebra} with respect to the 2-cocycle:
\[
    \alpha\colon A\times A\to\C^\times;\quad (a,b)\mapsto \chi(\widetilde{g}_a\widetilde{g}_b\widetilde{g}_{ab}^{-1}).
\]
Recall from \zcref{lem:out_stab_T'} that we can transfer $T'\hookrightarrow G$ so that the superspecial vertex $x_0$ belongs to $\mathcal{A}(T',G)$ and that there exists $a_{T'}=g_{0,a}\rtimes a\in G_\mathrm{sc}(F)\rtimes a$ which normalizes $T'\subset G$ and $\Ad(a_{T'})=\Ad(\widetilde{g}_a)$ on $T'$. Identifying $\widetilde{G}^z$ with $\widetilde{G}'(F)$ along $\xi$, we have $t_a\in T'$ such that $\widetilde{g}_a=t_a\cdot a_{T'}$. For $a,b\in A$, we write $n_{a,b}=a_{T'}b_{T'}(ab)_{T'}^{-1}\in N_{G_\mathrm{sc}}(T')(F)$ and put $t_{a,b}\coloneq t_a\cdot a_{T'}(t_b)\cdot {}^{n_{a,b}}t_{ab}^{-1}\in T'$. Then we can describe the values of $\alpha$ as:
\[
    \alpha(a,b)=\chi(t_a\cdot a_{T'}(t_b)\cdot n_{a,b}\cdot t_{ab}^{-1})=\chi(t_{a,b}\cdot n_{a,b}).
\]

\begin{remark}\label{rem:value_of_2-cocycle_indep_chi}
    The definition of $\alpha$ is independent of the choice of $\chi$. That is, if we take another $\chi'$ such that $\pi=\chi'\cdot\pi_0$, the value $\chi'(t_{a,b}n_{a,b})$ is equal to $\chi(t_{a,b}n_{a,b})$.
\end{remark}
\subsection{The Galois side}

We next consider the Galois side. We put $(\phi,\rho)=(\phi_\pi,\rho_{(z,\pi)})$. We may assume that the central subgroup $Z\subset G$ contains $Z(G_\mathrm{der})$. We have the following diagram:
\[
    \begin{tikzcd}
    Z_\mathrm{c}\arrow[r,phantom,"\coloneq"]&[-.5cm]Z_\mathrm{sc}\times (Z\cap Z_G)\arrow[r,two heads]\arrow[d,phantom,"\subset" sloped]&Z\arrow[d,phantom,"\subset" sloped]&\ \\[-.5cm]
    G_\mathrm{c}\arrow[r,phantom,"\coloneq"]&G_\mathrm{sc}\times Z_G\arrow[r,two heads]\arrow[d,two heads]&G\arrow[r,two heads]\arrow[d,two heads]&G_\mathrm{ad}\arrow[d,equal]\\
    &G_\mathrm{ad}\times A_{\overline{G}}\arrow[r,equal]&\overline{G}\arrow[r,two heads]&G_\mathrm{ad}
    \end{tikzcd}
\]
where $Z_G=Z(G)^\circ$ and $Z_\mathrm{sc}=Z(G_\mathrm{sc})$. Take the dual and consider the following:
\[
    \begin{tikzcd}[column sep=large]
        &[-1cm]W_F\times\SL_2(\C)\arrow[d,"\overline{\phi}=\overline{\phi}_0\times\overline{\phi}_\chi"']\arrow[dr,"\phi=\phi_0\cdot\phi_\chi"', shift right=1mm]\arrow[dr,"\phi_0",shift left=1mm]\arrow[drr,"\phi_\mathcal{O}",bend left]&&\\[.5cm]
        \widehat{G_\mathrm{c}}\arrow[r,equal]&\widehat{G}_\mathrm{ad}\times A_{\widehat{G}}&\widehat{G}\arrow[l,two heads]&\widehat{G}_\mathrm{sc}\arrow[l]\\
        &\widehat{G}_\mathrm{sc}\times Z_{\widehat{\overline{G}}}\arrow[r,equal]\arrow[u,two heads]&\widehat{\overline{G}}\arrow[u,two heads]&\widehat{G}_\mathrm{sc}\arrow[u,equal]\arrow[l,hook']\\[-.5cm]
        \widehat{Z_\mathrm{c}}\arrow[r,equal]&Z(\widehat{G}_\mathrm{sc})\times(Z\cap Z_G)\widehat{\vphantom{)}}\arrow[u,phantom, "\subset" sloped]&\widehat{Z}\arrow[l,two heads]\arrow[u,phantom, "\subset" sloped]&\ 
    \end{tikzcd}
\]
Here we choose $(\phi_\mathcal{O},\rho_\mathcal{O})$ in the conjugacy class $[\phi_\mathcal{O},\rho_\mathcal{O}]$ as follows: Let $\phi_0\colon W_F\times \SL_2\C\to \Ell{G}$ is the image of $\phi_\mathcal{O}$. Then we have $\phi=\phi_\chi\cdot \phi_0$ for some 1-cocycle $\phi_\chi\colon W_F\to Z(\widehat{G})$, which corresponds to a character $\chi\colon G'(F)\to \C^\times$ such that $\pi=\chi\cdot\pi_0$. We also need that $\rho_\mathcal{O}$ is a constituent of $\rho|_{\mathcal{S}_{\phi_\mathcal{O}}}$. 
As in \zcref{lem:unique_sandwich_galois}, the multiplicity of $\rho_\mathcal{O}$ in $\rho$ is one. That is, $\rho_\mathcal{O}$ extends to an irreducible representation $\rho_\circ$ of the stabilizer $\mathcal{S}_\phi^{+,\rho_\mathcal{O}}$ of $\rho_\mathcal{O}$ such that $\rho=\Ind\rho_\circ$. Moreover, the induction gives an isomorphism
\[
    \End\Ind_{\mathcal{S}_\phi^{+,\rho_\mathcal{O}}}^{\widetilde{\mathcal{S}}_\phi^{+,\rho_\mathcal{O}}}\rho_\circ\xrightarrow{\sim} \End\Ind_{\mathcal{S}_\phi^+}^{\widetilde{\mathcal{S}}_\phi^+}\rho.
\]
We denote by $V'$ the representation space of $\rho_\mathcal{O}$ and $\rho_\circ$.
Now we fix a lift $z_\mathrm{c}\in Z^1(u\to\mathcal{W},Z_\mathrm{c}\to G_\mathrm{c})$ of $z$. We put $\pi_\mathrm{c}=\pi_\mathrm{sc}\boxtimes(\chi|_{Z_G(F)})$ so that $\phi_{\pi_\mathrm{c}}=\overline{\phi}$ and $\rho_\mathrm{c}\coloneq \rho_{(z_\mathrm{c},\pi_\mathrm{c})}$ contains $\rho$. We similarly take an irreducible representation $\rho_{\mathrm{c},\circ}$ of $\mathcal{S}_{\overline{\phi}}^{+,\rho_\mathcal{O}}$ such that $\rho_\mathrm{c}=\Ind\rho_{\mathrm{c},\circ}$. 

For each $a\in A$ we take a lift $\widetilde{g}_{a,\mathcal{O}}$ in $\widetilde{\mathcal{S}}_{\phi_\mathcal{O}}$. We can also consider the homomorphism
\[
    Z_{\widehat{G}}\rtimes A\to \widetilde{A}_{\widehat{G}}=A_{\widehat{G}}\rtimes A.
\]
Since $A$ stabilizes $\pi_0$ and $\pi=\chi\cdot\pi_0$, ${}^a\chi$ and $\chi$ coincide on $Z_G(F)$. That is, the conjugacy class of $\overline{\phi}_\chi$ is $A$-stable. For each $a$, we take $\dot{s}_a\rtimes a\in Z_{\widehat{\overline{G}}}\rtimes a$ which fixes $\overline{\phi}_\chi$. We have
\[
    \dot{s}_a\widetilde{g}_{a,\mathcal{O}}(\overline{\phi})=\dot{s}_a\cdot (\overline{\phi}_0\times {}^a\overline{\phi}_\chi)=\overline{\phi}.
\]
In particular, for any $\widetilde{a}\in \widetilde{\mathcal{S}}_\phi^{+,\rho_\mathcal{O}}$,  $\widetilde{a}\widetilde{g}_{a,\mathcal{O}}^{-1}\dot{s}_a^{-1}$ belongs to $\mathcal{S}^{+,\rho_\mathcal{O}}_{\overline{\phi}}$. We define $f_{\widetilde{a}}\in \GL(V')$ as:
\[
    f_{\widetilde{a}}=\rho_{\mathrm{c},\circ}(\widetilde{a}\widetilde{g}_{a,\mathcal{O}}^{-1}\dot{s}_a^{-1})\rho_\mathcal{O}(\widetilde{g}_{a,\mathcal{O}}).
\]
It is clear that $f_{\widetilde{a}}$ does not depend on the choice of $\widetilde{g}_{a,\mathcal{O}}$. Moreover, a similar argument to \zcref{lem:outer_action_belong_to_Hom_space} shows that $f_{\widetilde{a}}\in \Hom(\rho_\circ,\rho_\circ^{\widetilde{a}})$. Hence we obtain an isomorphism of $\C$-algebras:
\[
    \psi_\mathrm{Gal}\colon\End(\Ind_{\mathcal{S}^{+,\rho_\mathcal{O}}_\phi}^{\widetilde{\mathcal{S}}^{+,\rho_\mathcal{O}}_\phi}\rho_\circ)\xrightarrow{\sim}\C[A;\beta].
\]
Here, $\beta$ is the 2-cocycle $A\times A\to\C^\times$ defined as follows: For $a,b\in A$, take a lift $\widetilde{a},\widetilde{b}\in \widetilde{\mathcal{S}}_\phi^{+,\rho_\mathcal{O}}$ and set:
\[
    \beta(a,b)=f_{\widetilde{a}\widetilde{b}}f_{\widetilde{b}}^{-1}f_{\widetilde{a}}^{-1}.
\]
It is easy to see that the value of $\beta$ does not depend on the lifts $\widetilde{a}$ and $\widetilde{b}$, by using the fact that $f_{s\widetilde{a}}=\rho_\circ(s)\circ f_{\widetilde{a}}$ for $s\in \mathcal{S}^{+,\rho_\mathcal{O}}_\phi$.

\subsection{Comparison of 2-cocycles}

Now we have two 2-cocycles $\alpha$ and $\beta$, which describe the structure of the sets $\Irr(G'(F);\pi)$ and $\Irr(\widetilde{\mathcal{S}}^+_\phi;\rho)$, respectively. We will give an explicit 1-cochain which interpolates them, so that we obtain a bijection between these sets.

Let $G_\mathrm{ext}\to G$ be a $z$-extension of $G$, and $G'_\mathrm{ext}\to G'$ be the associated $z$-extension of $G'$. We denote by $T'_\mathrm{ext}$ the preimage of $T'$ in $G'_\mathrm{ext}$. Since $G'_\mathrm{ext}(F)\to G'(F)$ is surjective, we can take a lift of $\widetilde{g}_a\widetilde{g}_b\widetilde{g}_{ab}^{-1}=t_{a,b}\cdot n_{a,b}$ and write $t^\mathrm{ext}_{a,b}\cdot n_{a,b}\in G'_\mathrm{ext}(F)$, where $t^\mathrm{ext}_{a,b}\in T'_\mathrm{ext}$ is a lift of $t_{a,b}\in T'$. We may think $\chi$ as a character of $G'_\mathrm{ext}(F)$, then it factors through $A_{G_\mathrm{ext}}(F)=(G_\mathrm{ext}/G_\mathrm{sc})(F)$. Since $n_{a,b}\in G_\mathrm{sc}$, its image in $A_{G_\mathrm{ext}}$ vanishes. Hence the image $\overline{t}_{a,b}^\mathrm{ext}\in A_{G_\mathrm{ext}}$ of $t^\mathrm{ext}_{a,b}$ is $\Gamma_F$-stable, and we obtain 
\[
    \alpha(a,b)=\chi(\overline{t}^{\mathrm{ext}}_{a,b}).
\]

Let $T'_\mathrm{c}\subset G'_\mathrm{c}$ be the preimage of $T'\subset G'$. Then $z_\mathrm{c}$ belongs to $Z^1(u\to\mathcal{W}_F,Z_\mathrm{c}\to T'_\mathrm{c})$.
Put $Z''=\Ker(G_\mathrm{c}\to G)=\Ker(T'_\mathrm{c}\to T')$. We take a lift $\dot{t}_a\in T'_\mathrm{c}$ of $t_a$ and define
\[
    y_a=z_\mathrm{c}^{-1}\cdot {}^{\dot{t}_a}a_{T'}(z_\mathrm{c})\in Z^1(\mathcal{W}_F,Z'').
\]
Similarly, we set
\[
    \psi_a=\phi_\chi^{-1}\cdot {}^{\dot{s}_a}a(\phi_\chi)\in Z^1(F,\widehat{Z}'').
\]
Using them, we can compute the values of $\beta$ as follows:

\begin{lemma}\label{lem:calc_of_beta}
    For $a,b\in A$, we have
    \[
        \beta(a,b)=\langle [y_{a^{-1}}]^{-1},\psi_b\rangle\cdot \langle [z_\mathrm{c}],\dot{s}_a\cdot a(\dot{s}_b)\cdot \dot{s}_{ab}^{-1}\rangle.
    \]
\end{lemma}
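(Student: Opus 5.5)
We compute $\beta(a,b)=f_{\widetilde{a}\widetilde{b}}f_{\widetilde{b}}^{-1}f_{\widetilde{a}}^{-1}$ directly from the definition of $f_{\widetilde{a}}$. The plan is to pick convenient lifts and push all group elements into $\mathcal{S}_{\overline{\phi}}^{+,\rho_\mathcal{O}}$, where $\rho_{\mathrm{c},\circ}$ can be evaluated.

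\emph{Choice of lifts.} Since $\beta$ does not depend on the lifts, we take $\widetilde{a}=\dot{s}_a\widetilde{g}_{a,\mathcal{O}}$ and $\widetilde{b}=\dot{s}_b\widetilde{g}_{b,\mathcal{O}}$, regarded in $\widetilde{\mathcal{S}}^{+,\rho_\mathcal{O}}_{\overline{\phi}}$, which maps onto $\widetilde{\mathcal{S}}_\phi^{+,\rho_\mathcal{O}}$. With these lifts $f_{\widetilde{a}}=\rho_{\mathcal{O}}(\widetilde{g}_{a,\mathcal{O}})$ and $f_{\widetilde{b}}=\rho_\mathcal{O}(\widetilde{g}_{b,\mathcal{O}})$.

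\emph{Expanding the product.} Next,
\[
\widetilde{a}\widetilde{b}=\dot{s}_a\,a(\dot{s}_b)\,\widetilde{g}_{a,\mathcal{O}}\widetilde{g}_{b,\mathcal{O}}.
\]
Writing $\widetilde{g}_{a,\mathcal{O}}\widetilde{g}_{b,\mathcal{O}}=c_{a,b}\widetilde{g}_{ab,\mathcal{O}}$ with $c_{a,b}\in\mathcal{S}_{\phi_\mathcal{O}}$ (which lies in $\widehat{G}_\mathrm{sc}$), we get
\[
f_{\widetilde{a}\widetilde{b}}=\rho_{\mathrm{c},\circ}\bigl(\dot{s}_a\,a(\dot{s}_b)\,c_{a,b}\,\dot{s}_{ab}^{-1}\bigr)\,\rho_\mathcal{O}(\widetilde{g}_{ab,\mathcal{O}}).
\]
Here $\rho_\mathcal{O}$ is extended to $\widetilde{\mathcal{S}}_{\phi_\mathcal{O}}$ as $\widetilde{\rho}_\mathcal{O}$, so $\rho_\mathcal{O}(\widetilde{g}_{a,\mathcal{O}})\rho_\mathcal{O}(\widetilde{g}_{b,\mathcal{O}})=\rho_\mathcal{O}(c_{a,b})\rho_\mathcal{O}(\widetilde{g}_{ab,\mathcal{O}})$.

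\emph{Splitting into central pieces.} We separate $\dot{s}_a a(\dot{s}_b)c_{a,b}\dot{s}_{ab}^{-1}$ into two factors. The first is $u=\dot{s}_a a(\dot{s}_b)\dot{s}_{ab}^{-1}\in Z_{\widehat{\overline{G}}}$, which lies over $Z(\widehat{G})$ and fixes $\overline{\phi}_\chi$ only up to $\widehat{Z}''$. The second is $c_{a,b}$ conjugated by $\dot{s}_{ab}$. The $\widehat{G}_\mathrm{sc}$-part $c_{a,b}$ commutes with the torus part, up to the commutator coming from the failure of $\dot{s}_{ab}$ to centralize. The first factor acts through the central character of $\rho_{\mathrm{c}}$, namely $\langle[z_\mathrm{c}],u\rangle$, by relevance of $\rho_\mathrm{c}$ to $[z_\mathrm{c}]$; this gives the second factor of the formula. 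The remaining discrepancy is that $\rho_{\mathrm{c},\circ}(c_{a,b})$ and $\rho_\mathcal{O}(c_{a,b})$ differ by a scalar, because $c_{a,b}$ viewed in $\mathcal{S}^+_{\overline{\phi}}$ (with $\dot{s}$-twists) versus $\mathcal{S}_{\phi_\mathcal{O}}$ differ by an element of $\widehat{Z}''$.

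\emph{Identifying the discrepancy (main obstacle).} The scalar should be $\langle [y_{a^{-1}}]^{-1},\psi_b\rangle$. To prove this, we would express $\delta_{\overline{\phi}}$ of the relevant element: the element $\dot{s}_b$ moves $\phi_\chi$ by the cocycle $\psi_b$, so conjugating by $\widetilde{a}$ produces the twisted element. We would then apply \zcref{cond:compatibility_change_rigidification} and \zcref{cond:functoriality} for the automorphism $a$ of $G_\mathrm{c}$. This compares $\rho_{(z_\mathrm{c},\pi_\mathrm{c})}$ with $a^\ast\rho_{(z_\mathrm{c},\pi_\mathrm{c})}$, where the rigidification changes by $y_a$ (via ${}^{\dot t_a}a_{T'}(z_\mathrm{c})=z_\mathrm{c}y_a$). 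This is essentially \zcref{cond:equivariance_sc} on the $G_\mathrm{c}$ level, and it yields a character $\langle [y],\delta(\text{--})\rangle$. Evaluating it at the element whose $\delta$ equals $\psi_b$ gives the pairing. The delicate points are the exponent $a^{-1}$ versus $a$ and the sign. These come from the convention $\epsilon\mapsto\widehat{\epsilon}^{-1}$ and the $\epsilon^{-1}$ in $\delta_z$. We would settle them by tracking $a$-equivariance of the pairing \zcref{eq:pairing_W_Z}, $\langle a(y),a(\psi)\rangle=\langle y,\psi\rangle$, and using $y_{a^{-1}}=a^{-1}(y_a)^{-1}$ up to coboundary.
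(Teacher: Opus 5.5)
There is a genuine gap, and it is in your first step. The elements $\dot{s}_a\widetilde{g}_{a,\mathcal{O}}$ are not admissible lifts of $a$ in $\widetilde{\mathcal{S}}_\phi^{+,\rho_\mathcal{O}}$:

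\begin{itemize}
\item The element $\dot{s}_a\rtimes a$ was chosen only to fix $\overline{\phi}_\chi$. All one knows is that ${}^a\chi=\chi$ on $Z_G(F)$.
\item It moves $\phi_\chi$ by $\psi_a=\phi_\chi^{-1}\cdot{}^{\dot{s}_a}a(\phi_\chi)\in Z^1(F,\widehat{Z}'')$, which is nontrivial in general.
\item So $\dot{s}_a\widetilde{g}_{a,\mathcal{O}}$ stabilizes $\overline{\phi}$ but not $\phi$. The natural map runs from the component group of $\phi$ to that of $\overline{\phi}$, not onto $\widetilde{\mathcal{S}}_\phi^{+,\rho_\mathcal{O}}$.
\end{itemize}

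Since $\beta$ is defined with lifts in $\widetilde{\mathcal{S}}_\phi^{+,\rho_\mathcal{O}}$, the correction elements $\widetilde{a}\widetilde{g}_{a,\mathcal{O}}^{-1}\dot{s}_a^{-1}\in\mathcal{S}^{+,\rho_\mathcal{O}}_{\overline{\phi}}$ cannot be made trivial. They carry exactly the information you are missing.

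Indeed, if you finish your computation with your lifts, you get only $\langle[z_\mathrm{c}],\dot{s}_a\,a(\dot{s}_b)\,\dot{s}_{ab}^{-1}\rangle$. Both ``discrepancies'' you invoke to produce $\langle[y_{a^{-1}}]^{-1},\psi_b\rangle$ vanish:
\begin{itemize}
\item $\dot{s}_{ab}$ lies in the central factor $Z_{\widehat{\overline{G}}}$ of $\widehat{\overline{G}}=\widehat{G}_\mathrm{sc}\times Z_{\widehat{\overline{G}}}$, so it commutes with $c_{a,b}$ exactly.
\item $\rho_{\mathrm{c},\circ}$ restricts to $\rho_\mathcal{O}$ on $\mathcal{S}_{\phi_\mathcal{O}}$, so $\rho_{\mathrm{c},\circ}(c_{a,b})=\rho_\mathcal{O}(c_{a,b})$. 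The paper uses precisely this.
\end{itemize}
As a result, your last paragraph looks for an element whose $\delta$ equals $\psi_b$, but no such element occurs anywhere in your expression.

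The fix, which is the paper's argument, is to keep arbitrary lifts $\widetilde{a},\widetilde{b}\in\widetilde{\mathcal{S}}_\phi^{+,\rho_\mathcal{O}}$ and track the elements $\dot{s}_b\widetilde{g}_{b,\mathcal{O}}\widetilde{b}^{-1}\in\mathcal{S}^{+,\rho_\mathcal{O}}_{\overline{\phi}}$.
\begin{enumerate}
\item Since $\widetilde{b}$ fixes $\phi$ while $\dot{s}_b\widetilde{g}_{b,\mathcal{O}}$ moves it by $\psi_b$, one has $\delta_{\overline{\phi}}(\dot{s}_b\widetilde{g}_{b,\mathcal{O}}\widetilde{b}^{-1})^{-1}=\psi_b$.
\item Since $\widetilde{g}_a$ stabilizes $\sigma$ and hence $\sigma_\mathrm{sc}$, \zcref{cond:functoriality} and \zcref{cond:compatibility_change_rigidification} give $a^\ast\rho_\mathrm{c}\cong\langle[y_{a^{-1}}],\delta_{\overline{\phi}}(\text{--})\rangle^{-1}\cdot\rho_\mathrm{c}$.
\item Hence $\widetilde{\rho}_\mathcal{O}(\widetilde{g}_{a,\mathcal{O}})$ intertwines $\rho_{\mathrm{c},\circ}$ not with $\rho_{\mathrm{c},\circ}^{\widetilde{g}_{a,\mathcal{O}}}$ itself, but with its twist by $\langle[y_{a^{-1}}]^{-1},\delta_{\overline{\phi}}(\text{--})^{-1}\rangle$.
\item In the expansion of $f_{\widetilde{a}\widetilde{b}}f_{\widetilde{b}}^{-1}f_{\widetilde{a}}^{-1}$, moving $\widetilde{\rho}_\mathcal{O}(\widetilde{g}_{a,\mathcal{O}})$ past $\rho_{\mathrm{c},\circ}(\dot{s}_b\widetilde{g}_{b,\mathcal{O}}\widetilde{b}^{-1})$ is exactly what produces $\langle[y_{a^{-1}}]^{-1},\psi_b\rangle$.
\item After that, use $\rho_{\mathrm{c},\circ}=\rho_\mathcal{O}$ on $\widetilde{g}_{ab,\mathcal{O}}\widetilde{g}_{b,\mathcal{O}}^{-1}\widetilde{g}_{a,\mathcal{O}}^{-1}\in\mathcal{S}_{\phi_\mathcal{O}}$. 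Everything collapses to $\rho_{\mathrm{c},\circ}(\dot{s}_{ab}^{-1}\,a(\dot{s}_b)\,\dot{s}_a)$.
\item Your relevance argument for this central element then correctly gives the second factor.
\end{enumerate}

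Your final paragraph names the right ingredients. What is missing is the realization that the twist enters through these lift-correction terms, not through $c_{a,b}$ or $\dot{s}_{ab}$.
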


\begin{proof}
    Unwinding the definition, we have
    \begin{align}
        &\beta(a,b)=\\
        &\quad\rho_{\mathrm{c},\circ}(\widetilde{a}\widetilde{b}\widetilde{g}_{ab,\mathcal{O}}^{-1}\dot{s}_{ab}^{-1})\rho_\mathcal{O}(\widetilde{g}_{ab,\mathcal{O}}\widetilde{g}_{b,\mathcal{O}}^{-1})\rho_{\mathrm{c},\circ}(\dot{s}_b\widetilde{g}_{b,\mathcal{O}}\widetilde{b}^{-1})\rho_\mathcal{O}(\widetilde{g}_{a,\mathcal{O}}^{-1})\rho_{\mathrm{c},\circ}(\dot{s}_a\widetilde{g}_{a,\mathcal{O}}\widetilde{a}^{-1}).
    \end{align}
    Since $\widetilde{g}_a\in \widetilde{G}'(F)_x$ stabilizes $\sigma$, it also stabilizes $\sigma_\mathrm{sc}=\chi\cdot\sigma_{0,\mathrm{sc}}$. \zcref{cond:functoriality} and \zcref{cond:compatibility_change_rigidification} imply that $a^\ast\rho_\mathrm{c}\cong \langle [y_{a^{-1}}],\delta_{\overline{\phi}}(\text{--})\rangle^{-1}\cdot\rho_\mathrm{c}$. Then we have
    \[
        \rho_\mathcal{O}(\widetilde{g}_{a,\mathcal{O}})\in \Hom(\rho_{\mathrm{c},\circ},\langle [y_{a^{-1}}]^{-1},\delta_{\overline{\phi}}(\text{--})^{-1}\rangle\cdot \rho_{\mathrm{c},\circ}^{\widetilde{g}_{a,\mathcal{O}}}).
    \]
    That is, 
    \begin{align}
        &\rho_\mathcal{O}(\widetilde{g}_{a,\mathcal{O}})\rho_{\mathrm{c},\circ}(\dot{s}_b\widetilde{g}_{b,\mathcal{O}}\widetilde{b}^{-1})=\\
        &\quad\langle [y_{a^{-1}}]^{-1},\delta_{\overline{\phi}}(\dot{s}_b\widetilde{g}_{b,\mathcal{O}}\widetilde{b}^{-1})^{-1}\rangle\cdot \rho_{\mathrm{c},\circ}(\widetilde{g}_{a,\mathcal{O}}\dot{s}_b\widetilde{g}_{b,\mathcal{O}}\widetilde{b}^{-1}\widetilde{g}_{a,\mathcal{O}}^{-1})\rho_\mathcal{O}(\widetilde{g}_{a,\mathcal{O}}).
    \end{align}
    Moreover, we have
    \[
        \delta_{\overline{\phi}}(\dot{s}_b\widetilde{g}_{b,\mathcal{O}}\widetilde{b}^{-1})^{-1}=\phi^{-1}\cdot {}^{\dot{s}_b\widetilde{g}_{b,\mathcal{O}}\widetilde{b}^{-1}}\phi=\psi_b\in Z^1(F,\widehat{Z}'').
    \]
    Using this, we obtain:
    \begin{align}
        &\beta(a,b)=\\
        &\quad\begin{multlined}
            \langle [y_{a^{-1}}]^{-1},\psi_b\rangle\cdot\\
             \rho_{\mathrm{c},\circ}(\widetilde{a}\widetilde{b}\widetilde{g}^{-1}_{ab,\mathcal{O}}\dot{s}_{ab}^{-1})\rho_\mathcal{O}(\widetilde{g}_{ab,\mathcal{O}}\widetilde{g}_{b,\mathcal{O}}^{-1}\widetilde{g}_{a,\mathcal{O}}^{-1})\rho_{\mathrm{c},\circ}(\widetilde{g}_{a,\mathcal{O}}\dot{s}_b\widetilde{g}_{b,\mathcal{O}}\widetilde{b}^{-1}\widetilde{g}_{a,\mathcal{O}}^{-1}\dot{s}_a\widetilde{g}_{a,\mathcal{O}}\widetilde{a}^{-1})
        \end{multlined}
        \\
        &\quad=\langle [y_{a^{-1}}]^{-1},\psi_b\rangle\cdot \rho_{\mathrm{c},\circ}(\widetilde{a}\widetilde{b}g_{ab,\mathcal{O}}^{-1}\dot{s}_{ab}^{-1}\widetilde{g}_{ab,\mathcal{O}}\widetilde{g}_{b,\mathcal{O}}^{-1}\dot{s}_b\widetilde{g}_{b,\mathcal{O}}\widetilde{b}^{-1}\widetilde{g}^{-1}_{a,\mathcal{O}}\dot{s}_a\widetilde{g}_{a,\mathcal{O}}\widetilde{a}^{-1})\\
        &\quad=\langle [y_{a^{-1}}]^{-1},\psi_b\rangle\cdot \rho_{\mathrm{c},\circ}(\dot{s}_{ab}^{-1}\cdot a(\dot{s}_b)\cdot \dot{s}_a)\\
        &\quad =\langle [y_{a^{-1}}]^{-1},\psi_b\rangle\cdot\langle [z_\mathrm{c}],\dot{s}_{ab}^{-1}\cdot a(\dot{s}_b)\cdot\dot{s}_a\rangle.
    \end{align}
    Here, for the second equality we use the fact that $\widetilde{g}_{ab,\mathcal{O}}\widetilde{g}_{b,\mathcal{O}}^{-1}\widetilde{g}_{a,\mathcal{O}}^{-1}$ belongs to $\mathcal{S}_{\phi_\mathcal{O}}$, on which $\rho_{\mathrm{c},\circ}$ and $\rho_\mathcal{O}$ coincide.
\end{proof}

Now we construct an explicit 1-cochain $h$. We will use the hypercohomology groups in \zcref{def:hypercohomology_kaletha,def_kaletha_hypercoh_dual}, which are introduced in \cite{kal_disconn}. Recall that $z_\mathrm{c}\in Z^1(u\to\mathcal{W},Z_\mathrm{c}\to T'_\mathrm{c})$ is killed along $T'_\mathrm{c}\to T'\xrightarrow{1-a_{T'}} T'$ for all $a\in A$. We put
\[
    Z'\coloneq \bigcap_{a\in A}\Ker(Z_\mathrm{c}\to T'\xrightarrow{1-a_{T'}}T')
\]
and $\overline{T}'_\mathrm{c}\coloneq T'_\mathrm{c}/Z'$. Then $z_\mathrm{c}$ belongs to $Z^1(u\to\mathcal{W},Z'\to T'_\mathrm{c})$. Moreover, we can see easily that
\[
    (z_\mathrm{c}^{-1},t_a)\in Z^1(u\to\mathcal{W},Z'\to T'_\mathrm{c}\xrightarrow{1-a_{T'}}T')
\]
and
\[
    (\phi_\chi^{-1},\dot{s}_a)\in Z^1(\widehat{Z}'\to \widehat{\overline{T}}'_\mathrm{c}\xleftarrow{1-\widehat{(a^{-1})}_{T'}}\widehat{T}')
\]
for each $a$. 
Here we see $\phi_\chi\in Z^1(W_F,\widehat{T}')$ through the canonical inclusion $Z(\widehat{G})\hookrightarrow \widehat{T}'$.
We define
\[
    h(a)\coloneq \alpha(a^{-1},a)\cdot\langle (z_\mathrm{c}^{-1},t_{a^{-1}}),(\phi_\chi^{-1},\dot{s}_a)\rangle,
\]
where the pairing $\langle ,\rangle$ is \zcref{eq:pairing_of_hypercohomology}.

\begin{proposition}\label{prop:comparison_of_2-cocycles}
    For $a,b\in A$, we have $h(a)h(b)h(ab)^{-1}=\alpha(a,b)\beta(a,b)^{-1}$.
\end{proposition}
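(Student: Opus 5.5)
The strategy is to compute the coboundary of $h$ piece by piece. The term $a\mapsto\alpha(a^{-1},a)$ is treated with the cocycle identity for $\alpha$. The hypercohomology pairing term is expanded via the bilinearity and the ``Leibniz rule'' of the pairing \zcref{eq:pairing_of_hypercohomology} from \zcref{app:cohom_rig_inn}. The resulting expression is then matched against $\alpha(a,b)$ and against the formula for $\beta(a,b)$ in \zcref{lem:calc_of_beta}.

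\textbf{Step 1: elements with fixed first coordinate.} I first record how the hypercohomology elements behave as $a$ varies. The first coordinate $z_\mathrm{c}^{-1}$ is fixed, so the elements $(z_\mathrm{c}^{-1},t_a)$ for different $a$ differ only in the second coordinate. Concretely, I would show
\[
    (z_\mathrm{c}^{-1},t_{ab})\sim(z_\mathrm{c}^{-1},t_a\cdot a_{T'}(t_b))\cdot(1,t_{a,b}^{-1}),
\]
using the definition $t_{a,b}=t_a\,a_{T'}(t_b)\,{}^{n_{a,b}}t_{ab}^{-1}$. Here ${}^{n_{a,b}}$ acts trivially modulo the image of $N_{G_\mathrm{sc}}(T')$, which is harmless after passing to $A_{G_\mathrm{ext}}$. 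Similarly, on the dual side I would show
\[
    (\phi_\chi^{-1},\dot{s}_{ab})=(\phi_\chi^{-1},\dot{s}_a\,a(\dot{s}_b))\cdot(1,\dot{s}_{ab}\,a(\dot{s}_b)^{-1}\dot{s}_a^{-1}),
\]
where the last factor lies in $\widehat{Z}'$. This factor pairs with $[z_\mathrm{c}]$, and the pairing should reproduce exactly the second factor $\langle [z_\mathrm{c}],\dot{s}_a a(\dot{s}_b)\dot{s}_{ab}^{-1}\rangle$ of $\beta(a,b)$. For the group-side factor $(1,t_{a,b})$, its pairing with $(\phi_\chi^{-1},\dot{s})$ should reduce to the Langlands pairing of $\overline{t}^\mathrm{ext}_{a,b}$ with $\phi_\chi$, i.e.\ to $\chi(\overline{t}^\mathrm{ext}_{a,b})=\alpha(a,b)$. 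This uses the identity $\alpha(a,b)=\chi(\overline{t}^\mathrm{ext}_{a,b})$ established above and the compatibility of the pairing with the Tate--Nakayama/Langlands pairing for tori.

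\textbf{Step 2: the twisted cross terms.} The twisted terms such as $(z_\mathrm{c}^{-1},a_{T'}(t_b))$ paired with $(\phi_\chi^{-1},\dot{s}_a)$ are rewritten using equivariance of the pairing under the action of $a$. In this rewriting, the differential of the complex $Z'\to T'_\mathrm{c}\xrightarrow{1-a_{T'}}T'$ produces the cocycles $y_a=z_\mathrm{c}^{-1}\cdot{}^{\dot t_a}a_{T'}(z_\mathrm{c})$ and $\psi_b$. The mixed term $\langle [y_{a^{-1}}]^{-1},\psi_b\rangle$ of \zcref{lem:calc_of_beta} should appear precisely as the cross term between the group-side class for $a$ and the dual-side class for $b$. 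This is why $h$ is defined with $t_{a^{-1}}$ rather than $t_a$. To make this work, I would use $t_{a^{-1}}$ together with the relation $a_{T'}(t_{a^{-1}})\,t_a\equiv t_{a,a^{-1}}$. The correction needed to convert $t_{a^{-1}}$ into $t_a$ is exactly what the factor $\alpha(a^{-1},a)$ absorbs.

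\textbf{Step 3: assembling.} Combining the expansions of $h(a)h(b)h(ab)^{-1}$, the $\alpha(\cdot^{-1},\cdot)$ factors and the $t_{a,b}$-pairings should collapse via the 2-cocycle relation for $\alpha$ to $\alpha(a,b)$. The $\widehat Z'$-factors and the $y$--$\psi$ cross terms should give $\beta(a,b)^{-1}$.

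I expect the main obstacle to be Step 2: keeping track of signs and inverses in the hypercohomology pairing. There are conventions like $\widehat{(a^{-1})}$ acting on the dual, the appearance of $y_{a^{-1}}$ rather than $y_a$, and the Leibniz sign $\langle\delta x,y\rangle\langle x,\delta y\rangle=1$. My first attempt would be to verify the identity when $A$ is cyclic of order two, where all terms are explicit, in order to fix conventions. After that I would run the general bookkeeping, using the properties of the pairing proved in \zcref{app:cohom_rig_inn}.
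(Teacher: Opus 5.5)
There is a genuine gap. You name the right ingredients: the explicit form of the pairing \zcref{eq:pairing_of_hypercohomology}, \zcref{lem:calc_of_beta}, compatibility with the Langlands pairing, and the cocycle identity for $\alpha$. But the decomposition in Steps 1--2 is not well posed, and the mechanism that produces $\beta$ is missing.

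First, $(z_\mathrm{c}^{-1},t_a)$ lives in $Z^1(u\to\mathcal{W},Z'\to T'_\mathrm{c}\xrightarrow{1-a_{T'}}T')$, a complex depending on $a$. Dually, $(\phi_\chi^{-1},\dot{s}_a)$ lives in the complex with $1-\widehat{(a^{-1})}_{T'}$. So there is no common group in which to multiply $(z_\mathrm{c}^{-1},t_a\,a_{T'}(t_b))$ by $(1,t_{a,b}^{-1})$ and compare with $(z_\mathrm{c}^{-1},t_{ab})$, and bilinearity or a Leibniz rule cannot be applied across different elements of $A$. Your relation is also off by $n_{a,b}$: $t_a\,a_{T'}(t_b)\,t_{a,b}^{-1}={}^{n_{a,b}}t_{ab}$, and $a_{T'}b_{T'}=n_{a,b}(ab)_{T'}$ with $n_{a,b}$ acting nontrivially on $T'$. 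The paper avoids this by taking homological representatives $(\lambda,\mu_a)\in Z_0(W_{E/F},X_\ast(\overline{T}'_\mathrm{c})\xrightarrow{1-a_{T'}}X_\ast(T'))$ with one $\lambda$ for all $a$. It then computes with the explicit formula $\langle\dot{c},\lambda\rangle\prod_w\langle\phi(w),\mu_1(w)\rangle^{-1}$, whose ingredients live in groups independent of $a$.

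Second, $(1,t_{a,b})$ is not a hypercocycle. That would need $t_{a,b}\in T'(F)$, but only $t_{a,b}n_{a,b}=\widetilde{g}_a\widetilde{g}_b\widetilde{g}_{ab}^{-1}$ is rational, not $t_{a,b}$ itself. Your remark that $n_{a,b}$ becomes invisible in $A_{G_\mathrm{ext}}$ is the right observation, but it must be implemented as follows. Replace $\mu_{(ab)^{-1}}$ by ${}^{n_{a,b}}\mu_{(ab)^{-1}}$, which is allowed only because $\phi_\chi$ is $Z(\widehat{G})$-valued. Check that $d\mu_{b^{-1},a^{-1}}\in X_\ast(T'_\mathrm{sc})$, and identify the class with $[dn^{-1}_{b^{-1},a^{-1}},t_{b^{-1},a^{-1}}]\in H^1(F,T'_\mathrm{sc}\to T')$. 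Then lift to $T'_\mathrm{ext}$ and use that the pairing with $[\phi_\chi^{-1},1]$ factors through $A_{G_\mathrm{ext}}(F)\times H^1_\mathrm{cts}(W_F,Z_{\widehat{G}_\mathrm{ext}})$.

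Done this way, the term is $\alpha(b^{-1},a^{-1})^{-1}$, with inverted indices, not $\alpha(a,b)$. The cocycle identities together with $\alpha(1,b)=1$ then turn $\alpha(a^{-1},a)\alpha(b^{-1},b)\alpha((ab)^{-1},ab)^{-1}\alpha(b^{-1},a^{-1})^{-1}$ into $\alpha(a,b)$. As described, your accounting does not close. If the $t_{a,b}$-factor already yields $\alpha(a,b)$, the three factors $\alpha(x^{-1},x)$ would have to cancel, but $\alpha(a^{-1},a)\alpha(b^{-1},b)\alpha((ab)^{-1},ab)^{-1}=\alpha(a,b)\alpha(b^{-1},a^{-1})$, which is not $1$ in general.

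Nor is $\alpha(a^{-1},a)$ a conversion factor from $t_{a^{-1}}$ to $t_a$. The index $a^{-1}$ is forced because the complex containing $(z_\mathrm{c}^{-1},t_{a^{-1}})$ is exactly the one dual to the complex containing $(\phi_\chi^{-1},\dot{s}_a)$. Pairing $(z_\mathrm{c}^{-1},t_a)$ with $(\phi_\chi^{-1},\dot{s}_a)$ is not even defined.

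Finally, Step 2 lacks the idea that brings in $\psi_b$. Since $\psi_b$ involves $\dot{s}_b$ and the action of $b$, equivariance under $a$ cannot produce it. The paper replaces $(\phi_\chi^{-1},\dot{s}_a)$ by a cohomologous pair, modifying by the coboundary attached to $\dot{s}_b$ and using ${}^{\dot{s}_b}b(\phi_\chi)=\psi_b\cdot\phi_\chi$. This splits off two factors:
\begin{itemize}
\item $\prod_w\langle\mu_{a^{-1}}(w),\psi_b(w)\rangle=\langle[y_{a^{-1}}],\psi_b\rangle$, by \zcref{cor:pairing_with_eta};
\item $\langle\lambda,\dot{s}_a\,{}^a\dot{s}_b\,\dot{s}_{ab}^{-1}\rangle=\langle[z_\mathrm{c}]^{-1},\dot{s}_a\,{}^a\dot{s}_b\,\dot{s}_{ab}^{-1}\rangle$.
\end{itemize}
The element $\dot{s}_a\,{}^a\dot{s}_b\,\dot{s}_{ab}^{-1}$ is in general only in $[\widehat{\overline{T}}'_\mathrm{c}]^+$, not in $\widehat{Z}'$ as you assert. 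Together these two factors give $\beta(a,b)^{-1}$. The same modification moves $b$ to the group side as ${}^{(b^{-1})_{T'}}\mu_{a^{-1}}$. This produces the remaining term $\prod_w\langle({}^{(b^{-1})_{T'}}\mu_{a^{-1}}+\mu_{b^{-1}}-\mu_{(ab)^{-1}})(w),\phi_\chi^{-1}(w)\rangle$, which is handled by the $T'_\mathrm{sc}\to T'$ argument above.
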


\begin{proof}
    The proof goes in a similar way to \cite[Proposition 8.1]{kal_disconn}. Let $E/F$ be a sufficiently large finite Galois extension. For each $a\in A$, we choose an element $(\lambda,\mu_a)\in Z_0(W_{E/F},X_\ast(\overline{T}'_\mathrm{c})\xrightarrow{1-a_{T'}}X_\ast(T'))$ which corresponds to the cohomology class of $(z_\mathrm{c}^{-1},t_a)$ through the isomorphism \zcref{eq:isom_hypercohomology}. 
    Since ${}^{\dot{s}_b}b(\phi_\chi)=\psi_b\cdot\phi_\chi$, $(\phi_\chi^{-1},\dot{s}_a)$ is cohomologous to $(\psi_b\cdot b(\phi_\chi)^{-1},\dot{s}_a\cdot {}^a\dot{s}_b\cdot \dot{s}_{b}^{-1})$. Using this, we have:
    \begin{align}
        &h(a)h(b)h(ab)^{-1}=\\
        &\quad\alpha(a^{-1},a)\alpha(b^{-1},b)\alpha((ab)^{-1},ab)^{-1}\cdot\prod_{w\in W_{E/F}}\langle \mu_{a^{-1}}(w),\psi_b(w)\rangle\cdot\\
        &\quad\langle \lambda,\dot{s}_a\cdot{}^a\dot{s}_b\cdot \dot{s}_{ab}^{-1}\rangle\cdot\prod_{w\in W_{E/F}}\langle ({}^{(b^{-1})_{T'}}\mu_{a^{-1}}+\mu_{b^{-1}}-\mu_{(ab)^{-1}})(w),\phi_\chi^{-1}(w)\rangle.
    \end{align}
    Now $\dot{s}_a\cdot{}^a\dot{s}_b\cdot\dot{s}_{ab}^{-1}$ belongs to the preimage $[\widehat{\overline{T}}'_{\mathrm{c}}]^+$ of $(\widehat{T}')^{\Gamma_F}$ in $\widehat{\overline{T}}'_{\mathrm{c}}$ and 
    \[
        \langle \lambda,\dot{s}_a\cdot{}^a\dot{s}_b\cdot \dot{s}_{ab}^{-1}\rangle=\langle [z_\mathrm{c}]^{-1},\dot{s}_a\cdot{}^a\dot{s}_b\cdot\dot{s}_{ab}^{-1}\rangle.
    \]
    Also, using \zcref{cor:pairing_with_eta} we have
    \[
        \prod_{w\in W_{E/F}}\langle \mu_{a^{-1}}(w),\psi_b(w)\rangle=\langle (\lambda,\mu_{a^{-1}}),(\psi_b,1)\rangle=\langle [y_{a^{-1}}],\psi_b\rangle.
    \]

    Consider the remaining term
    \begin{equation}
        \prod_{w\in W_{E/F}}\langle ({}^{(b^{-1})_{T'}}\mu_{a^{-1}}+\mu_{b^{-1}}-\mu_{(ab)^{-1}})(w),\phi_\chi^{-1}(w)\rangle.\tag{$\ast$}\label{eq:remaining_term}
    \end{equation}
    Since $\phi_\chi$ has values in $Z(\widehat{G})$, the first term of this pairing factors through 
    \[
        X_\ast(T')\to X_\ast(T')/X_\ast(T'_\mathrm{sc})\cong X^\ast(Z(\widehat{G})).
    \]
    Now $\mu_{(ab)^{-1}}$ and ${}^{n_{a,b}}\mu_{(ab)^{-1}}$ have the same image in $X^\ast(Z(\widehat{G}))$. We put
    \[
        \mu_{b^{-1},a^{-1}}={}^{(b^{-1})_{T'}}\mu_{a^{-1}}+\mu_{b^{-1}}-{}^{n_{a,b}}\mu_{(ab)^{-1}}.
    \]
    It is easy to see that $d\mu_{b^{-1},a^{-1}}\in C_0(W_{E/F},X_\ast(T'))_0$ belongs to $X_\ast(T'_\mathrm{sc})$. Hence $(d\mu_{b^{-1},a^{-1}},\mu_{b^{-1},a^{-1}})\in Z_0(W_{E/F},X_\ast(T'_\mathrm{sc})\to X_\ast(T'))$. Moreover, its image in $H^1(F,T'_\mathrm{sc}\to T')$ is the same as $[dn_{b^{-1},a^{-1}}^{-1},t_{b^{-1},a^{-1}}]$, where 
    \[
        dn_{b^{-1},a^{-1}}^{-1}(\gamma)=n_{b^{-1},a^{-1}}\cdot \Ad(z(\gamma)\rtimes \gamma)(n_{b^{-1},a^{-1}})^{-1}={}^{n_{b^{-1},a^{-1}}-1}z(\gamma)
    \]
    Also we have $(\phi_\chi^{-1},1)\in Z^1_\mathrm{cts}(W_F,\widehat{T}'\to \widehat{T}'_\mathrm{ad})$, where $\widehat{T}'_\mathrm{ad}=\widehat{T'_\mathrm{sc}}=\widehat{T}'/Z(\widehat{G})$. It is then clear that \zcref{eq:remaining_term} is equal to the value $\langle [dn^{-1}_{b^{-1},a^{-1}},t_{b^{-1},a^{-1}}],[\phi_\chi^{-1},1]\rangle$ with respect to the pairing
    \[
        H^1(F,T'_\mathrm{sc}\to T')\times H^1_\mathrm{cts}(W_F,\widehat{T}'\to \widehat{T}'_\mathrm{ad})\to \C^\times
    \]
    Now $[dn^{-1}_{b^{-1},a^{-1}},t_{b^{-1},a^{-1}}]$ is the image of $[dt_{b^{-1},a^{-1}}^{\mathrm{ext}},t_{b^{-1},a^{-1}}^{\mathrm{ext}}]\in H^1(F,T'_\mathrm{sc}\to T'_\mathrm{ext})$. By functoriality of the pairing, we can replace $T'$ with $T'_\mathrm{ext}$. Since $[\phi_\chi^{-1},1]$ belongs to $H^1_\mathrm{cts}(W_F,Z_{\widehat{G}_\mathrm{ext}})=H^1_\mathrm{cts}(W_F,\Ker(\widehat{T}'_\mathrm{ext}\to \widehat{T}'_\mathrm{ad}))\subset H^1_\mathrm{cts}(W_F,\widehat{T}'_\mathrm{ext}\to \widehat{T}'_\mathrm{ad})$, this pairing factors through
    \[
        A_{G_\mathrm{ext}}(F)\times H^1_\mathrm{cts}(W_F,Z_{\widehat{G}_\mathrm{ext}})\to \C^\times.
    \]
    Hence we obtain
    \[
        \text{\zcref{eq:remaining_term}}=\chi^{-1}(\overline{t}_{b^{-1},a^{-1}}^{\mathrm{ext}})=\alpha(b^{-1},a^{-1})^{-1}.
    \]
    
    We therefore obtain
    \begin{multline}
        h(a)h(b)h(ab)^{-1}=\\
        \alpha(a^{-1},a)\alpha(b^{-1},b)\alpha((ab)^{-1},ab)^{-1}\alpha(b^{-1},a^{-1})^{-1}\beta(a,b)^{-1}.
    \end{multline}
    Since $\alpha$ is a 2-cocycle, we have
    \[
        \alpha(a^{-1},a)\alpha(1,b)=\alpha(a,b)\alpha(a^{-1},ab)
    \]
    and
    \[
        \alpha(a^{-1},ab)\alpha(b^{-1},b)=\alpha((ab)^{-1},ab)\alpha(b^{-1},a^{-1}).
    \]
    Combining these equalities, we obtain
    \[
        \alpha(a^{-1},a)\alpha(b^{-1},b)\alpha((ab)^{-1},ab)^{-1}\alpha(b^{-1},a^{-1})^{-1}=\alpha(a,b)\alpha(1,b)^{-1}.
    \]
    Since we choose $\widetilde{g}_1=1$, we have $\alpha(1,b)=1$. Hence $\alpha=\beta\cdot dh$.
\end{proof}

\begin{corollary}
    We have an isomorphism of $\C$-algebras:
    \begin{multline}
    \End(\Ind_{G'(F)}^{\widetilde{G}'(F)}\pi)\cong \End(\Ind_{G'(F)_x}^{\widetilde{G}'(F)_x}\sigma)\xrightarrow[\psi_\mathrm{gp}]{\sim} \C[A;\alpha]\xrightarrow[h]{\sim} \C[A,\beta]\\
    \xrightarrow[\psi_\mathrm{Gal}^{-1}]{\sim} \End(\Ind_{\mathcal{S}_\phi^{+,\rho_\mathcal{O}}}^{\widetilde{\mathcal{S}}_\phi^{+,\rho_\mathcal{O}}}\rho_\circ)\cong \End(\Ind_{\mathcal{S}_\phi^+}^{\widetilde{\mathcal{S}}_\phi^+}\rho).
\end{multline}
In particular, comparing simple modules of both ends, we obtain a bijection
\[
        \Irr(\widetilde{G}'(F);\pi)\leftrightarrow\Irr(\widetilde{\mathcal{S}}^+_{\phi_\pi};\rho_{(z,\pi)}).
\]
\end{corollary}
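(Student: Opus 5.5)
The plan is to assemble the chain of isomorphisms in the statement from results already established and then read off the bijection on simple modules. The first isomorphism, $\End(\Ind_{G'(F)}^{\widetilde{G}'(F)}\pi)\cong\End(\Ind_{G'(F)_x}^{\widetilde{G}'(F)_x}\sigma)$, comes from transitivity of (compact) induction, $\Ind_{G'(F)}^{\widetilde{G}'(F)}\cind_{G'(F)_x}^{G'(F)}\sigma=\cind_{\widetilde{G}'(F)_x}^{\widetilde{G}'(F)}\Ind_{G'(F)_x}^{\widetilde{G}'(F)_x}\sigma$. We combine this with the fact that $\widetilde{G}'(F)_x/G'(F)_x=A$, so that Mackey theory matches intertwiners supported on $\widetilde{G}'(F)_x$ with all intertwiners; here supercuspidality of $\pi$ makes the remaining double cosets contribute nothing. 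On the Galois side, the last isomorphism is the one induced by $\rho=\Ind\rho_\circ$, as recorded earlier.

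The middle step is the identification $\C[A;\alpha]\cong\C[A;\beta]$ via $h$. The map is $e_a\mapsto h(a)e_a$, or its inverse, depending on conventions. It is an algebra homomorphism exactly because $dh=\alpha\beta^{-1}$ by \zcref{prop:comparison_of_2-cocycles}: the multiplication $e_ae_b=\alpha(a,b)e_{ab}$ is sent to $h(a)h(b)\alpha(a,b)e_{ab}$ versus $h(ab)\beta(a,b)e_{ab}$. I will fix the sign convention of $\psi_\mathrm{gp}$, $\psi_\mathrm{Gal}$ so that the normalization matches the identity $h(a)h(b)h(ab)^{-1}=\alpha(a,b)\beta(a,b)^{-1}$, which is a bookkeeping check. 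The isomorphisms $\psi_\mathrm{gp}$ and $\psi_\mathrm{Gal}$ come from bases $\{\widetilde\sigma_\mathcal{O}(\widetilde g_a)\}$ and $\{f_{\widetilde a}\}$ of the one-dimensional spaces $\Hom(\sigma,\sigma^{\widetilde g_a})$ and $\Hom(\rho_\circ,\rho_\circ^{\widetilde a})$, given by \zcref{lem:outer_action_belong_to_Hom_space} and its Galois analogue. Here we use that each $a\in A$ stabilizes $\pi$ and $\rho$, after replacing $A$ by $A^{[z],\pi}=A^{[\phi],\rho}$ via the equivariance from \zcref{cor:existence_LLC_for_EUC}.

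For the bijection, Clifford theory says $\Irr(\widetilde{G}'(F);\pi)$ is in canonical bijection with simple modules of $\End(\Ind_{G'(F)}^{\widetilde{G}'(F)}\pi)$, via $\widetilde\pi\mapsto\Hom_{\widetilde{G}'(F)}(\Ind\pi,\widetilde\pi)$. The same holds for $\Irr(\widetilde{\mathcal{S}}^+_\phi;\rho)$ with $\End(\Ind\rho)$. The only subtlety on the $p$-adic side is that $\Ind_{G'(F)}^{\widetilde{G}'(F)}\pi$ is a finite-index induction of an irreducible smooth representation. It is therefore semisimple of finite length, and $\widetilde\pi\mapsto\Hom(\Ind\pi,\widetilde\pi)$ gives the required Morita-type correspondence. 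Composing, we get the bijection.

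The main obstacle I expect is verifying that the composite algebra isomorphism is independent of the auxiliary choices: the lifts $\widetilde g_a$, $\widetilde g_{a,\mathcal{O}}$ and $\dot s_a$, the character $\chi$ (cf.\ \zcref{rem:value_of_2-cocycle_indep_chi}), and the lift $z_\mathrm{c}$. The corollary as stated only asserts existence of an isomorphism, so independence is not strictly needed. However, it matters for the compatibility in \zcref{thm:LLC_for_disconn}, and I would check it by tracking how $h$ changes under each choice: changing $\widetilde g_a$ by $t\in T'$ rescales both $\alpha$ and $h(a)$ by compatible factors.
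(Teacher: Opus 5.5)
Your proposal is correct and is essentially the paper's argument: the corollary is the composite of the compact-induction isomorphism, $\psi_\mathrm{gp}$, the rescaling by $h$, $\psi_\mathrm{Gal}^{-1}$ and the isomorphism coming from $\rho=\Ind\rho_\circ$, followed by comparing simple modules via Clifford theory; the rescaling is an algebra isomorphism because $\alpha=\beta\cdot dh$ by \zcref{prop:comparison_of_2-cocycles}. The only slip is in your bookkeeping for the middle step: with $dh=\alpha\beta^{-1}$, the map $e_a\mapsto h(a)e_a$ from $\C[A;\alpha]$ to $\C[A;\beta]$ already works (compare $h(a)h(b)\beta(a,b)$ with $h(ab)\alpha(a,b)$), so no convention for $\psi_\mathrm{gp}$ or $\psi_\mathrm{Gal}$ needs adjusting.
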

Now the proof of \zcref{thm:LLC_for_disconn} is completed.

\subsection{Independence of choices}

Our construction of the correspondence in \zcref{thm:LLC_for_disconn} depends on several choices. In this subsection, we observe how they affect the resulting bijection. The goal is as follows:

\begin{proposition}\label{prop:choice_independence}
    The bijection $\Irr(\widetilde{G}'(F))_\mathrm{euc}\to \Phi_\mathrm{e}(\widetilde{G};[z])_\mathrm{euc}$ in \zcref{thm:LLC_for_disconn} depends only on the choice of a ``partial correspondence'' in \zcref{ssec:partial_corr}.
\end{proposition}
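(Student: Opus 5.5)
We list the auxiliary choices made in the construction and check that each one leaves the composite isomorphism
\[
\End(\Ind_{G'(F)}^{\widetilde{G}'(F)}\pi)\cong\C[A;\alpha]\xrightarrow{h}\C[A;\beta]\cong\End(\Ind_{\mathcal{S}^+_\phi}^{\widetilde{\mathcal{S}}^+_\phi}\rho)
\]
unchanged, at least up to an inner automorphism that does not move simple modules. Once the partial correspondence $(\widetilde{\pi}_\mathcal{O},\widetilde{\rho}_\mathcal{O})$ is fixed, the remaining choices are: the character $\chi$ with $\pi=\chi\cdot\pi_0$; the vertex $x$ and the torus $T'$ together with its transfer; the lifts $\widetilde{g}_a\in\widetilde{G}'(F)_x$; the lifts $\widetilde{g}_{a,\mathcal{O}}$ and the elements $\dot{s}_a$; the lifts $\dot{t}_a$ and $z_\mathrm{c}$ of $t_a$ and $z$; and the representative $\phi_\mathcal{O}$ in its conjugacy class. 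In each case the plan is to compute how $\psi_\mathrm{gp}$, $\psi_\mathrm{Gal}$ and $h$ change, and to show that the changes cancel.

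First, the choices of $\widetilde{g}_a$ and $\dot{s}_a$. Replacing $\widetilde{g}_a$ by $g\widetilde{g}_a$ with $g\in G'(F)_x$ multiplies the basis element $\widetilde{\sigma}_\mathcal{O}(\widetilde{g}_a)$ by a scalar $c_a$ relative to the intertwiner $\sigma(g)$. This changes $\psi_\mathrm{gp}$ by the coboundary $dc$. I expect $h(a)=\alpha(a^{-1},a)\langle(z_\mathrm{c}^{-1},t_{a^{-1}}),(\phi_\chi^{-1},\dot{s}_a)\rangle$ to change by exactly the compensating factor. To see this, write $g=t\,n$ with $n$ in the simply connected part and use $\chi(g)$ together with the functoriality of the hypercohomology pairing from the appendix: the change $t_{a}\mapsto t\,t_a$ changes the class $(z_\mathrm{c}^{-1},t_a)$ by $(1,t)$, whose pairing with $(\phi_\chi^{-1},\dot{s}_a)$ is $\chi(t)$.

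Similarly, replacing $\dot{s}_a$ by $\dot{s}_a u$ with $u\in Z_{\widehat{\overline{G}}}$ fixing $\overline{\phi}_\chi$ rescales $f_{\widetilde{a}}$ by $\langle[z_\mathrm{c}],u\rangle$ through $\rho_{\mathrm{c},\circ}$. The pairing in $h$ changes by the same factor, because the cocycle $(1,u)$ pairs with $(z_\mathrm{c}^{-1},t)$ through $[z_\mathrm{c}]$. Independence of $\widetilde{g}_{a,\mathcal{O}}$ was already noted. Changing $\dot{t}_a$ or $z_\mathrm{c}$ by elements of $Z''$ is absorbed by \zcref{cond:compatibility_change_rigidification}, as in \zcref{lem:calc_of_beta}.

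Next, the choice of $\chi$. By \zcref{rem:value_of_2-cocycle_indep_chi}, $\alpha$ does not depend on $\chi$. If $\chi'=\chi\eta$ with $\eta\cdot\pi_0=\pi_0$, then $\phi_\chi$ changes by $\phi_\eta$. The term $\langle(z_\mathrm{c}^{-1},t_{a^{-1}}),(\phi_\eta^{-1},\cdot)\rangle$ should equal $\eta(\widetilde{g}_a\text{-part})$. This matches the rescaling of $\widetilde{\sigma}_\mathcal{O}(\widetilde{g}_a)$ as an element of $\Hom(\sigma,\sigma^{\widetilde{g}_a})$, since $\sigma=\chi\sigma_0$ with $\chi$ replaced.

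Finally, the choices of $x$ and $T'$, and of the $\widehat{G}\rtimes A$-conjugate of $(\phi,\rho)$. These are handled by transport of structure. Conjugating by $G'(F)$ moves $(x,T',\widetilde{g}_a)$ simultaneously, and $\alpha$ and $h$ are invariant under this by \zcref{lem:max_spl_unram_conj} and the uniqueness up to $G(F)_{x_0}$ in \zcref{lem:out_stab_T'}. On the Galois side, conjugation preserves the simple-module matching because the induced isomorphism of endomorphism algebras is inner.

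I expect the main obstacle to be the compatibility under changing the lifts $\widetilde{g}_a$, in particular the $t_a$-component. The formula for $h$ mixes the group-side scalar $\alpha(a^{-1},a)$ with a hypercohomology pairing, so one must check, via the explicit description \zcref{eq:isom_hypercohomology} as in the proof of \zcref{prop:comparison_of_2-cocycles}, that $h$ changes by exactly $dc$ (including the $a^{-1}$ indexing) and not merely by a coboundary. I would first attack this by reducing to $T'$ a torus and computing in $Z_0(W_{E/F},\cdot)$.
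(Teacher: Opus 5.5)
\textbf{There are genuine gaps.} Your plan has the same shape as the paper's proof: run through the auxiliary choices and show that $\psi_{\mathrm{Gal}}^{-1}\circ h\circ\psi_{\mathrm{gp}}$ does not move. The easy cases are fine: $\{\dot{s}_a\}$, $\widetilde{g}_{a,\mathcal{O}}$, and $t_a$ for fixed $\widetilde{g}_a$, which only moves by $T'_{\mathrm{sc}}(F)$ and so is invisible to $\chi$. But two steps fail as you set them up.

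\textbf{The change of $\widetilde{g}_a$.} This is the step you single out as the main obstacle. The lifts $\widetilde{g}_a$ are required to normalize $T'$, so a change is by some $n_a\in N_{G'}(T')(F)_x$. Write $n_a=t_{a,0}n_{a,0}$ with $n_{a,0}\in N_{G_{\mathrm{sc}}}(T')(F)_{x_0}$. Then $t_{a,0}\in T'(\overline{F})$ is in general not $F$-rational, because $n_a$ is rational for the $\overline{z}$-twisted action and $n_{a,0}$ for the quasi-split one. At the same time $a_{T'}$ becomes $n_{a,0}a_{T'}$ and $t_a$ becomes $t_{a,0}\cdot{}^{n_{a,0}}t_a$, so the new pair lives in a different complex $T'_{\mathrm{c}}\xrightarrow{1-n_{a,0}a_{T'}}T'$. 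Hence ``the class changes by $(1,t)$, which pairs to $\chi(t)$'' has no meaning: $(1,t_{a,0})$ is not a hypercocycle, and $\chi(t_{a,0})$ is undefined. The correct factor is $\chi(n_a)^{-1}$, which does not split as $\chi(t_{a,0})\chi(n_{a,0})$.

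The missing device is to compare through the auxiliary complex $T'_{\mathrm{c}}\xrightarrow{(1-a_{T'},\,1-n_{a,0})}T'\times T'$, which maps to the new one by $(n_{a,0}\circ\mathrm{pr}_1)\cdot\mathrm{pr}_2$. Functoriality of the hypercohomology pairing, using that $\widehat{n}_{a,0}$ acts trivially on $Z(\widehat{G})$, splits the new pairing as the old one times $\langle[z_{\mathrm{c}}^{-1},t_{a,0}],[\phi_\chi^{-1},1]\rangle$. That last factor is then shown to be $\chi(n_a)^{-1}$ by the $z$-extension argument that evaluates the remaining term in the proof of \zcref{prop:comparison_of_2-cocycles}. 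Reducing to a torus and computing in $Z_0(W_{E/F},\cdot)$ does not by itself account for the change of $a_{T'}$.

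\textbf{The treatment of $\chi$.} In the construction $\chi$ is not a free choice: $\phi_\chi$ is the cocycle $\phi\cdot\phi_0^{-1}$, pinned down by the representative $\phi_{\mathcal{O}}$. The group-side compensation you propose also does not exist. If $\eta\cdot\pi_0\cong\pi_0$, then $\eta$ kills the image of $G'_{\mathrm{sc}}(F)$ and $\sigma_{0,\mathrm{sc}}$ is irreducible, so $\eta$ is trivial on $G'(F)_x$. Hence $\sigma$, $\alpha$ and $\psi_{\mathrm{gp}}$ are literally unchanged, and $\widetilde{\sigma}_{\mathcal{O}}(\widetilde{g}_a)$ is not rescaled. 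Nor does ``$\eta(\widetilde{g}_a\text{-part})$'' make sense, since $t_a$ is not rational.

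What actually has to be checked is replacing $\phi_{\mathcal{O}}$ by ${}^{g^{-1}}\phi_{\mathcal{O}}$ with $\phi$ fixed, where $g\in\widehat{G}_{\mathrm{sc}}$ may be taken to stabilize $\rho_{\mathcal{O}}$. Then $\phi_\chi$ moves by the finite-valued cocycle $\eta=\delta_{\overline{\phi}}(g)$, and $h(a)$ changes by $\langle\eta,y_{a^{-1}}\rangle$ by \zcref{cor:pairing_with_eta}. The compensation comes from $\psi_{\mathrm{Gal}}$, not from the group side. Indeed, $\widetilde{\rho}_{\mathcal{O}}$ gets conjugated by $\rho_{\mathrm{c},\circ}(g)$. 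Since $\rho_{\mathrm{c},\circ}^{\widetilde{g}_{a,\mathcal{O}}}\cong\langle[y_{a^{-1}}],\delta_{\overline{\phi}}(\text{--})\rangle^{-1}\cdot\rho_{\mathrm{c},\circ}$ by \zcref{cond:functoriality,cond:compatibility_change_rigidification}, each $f_{\widetilde{a}}$ picks up $\langle[y_{a^{-1}}],[\eta]\rangle$. Your ``transport of structure / inner automorphism'' remark only covers conjugating $\phi$, $\rho$ and $\phi_{\mathcal{O}}$ simultaneously, not this relative change.

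\textbf{Two smaller omissions.}
For $z_{\mathrm{c}}$, \zcref{cond:compatibility_change_rigidification} only tells you how $\psi_{\mathrm{Gal}}$ moves. You need \zcref{lem:pairing_hypercoh_and_W_Z} to see that $h$ moves by exactly the compensating character. For $(T',\xi,x)$, changing $\xi$ replaces $z_{\mathrm{c}}$ by ${}^{g}z_{\mathrm{c}}$, which differs from $\Ad(g_0)(z_{\mathrm{c}})$ by the coboundary of $s'\in T''_{\mathrm{c}}$. It is this coboundary, not pure transport of structure, that keeps $h$ fixed.
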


Let us recall the choices we need to construct the correspondence. Given a rigid inner twist $(G',\xi,z)$ of $G$, we first choose a maximally split and maximally unramified maximal torus $T'\subset G'$. We then replace $\xi$ with $\xi\circ \Ad(g)$ for some $g\in G$ so that $\xi^{-1}|_{T'}\colon T'\hookrightarrow G$ is defined over $F$, by which we consider $T'\subset G$ and that $z$ belongs to $Z^1(u\to\mathcal{W},Z\to T')$. For $\pi\in \Irr(G'(F))_\mathrm{euc}$, we choose a vertex $x\in \mathcal{A}(T',G')$ in order to write $\pi$ as a compact induction. For each $a\in A$ we choose a lift $\widetilde{g}_a\in \widetilde{G}'(F)$ which stabilizes $T'$ and $x$. We can express $\widetilde{g}_a=t_a\cdot a_{T'}$, where $t_a\in T'$ and $a_{T'}\in G_\mathrm{sc}(F)\rtimes a$. We also need to choose a lift $z_\mathrm{c}\in Z^1(u\to\mathcal{W},Z_\mathrm{c}\to T'_\mathrm{c})$. On the Galois side, we choose $(\phi_\mathcal{O},\rho_\mathcal{O})$ and $\{\dot{s}_a\in Z_{\widehat{\overline{G}}}\}$. We illustrate the relation of these choices below:
\[
    \begin{tikzcd}[column sep=.4cm]
        &&&&&&&h\\
        &&&&&&&\phi_\chi\arrow[u,rightsquigarrow]\\
        T'\arrow[r,dashed]&\xi\arrow[r,dashed]\arrow[rrrr,bend right,dashed]&x\arrow[r,dashed]\arrow[d,rightsquigarrow,crossing over]&\{\widetilde{g}_a\}\arrow[r,dashed]\arrow[d,rightsquigarrow,crossing over]\arrow[uurrrr,rightsquigarrow,bend left]&\{t_a\}\arrow[uurrr,rightsquigarrow,bend left]&z_\mathrm{c}\arrow[uurr,rightsquigarrow]\arrow[dd,rightsquigarrow]&\{\dot{s}_a\}\arrow[uur,rightsquigarrow]\arrow[dddr,rightsquigarrow]&(\phi_\mathcal{O},\rho_\mathcal{O})\arrow[u,rightsquigarrow]\arrow[ddd,rightsquigarrow]\\
        &&\pi_{0,\mathrm{sc}}\arrow[drrr,rightsquigarrow]&\psi_\mathrm{gp}&&&&\\
        &&&&&\rho_\mathrm{c}\arrow[drr,rightsquigarrow]&&\\
        &&&&&&&\psi_\mathrm{Gal}
    \end{tikzcd}
\]
Here, $X\dashrightarrow Y$ means that we need to choose $Y$ after choosing $X$, and $X_1,\dots,X_n\rightsquigarrow Y$ means that $X_1,\dots,X_n$ determines $Y$.

We start with the easiest case: the 2-cocycle $h$ is independent of $\{t_a\}$ when all other choices are fixed. Indeed, since $\widetilde{g}_a=t_a\cdot a_{T'}$ with $a_{T'}\in G_\mathrm{sc}(F)\rtimes a$, $t_a$ is determined modulo $T'_\mathrm{sc}(F)$. If $\{t'_a\}$ is another choice, we have
\[
    \langle (z_\mathrm{c}^{-1},t'_{a^{-1}}),(\phi_\chi^{-1},\dot{s}_a)\rangle=\chi(t'_{a^{-1}}t_{a^{-1}}^{-1})\cdot \langle (z_\mathrm{c}^{-1},t_{a^{-1}}),(\phi_\chi^{-1},\dot{s}_a)\rangle,
\]
and $\chi(t'_{a^{-1}}t_{a^{-1}}^{-1})=1$ because $t'_{a^{-1}}t_{a^{-1}}^{-1}\in T'_\mathrm{sc}(F)$.
We can also check the independence of $\{\dot{s}_a\}$. If we take another choice $\{\dot{s}'_a\}$, $f_{\widetilde{a}}$ changes into $\langle [z_\mathrm{c}],\dot{s}'_a\dot{s}_a^{-1}\rangle\cdot f_{\widetilde{a}}$, while we have
\[
    \langle (z_\mathrm{c}^{-1},t_{a^{-1}}),(\phi_\chi^{-1},\dot{s}'_a)\rangle=\langle [z_\mathrm{c}]^{-1},\dot{s}'_a\dot{s}_a^{-1}\rangle\cdot \langle (z_\mathrm{c}^{-1},t_{a^{-1}}),(\phi_\chi^{-1},\dot{s}_a)\rangle.
\]
This means that the composite $\C[A,\alpha]\xrightarrow{h}\C[A,\beta]\xrightarrow{\psi_\mathrm{Gal}^{-1}}\End(\Ind\rho_\circ)$ does not depend on $\{\dot{s}_a\}$.

We now consider the remaining choices.
\begin{lemma}
    The composite $\C[A,\alpha]\xrightarrow{h}\C[A,\beta]\xrightarrow{\psi_\mathrm{Gal}^{-1}}\End(\Ind\rho_\circ)$ is independent of $z_\mathrm{c}$.
\end{lemma}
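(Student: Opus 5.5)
Two lifts of $z$ differ by a cocycle with values in the kernel of $G_\mathrm{c}\to G$: if $z'_\mathrm{c}$ is another lift in $Z^1(u\to\mathcal{W},Z_\mathrm{c}\to T'_\mathrm{c})$, then $z'_\mathrm{c}=y\cdot z_\mathrm{c}$ for some $y\in Z^1(\mathcal{W},Z'')$. I will compute how each of the two ingredients of the composite changes under $z_\mathrm{c}\mapsto yz_\mathrm{c}$, and check that the two changes cancel. Everything else ($T'$, $\xi$, $x$, $\{\widetilde{g}_a\}$, $\{t_a\}$, $\{\dot{s}_a\}$, $(\phi_\mathcal{O},\rho_\mathcal{O})$) stays fixed.

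\textbf{The cochain $h$.} The value $\alpha(a^{-1},a)$ does not involve $z_\mathrm{c}$. For the hypercohomology factor, $(z_\mathrm{c}^{-1},t_{a^{-1}})$ is replaced by $(y^{-1},1)\cdot(z_\mathrm{c}^{-1},t_{a^{-1}})$. By bilinearity of \zcref{eq:pairing_of_hypercohomology} and the compatibility of that pairing with the inclusion of $H^1(\mathcal{W},Z'')$ (the analogue of \zcref{cor:pairing_with_eta}), I expect
\[
    \langle (y^{-1},1),(\phi_\chi^{-1},\dot{s}_a)\rangle=\langle [y],\delta\rangle\cdot\langle [y]|_u,\dot{s}_a\rangle^{\pm1}.
\]
Here the first factor involves the cocycle part $\phi_\chi^{-1}$, reduced to $\widehat{Z}''$-values, and the second is the $\Hom(u,Z)$-pairing of the restriction of $y$ to $u$ with the image of $\dot{s}_a$ in $\widehat{Z}''$. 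The precise form of this must be extracted from the appendix.

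\textbf{The map $\psi_\mathrm{Gal}$.} By \zcref{cond:compatibility_change_rigidification} applied to $G_\mathrm{c}$, replacing $z_\mathrm{c}$ by $yz_\mathrm{c}$ multiplies $\rho_\mathrm{c}$, and hence $\rho_{\mathrm{c},\circ}$, by the character $\langle [y],\delta_{\overline{\phi}}(\text{--})\rangle^{-1}$. This character is trivial on $\mathcal{S}_{\phi_\mathcal{O}}$, since there $\delta_{\overline{\phi}}$ vanishes modulo $\widehat{Z}$ of the adjoint part. So $\rho_\mathcal{O}$ is unchanged, and $f_{\widetilde{a}}$ is multiplied by $\langle [y],\delta_{\overline{\phi}}(\widetilde{a}\widetilde{g}_{a,\mathcal{O}}^{-1}\dot{s}_a^{-1})\rangle^{-1}$. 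As in the proof of \zcref{lem:calc_of_beta}, $\delta_{\overline{\phi}}$ of this element is expressed through $\psi$-type cocycles built from $\dot{s}_a$ and $\phi_\chi$. It splits into a coboundary part, which pairs with $[y]$ only through $y|_u$, and a part that pairs with $[y]$ directly. One also has to check that the choice of $\rho_{\mathrm{c},\circ}$ inside $\rho_\mathrm{c}$ (the unique constituent containing $\rho_\mathcal{O}$) is transported correctly by this twist; this is immediate from the multiplicity-one statement in \zcref{lem:unique_sandwich_galois}.

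\textbf{Comparison and main obstacle.} The composite sends the basis element $e_a\in\C[A,\alpha]$ to $h(a)\psi_\mathrm{Gal}^{-1}(e_a)$. It is therefore invariant exactly when the scalar by which $h(a)$ changes equals the scalar by which $f_{\widetilde{a}}$ changes. The main obstacle is matching the two expressions on the nose, including signs and inverses and the distinction between $a$ and $a^{-1}$: on the group side $h$ uses $t_{a^{-1}}$, while on the Galois side $f$ is indexed by $a$. To handle this I will unwind both scalars into explicit cochain formulas over $W_{E/F}$ via the isomorphism \zcref{eq:isom_hypercohomology}, exactly as in the proof of \zcref{prop:comparison_of_2-cocycles}. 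I will then compare them termwise, using that $y$ commutes with the $A$-action up to the coboundaries already absorbed. If a discrepancy of the form $\langle [y],\psi_a\rangle$ remains, I expect it to be killed by the fact that $\psi_a$ is cohomologically trivial modulo $\widehat{Z}''$, since $\dot{s}_a$ was chosen to fix $\overline{\phi}_\chi$.
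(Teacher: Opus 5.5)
Your setup matches the paper's. You write the new lift as $yz_\mathrm{c}$ with $y\in Z^1(\mathcal{W},Z'')$. By \zcref{cond:compatibility_change_rigidification}, $f_{\widetilde a}$ is then multiplied by $\langle [y],\delta_{\overline\phi}(\widetilde a\widetilde g_{a,\mathcal O}^{-1}\dot s_a^{-1})\rangle^{-1}$, and by bilinearity $h(a)$ is multiplied by $\langle (y^{-1},1),(\phi_\chi^{-1},\dot s_a)\rangle$.

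The gap is that the whole content of the lemma is showing these two scalars cancel, and you leave that open with a guessed formula that is not well-formed. The element $\dot s_a$ lies in $Z_{\widehat{\overline G}}$, not in $\widehat Z''$, so ``$\langle [y]|_u,\dot s_a\rangle$'' is undefined. Likewise, neither $d\dot s_a$ nor ${}^{1-a_{T'}}\phi_\chi$ need be $\widehat Z''$-valued on its own.

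The missing input is \zcref{lem:pairing_hypercoh_and_W_Z}, not \zcref{cor:pairing_with_eta}, which is the dual statement. It gives a single pairing
\[
\langle (y^{-1},1),(\phi_\chi^{-1},\dot s_a)\rangle=\langle y^{-1},\,d\dot s_a\cdot{}^{1-a_{T'}}\phi_\chi\rangle .
\]
Combine it with the direct computation
\[
\delta_{\overline\phi}(\widetilde a\widetilde g_{a,\mathcal O}^{-1}\dot s_a^{-1})={}^{\dot s_a\rtimes a}\phi_\chi\cdot\phi_\chi^{-1}=d(\dot s_a)^{-1}\cdot{}^{a_{T'}-1}\phi_\chi=\psi_a
\]
in $Z^1(F,\widehat Z'')$. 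These two cocycles are inverse to each other. Hence $h(a)$ is multiplied by $\langle [y],\psi_a\rangle$ and $f_{\widetilde a}$ by $\langle [y],\psi_a\rangle^{-1}$. No unwinding over $W_{E/F}$ is needed. The $a$ versus $a^{-1}$ issue never arises, because the second component of $(y^{-1},1)$ is trivial and $t_{a^{-1}}$ drops out.

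Two steps of your plan would actually go wrong.

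First, your invariance criterion is inverted. You describe the composite as $e_a\mapsto h(a)\psi_\mathrm{Gal}^{-1}(e_a)$, and $\psi_\mathrm{Gal}^{-1}(e_a)$ scales like $f_{\widetilde a}$. So invariance requires the two scalars to be mutually inverse, not equal. With the criterion you state, you would be left with a spurious factor $\langle[y],\psi_a\rangle^{2}$.

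Second, your fallback for killing a leftover $\langle[y],\psi_a\rangle$ is false. The pairing $H^1(\mathcal{W}_F,Z'')\times Z^1(F,\widehat Z'')\to\C^\times$ is perfect and does not factor through $H^1(F,\widehat Z'')$; on coboundaries it detects $y|_u$. So whenever $\psi_a\neq 1$ as a cocycle, some $y$ pairs nontrivially with it, whatever its cohomology class. The requirement that $\dot s_a\rtimes a$ fix $\overline\phi_\chi$ only ensures that $\psi_a$ is $\widehat Z''$-valued. The factor is never killed; it cancels exactly between $h$ and $\psi_\mathrm{Gal}$.
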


\begin{proof}
    Take $y\in Z^1(\mathcal{W},Z'')$ so that $y\cdot z_\mathrm{c}\in Z^1(u\to\mathcal{W},Z_\mathrm{c}\to T'_\mathrm{c})$ is another choice. Since we set $\rho_\mathrm{c}=\rho_{(z_\mathrm{c},\pi_\mathrm{c})}$, it changes into $\langle [y],\delta_{\overline{\phi}}(\text{--})\rangle^{-1}\cdot \rho_\mathrm{c}$, and therefore $f_{\widetilde{a}}$ changes into $\langle [y],\delta_{\overline{\phi}}(\widetilde{a}\widetilde{g}_{a,\mathcal{O}}^{-1}\dot{s}_a^{-1})\rangle^{-1}\cdot f_{\widetilde{a}}$. Here, the image of $\delta_{\overline{\phi}}(\widetilde{a}\widetilde{g}_{a,\mathcal{O}}^{-1}\dot{s}_a^{-1})$ in $Z^1(F,\widehat{Z}'')$ is described as:
    \[
        {}^{\dot{s}_a g_{a,\mathcal{O}}\widetilde{a}^{-1}}\phi\cdot \phi^{-1}={}^{\dot{s}_a g_{a,\mathcal{O}}}\phi\cdot \phi^{-1}={}^{\dot{s}_a\rtimes a}\phi_\chi\cdot\phi_\chi^{-1}=d(\dot{s}_a)^{-1}\cdot {}^{a_{T'}-1}\phi_\chi.
    \]
    On the other hand, we have
    \[
        \langle (y^{-1}z_\mathrm{c}^{-1},t_{a^{-1}}),(\phi_\chi^{-1},\dot{s}_a)\rangle=\langle (y^{-1},1),(\phi_\chi^{-1},\dot{s}_a)\rangle\cdot \langle (z_\mathrm{c}^{-1},t_{a^{-1}}),(\phi_\chi^{-1},\dot{s}_a)\rangle.
    \]
    By \zcref{lem:pairing_hypercoh_and_W_Z},
    \[
        \langle (y^{-1},1),(\phi_\chi^{-1},\dot{s}_a)\rangle=\langle y^{-1},d{\dot{s}_a}\cdot {}^{1-a_{T'}}\phi_\chi\rangle.
    \]
    Hence $h(a)$ changes into $\langle [y],\delta_{\overline{\phi}}(\widetilde{a}\widetilde{g}_{a,\mathcal{O}}^{-1}\dot{s}_a^{-1})\rangle\cdot h(a)$, which means that the composite $\C[A,\alpha]\xrightarrow{h}\C[A,\beta]\xrightarrow{\psi_\mathrm{Gal}^{-1}}\End(\Ind\rho_\circ)$ does not change.
\end{proof}

\begin{lemma}
    The composite
    \[
        \C[A,\alpha]\xrightarrow{h}\C[A,\beta]\xrightarrow{\psi_\mathrm{Gal}^{-1}}\End(\Ind\rho_\circ)\cong \End(\Ind\rho)
    \]
    is independent of $(\phi_\mathcal{O},\rho_\mathcal{O})$.
\end{lemma}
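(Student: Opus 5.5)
The freedom in $(\phi_\mathcal{O},\rho_\mathcal{O})$ is limited. Once the partial correspondence of \zcref{ssec:partial_corr} is fixed, the conjugacy class $[\phi_\mathcal{O},\rho_\mathcal{O}]$ is fixed. We also require that $\phi_\mathcal{O}$ lifts $\phi_0$ with $\phi=\phi_\chi\cdot\phi_0$, and that $\rho_\mathcal{O}$ be a constituent of $\rho|_{\mathcal{S}_{\phi_\mathcal{O}}}$. So any other admissible choice has the form $(\phi'_\mathcal{O},\rho'_\mathcal{O})=({}^{g}\phi_\mathcal{O},{}^{g}\rho_\mathcal{O})$ for some $g\in\widehat{G}_\mathrm{sc}$ whose image in $\widehat{G}$ (times a central element) carries $\phi_0$ to $\phi'_0$.

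I would split this freedom into two steps. First, the \emph{conjugation step}: conjugate every object by $g$, including $\phi$, $\phi_\chi$ (central, hence unchanged), $\rho_\mathrm{c}$, $\rho_{\mathrm{c},\circ}$, the lifts $\widetilde{g}_{a,\mathcal{O}}$ and the $\dot{s}_a$. All of $f_{\widetilde{a}}$, $\beta$ and $h$ are defined by transport of structure, so the composite is unchanged up to the induced isomorphism $\End(\Ind\rho)\cong\End(\Ind{}^g\rho)$. That isomorphism is the identification built into $\Phi_\mathrm{e}(\widetilde{G};[z])$ as a set of conjugacy classes. Second, the \emph{internal step}: it remains to treat $g$ with ${}^g\phi=\phi$ modulo the allowed change of $\phi_0$. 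Changing $\phi_0$ within its class while keeping $\phi$ fixed only alters $\phi_\chi$ by a coboundary of $Z(\widehat{G})$. I would absorb this into the choice of $\dot{s}_a$, which was already shown not to matter, together with the earlier remark that $\alpha$ is independent of $\chi$ (\zcref{rem:value_of_2-cocycle_indep_chi}). After that reduction, $g$ lies in $Z^+_{\widehat{\overline{G}}}(\phi)$. It normalizes the stabilizer $\mathcal{S}^{+,\rho_\mathcal{O}}_\phi$ and replaces $\rho_\mathcal{O}$ by another constituent ${}^g\rho_\mathcal{O}$ of $\rho$.

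For the internal step, the key claim is that $\rho_\circ\mapsto{}^g\rho_\circ$ and $f_{\widetilde{a}}\mapsto\rho(g)f_{g^{-1}\widetilde{a}g}\rho(g)^{-1}$ intertwine the two constructions. More precisely, the diagram formed by the two copies of $\psi_\mathrm{Gal}$ and the induction isomorphisms $\End\Ind\rho_\circ\cong\End\Ind\rho\cong\End\Ind{}^g\rho_\circ$ commutes up to a character of $A$. To prove this I would write $f_{\widetilde{a}}$ explicitly. Since $\rho_{\mathrm{c},\circ}$ is replaced by ${}^g\rho_{\mathrm{c},\circ}$ and $\widetilde{g}_{a,\mathcal{O}}$ by $g\widetilde{g}_{a,\mathcal{O}}g^{-1}$, the new $f$ equals the old $f$ conjugated by $\rho_\mathrm{c}(g)$, multiplied by the factor
\[
\rho_{\mathrm{c}}\bigl(g\,\widetilde{g}_{a,\mathcal{O}}g^{-1}\widetilde{g}_{a,\mathcal{O}}^{-1}\bigr)\quad\text{compared with}\quad \rho_{\mathcal{O}}\bigl(g\,\widetilde{g}_{a,\mathcal{O}}g^{-1}\widetilde{g}_{a,\mathcal{O}}^{-1}\bigr).
\]
This discrepancy is governed by $\delta_{\overline{\phi}}$ and the twist from \zcref{cond:compatibility_change_rigidification}, exactly as in the proof of \zcref{lem:calc_of_beta}.

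The main obstacle is showing that this discrepancy is a $1$-cochain in $a$ that exactly cancels the corresponding change of $h$. If the commutator $g\widetilde{g}_{a,\mathcal{O}}g^{-1}\widetilde{g}_{a,\mathcal{O}}^{-1}$ lies in $\mathcal{S}_{\phi_\mathcal{O}}$, then $\rho_\mathrm{c}$ and $\rho_\mathcal{O}$ agree on it and the factor vanishes. Otherwise its image lies in the central part $Z_{\widehat{\overline{G}}}$, hence acts by $\langle[z_\mathrm{c}],-\rangle$, and it can be absorbed by replacing $\dot{s}_a$ with $\dot{s}_a$ times that central element. This reduces the internal step to the already-proved independence of $\{\dot{s}_a\}$, which is the route I would try first.
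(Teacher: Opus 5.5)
There is a genuine gap, and it sits in your internal step.

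\textbf{The $h$ side.} You claim that changing $\phi_0$ within its class while keeping $\phi$ fixed alters $\phi_\chi$ only by a coboundary of $Z(\widehat{G})$. This is false in general, and it is exactly the case the lemma is about. Your conjugation step does not remove the problem. Since $g$ centralizes $\phi$ only modulo $\widehat{Z}''$, the conjugate ${}^g\phi$ differs from $\phi$ by a central cocycle, so all the content remains in the comparison that follows.

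The correct reduction has two stages. First treat $g\in\mathcal{S}^+_\phi$ by direct computation; there $\phi_\chi$ really is unchanged. Then multiply $g$ by such an element to arrange $\rho_\mathcal{O}^g=\rho_\mathcal{O}$, so that $g\in(\mathcal{S}^+_{\overline{\phi}})^{\rho_\mathcal{O}}\cap\widehat{G}_\mathrm{sc}$. For such $g$, $\phi_\mathcal{O}$ is multiplied by $\eta=\delta_{\overline{\phi}}(g)\in Z^1(F,\widehat{Z}'')$ and $\phi_\chi$ by $\eta^{-1}$. This $\eta$ is the differential of a non-central element, and its class is in general nontrivial; this is precisely the phenomenon that makes $\mathcal{S}^+$ larger than $\mathcal{S}$.

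It cannot be absorbed into $\{\dot{s}_a\}$. \zcref{rem:value_of_2-cocycle_indep_chi} does not help either, because it concerns $\alpha$. The factor of $h(a)$ that depends on $\phi_\chi$ is the hypercohomology pairing $\langle (z_\mathrm{c}^{-1},t_{a^{-1}}),(\phi_\chi^{-1},\dot{s}_a)\rangle$. By \zcref{cor:pairing_with_eta}, this turns $h(a)$ into $\langle\eta,y_{a^{-1}}\rangle\cdot h(a)$, which need not equal $h(a)$.

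\textbf{The Galois side.} You name the right mechanism, namely the twist coming from \zcref{cond:functoriality,cond:compatibility_change_rigidification} as in \zcref{lem:calc_of_beta}, but then you argue it away. The relevant fact is the twisted invariance $(\rho_{\mathrm{c},\circ})^{\widetilde{g}_{a,\mathcal{O}}}\cong\langle[y_{a^{-1}}],\delta_{\overline{\phi}}(\text{--})\rangle^{-1}\cdot\rho_{\mathrm{c},\circ}$. Take $\rho_{\mathrm{c},\circ}(g)$ as the intertwiner $\rho_\mathcal{O}\to\rho_\mathcal{O}^g$. Then this twist multiplies $f_{\widetilde{a}}$ by $\langle[y_{a^{-1}}],\delta_{\overline{\phi}}(g)\rangle^{-1}$, whether or not the commutator $g\widetilde{g}_{a,\mathcal{O}}g^{-1}\widetilde{g}_{a,\mathcal{O}}^{-1}$ lies in $\mathcal{S}_{\phi_\mathcal{O}}$.

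Your alternative case is also empty. That commutator lies in $\widehat{G}_\mathrm{sc}$, so it has no component in $Z_{\widehat{\overline{G}}}$.

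Absorbing a discrepancy into $\{\dot{s}_a\}$ is not a legitimate one-sided move. Re-choosing $\dot{s}_a$ changes $f_{\widetilde{a}}$ and $h(a)$ at the same time, by inverse factors. To use it you would still need to know how $h(a)$ changes, and that is exactly the computation your reduction discarded.

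The correct conclusion is not that $h$ and $\psi_\mathrm{Gal}$ are separately unchanged; that fails whenever $\langle[y_{a^{-1}}],\eta\rangle\neq1$. Rather, $h(a)$ acquires the factor $\langle\eta,y_{a^{-1}}\rangle$ and $f_{\widetilde{a}}$ acquires $\langle[y_{a^{-1}}],\delta_{\overline{\phi}}(g)\rangle^{-1}$. These two factors cancel in the composite $\psi_\mathrm{Gal}^{-1}\circ h$.
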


\begin{proof}
    Let $(\phi'_\mathcal{O},\rho'_\mathcal{O})=({}^{g^{-1}}\phi_\mathcal{O},\rho_\mathcal{O}^g)$ be another choice, where $g\in \widehat{G}_\mathrm{sc}$. If $g\in \mathcal{S}_\phi^+$, it is clear that $\phi_\chi$ (and hence $h$) does not change. We can also check by direct computation that $\C[A,\beta]\xrightarrow{\sim}\End\Ind\rho_\circ\cong \End\Ind\rho$ does not change.

    Now we consider a general case. Since $\rho_\mathcal{O}^g,\rho_\mathcal{O}\subset \rho|_{\mathcal{S}_{\phi_\mathcal{O}}}$, we may replace $g$ with $gg_0$ for a suitable $g_0\in\mathcal{S}^+_\phi$ so that $\rho_\mathcal{O}^g=\rho_\mathcal{O}$. This means that $g\in (\mathcal{S}_{\overline{\phi}}^+)^{\rho_\mathcal{O}}\cap \widehat{G}_\mathrm{sc}=(\mathcal{S}^+_{\overline{\phi}_0})^{\rho_\mathcal{O}}$.

    Put $\eta\coloneq \phi'_\mathcal{O}\cdot\phi_\mathcal{O}^{-1}=\delta_{\overline{\phi}}(g)$. Then $\phi_\chi$ changes into $\phi_\chi\cdot\eta^{-1}$. We have
    \[
        \langle (z_\mathrm{c}^{-1},t_{a^{-1}}),(\phi_\chi^{-1}\cdot\eta,\dot{s}_a)\rangle=\langle (z_\mathrm{c}^{-1},t_{a^{-1}}),(\eta,1)\rangle \cdot \langle (z_\mathrm{c}^{-1},t_{a^{-1}}),(\phi_\chi^{-1},\dot{s}_a)\rangle.
    \]
    By \zcref{cor:pairing_with_eta}, we have
    \[
        \langle (z_\mathrm{c}^{-1},t_{a^{-1}}),(\eta,1)\rangle=\langle \eta,y_{a^{-1}}\rangle.
    \]
    Hence $h(a)$ changes into $\langle \eta,y_{a^{-1}}\rangle\cdot h(a)$.

    We then consider how $\psi_\mathrm{Gal}$ changes. Let $V'$ be the representation space of $\rho_\mathcal{O}$. We assume that $\rho_\mathcal{O}^g=\rho_\mathcal{O}$ in $\Irr(\mathcal{S}_{\phi_\mathcal{O}})$, i.e. there exists an isomorphism $(\rho,V')\to(\rho^g,V')$. It is easy that $\rho_{\mathrm{c},\circ}(g)\in\GL(V')$ is such an isomorphism. Then, the extension $\widetilde{\rho}'_\mathcal{O}\in \Irr(\widetilde{\mathcal{S}}_{\phi'_\mathcal{O}})$ of $\rho_\mathcal{O}$ with $[\phi_\mathcal{O},\widetilde{\rho}_\mathcal{O}]=[\phi'_\mathcal{O},\widetilde{\rho}_\mathcal{O}]$ is realized as:
    \[
        \widetilde{\rho}'_\mathcal{O}(x)=\rho_{\mathrm{c},\circ}(g)^{-1}\widetilde{\rho}(gxg^{-1})\rho_{\mathrm{c},\circ}(g).
    \]
    Since we can take $g^{-1}\widetilde{g}_{a,\mathcal{O}}g\in \widetilde{\mathcal{S}}_{\phi'_\mathcal{O}}$ as a lift of $a$, we can see that each $f_{\widetilde{a}}$ changes into
    \begin{multline}
        \rho_{\mathrm{c},\circ}(\widetilde{a}g^{-1}\widetilde{g}_{a,\mathcal{O}}^{-1}g\dot{s}_a^{-1})\rho_{\mathrm{c},\circ}(g)^{-1}\widetilde{\rho}_\mathcal{O}(\widetilde{g}_{a,\mathcal{O}})\rho_{\mathrm{c},\circ}(g)\\
        =f_{\widetilde{a}}\cdot \widetilde{\rho}_\mathcal{O}(\widetilde{g}_{a,\mathcal{O}})^{-1}\rho_{\mathrm{c},\circ}(\widetilde{g}_{a,\mathcal{O}}g^{-1}\widetilde{g}_{a,\mathcal{O}}^{-1})\widetilde{\rho_\mathcal{O}}(\widetilde{g}_{a,\mathcal{O}})\rho_{\mathrm{c},\circ}(g).
    \end{multline}
    Similarly to the proof of \zcref{lem:calc_of_beta}, we have $(\rho_{\mathrm{c},\circ})^{\widetilde{g}_{a,\mathcal{O}}}\cong \langle [y_{a^{-1}}],\delta_{\overline{\phi}}(\text{--})\rangle^{-1}\cdot\rho_{\mathrm{c},\circ}$ and thus
    \[
        \widetilde{\rho}_\mathcal{O}(\widetilde{g}_{a,\mathcal{O}})^{-1}\rho_{\mathrm{c},\circ}(\widetilde{g}_{a,\mathcal{O}}g^{-1}\widetilde{g}_{a,\mathcal{O}}^{-1})\widetilde{\rho_\mathcal{O}}(\widetilde{g}_{a,\mathcal{O}})\rho_{\mathrm{c},\circ}(g)
        =\langle [y_{a^{-1}}],\delta_{\overline{\phi}}(g)\rangle^{-1}=\langle [y_{a^{-1}}],[\eta]\rangle.
    \]
    We therefore conclude that the composite $\C[A,\alpha]\xrightarrow{h}\C[A,\beta]\xrightarrow{\psi_\mathrm{Gal}^{-1}}\End\Ind\rho_\circ$ is independent of $(\phi_\mathcal{O},\rho_\mathcal{O})$.
\end{proof}

\begin{lemma}
    The composite $\End\Ind\sigma\xrightarrow{\psi_\mathrm{gp}}\C[A,\alpha]\xrightarrow{h}\C[A,\beta]$ is independent of $\{\widetilde{g}_a\}$.
\end{lemma}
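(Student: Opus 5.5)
We compare two choices of lifts and track how each map in the composite changes. Let $\{\widetilde{g}_a\}$ and $\{\widetilde{g}'_a\}$ be two families of lifts in $\widetilde{G}'(F)_x$ normalizing $T'$, with $\widetilde{g}_1=\widetilde{g}'_1=1$. Since both lie over $a$ and stabilize $T'$ and $x$, their quotient lies in $N_{G'}(T')(F)\cap G'(F)_x$. Using the decomposition $\widetilde{g}_a=t_a\cdot a_{T'}$ and the fact (from \zcref{lem:out_stab_T'}) that $a_{T'}$ is determined up to $N_{G_\mathrm{sc}}(T')(F)\cap G_\mathrm{sc}(F)_{x_0}$, we write $\widetilde{g}'_a=u_a n_a\cdot\widetilde{g}_a$ with $u_a\in T'(F)$ and $n_a\in N_{G'_\mathrm{sc}}(T')(F)$ stabilizing $x$. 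We may also take $a'_{T'}=n_a\cdot a_{T'}$ and $t'_a=u_a\cdot{}^{n_a}t_a$.

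\textbf{Change of $\psi_\mathrm{gp}$.} The basis element attached to $a$ is $\widetilde{\sigma}_\mathcal{O}(\widetilde{g}_a)$. By the identity $\widetilde{\sigma}_\mathcal{O}=\chi^{-1}\sigma$ on the relevant intertwiners (the formula displayed after \zcref{lem:outer_action_belong_to_Hom_space}), this element is multiplied by a scalar. We claim the scalar is
\[
\widetilde{\sigma}_\mathcal{O}(\widetilde{g}'_a)\,\widetilde{\sigma}_\mathcal{O}(\widetilde{g}_a)^{-1}\circ(\text{action of }\sigma(u_an_a))^{-1}=\chi(u_a)^{-1},
\]
since $\chi$ is trivial on $n_a\in G'_\mathrm{sc}(F)$. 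Hence $\psi'_\mathrm{gp}=c\circ\psi_\mathrm{gp}$, where $c$ rescales the basis vector $e_a$ by $\chi(u_a)^{\pm1}$, and $\alpha'=\alpha\cdot d(\chi(u_\bullet))^{\pm1}$. The sign conventions will be fixed by following the definition of $\psi_\mathrm{gp}$.

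\textbf{Change of $h$.} Here $h(a)=\alpha(a^{-1},a)\langle(z_\mathrm{c}^{-1},t_{a^{-1}}),(\phi_\chi^{-1},\dot s_a)\rangle$. We expand both factors.
\begin{itemize}
\item For the pairing, replacing $t_{a^{-1}}$ by $u_{a^{-1}}\,{}^{n_{a^{-1}}}t_{a^{-1}}$ changes the class by the element $(1,u_{a^{-1}})$ plus a term coming from $n_{a^{-1}}\in G_\mathrm{sc}$.
\item The $(1,u)$-part multiplies the pairing by $\chi(u_{a^{-1}})^{\pm1}$. This is the same computation as the independence from $\{t_a\}$ already done above: pairing $(1,t)$ with $(\phi_\chi^{-1},\cdot)$ gives $\chi(t)$.
\item The $n_{a^{-1}}$-part changes $(z_\mathrm{c}^{-1},t)$ by a class $[dn^{-1},\ast]$ in $H^1(F,T'_\mathrm{sc}\to T')$. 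As in the treatment of \zcref{eq:remaining_term} in \zcref{prop:comparison_of_2-cocycles}, its pairing with $\phi_\chi$ factors through $A_{G_\mathrm{ext}}(F)$ after passing to a $z$-extension, so it is trivial, because $n_{a^{-1}}$ maps to $1$ there.
\item The factor $\alpha(a^{-1},a)$ changes by $\chi(u_{a^{-1}})\chi(u_a)\chi(u_1)^{-1}$ up to the conjugation by $a$, since $\alpha'=\alpha\cdot d\chi(u_\bullet)$.
\end{itemize}
We must also use that $\chi$ is $A$-invariant on the relevant elements, because $A$ stabilizes $\pi$ and hence $\chi$ modulo characters trivial on these tori. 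The point is then that the total change of $h(a)$ is exactly the inverse of the rescaling $c$ of $e_a$. Consequently $h'\circ c\circ\psi_\mathrm{gp}=h\circ\psi_\mathrm{gp}$ as maps into $\C[A,\beta]$; note that $\beta$ does not depend on $\{\widetilde{g}_a\}$ at all.

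\textbf{Main obstacle.} The delicate step is the cancellation between the change in $\alpha(a^{-1},a)$ and $\chi(u_{a^{-1}})$ in $h$. One needs $\chi(u_{a^{-1}})\chi(a^{-1}\text{-conjugate of }u_a)=\chi(u_{a^{-1}}\cdot{}^{a^{-1}}u_a)$ to be the coboundary correction. I would first reduce to the case where only a single $\widetilde{g}_a$ changes, together with $\widetilde{g}_{a^{-1}}$. If needed, I would use a $z$-extension to make $\chi$ factor through the torus $A_{G_\mathrm{ext}}$, where all terms become values of one character on $\Gamma_F$-fixed elements and the identity reduces to an abelian computation.
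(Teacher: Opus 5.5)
There is a genuine gap, at the very first step. You write $\widetilde{g}'_a\widetilde{g}_a^{-1}=u_an_a$ with $u_a\in T'(F)$ and $n_a$ in the image of $N_{G'_\mathrm{sc}}(T')(F)$, but such a decomposition does not exist in general. Put $m_a=\widetilde{g}'_a\widetilde{g}_a^{-1}$. This can be an arbitrary element of $N_{G'}(T')(F)_x$, and such elements may carry Kottwitz invariants that $T'(F)$ times the image of $G'_\mathrm{sc}(F)$ never reaches.

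For instance, take $G=\GL_2\times\GL_2$ with $A=\Z/2\Z$ swapping the factors, $G'=D^\times\times D^\times$ with a swap-stable rigidification, and $T'=E^\times\times E^\times$ for $E/F$ unramified quadratic. Let $m_a=(\Pi,1)$, where $\Pi$ is a uniformizer of $D$ normalizing $E$. Then $m_a$ fixes the unique vertex and normalizes $T'$, yet $v\circ\mathrm{Nrd}$ is odd on $\Pi$ and even on $E^\times\cdot\mathrm{SL}_1(D)$. Moreover $\chi=(-1)^{v\circ\mathrm{Nrd}}\boxtimes(-1)^{v\circ\mathrm{Nrd}}$ (e.g.\ for $\pi=\chi$ with trivial base point $\pi_\mathcal{O}$) is trivial on $T'(F)$ and on the image of $G'_\mathrm{sc}(F)$, while $\chi(m_a)=-1$. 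So $\psi_\mathrm{gp}$ rescales $e_a$ by a sign that your bookkeeping cannot detect, since it only produces factors $\chi(u)$ with $u\in T'(F)$ and declares the rest trivial.

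The appeal to \zcref{lem:out_stab_T'} does not give your decomposition. That lemma controls $a_{T'}$ up to $N_{G_\mathrm{sc}}(T')(F)_{x_0}$, i.e.\ rational points of the \emph{quasi-split} group, and you then treat these as rational points of $G'_\mathrm{sc}$. For the same reason, $n_aa_{T'}$ with $n_a\in G'_\mathrm{sc}(F)$ is not an admissible new $a_{T'}$: $\xi^{-1}(n_a)$ is fixed only by the $z$-twisted Galois action, whereas $a_{T'}$ must lie in $G_\mathrm{sc}(F)\rtimes a$.

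Here is what the argument needs instead. On the group side no decomposition is required. Since $\widetilde{\sigma}_\mathcal{O}$ restricts to $\sigma_0=\chi^{-1}\sigma$ on $G'(F)_x$, the basis vector at $a$ is rescaled by $\chi(m_a)^{\pm1}$. Also $\alpha$ changes by the coboundary of $a\mapsto\chi(m_a)$, using that $\chi|_{G'(F)_x}$ is $\widetilde{g}_a$-invariant, which follows from \zcref{lem:outer_action_belong_to_Hom_space}.

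On the side of $h$, decompose $m_a$ relative to the quasi-split group, as the paper does: $m_a=t_{a,0}n_{a,0}$ with $n_{a,0}\in N_{G_\mathrm{sc}}(T')(F)_{x_0}$ and $t_{a,0}\in T'(\overline{F})$ \emph{not} rational. Then $a_{T'}\mapsto n_{a,0}a_{T'}$ and $t_a\mapsto t_{a,0}\cdot{}^{n_{a,0}}t_a$, and $(z_\mathrm{c}^{-1},t_{a,0})$ is a hypercocycle for $T'_\mathrm{c}\xrightarrow{1-n_{a,0}}T'$. Apply functoriality of the pairing to $T'\times T'\to T'$, $(t,t')\mapsto n_{a,0}(t)\,t'$, using that $\widehat{n}_{a,0}$ fixes $Z(\widehat{G})$. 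This splits the new pairing in $h(a)$ as the old one times $\langle[z_\mathrm{c}^{-1},t_{a^{-1},0}],[\phi_\chi^{-1},1]\rangle$. That factor is evaluated exactly like \zcref{eq:remaining_term}: after passing to a $z$-extension it factors through $A_{G_\mathrm{ext}}(F)$, where the class maps to the image of $m_{a^{-1}}$, not to $1$. So it equals $\chi(m_{a^{-1}})^{-1}$.

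This non-rational torus contribution is the real content of the lemma, and it is exactly the term your third bullet declares trivial. Combined with the coboundary change of $\alpha(a^{-1},a)$, it rescales $h(a)$ by precisely the factor that undoes the rescaling of $e_a$. By comparison, the cancellation you flag as the main obstacle is routine.
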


\begin{proof}
    Let $\{\widetilde{g}'_a\}$ be another choice. Then $n_a\coloneq \widetilde{g}'_a\cdot \widetilde{g}_a^{-1}$ belongs to $N_G'(T')(F)_x$. We can choose $n_{a,0}\in N_{G_\mathrm{sc}}(T')(F)_{x_0}$ such that $t_{a,0}\coloneq n_a\cdot n_{a,0}^{-1}\in T'$. Hence $a_{T'}$ and $t_a$ change respectively into $n_{a,0}a_{T'}$ and $t'_a=t_{a,0}\cdot {}^{n_{a,0}}t_a$.

    Consider the diagram:
    \[
        \begin{tikzcd}
            Z'\arrow[r]\arrow[d,equal]&T'_\mathrm{c}\arrow[d,equal]\arrow[r,"{(1-a_{T'},1-n_{a,0})}"]&[1cm]T'\times T'\arrow[d,"{(n_{a,0}\circ\operatorname{pr}_1)\cdot\operatorname{pr}_2}"]\\
            Z'\arrow[r]&T'_\mathrm{c}\arrow[r,"{1-n_{a,0}a_{T'}}"]&T'
        \end{tikzcd}
    \]
    The dual is as follows:
    \[
        \begin{tikzcd}
            \widehat{Z}'\arrow[r]\arrow[d,equal]&\widehat{\overline{T}}'_\mathrm{c}\arrow[d,equal]&[2.5cm]\widehat{T}'\times\widehat{T}'\arrow[l,"{(1-\widehat{a}_{T'})\operatorname{pr}_1\cdot (1-\widehat{n}_{a,0})\operatorname{pr}_2}"']\\
            \widehat{Z}'\arrow[r]&\widehat{\overline{T}}'_\mathrm{c}&\widehat{T}'\arrow[l,"{1-\widehat{a}_{T'}\widehat{n}_{a,0}}"']\arrow[u,"{(\widehat{n}_{a,0},\id)}"']
        \end{tikzcd}
    \]
    By functoriality of the pairing \zcref{eq:pairing_of_hypercohomology}, we have
    \begin{multline}
        \langle [z_\mathrm{c}^{-1},t'_{a^{-1}}],[\phi_\chi^{-1},\dot{s}_a]\rangle=\langle [z_\mathrm{c}^{-1},(t_a,t_{a,0})],[(\phi_\chi^{-1},\phi_\chi^{-1}),\dot{s}_a]\rangle\\
        =\langle [z_\mathrm{c}^{-1},(t_a,t_{a,0})],[(\phi_\chi^{-1},1),\dot{s}_a]\rangle\cdot\langle [z_\mathrm{c}^{-1},(t_a,t_{a,0})],[(1,\phi_\chi^{-1}),1]\rangle.
    \end{multline}
    Here we use the fact that $\widehat{n}_{a,0}$ acts trivially on $Z(\widehat{G})$ and hence fixes $(\phi_\chi^{-1},\dot{s}_a)$. Using functoriality again, we obtain
    \[
        \langle [z_\mathrm{c}^{-1},(t_a,t_{a,0})],[(\phi_\chi^{-1},1),\dot{s}_a]\rangle=\langle [z_\mathrm{c}^{-1},t_a],[\phi_\chi^{-1},\dot{s}_a]\rangle
    \]
    and
    \[
        \langle [z_\mathrm{c}^{-1},(t_a,t_{a,0})],[(1,\phi_\chi^{-1}),1]\rangle=\langle [z_\mathrm{c}^{-1},t_{a,0}],[\phi_\chi^{-1},1]\rangle.
    \]
    Similarly to the computation of \zcref{eq:remaining_term} in the proof of \zcref{prop:comparison_of_2-cocycles}, we can show
    \[
        \langle [z_\mathrm{c}^{-1},t_{a,0}],[\phi_\chi^{-1},1]\rangle=\chi(n_a)^{-1}.
    \]
    Hence $h(a)$ changes into $\chi(n_a)^{-1}\cdot h(a)$. Recalling the construction of $\psi_\mathrm{gp}$, we can deduce that the composite $\End\Ind\sigma\to \C[A,\alpha]\to\C[A,\beta]$ is independent of $\{\widetilde{g}_a\}$.
\end{proof}

\begin{lemma}
    The composite
    \[
        \End\Ind\pi\cong \End\Ind\sigma\xrightarrow{\psi_\mathrm{gp}}\C[A,\alpha]\xrightarrow{h}\C[A,\beta]\xrightarrow{\psi_\mathrm{Gal}^{-1}}\End\Ind\rho_\circ
    \]
    is independent of $T',\xi$ and $x\in\mathcal{A}(T',G')$.
\end{lemma}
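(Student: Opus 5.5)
We reduce the change of $(T',\xi,x)$ to the changes already handled in the previous lemmas: changes of $\{\widetilde{g}_a\}$, $\{t_a\}$, $z_\mathrm{c}$, $\{\dot{s}_a\}$ and $(\phi_\mathcal{O},\rho_\mathcal{O})$, together with a conjugation by an element of $G'(F)$.

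First consider another vertex $x'\in\mathcal{A}(T',G')$ with $\pi=\cind_{G'(F)_{x'}}^{G'(F)}\sigma'$, keeping $T'$ and $\xi$ fixed. Since the $G'(F)$-conjugacy class of $(x,\sigma)$ is determined by $\pi$, there is $g'\in G'(F)$ with $g'x=x'$ and $\sigma'\cong\sigma^{g'^{-1}}$. Because $x,x'$ lie in a common apartment, we can take $g'\in N_{G'}(T')(F)$. For this we use the transitivity of $N_{G'}(T')(F)$ on the vertices of $\mathcal{A}(T',G')$ lying in one $G'(F)$-orbit, which is standard Bruhat--Tits theory. Conjugation by $g'$ then gives a commutative square between $\End\Ind_{G'(F)_x}^{\widetilde{G}'(F)_x}\sigma$ and $\End\Ind_{G'(F)_{x'}}^{\widetilde{G}'(F)_{x'}}\sigma'$, compatible with the isomorphism to $\End\Ind\pi$. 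It also sends the lifts $\widetilde{g}_a$ to lifts $g'\widetilde{g}_ag'^{-1}$ that stabilize $x'$ and normalize $T'$. With these lifts, $\psi_\mathrm{gp}$ is literally transported, and the cocycle $\alpha$ does not change, since $\chi(g'\,\cdot\,g'^{-1})=\chi$. The new $t_a$ differ from the old ones by factors in $T'$ coming from $g'a_{T'}(g')^{-1}$. By the previous lemma on independence of $\{\widetilde{g}_a\}$, this reduces to checking that $h$ changes by $\chi(n_a)^{-1}$ with $n_a=g'\widetilde{g}_ag'^{-1}\widetilde{g}_a^{-1}=g'\,{}^{\widetilde{g}_a}g'^{-1}$. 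That check is exactly the argument already done there.

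Next fix $T'$ and change $\xi$ to $\xi\circ\Ad(g)$ with $g\in G(\overline{F})$ such that the embedding of $T'$ is still defined over $F$ and $x_0\in\mathcal{A}(T',G)$. By the uniqueness of such embeddings up to $G(F)_{x_0}$-conjugacy (\cite[Lemma 3.4.12]{Kal_reg_sc}), we may write $g=t\,g_0$ with $g_0\in G(F)_{x_0}$ and $t\in T'(\overline{F})$ such that $t^{-1}\sigma(t)\in Z$. Conjugating by $g_0$ transports every choice ($a_{T'}$, $n_{a,b}$, $t_a$, $z$) and leaves all pairings unchanged, by the functoriality of \zcref{eq:pairing_of_hypercohomology} applied to the isomorphism $\Ad(g_0)$. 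The factor $t$ changes $z$ into ${}^tz$, which differs from $z$ by a coboundary, and changes $z_\mathrm{c}$ into another lift; the lemma on $z_\mathrm{c}$ covers this. The $t_a$ change only by elements $t\,a_{T'}(t)^{-1}$, and these are absorbed as in the $\{t_a\}$ argument, using that the pairing $\langle(z_\mathrm{c}^{-1},t_{a^{-1}}),\cdot\rangle$ depends only on the hypercohomology class.

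Finally, for a different maximally split, maximally unramified torus $T''$, \zcref{lem:max_spl_unram_conj} gives $g'\in G'(F)$ with $T''=g'T'g'^{-1}$. Transporting $(\xi,x,\{\widetilde{g}_a\})$ by $g'$ reduces to the two cases above.

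The main obstacle I expect is the middle step. There one must verify that, on hypercohomology, the coboundary modification of $(z_\mathrm{c}^{-1},t_{a^{-1}})$ by $t$ is genuinely trivial, rather than merely correct up to a character twist. I would check this first by writing $(z_\mathrm{c}^{-1},t_a)\mapsto({}^{\dot t}z_\mathrm{c}^{-1},t\,t_a\,a_{T'}(t)^{-1})$ explicitly as a coboundary in the complex $Z'\to T'_\mathrm{c}\xrightarrow{1-a_{T'}}T'$.
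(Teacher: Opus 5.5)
Your Steps 2 and 3 follow the paper's route, and the coboundary check you single out is the paper's key computation. That route is: transport by an element of $G'(F)$, write the conjugating element of $G$ as an element of $G_\mathrm{sc}(F)_{x_0}$ times some $s\in T'_\mathrm{c}$, and note that the modification of $(z_\mathrm{c}^{-1},t_a)$ is the coboundary of $s$ in the complex $Z'\to T'_\mathrm{c}\xrightarrow{1-a_{T'}}T'$. Step 1, however, has a genuine gap.

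You move $x$ to $x'=g'x$ with $g'\in N_{G'}(T')(F)$ while keeping $\xi$ and $z_\mathrm{c}$ fixed. Then $\pi_{0,\mathrm{sc}}=\cind_{G'_\mathrm{sc}(F)_x}^{G'_\mathrm{sc}(F)}\sigma_{0,\mathrm{sc}}$ is replaced by its $g'$-conjugate. This is a \emph{different} constituent of $\pi_0|_{G'_\mathrm{sc}(F)}$ whenever $x$ and $x'$ are not $G'_\mathrm{sc}(F)$-conjugate (e.g.\ the two hyperspecial vertices of a chamber of $\mathrm{GSp}_4$). Hence $\rho_\mathrm{c}=\rho_{(z_\mathrm{c},\pi_\mathrm{c})}$ changes. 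By functoriality for $\Ad(g')$ and the change-of-rigidification condition, it is twisted by $\langle [y],\delta_{\overline{\phi}}(\text{--})\rangle^{\pm1}$, where $y=z_\mathrm{c}^{-1}\cdot{}^{\dot\gamma}z_\mathrm{c}\in Z^1(\mathcal{W},Z'')$ and $\gamma=\xi^{-1}(g')$. So every $f_{\widetilde a}$ is multiplied by $\langle [y],\delta_{\overline{\phi}}(\widetilde a\widetilde g_{a,\mathcal{O}}^{-1}\dot s_a^{-1})\rangle^{\pm1}$, and $\psi_\mathrm{Gal}$ changes. Your proposal never mentions this, and its bookkeeping does not close. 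You say (correctly) that $\psi_\mathrm{gp}$ is transported and $\alpha$ is unchanged, and then aim to show that $h(a)$ changes by $\chi(n_a)^{-1}$. In the lemma on $\{\widetilde g_a\}$, that factor cancels the rescaling of the basis vectors $\widetilde\sigma_\mathcal{O}(\widetilde g_a)$ under $\psi_\mathrm{gp}$. Here nothing in $\psi_\mathrm{gp}$ is rescaled, so your computation alone says the composite changes by $\chi(n_a)^{-1}=({}^{a}\chi/\chi)(g')^{\pm1}$. This need not be $1$, since ${}^a\chi/\chi$ is only constrained on $G'(F)_x$ and $g'\notin G'(F)_x$. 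The missing argument is that the twist of $\psi_\mathrm{Gal}$ exactly cancels $\chi(n_a)^{-1}$.

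The paper avoids this by never moving $x$ without moving $\xi$ and $z_\mathrm{c}$ along with it. It takes $g'\in G'(F)$ with $\Ad(g')T'=T''$ and $g'x=x'$, defines $g$ by $\xi'\circ\Ad(g)=\Ad(g')\circ\xi$, and uses ${}^{g}z_\mathrm{c}$ as the new lift. This is legitimate: for $g'\in G'(F)$, the element $\gamma=\xi^{-1}(g')$ satisfies $z_w\,w(\gamma)\,z_w^{-1}=\gamma$, so ${}^{\gamma}z=z$ on the nose and ${}^{\dot\gamma}z_\mathrm{c}$ is again a lift of $z$. Then $\rho_\mathrm{c}$ is literally unchanged (functoriality for the isomorphism $\Ad(g')$ of rigid inner twists), as is $\psi_\mathrm{gp}$ (using the lifts $g'\widetilde g_ag'^{-1}$). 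What remains is to show that $h$ is \emph{unchanged}, via $g=g_0s$ and your coboundary computation, plus functoriality of the pairing under $\Ad(g_0)$.

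You can repair Step 1 the same way. Your Step 3 already transports $\xi$ along with $g'$, and the lemma on $z_\mathrm{c}$ lets you switch to ${}^{\dot\gamma}z_\mathrm{c}$ at $x'$. Steps 1 and 2 then merge into the paper's single computation.

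A smaller point in Step 2: ${}^{\dot t}z_\mathrm{c}$ lifts ${}^{t}z\neq z$, so it is not covered by the lemma on $z_\mathrm{c}$. What keeps $\rho_\mathrm{c}$ fixed there is functoriality for the isomorphism of rigid inner twists. The condition $t^{-1}\sigma(t)\in Z$ you impose is neither justified nor needed.
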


\begin{proof}
    Let $(T'',\xi',x')$ be another choice. Then we have a commutative diagram as follows:
    \[
        \begin{tikzcd}
            G\arrow[r,"\Ad(g)"]\arrow[d,"\xi"]&G\arrow[d,"\xi'"]\\
            G'\arrow[r,"\Ad(g')"]&G'
        \end{tikzcd}
    \]
    where $g\in G_\mathrm{c}$ and $g'\in G'(F)$ such that $\Ad(g')T'=T''$ and $g'\cdot x=x'$. In this setting, $\xi^{-1}|_{T'}$ and $\xi'^{-1}\circ \Ad(g')|_{T'}=\Ad(g)\circ\xi^{-1}|_{T'}$ are both transfers $T'\hookrightarrow G$ such that $x_0\in \mathcal{A}(T',G)$. Hence there exists $g_0\in G_\mathrm{sc}(F)_{x_0}$ such that $s\coloneq g_0^{-1}g\in T'_\mathrm{c}$. Put $s'=g_0sg_0^{-1}\in T''_\mathrm{c}$. For the choice $(T'',\xi',x')$, we can choose $\{\widetilde{g}'_a=g_0\widetilde{g}_a g_0^{-1}\}_{a\in A}$ as a collection of lifts of $a\in A$. Then straightforward computation shows that $\psi_\mathrm{gp}$ does not change.

    Put $t'_a=\Ad(g_0)(t_a)$ and $a_{T''}=\Ad(g_0)(a_{T'})$. Then we have
    \[
        \xi'^{-1}(\widetilde{g}'_a)=g\xi^{-1}(\widetilde{g}_a)g^{-1}={}^{1-a_{T''}}s'\cdot t'_a\cdot a_{T''}.
    \]
    It is clear that ${}^gz_\mathrm{c}$ is a lift of the 1-cocycle $z'\in Z^1(u\to\mathcal{W},Z\to G)$ corresponding to $\xi'$. Since $\rho_\mathrm{c}$ does not change under this choice, $\psi_\mathrm{Gal}$ is also fixed.
    If we put $z'_\mathrm{c}=\Ad(g_0)(z_\mathrm{c})$, then we have ${}^gz_\mathrm{c}=ds'^{-1}\cdot z'_\mathrm{c}$. Hence $({}^gz_\mathrm{c}^{-1},{}^{1-a_{T''}}s'\cdot t'_a)$ is cohomologous to $(z_\mathrm{c}'^{-1},t'_\mathrm{c})$. The isomorphism $\Ad(g_0)\colon T'\xrightarrow{\sim}T''$ gives an identification $\widehat{T}'\cong \widehat{T}''$ and we obtain 
    \begin{align}
        \langle [z_\mathrm{c}^{-1},t_{a^{-1}}],[\phi_\chi^{-1},\dot{s}_a]\rangle&=\langle [z_\mathrm{c}'^{-1},t'_{a^{-1}}],[\phi_\chi^{-1},\dot{s}_a]\rangle\\
        &=\langle [{}^gz_\mathrm{c}^{-1},{}^{1-a_{T''}}s'\cdot t'_\mathrm{c}],[\phi_\chi^{-1},\dot{s}_a]\rangle,
    \end{align}
    which means that $h$ does not change. Therefore the resulting isomorphism $\End\Ind\pi\cong \End\Ind\rho_\circ$ is independent of $(T',\xi,x)$.
\end{proof}

Now the proof of \zcref{prop:choice_independence} is completed.
\appendix
\section{Cohomology groups related to rigid inner twists}\label{app:cohom_rig_inn}

In \cite{Kal_rig_inn}, Kaletha introduced the notion of \emph{rigid inner twists}, which is a framework of the local Langlands correspondence applicable to all connected reductive groups. He also studies several (hyper)cohomology groups related to it, such as \cite[Section 6]{Kal_rigid_inn_vs_isoc} and \cite[Section 5]{kal_disconn}. This section is devoted to giving a quick review of these results and their consequences.

We assume that $F$ is a non-Archimedean local field of characteristic zero. For a finite Galois extension $E/F$ and $n>0$, we define a quotient $u_{E/F,n}$ of $\Res_{E/F}\mu_n$ as follows: Under the identification $X^\ast(\Res_{E/F}\mu_n)\cong \Z/n\Z[\Gamma_{E/F}]$, we set
\[
    X^\ast(u_{E/F,n})=\left\{\sum_\gamma a_\gamma\cdot\gamma\in X^\ast(\Res_{E/F}\mu_n) \relmiddle{|} \sum_\gamma a_\gamma=0\right\}.
\]
We define $u=u_F\coloneq \varprojlim_{E,n}u_{E/F,n}$ where the transition maps are as in \cite[(3.2)]{Kal_rig_inn}. In fact, it is the same as $\varprojlim_{E,n}\Res_{E/F}\mu_n$ when $F$ is non-Archimedean; indeed, if we take a larger extension $E'/E/F$ with $[E':E]=n$, the image of the transition map $X^\ast(\Res_{E/F}\mu_n)\to X^\ast(\Res_{E'/F}\mu_n)$ is contained in $X^\ast(u_{E'/F,n})$. \cite[Theorem 3.1]{Kal_rig_inn} says that we have a canonical isomorphism $H^2(F,u_F)\cong \widehat{\Z}$. Then $-1\in\widehat{\Z}$ determines an isomorphism class of extensions of $\Gamma_F$ by $u_F$, one of which we choose and denote by $\mathcal{W}=\mathcal{W}_F$.
Here, we use the calligraphic font $\mathcal{W}$ in order to avoid confusion with the Weil group $W_F$ (and Weyl groups $W$).
Let $G$ be a connected (quasi-split) reductive group over $F$ with a finite central subgroup $Z\subset G$. Now $\mathcal{W}=\mathcal{W}_F$ acts on $G(\overline{F})$ along $\mathcal{W}_F\to \Gamma_F$.

\begin{definition}
    The set $Z^1(u\to\mathcal{W},Z\to G)$ consists of 1-cocycles
    \[
        z\colon \mathcal{W}_F\to G(\overline{F})
    \]
    such that $z|_{u_F}$ is an algebraic homomorphism $u_F\to Z$. We define $H^1(u\to\mathcal{W},Z\to G)=Z^1(u\to\mathcal{W},Z\to G)/B^1(F,G)$.
\end{definition}

We put $\overline{G}=G/Z$. We have a canonical map $Z^1(u\to\mathcal{W},Z\to G)\to Z^1(F,\overline{G})$, which is surjective by \cite[Proposition 3.6]{Kal_rig_inn}. In the dual side, we have a covering map $\widehat{\overline{G}}\to \widehat{G}$. We denote by $Z(\widehat{\overline{G}})^+$ the preimage of $Z(\widehat{G})^\Gamma_F$ along this covering. As in \cite[Section 4]{Kal_rig_inn}, we have a canonical perfect pairing
\begin{equation}
    H^1(u\to\mathcal{W},Z\to G)\times \pi_0Z(\widehat{\overline{G}})^+\to \C^\times,\label{eq:pairing_rigid_inn}
\end{equation}
which recovers the Tate--Nakayama duality when $Z$ is trivial. Moreover, we have the restriction map $H^1(u\to\mathcal{W},Z\to G)\to \Hom_F(u,Z)$. Here we have $\Hom_F(u,Z)\cong \Hom_\Z(X^\ast(Z),X^\ast(u))^{\Gamma_F}\cong \Hom(\widehat{Z},\Q/\Z)$. The above pairing is an extension of this pairing $\Hom_F(u,Z)\times\widehat{Z}\to\Q/\Z$.

When $G=T$ is a torus, we can realize this pairing more explicitly. We take an increasing tower ${E_k/F}_k$ of finite Galois extensions of $F$ and choose set-theoretic sections $\zeta_k\colon \Gamma_{E_{k}/F}\to \Gamma_{E_{k+1}/F}$ and $s_k\colon \Gamma_{E_k/F}\to W_{E_k/F}$ as in \cite[Section 4.4]{Kal_rig_inn}. These data then determine a 2-cocycle $c_k\in Z^2(E_k/F,E_k^\times)$ which belongs to the canonical class. We also choose a cofinal sequence $\{n_k\}_k\subset \Z_{>0}$ and ``$n_k$-th root maps'' $l_k\colon \overline{F}^\times\to \overline{F}^\times$ as in \cite[Section 4.5]{Kal_rig_inn}. We have a specified homomorphism $\delta_e\colon \mu_{n_k}\to u_k=u_{E_k/F,n_k}$ which is killed by the norm map $N_{E_k/F}$. That is, $\delta_e$ belongs to $\widehat{Z}^{-1}(E_k/F,\Hom(\mu_{n_k},u_k))$. Here, $\widehat{Z}^i$ means the (cochains of) Tate cohomology. Using the \emph{unbalanced cup-product} $\sqcup_{E_k/F}$ introduced in \cite[Section 4.3]{Kal_rig_inn}, we obtain a 2-cocycle
\[
    \xi_k=dl_kc_k\sqcup_{E_k/F} \delta_e\in Z^2(F,u_k).
\]
Let $\mathcal{W}_k=u_k\boxtimes_{\xi_k}\Gamma_F$ be the extension of $\Gamma_F$ determined by $\xi_k$. With suitable transition maps $\mathcal{W}_{k+1}\to\mathcal{W}_{k}$ we obtain $\mathcal{W}=\varprojlim_k \mathcal{W}_k$. Now we take $k$ sufficiently large so that $T$ splits over $E_k$ and $\abs{Z}$ divides $n_k$. For $\lambda\in \widehat{Z}^{-1}(E_k/F,X_\ast(\overline{T}))$, we have $n_k\lambda\in X_\ast(T)$ and thus we obtain 
\[
    z_{\lambda,k}=l_kc_k\sqcup_{E_k/F}n_k\lambda\in C^1(F,T).
\]
As in \cite[Section 4.6]{Kal_rig_inn}, it extends to a 1-cocycle $\mathcal{W}_k\to T$. Moreover, its inflation along $\mathcal{W}\to \mathcal{W}_k$ is independent of $k$ and determines an element $z_\lambda\in Z^1(u\to\mathcal{W},Z\to T)$. We therefore obtain a linear map
\[
    \widehat{Z}^{-1}(E_k/F,X_\ast(\overline{T}))\to Z^1(u\to\mathcal{W},Z\to T),
\]
which induces an isomorphism
\[
    \frac{\widehat{Z}^{-1}(E_k/F,X_\ast(\overline{T}))}{\widehat{B}^{-1}(E_k/F,X_\ast(T))}\xrightarrow{\sim} H^1(u\to\mathcal{W},Z\to T).
\]
The pairing $H^1(u\to\mathcal{W},Z\to G)\times \pi_0 Z(\widehat{\overline{G}})^+\to \C^\times$ then comes from the natural pairing $X_\ast(\overline{T})\times\widehat{\overline{T}}\to \C^\times$.

We next consider the cohomology group $H^1(\mathcal{W}_F,Z)$, which appears in the exact sequence:
\[
    H^1(\mathcal{W}_F,Z)\to H^1(u_F\to\mathcal{W}_F,Z\to G)\to H^1(F,\overline{G}).
\]
Recall that $\widehat{Z}=\Ker(\widehat{\overline{G}}\to\widehat{G})$ is canonically identified with $X^\ast(Z)$. According to \cite[Section 6.1 and 6.2]{Kal_rigid_inn_vs_isoc}, we have a canonical perfect pairing:
\begin{equation}
    H^1(\mathcal{W}_F,Z)\times Z^1(F,\widehat{Z})\to\C^\times.\label{eq:pairing_W_Z}
\end{equation}
By using tori $T\to \overline{T}=T/Z$, we can write down this pairing explicitly: It is clear that $X_\ast(\overline{T})/X_\ast(T)\cong X^\ast(\widehat{Z})$. For sufficiently large $k$, consider the set of $(-2)$-cochains $\widehat{C}^{-2}(E_k/F,X^\ast(\widehat{Z}))$. We have a canonical isomorphism
\[
    X^\ast(\widehat{Z})\cong \Hom(\mu_{n_k},Z).
\]
Since the 3-cocycle $dl_kc_k$ has values in $\mu_{n_k}$, each $\mu\in \widehat{C}^{-2}(E_k/F,X^\ast(\widehat{Z}))$ determines a 1-cochain
\[
    y_{\mu,k}=(-dl_kc_k)\sqcup_{E_k/F} \mu\in C^1(F,Z),
\]
which extends to a 1-cocycle of $\mathcal{W}_F$. The cohomology class $[y_\mu]\coloneq [y_{\mu,k}]\in H^1(\mathcal{W}_F,Z)$ is independent of $k$, and we obtain an isomorphism:
\[
    \frac{\widehat{C}^{-2}(E_k/F,X^\ast(\widehat{Z}))}{\widehat{B}^{-2}(E_k/F,X^\ast(\widehat{Z}))}\xrightarrow{\sim} H^1(\mathcal{W}_F,Z).
\]
Then the pairing $H^1(\mathcal{W}_F,Z)\times H^1(F,\widehat{Z})\to\C^\times$ comes from $X^\ast(\widehat{Z})\times \widehat{Z}\to \C^\times$.

\begin{lemma}\label{lem:rig_inn_from_cycles}
    Take $\mu\in \widehat{C}^{-2}(E_k/F,X_\ast(\overline{T}))$ and denote by $\overline{\mu}$ the image of $\mu$ in $\widehat{C}^{-2}(E_k/F,X^\ast(\widehat{Z}))$. Put $\dot{t}_{\mu,k}=l_kc_k\sqcup_{E_k/F}n_k\mu\in \widehat{C}^0(F,T)=T(\overline{F})$. Then we have
    \[
        z_{d\mu}=y_{\overline{\mu},k}\cdot d\dot{t}_{\mu,k}.
    \]
\end{lemma}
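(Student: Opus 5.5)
The identity is a formal consequence of the Leibniz rule for Kaletha's unbalanced cup product, so the plan is to differentiate $\dot{t}_{\mu,k}$ and match terms. Recall from \cite[Section 4.3]{Kal_rig_inn} that for a cochain $a\in C^p(F,M)$ and $b\in\widehat{C}^q(E_k/F,N)$, the unbalanced cup product satisfies
\[
    d(a\sqcup_{E_k/F}b)=da\sqcup_{E_k/F}b+(-1)^p\,a\sqcup_{E_k/F}db ,
\]
written additively. Here $d$ on the Tate side is the Tate differential, with $db\in\widehat{C}^{q+1}$. We apply this with $p=2$, $a=l_kc_k$ and $b=n_k\mu\in\widehat{C}^{-2}(E_k/F,X_\ast(T))$. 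Note that $n_k\mu$ indeed lies in $X_\ast(T)$ once $\abs{Z}$ divides $n_k$, and $l_kc_k$ is a $2$-cochain valued in $\overline{F}^\times$, paired via $\overline{F}^\times\otimes X_\ast(T)\to T(\overline{F})$. Then
\[
    d\dot{t}_{\mu,k}=d(l_kc_k)\sqcup_{E_k/F}n_k\mu+l_kc_k\sqcup_{E_k/F}n_kd\mu .
\]

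The second term is exactly the defining formula of $z_{d\mu,k}=l_kc_k\sqcup_{E_k/F}n_k(d\mu)$, since $d\mu\in\widehat{Z}^{-1}(E_k/F,X_\ast(\overline{T}))$. For the first term, $dl_kc_k$ takes values in $\mu_{n_k}$, because $c_k$ is a cocycle. Under the pairing $\mu_{n_k}\otimes X_\ast(T)\to T$, the element $n_k\mu$ acts through $\mu_{n_k}\otimes X_\ast(\overline{T})\xrightarrow{n_k}\ldots$, that is, through $X_\ast(\overline{T})/X_\ast(T)\cong X^\ast(\widehat{Z})\cong\Hom(\mu_{n_k},Z)$. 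We must check this identification carefully: $\zeta\otimes n_k\lambda\mapsto \lambda(\zeta^{1/n_k})^{n_k}$ equals the image of $\zeta$ under the homomorphism $\mu_{n_k}\to Z$ attached to $\overline{\lambda}$. Granting this, we get $dl_kc_k\sqcup n_k\mu=dl_kc_k\sqcup\overline{\mu}=y_{\overline{\mu},k}^{-1}$, by the definition $y_{\overline{\mu},k}=(-dl_kc_k)\sqcup\overline{\mu}$. Rearranging gives $z_{d\mu,k}=y_{\overline{\mu},k}\cdot d\dot{t}_{\mu,k}$ on $\Gamma_F$. Since both sides are cochains valued in the abelian group $T(\overline{F})$, no ordering issues arise.

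It remains to pass from $\Gamma_F$-cochains to $\mathcal{W}$-cocycles. Both $z_{d\mu}$ and $y_{\overline{\mu}}$ are defined as the extensions to $\mathcal{W}_k$ (then inflated to $\mathcal{W}$) of the displayed cochains. The extension is determined by the restriction to $u_k$, and the restriction to $u_k$ is governed by the $-1$-cocycle $\delta_e$ paired against the relevant Tate cochain, as in \cite[Section 4.6]{Kal_rig_inn}. Meanwhile $d\dot{t}_{\mu,k}$ is a coboundary of an element of $T(\overline{F})$, hence trivial on $u$. So we must verify that the $u$-restrictions of $z_{d\mu}$ and $y_{\overline{\mu}}$ agree. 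By Kaletha's formula, the former is the homomorphism $u\to Z$ given by the image of $d\mu$ in $\Hom(X^\ast(Z),X^\ast(u))$, that is, the norm-type element built from $N\overline{\mu}$. The latter should be given by the same expression from the construction in \cite[Section 6.1]{Kal_rigid_inn_vs_isoc}.

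I expect the main obstacle to be this last compatibility, together with the sign conventions: the sign $(-1)^p$ in the Leibniz rule, the minus sign in $y_{\mu,k}$, and Kaletha's convention relating the $u$-part to $\delta_e$. I would first fix conventions by checking the case $T=\mathbb{G}_m$, $Z=\mu_n$ directly. Then I would use the uniqueness of the extension of a $\Gamma_F$-cochain with prescribed differential in $Z^2(F,u_k)$-form, i.e. with $dz=\xi_k$-twisted behaviour, to conclude equality on all of $\mathcal{W}$.
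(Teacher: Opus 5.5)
Your proposal is correct and follows the paper's argument: the Leibniz rule for $\sqcup_{E_k/F}$ splits $d\dot{t}_{\mu,k}$ into $z_{d\mu}$ and $(dl_kc_k\sqcup_{E_k/F} n_k\mu)_T$. The latter equals $y_{\overline{\mu},k}^{-1}$ because the homomorphism $\mu_{n_k}\to Z$ attached to $\overline{\lambda}$ is the restriction of the cocharacter $n_k\lambda\in X_\ast(T)$ to $\mu_{n_k}$. So your expression $\lambda(\zeta^{1/n_k})^{n_k}$ should read $(n_k\lambda)(\zeta)$, and the step you left as ``granting this'' is immediate. Your extra paragraph on the extension to $\mathcal{W}$ covers a point the paper leaves implicit, and it goes through by your direct check: $d\dot{t}_{\mu,k}$ is trivial on $u$, and both $z_{d\mu}$ and $y_{\overline{\mu},k}$ restrict on $u$ to the homomorphism attached to the same Tate $(-1)$-cocycle $d\overline{\mu}$ (a Tate differential, not a norm).
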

\begin{proof}
    By definition, we have
    \[
        d\dot{t}_{\mu,k}=d(l_kc_k\sqcup_{E_k/F}n_k\mu)=(dl_kc_k\sqcup_{E_k/F}n_k\mu)_T\cdot z_{d\mu},
    \]
    where $(dl_kc_k\sqcup_{E_k/F}n_k\mu)_T$ means that the product is computed by the pairing $\mathbb{G}_m\times X_\ast(T)\to T$. Recall that we have a canonical isomorphism
    \[
        X^\ast(\widehat{Z})\cong \Hom(\mu_{n_k},Z)
    \]
    For $\lambda\in X_\ast(\overline{T})$, the homomorphism $\mu_{n_k}\to Z$ determined by its image $\overline{\lambda}\in X^\ast(\widehat{Z})$ is described as $\widetilde{(n_k\lambda)}|_{\mu_{n_k}}$, where $\widetilde{(n_k\lambda)}\colon \mathbb{G}_m\to T$ is the lift of $n_k\lambda\colon\mathbb{G}_m\to \overline{T}$. It means that $(dl_kc_k\sqcup_{E_k/F}n_k\mu)_T$ is equal to $(dl_kc_k\sqcup_{E_k/F}\mu)_Z=y_{\overline{\mu},k}^{-1}$.
\end{proof}

In \cite[Section 5]{kal_disconn}, Kaletha generalizes these notions and introduces a hypercohomology group: Let $f\colon T\to U$ be a homomorphism of tori with a finite group $Z\subset \Ker(f)$. Then $f$ descends to $\overline{f}\colon \overline{T}=T/Z\to U$. 

\begin{definition}\label{def:hypercohomology_kaletha}
    The set $Z^1(u\to\mathcal{W},Z\to T\xrightarrow{f}U)$ consists of pairs $(z,c)$ where $z\in Z^1(u\to\mathcal{W},Z\to T),\ c\in C^0(F,U)=U(\overline{F})$ are such that $\overline{f}(\overline{z})=dc$, where $\overline{z}\in Z^1(F,\overline{T})$ is the image of $z$. We also define $B^1(F,T\to U)=\{(dt,f(t))\mid t\in C^0(F,T)=T(\overline{F})\}$ and
    \[
        H^1(u\to\mathcal{W},Z\to T\to U)\coloneq Z^1(u\to\mathcal{W},Z\to T\to U)/B^1(F,T\to U).
    \]
\end{definition}
Let $E/F$ be a finite Galois extension over which $T$ and $U$ split. We have
\begin{align}
    &Z_0(W_{E/F},X_\ast(T)\to X_\ast(U))=\\
    &\quad\{(\lambda,\mu_1)\mid \lambda\in C_0(W_{E/F},X_\ast(T)),\ \mu_1\in C_1(W_{E/F},X_\ast(U)),\ f_\ast\lambda=d\mu\},\\
    &B_0(W_{E/F},X_\ast(T)\to X_\ast(U))=\\
    &\quad\{(d\lambda_1,f_\ast\lambda_1-d\mu_2)\mid \lambda_1\in C_1(W_{E/F},X_\ast(T)),\ \mu_2\in C_2(W_{E/F},X_\ast(U))\}.
\end{align}
The subgroup $Z_0(\text{--})_0\subset Z_0(\text{--})$ consists of those $(\lambda,\mu_1)$ with $N_{E/F}\lambda=0$. We define
\begin{multline}
    H_0(W_{E/F},X_\ast(T)\to X_\ast(\overline{T})\to X_\ast(U))_0\coloneq \\
    Z_0(W_{E/F},X_\ast(\overline{T})\to X_\ast(U))_0/B_0(W_{E/F},X_\ast(T)\to X_\ast(U)).
\end{multline}
According to \cite[Fact 5.4]{kal_disconn}, the coinflation map gives an identification of these homology groups for larger extensions $E'/E/F$.
Let $k>0$ be sufficiently large and set $E=E_k$. Consider the homomorphism
\begin{equation}
    C_1(W_{E_k/F},X_\ast(U))\xrightarrow{\Res} C_1(E_k^\times,X_\ast(U))\cong E_k^\times\otimes_\Z X_\ast(U)\cong U(E_k^\times),\label{eq:isom_hypercohomology}
\end{equation}
which we denote by $\phi_{U,k}$. Here, the restriction map depends on the choice of the section $s_k$. For $(\lambda,\mu_1)\in Z_0(W_{E/F},X_\ast(\overline{T})\to X_\ast(U))$, we can check that $(z_{\lambda},\phi_{U,k}(\mu_1))\in Z^1(u\to\mathcal{W},Z\to T\to U)$. This linear map induces an isomorphism
\[
    H_0(W_{E_k/F},X_\ast(T)\to X_\ast(\overline{T})\to X_\ast(U))_0\xrightarrow{\sim} H^1(u\to\mathcal{W},Z\to T\to U).
\]

Let us consider the dual side. We have a homomorphism $\widehat{\overline{f}}\colon \widehat{U}\to\widehat{\overline{T}}$, which is a lift of $\widehat{f}\colon \widehat{U}\to \widehat{T}$.

\begin{definition}\label{def_kaletha_hypercoh_dual}
    The set $Z^1_\mathrm{cts}(W_F,\widehat{Z}\to \widehat{\overline{T}}\leftarrow \widehat{U})$ consists of pairs $(\phi,\dot{c})$ where $\phi\in Z^1_\mathrm{cts}(W_F,\widehat{U})$ and $\dot{c}\in\widehat{\overline{T}}$ are such that $\widehat{f}(\phi)=dc$, where $c\in \widehat{T}$ is the image of $\dot{c}$. We also define
    \[
        B^1(W_F,\widehat{Z}\to\widehat{\overline{T}}\leftarrow \widehat{U})\coloneq \{(du,\widehat{\overline{f}}(u))\mid u\in \widehat{U}\}
    \]
    and write $H^1_\mathrm{cts}=Z^1_\mathrm{cts}/B^1$.
\end{definition}
Let $[\widehat{\overline{T}}]^+\subset\widehat{\overline{T}}$ be the preimage of $\widehat{T}^{\Gamma_F}$. We have an inclusion $[\widehat{\overline{T}}]^+\hookrightarrow Z^1_\mathrm{cts}(W_F,\widehat{Z}\to\widehat{\overline{T}}\leftarrow\widehat{U});\ t\mapsto (1,t)$. We define
\[
    H^1_\mathrm{cts}(W_F,\widehat{Z}\to\widehat{\overline{T}}\leftarrow\widehat{U})_\mathrm{red}\coloneq Z^1_\mathrm{cts}/(B^1\cdot [\widehat{\overline{T}}]^{+,\circ}).
\]
As in \cite[Section 5.2]{kal_disconn}, we obtain a perfect pairing
\begin{equation}
    H_0(W_{E/F},X_\ast(T)\to X_\ast(\overline{T})_0\to X_\ast(U))_0\times H^1_\mathrm{cts}(W_F,\widehat{Z}\to\widehat{\overline{T}}\leftarrow\widehat{U})_\mathrm{red}\to \C^\times
\end{equation}
as follows: For each $(\lambda,\mu_1)\in Z_0(W_{E/F},X_\ast(T)\to X_\ast(U))_0$ and $(\phi,\dot{c})\in Z^1_\mathrm{cts}(W_F,\widehat{Z}\to\widehat{\overline{T}}\leftarrow\widehat{U})$, we assign the value
\[
    \langle \dot{c},\lambda\rangle\cdot\prod_{w\in W_{E/F}}\langle \phi(w),\mu_1(w)\rangle^{-1}.\label{eq:formula_pairing_hypercohomology}
\]
Hence we finally get a perfect pairing
\begin{equation}
    H^1(u\to\mathcal{W},Z\to T\to U)\times H^1_\mathrm{cts}(W_F,\widehat{Z}\to\widehat{\overline{T}}\leftarrow\widehat{U})_\mathrm{red}\to\C^\times.\label{eq:pairing_of_hypercohomology}
\end{equation}

\begin{remark}
    According to \cite[Corollary 5.11]{kal_disconn}, this pairing is compatible with the Langlands pairing $U(F)\times H^1(W_F,\widehat{U})\to \C^\times$ and \zcref{eq:pairing_rigid_inn}.
\end{remark}

We have a map $Z^1(\mathcal{W}_F,Z)\to Z^1(u\to\mathcal{W},Z\to T\to U);\ y\mapsto (y,1)$. Also, for $(\phi,\dot{c})\in Z^1_\mathrm{cts}(W_F,\widehat{Z}\to\widehat{\overline{T}}\leftarrow\widehat{U})$ we have
\[
    \psi_{(\phi,\dot{c})}=d\dot{c}\cdot\widehat{\overline{f}}(\phi)^{-1}\in Z^1(F,\widehat{Z}).
\]
\begin{lemma}\label{lem:pairing_hypercoh_and_W_Z}
    For $y\in Z^1(\mathcal{W}_F,Z)$ and $(\phi,\dot{c})\in Z^1_\mathrm{cts}(W_F,\widehat{Z}\to\widehat{\overline{T}}\leftarrow \widehat{U})$, we have
    \[
        \langle (y,1),(\phi,\dot{c})\rangle=\langle y,\psi_{(\phi,\dot{c})}\rangle.
    \]
\end{lemma}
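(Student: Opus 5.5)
The plan is to reduce the statement to an identity between explicit cochain-level formulas. Everything is explicit once $E=E_k$ is taken sufficiently large, so I would realize both pairings through the Tate–Nakayama type descriptions of this appendix. First, write $y$ in the form $y_{\overline{\mu},k}$ up to a coboundary, using the isomorphism $\widehat{C}^{-2}(E_k/F,X^\ast(\widehat{Z}))/\widehat{B}^{-2}\xrightarrow{\sim}H^1(\mathcal{W}_F,Z)$. Then lift $\overline{\mu}$ to $\mu\in \widehat{C}^{-2}(E_k/F,X_\ast(\overline{T}))$; this is possible since $X_\ast(\overline{T})\to X_\ast(\overline{T})/X_\ast(T)\cong X^\ast(\widehat{Z})$ is surjective. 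By \zcref{lem:rig_inn_from_cycles} we have $y_{\overline{\mu},k}=z_{d\mu}\cdot d\dot{t}_{\mu,k}^{-1}$. Consequently $(y,1)$ is cohomologous in $H^1(u\to\mathcal{W},Z\to T\to U)$ to
\[
(z_{d\mu},\,\overline{f}(\dot{t}_{\mu,k})^{-1}),
\]
obtained by subtracting the coboundary $(d\dot{t}_{\mu,k},f(\dot{t}_{\mu,k}))$. Note that $\dot{t}_{\mu,k}$ lies in $T$ while $f$ kills $Z$, so this is consistent.

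Next, identify this class with a cycle $(\lambda,\mu_1)$ under \zcref{eq:isom_hypercohomology}. We take $\lambda=d\mu$ and set $\mu_1=-f_\ast\mu$, viewed in $C_1(W_{E/F},X_\ast(U))$ through the standard identification of Tate $(-2)$-chains with $1$-chains. The required condition $f_\ast\lambda=d\mu_1$ then holds up to sign conventions. The point to verify is that $\phi_{U,k}(-f_\ast\mu)$ agrees with $\overline{f}(\dot{t}_{\mu,k})^{-1}$ modulo $B^1$. Both are built from $c_k$ cupped with $\mu$: the first via restriction to $E_k^\times$, the second via the unbalanced cup product with $l_kc_k$, and after applying $f$ the $n_k$-th roots disappear since $f(n_k\mu)$ is divisible. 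I expect this compatibility to follow from Kaletha's construction of the isomorphism in \cite[Section 5]{kal_disconn}. Indeed, the class of $(z_\lambda,\phi_{U,k}(\mu_1))$ is by definition what corresponds to $(\lambda,\mu_1)$, and the coboundary $(d\lambda_1,f_\ast\lambda_1-d\mu_2)$ on the homology side matches the group-side coboundaries.

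Finally, evaluate the pairing formula \zcref{eq:formula_pairing_hypercohomology} on $(d\mu,-f_\ast\mu)$ and $(\phi,\dot{c})$. This gives
\[
\langle \dot{c},d\mu\rangle\prod_{w}\langle \phi(w),f_\ast\mu(w)\rangle .
\]
By adjunction, $\langle \dot{c},d\mu\rangle$ becomes the pairing of $\mu$ with the coboundary $d\dot{c}$, and $\langle\phi,f_\ast\mu\rangle=\langle \widehat{\overline{f}}(\phi),\mu\rangle$. Since $\psi_{(\phi,\dot{c})}=d\dot{c}\cdot\widehat{\overline{f}}(\phi)^{-1}$ takes values in $\widehat{Z}$, the product becomes $\prod_w\langle \psi_{(\phi,\dot{c})}(w),\mu(w)\rangle$. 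Only the image $\overline{\mu}\in X^\ast(\widehat{Z})$ matters here, and by the explicit description of \zcref{eq:pairing_W_Z} this equals $\langle y_{\overline{\mu}},\psi_{(\phi,\dot{c})}\rangle$.

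I expect the main obstacle to be sign and normalization bookkeeping. Three sources need to be reconciled: the inverse in \zcref{eq:formula_pairing_hypercohomology}, the sign $-dl_kc_k$ in the definition of $y_{\mu,k}$, and the convention identifying $\widehat{C}^{-2}$ with $C_1$, which also has to be compatible with the choice of the sections $s_k$. I would fix these by checking the formula first in the degenerate case $U=1$. There the hypercohomology reduces to $H^1(u\to\mathcal{W},Z\to T)$ and the claim becomes the known compatibility of \zcref{eq:pairing_rigid_inn} with \zcref{eq:pairing_W_Z} \cite[Section 6]{Kal_rigid_inn_vs_isoc}, which pins down all signs. The general case then follows from the same computation with the extra $U$-terms, which cancel by the adjunction step above.
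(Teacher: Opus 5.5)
Your route is the paper's. You represent $[y]$ by $y_{\overline\mu,k}$ with a lift $\mu\in\widehat C^{-2}(E_k/F,X_\ast(\overline T))$, match $(y,1)$ with a cycle of the form $(d\mu,\ \pm\overline f_\ast\mu)$, and evaluate the explicit pairing via adjunction. Your splitting of the paper's ``the diagram commutes'' into \zcref{lem:rig_inn_from_cycles} plus the cochain identity that $(z_{d\mu},\phi_{U,k}(\operatorname{inf}\overline f_\ast\mu))$ and $(z_{d\mu},f(\dot t_{\mu,k}))$ have the same class is a useful expansion. Deferring that identity to Kaletha's construction leaves you at the same level of detail as the paper.

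However, the signs as written break the computation, and your proposed calibration cannot catch this.
\begin{itemize}
\item The coboundary step is off. One has $(y,1)\cdot(d\dot t_{\mu,k},f(\dot t_{\mu,k}))=(z_{d\mu},f(\dot t_{\mu,k}))$. Your $(z_{d\mu},f(\dot t_{\mu,k})^{-1})$ is not a cocycle, since $\overline f(\overline{z_{d\mu}})=d f(\dot t_{\mu,k})$.
\item The cycle must be $(d\mu,\operatorname{inf}\overline f_\ast\mu)$, where $\operatorname{inf}$ is inflation along the same section $s_k$ that defines $\phi_{U,k}$. The condition $\overline f_\ast\lambda=d\mu_1$ forces the plus sign, so $(d\mu,-\overline f_\ast\mu)$ is not a cycle.
\item With your sign, the pairing formula gives $\prod_w\langle (d\dot c\cdot\widehat{\overline f}(\phi))(w),\mu(w)\rangle$. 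This is not $\psi_{(\phi,\dot c)}$, and in general it does not take values in $\widehat Z$, since its image in $\widehat T$ is $(dc)^2$. Only with $\mu_1=+\operatorname{inf}\overline f_\ast\mu$ does the inverse on the $\mu_1$-term produce $d\dot c\cdot\widehat{\overline f}(\phi)^{-1}=\psi_{(\phi,\dot c)}$.
\item Checking the case $U=1$ cannot detect any of this, because there the $\mu_1$- and $\phi$-terms are absent. The sign is dictated by the cycle condition.
\item To pass from $\prod_{w\in W_{E_k/F}}\langle\psi_{(\phi,\dot c)}(w),\operatorname{inf}\mu(w)\rangle$ to $\langle [y_{\overline\mu,k}],\psi_{(\phi,\dot c)}\rangle$, take $k$ large enough that $\psi_{(\phi,\dot c)}$ is inflated from $\Gamma_{E_k/F}$. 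Then the inflation can be dropped and the explicit description of the pairing on $H^1(\mathcal{W}_F,Z)$ applies.
\end{itemize}
With these corrections your argument is exactly the paper's proof.
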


\begin{proof}
    We can check that the following diagram commutes:
    \[
        \begin{tikzcd}
            \widehat{C}^{-2}(E_k/F,X_\ast(\overline{T}))\arrow[r,"{\mu\mapsto [y_{\overline{\mu}}]}"]\arrow[d,"{\mu\mapsto (d\mu,\operatorname{inf}\overline{f}_\ast\mu)}"]&H^1(\mathcal{W}_F,Z)\arrow[d]\\
            Z_0(W_{E/F},X_\ast(\overline{T})\to X_\ast(U))_0\arrow[r]&H^1(u\to\mathcal{W}_F,Z\to T\to U).
        \end{tikzcd}
    \]
    Here, $\operatorname{inf}\colon C_1(\Gamma_{E_k/F},X_\ast(U))\to C_1(W_{E_k/F},X_\ast(U))$ is the inflation map  determined by the section $s_k$.
    Using this, we have
    \begin{align}
        \langle (y,1),(\phi,\dot{c})\rangle&=\langle \dot{c},d\mu\rangle\cdot \prod_w \langle \phi(w),\operatorname{inf}\overline{f}_\ast\mu(w)\rangle^{-1}\\
        &=\prod_w \langle \dot{c},(w^{-1}-1)\operatorname{inf}\mu(w)\rangle\cdot\langle\widehat{\overline{f}}(\phi)(w),\operatorname{inf}\mu(w)\rangle^{-1}\\
        &=\prod_w\langle d\dot{c}\cdot \widehat{\overline{f}}(\phi)^{-1},\operatorname{inf}\mu(w)\rangle\\
        &=\prod_w\langle \psi_{(\phi,\dot{c})}(w),\operatorname{inf}\mu(w)\rangle.
    \end{align}
    Since we take $k$ sufficiently large, we may assume that $\psi_{(\phi,\dot{c})}$ factors through $\Gamma_{E_k/F}$. Then we can remove $\operatorname{inf}$ and the claim holds.
\end{proof}

Our particular interest is the following case: We have an isogeny $p\colon T\to U$ with the finite kernel $Z'$, and $f\colon T\to U$ factors as $T\xrightarrow{f_T}T\xrightarrow{p}U$. We can choose $k\gg 0$ so that $H^1(F,\widehat{Z}')=H^1(E_k/F,\widehat{Z}')$. 

\begin{lemma}\label{lem:coinf_vanish_then_image}
    Let $\mu\in C_1(W_{E_k/F},X_\ast(U))$ be such that $\operatorname{coinf}_{W_{E_k/F}}^{\Gamma_{E_k/F}}\mu=0$. Then $\phi_{U,k}(\mu)$ belongs to the image of $T(F)\xrightarrow{p}U(F)$.
\end{lemma}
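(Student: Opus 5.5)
The plan is to deduce the lemma from Tate--Nakayama duality for the isogeny $p\colon T\to U$, rather than by constructing a preimage directly. We use the exact sequence
\[
    T(F)\xrightarrow{p}U(F)\xrightarrow{\partial}H^1(F,Z')\to H^1(F,T).
\]
By local duality for tori, an element $u\in U(F)$ lies in $p(T(F))$ if and only if its image under the Langlands pairing $U(F)\times H^1(W_F,\widehat{U})\to\C^\times$ is trivial on the kernel of $H^1(W_F,\widehat{U})\to H^1(W_F,\widehat{T})$. This kernel is the image of the connecting map from $H^1(F,\widehat{Z}')$, or rather of $\widehat{T}^{\Gamma_F}$ modulo the image of $\widehat{U}^{\Gamma_F}$, depending on which dualization one uses. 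Equivalently, $\partial(u)$ pairs trivially with $H^1(F,\widehat{Z}')$. By our choice of $k$ we have $H^1(F,\widehat{Z}')=H^1(E_k/F,\widehat{Z}')$, so we only need to test against classes inflated from $\Gamma_{E_k/F}$.

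First I will check that $\phi_{U,k}(\mu)$ lies in $U(F)$. Write the restriction $C_1(W_{E_k/F},X_\ast(U))\to C_1(E_k^\times,X_\ast(U))$ explicitly via the section $s_k$, as a transfer. Translating by $\sigma\in\Gamma_{E_k/F}$ changes $\phi_{U,k}(\mu)$ by terms that only involve the sums of $\mu$ over the fibres of $W_{E_k/F}\to\Gamma_{E_k/F}$. These sums are exactly the values of $\operatorname{coinf}\mu$, and they vanish by hypothesis. So $\phi_{U,k}(\mu)$ is $\Gamma_{E_k/F}$-fixed. If this direct check is messy, I would instead use the compatibility of $\phi_{U,k}$ with the isomorphism \zcref{eq:isom_hypercohomology} in the special case $T=1$. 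There $\mu$ with vanishing coinflation differs by a boundary from a $0$-homology class, which is mapped into $U(F)$.

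The main step is computing the obstruction. For $\psi\in Z^1(E_k/F,\widehat{Z}')$, the pairing of $\partial(\phi_{U,k}(\mu))$ with $\psi$ equals, by the explicit formula \zcref{eq:formula_pairing_hypercohomology} together with its compatibility with the Langlands pairing, the product
\[
    \prod_{w\in W_{E_k/F}}\langle \widehat{p}\text{-lift of }\psi(w),\mu(w)\rangle^{\pm1}.
\]
This is the same manipulation as in the proof of \zcref{lem:pairing_hypercoh_and_W_Z}. Since $\psi$ is inflated from $\Gamma_{E_k/F}$, the factor $\psi(w)$ depends only on the image of $w$ in $\Gamma_{E_k/F}$. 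Grouping the product along fibres therefore gives $\prod_{\sigma}\langle\psi(\sigma),(\operatorname{coinf}\mu)(\sigma)\rangle$, which is trivial.

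I expect the main obstacle to be making the identification between $\partial$ composed with the Tate--Nakayama pairing and this chain-level formula precise, with the correct sign and with the dependence on the section $s_k$ tracked. I would handle this by reducing to the case of the hypercohomology $H^1(u\to\mathcal{W},Z'\to T\to U)$ with $f=p$. There $(1,\phi_{U,k}(\mu))$ is the class corresponding to $(0,\mu)$ under \zcref{eq:isom_hypercohomology}, so \cite[Corollary 5.11]{kal_disconn} supplies the needed compatibility.
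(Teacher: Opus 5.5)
Your proposal is correct and is essentially the paper's argument: $\phi_{U,k}(\mu)$ is the class of $(0,\mu)$ in $H^1(u\to\mathcal{W},Z'\to T\to U)$, and its pairing with $(\psi,1)$ for $\psi\in Z^1(E_k/F,\widehat{Z}')$ collapses to a pairing with $\operatorname{coinf}\mu=0$; Kaletha's compatibility with the Langlands pairing then shows that $\phi_{U,k}(\mu)$ is killed by every character of $U(F)$ trivial on $p(T(F))$. The detour through $\partial$ and Tate duality for $Z'$ is not needed, and the kernel of $H^1(W_F,\widehat{U})\to H^1(W_F,\widehat{T})$ is just the image of $H^1(F,\widehat{Z}')$ under inclusion (with $\widehat{Z}'\subset\widehat{U}$, so no lift of $\psi(w)$ is needed); the preliminary check $\phi_{U,k}(\mu)\in U(F)$ is automatic, since $d\mu=d(\operatorname{coinf}\mu)=0$ makes $(0,\mu)$ a cycle.
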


\begin{proof}
    Since the differential map $d\colon C_1(W_{E_k/F},X_\ast(U))\to C_0(W_{E_k/F},X_\ast(U))$ factors through $\operatorname{coinf}_{W_{E_k/F}}^{\Gamma_{E_k/F}}$, we have $(0,\mu)\in Z_0(W_{E_k/F},X_\ast(T)\xrightarrow{p_\ast}X_\ast(U))$. Take any $\phi\in Z^1(E_k/F,\widehat{Z}')$. Then $(\phi,1)\in Z^1_\mathrm{cts}(W_F,\widehat{Z}'\to \widehat{\overline{T}}\xleftarrow{\sim}\widehat{U})$. Moreover, we have
    \[
        \langle (0,\mu),(\phi,1)\rangle=\prod_{w\in W_{E_k/F}}\langle \phi(w),\mu(w)\rangle=1,
    \]
    because $\phi\in Z^1(E_k/F,\widehat{Z}')$ and $\operatorname{coinf}_{W_{E_k/F}}^{\Gamma_{E_k/F}}\mu=0$. On the other hand, compatibility with the Langlands pairing implies that
    \[
        \langle (0,\mu),(\phi,1)\rangle=\chi_\phi(c_\mu),
    \]
    where $\chi_\phi\colon U(F)\to \C^\times$ is the character corresponding to $\phi$ and $c_\mu=\phi_{U,k}(\mu)$. Varying $\phi$, $\chi_\phi$ runs over all characters of $U(F)$ trivial on $\operatorname{Im}(T(F)\to U(F))$. Hence $c_\mu$ must belong to it.
\end{proof}

Take $(z,c)\in Z^1(u\to\mathcal{W},Z\to T\xrightarrow{f}U)$, i.e.\ $f(z)=dc$ in $Z^1(F,U)$. Then, for a lift $\dot{c}\in T$ of $c$ along $p$, we have $f_T(z)\cdot d\dot{c}^{-1}\in Z^1(\mathcal{W},Z')$. The following lemma enables us to compute it:

\begin{lemma}\label{lem:rigidif_differ}
    Suppose that $(\lambda,\mu)\in Z_0(W_{E_k/F},X_\ast(\overline{T})\xrightarrow{\overline{f}_\ast} X_\ast(U))$ corresponds to $(z,c)$. Let $\overline{\mu}$ is the image of $\operatorname{coinf}_{W_{E_k/F}}^{\Gamma_{E_k/F}}\mu$ in $C_1(E_k/F,X^\ast(\widehat{Z}'))$. Then we have
    \[
        [f_T(z)\cdot d\dot{c}^{-1}]=[y_{\overline{\mu},k}]
    \]
    in $H^1(\mathcal{W},\widehat{Z}')$.
\end{lemma}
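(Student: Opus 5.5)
The plan is to reduce everything to the explicit cochain formulas of this appendix and then compare the two sides with the unbalanced cup product. Write $(z,c)$ via the isomorphism \zcref{eq:isom_hypercohomology}: up to $B^1(F,T\to U)$ we may take $z=z_\lambda=l_kc_k\sqcup_{E_k/F}n_k\lambda$ and $c=\phi_{U,k}(\mu)$. Changing $(z,c)$ by a coboundary $(dt,f(t))$ changes $\dot{c}$ to $\dot{c}\cdot f_T(t)$ up to $Z'$, so $f_T(z)\,d\dot{c}^{-1}$ changes by $d$ of an element of $Z'$. Likewise, replacing $(\lambda,\mu)$ by an element of $B_0$ changes $\overline{\mu}$ by a boundary in $\widehat{B}$ of $X^\ast(\widehat{Z}')$. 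Hence both sides of the claimed equality are well defined on classes, and it suffices to check the formula on the explicit representatives.

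Next, split $\mu$ into an inflated part and a part killed by coinflation. Choose $\mu'\in C_1(\Gamma_{E_k/F},X_\ast(U))$ with $\operatorname{inf}\mu'$ having the same coinflation as $\mu$, and put $\mu''=\mu-\operatorname{inf}\mu'$. By \zcref{lem:coinf_vanish_then_image}, $\phi_{U,k}(\mu'')$ lies in $p(T(F))$, so we may choose its lift $\dot{c}''$ in $T(F)$. Then $d\dot{c}''=1$, and $\overline{\mu}$ only sees $\mu'$. So we reduce to $\mu=\operatorname{inf}\mu'$. Lift $\mu'$ along $p_\ast\colon X_\ast(T)\otimes\Q\to X_\ast(U)\otimes\Q$ to $\widetilde\mu\in \widehat{C}^{-2}(E_k/F,X_\ast(T)\otimes\Q)$ with $p_\ast\widetilde\mu=\mu'$. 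Then $n_k\widetilde\mu$ is integral, after enlarging $k$ so that $\abs{Z'}$ divides $n_k$. Identify $\phi_{U,k}(\operatorname{inf}\mu')$ with $l_kc_k\sqcup_{E_k/F}n_k(\cdot)$ applied to $\mu'$; this should be the definition of the isomorphism with Tate $(-2)$-cochains via $c_k$. Then $\dot{c}:=l_kc_k\sqcup n_k\widetilde\mu\in T(\overline F)$ is a lift of $c$.

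Now compute exactly as in the proof of \zcref{lem:rig_inn_from_cycles}, with $\overline{T}$ replaced by $T$ relative to $Z'$. The differential of the unbalanced cup product gives
\[
d\dot{c}=(dl_kc_k\sqcup n_k\widetilde\mu)_T\cdot(l_kc_k\sqcup n_k d\widetilde\mu).
\]
The cocycle condition $\overline{f}_\ast\lambda=d\mu$ gives $p_\ast(f_{T\ast}\lambda)=p_\ast d\widetilde\mu$. From this we expect $l_kc_k\sqcup n_kd\widetilde\mu=f_T(z_\lambda)$, up to factors in $Z'$ coming from $f_{T\ast}\lambda-d\widetilde\mu\in\ker p_\ast\otimes\Q=0$. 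So the two agree exactly, using that $\ker(p_\ast)$ on rational cocharacters is zero for an isogeny. The first factor equals $y_{\overline{\mu},k}^{-1}$ by the same identification $X^\ast(\widehat{Z}')\cong\Hom(\mu_{n_k},Z')$ used in \zcref{lem:rig_inn_from_cycles}, where $\overline\mu$ is the image of $\widetilde\mu$ modulo $X_\ast(T)$, that is, of $\mu'$ in $X_\ast(U)/p_\ast X_\ast(T)\cong X^\ast(\widehat{Z}')$. Rearranging gives $f_T(z)\,d\dot{c}^{-1}=y_{\overline{\mu},k}$.

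The main obstacle is the bookkeeping in the reduction step: matching $\phi_{U,k}$ (restriction to $E_k^\times$ via the section $s_k$) with the cup-product description $l_kc_k\sqcup n_k(\cdot)$ on inflated chains. This requires checking that the two agree modulo $p(T(F))$ and coboundaries. I would first try to verify it directly for $U=\mathbb{G}_m$ split, using the explicit formula for $c_k$ in \cite[Section 4.4]{Kal_rig_inn}, and then pass to general tori by functoriality and Shapiro's lemma.
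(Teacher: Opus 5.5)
Your proposal is correct and follows the paper's proof. You split $\mu$ into the inflation of $\mu'=\operatorname{coinf}\mu$ plus a coinflation-trivial part, which \zcref{lem:coinf_vanish_then_image} absorbs. You then re-derive, rather than cite, \zcref{lem:rig_inn_from_cycles} for the isogeny $p\colon T\to U$ with kernel $Z'$, via $f_T(z_\lambda)=z_{d\mu'}$ relative to $Z'$, to get $f_T(z)=y_{\overline{\mu},k}\cdot d\dot{c}$.

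The step you flag as the main obstacle is the identification of $p(l_kc_k\sqcup_{E_k/F}n_k\mu')$ with $\phi_{U,k}(\operatorname{inf}\mu')$. The paper uses it without comment (``$\dot{t}_{\mu',k}$ is a lift of $\phi_{U,k}(\mu')$''). Agreement modulo $p(T(F))$ is enough here. Agreement only modulo $U(F)$, or modulo unspecified ``coboundaries'', would not be: for $u\in U(F)$, $d$ of a lift of $u$ represents $\delta(u)\in H^1(F,Z')$, which is in general nontrivial.
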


\begin{proof}
    Put $\mu'=\operatorname{coinf}_{W_{E_k/F}}^{\Gamma_{E_k/F}}\mu$. Then $\lambda=d\mu=d\mu'$. Applying \zcref{lem:rig_inn_from_cycles} for $\mu'$, we obtain:
    \[
        f_T(z)=y_{\overline{\mu},k}\cdot d\dot{t}_{\mu',k}.
    \]
    Here, $\dot{t}_{\mu',k}$ is a lift of $\phi_{U,k}(\mu')$. By \zcref{lem:coinf_vanish_then_image}, $\phi_{U,k}(\mu-\mu')$ has a lift $t_0\in T(F)$. Then $\dot{t}_{\mu',k}\cdot t_0$ is a lift of $\phi_{U,k}(\mu)=c$, so we have $[d\dot{c}]=[d\dot{t}_{\mu',k}]$ and $[f_T(z)]=[y_{\overline{\mu},k}\cdot d\dot{c}]$.
\end{proof}

\begin{corollary}\label{cor:pairing_with_eta}
    Take $\eta\in Z^1(F,\widehat{Z}')$ and $(z,c)\in Z^1(u\to\mathcal{W},Z\to T\to U)$. Let $\dot{c}\in T$ be a lift of $c\in U$ along $p$. Then we have
    \[
        \langle [z,c],[\eta,1]\rangle=\langle [f_T(z)\cdot d\dot{c}^{-1}],[\eta]\rangle.
    \]
\end{corollary}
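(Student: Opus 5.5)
The plan is to evaluate the explicit formula \eqref{eq:formula_pairing_hypercohomology} on a convenient representative of $[z,c]$ and then use \zcref{lem:rigidif_differ} to identify the answer. The setting is the isogeny case $f=p\circ f_T$ with $\widehat{U}\xrightarrow{\sim}\widehat{\overline{T}}$ for the relevant quotient, so that $(\eta,1)$ is a cocycle in $Z^1_\mathrm{cts}(W_F,\widehat{Z}'\to\widehat{\overline{T}}\leftarrow\widehat{U})$. Here $\eta$ is viewed in $Z^1(W_F,\widehat{U})$ through $\widehat{Z}'=\Ker(\widehat{U}\to\widehat{T})$. We choose $k\gg0$ so that $\eta$ factors through $\Gamma_{E_k/F}$ and $H^1(F,\widehat{Z}')=H^1(E_k/F,\widehat{Z}')$.

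First, pick $(\lambda,\mu)\in Z_0(W_{E_k/F},X_\ast(\overline{T})\to X_\ast(U))_0$ representing $[z,c]$ via \zcref{eq:isom_hypercohomology}. The formula for the pairing then gives
\[
\langle [z,c],[\eta,1]\rangle=\langle 1,\lambda\rangle\cdot\prod_{w\in W_{E_k/F}}\langle \eta(w),\mu(w)\rangle^{-1}=\prod_{w}\langle \eta(w),\mu(w)\rangle^{-1}.
\]
Because $\eta$ is inflated from $\Gamma_{E_k/F}$, the product over $W_{E_k/F}$ collapses to a product over $\Gamma_{E_k/F}$ against $\mu'=\operatorname{coinf}\mu$. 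This is the same manipulation used at the end of the proof of \zcref{lem:pairing_hypercoh_and_W_Z}. In this product only the image $\overline{\mu}$ of $\mu'$ in $C_1(E_k/F,X^\ast(\widehat{Z}'))$ matters, since $\eta$ takes values in $\widehat{Z}'$ and the pairing $X_\ast(U)\times\widehat{Z}'\to\C^\times$ factors through $X_\ast(U)/p_\ast X_\ast(T)\cong X^\ast(\widehat{Z}')$.

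Next, by \zcref{lem:rigidif_differ} we have $[f_T(z)\cdot d\dot{c}^{-1}]=[y_{\overline{\mu},k}]$. By the explicit description of the pairing \zcref{eq:pairing_W_Z} through the isomorphism $\widehat{C}^{-2}(E_k/F,X^\ast(\widehat{Z}'))/\widehat{B}^{-2}\cong H^1(\mathcal{W}_F,Z')$, the pairing $\langle[y_{\overline{\mu}}],[\eta]\rangle$ is computed as $\prod_{\gamma}\langle\overline{\mu}(\gamma),\eta(\gamma)\rangle$, up to the sign convention in the identification of chains with cochains. Comparing the two products yields the claim.

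The main obstacle is the sign and indexing bookkeeping. The hypercohomology formula carries an exponent $-1$, and the degree $-2$ Tate chains must be matched with the homological $C_1$ in a way that may introduce an inversion or $\gamma\mapsto\gamma^{-1}$. I would settle this by checking consistency with the case already recorded in \zcref{lem:pairing_hypercoh_and_W_Z}: take $(z,c)=(y,1)$, so that $f_T(y)=y$ (as $Z'\subset\Ker$ appropriately) and $\dot{c}=1$. That lemma gives $\langle(y,1),(\eta,1)\rangle=\langle y,\psi_{(\eta,1)}\rangle$, where $\psi_{(\eta,1)}$ is $\eta^{-1}$ pushed into $\widehat{Z}'$. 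The same convention must then appear in the general computation, which fixes the orientation of the identification used above. Both sides are bilinear in $[z,c]$, and a general $[z,c]$ differs from such special classes only through the $\lambda$-part, which pairs trivially with $(\eta,1)$. Hence agreement in the special case together with the direct computation determines the sign, and I would present the final comparison normalized by that check.
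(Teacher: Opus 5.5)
Your main computation is the paper's proof. You represent $[z,c]$ by $(\lambda,\mu)$ with $\eta$ inflated from $\Gamma_{E_k/F}$, collapse the product over $W_{E_k/F}$ to a product over $\Gamma_{E_k/F}$ against $\overline{\mu}$, recognize $\langle [y_{\overline{\mu},k}],\eta\rangle$, and finish with \zcref{lem:rigidif_differ}. The paper does no sign bookkeeping at all. It simply writes $\langle [z,c],[\eta,1]\rangle=\prod_{w}\langle \eta(w),\mu(w)\rangle$, i.e.\ it evaluates the pairing without the exponent $-1$ of the displayed formula. It does the same in the proof of \zcref{prop:comparison_of_2-cocycles}.

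The step where you go beyond this, fixing the sign by comparison with \zcref{lem:pairing_hypercoh_and_W_Z}, does not work. It rests on two false claims.
\begin{enumerate}
\item $f_T(y)\neq y$ in general. $f_T$ is an arbitrary endomorphism such as $1-a_{T'}$, so $f_T(y)$ is a $Z'$-valued cocycle. Also, $\psi_{(\eta,1)}=\widehat{\overline{f}}(\eta)^{-1}$ lies in $\widehat{Z}$, not $\widehat{Z}'$.
\item A general class is not a class $(y,1)$ altered only in its $\lambda$-part. The classes $(y,1)$ correspond to chains $(d\mu_0,\operatorname{inf}\overline{f}_\ast\mu_0)$, whose $\mu$-component is constrained.
\end{enumerate}

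More seriously, the check, carried out correctly, gives the \emph{opposite} sign. Take $T=U=\mathbb{G}_m$, $p$ the cube map, $Z=Z'=\mu_3$, $f_T=\id$, $(z,c)=(y,1)$ and $\dot c=1$. The lemma gives $\langle (y,1),(\eta,1)\rangle=\langle [y],\eta\rangle^{-1}$, while the statement gives $\langle [y],\eta\rangle$. Together they would force $\langle [y],\eta\rangle^2=1$ for all $y,\eta$, contradicting the perfectness of \zcref{eq:pairing_W_Z} for $Z=\mu_3$.

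Correspondingly, keep the exponent $-1$ (the normalization under which the lemma is proved) and the convention $\langle [y_\nu],\eta\rangle=\prod_\gamma\langle \eta(\gamma),\nu(\gamma)\rangle$. That convention is used both in the lemma and in the computation preceding the proof of \zcref{prop:calc_delta_z}. With these, your computation proves $\langle [z,c],[\eta,1]\rangle=\langle [f_T(z)\cdot d\dot{c}^{-1}],\eta\rangle^{-1}$. The stated form only comes out of the paper's silent use of the pairing without the $-1$. So do not appeal to a consistency check that actually contradicts the stated sign. Instead, fix one normalization of the hypercohomology pairing explicitly and record the sign it forces.
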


\begin{proof}
    We may assume that $(\lambda,\mu)\in Z_0(W_{E_k/F},X_\ast(\overline{T})\to X_\ast(U))_0$ corresponds to $(z,c)$ and $\eta\in Z^1(E_k/F,\widehat{Z}')$. Then 
    \[
        \langle [z,c],[\eta,1]\rangle=\prod_{w\in W_{E_k/F}}\langle \eta(w),\mu(w)\rangle.
    \]
    Since $\eta$ is a 1-cocycle on $\Gamma_{E_k/F}$ and has values in $\widehat{Z}'$, we have
    \[
        \prod_{w\in W_{E_k/F}}\langle \eta(w),\mu(w)\rangle=\prod_{w\in \Gamma_{E_k/F}}\langle \eta(w),\overline{\mu}(w)\rangle=\langle y_{\overline{\mu},k},\eta\rangle,
    \]
    where $\overline{\mu}\in C_1(E_k/F,X^\ast(\widehat{Z}'))$ is as in \zcref{lem:rigidif_differ}. Thus 
    \[
        \langle [z,c],[\eta,1]\rangle=\langle [y_{\overline{\mu},k}],[\eta]\rangle=\langle [f_T(z)\cdot d\dot{c}^{-1}],[\eta]\rangle.
    \]
\end{proof}
\bibliographystyle{my_amsalpha}
\bibliography{reference}

\providecommand{\bysame}{\leavevmode\hbox to3em{\hrulefill}\thinspace}
\providecommand{\MR}{\relax\ifhmode\unskip\space\fi MR }
% \MRhref is called by the amsart/book/proc definition of \MR.
\providecommand{\MRhref}[2]{%
  \href{http://www.ams.org/mathscinet-getitem?mr=#1}{#2}
}
\providecommand{\href}[2]{#2}
\begin{thebibliography}{AMS18}

\bibitem[AMS18]{AMS}
A.-M. Aubert, A.~Moussaoui, and M.~Solleveld, \emph{Generalizations of the {S}pringer correspondence and cuspidal {L}anglands parameters}, Manuscripta Math. \textbf{157} (2018), no.~1-2, 121--192.

\bibitem[Art06]{Arthur}
J.~Arthur, \emph{A note on {$L$}-packets}, Pure Appl. Math. Q. \textbf{2} (2006), no.~1, 199--217.

\bibitem[FOS20]{FOS}
Y.~Feng, E.~Opdam, and M.~Solleveld, \emph{Supercuspidal unipotent representations: {L}-packets and formal degrees}, J. \'Ec. polytech. Math. \textbf{7} (2020), 1133--1193.

\bibitem[Kal16]{Kal_rig_inn}
T.~Kaletha, \emph{Rigid inner forms of real and {$p$}-adic groups}, Ann. of Math. (2) \textbf{184} (2016), no.~2, 559--632.

\bibitem[Kal18a]{Kaletha_rigid_inn_vs_Arthr}
\bysame, \emph{Global rigid inner forms and multiplicities of discrete automorphic representations}, Invent. Math. \textbf{213} (2018), no.~1, 271--369.

\bibitem[Kal18b]{Kal_rigid_inn_vs_isoc}
\bysame, \emph{Rigid inner forms vs isocrystals}, J. Eur. Math. Soc. (JEMS) \textbf{20} (2018), no.~1, 61--101.

\bibitem[Kal19]{Kal_reg_sc}
\bysame, \emph{Regular supercuspidal representations}, J. Amer. Math. Soc. \textbf{32} (2019), no.~4, 1071--1170.

\bibitem[Kal21]{kaletha2021supercuspidallpackets}
\bysame, \emph{Supercuspidal {L}-packets}, 2021, Preprint: \href{http://arxiv.org/abs/1912.03274}{{arXiv:1912.03274 [math.RT]}}.

\bibitem[Kal22]{kal_disconn}
\bysame, \emph{On the local {L}anglands conjectures for disconnected groups}, 2022, Preprint: \href{http://arxiv.org/abs/2210.02519}{{arXiv:2210.02519 [math.RT]}}.

\bibitem[Lus95]{Lus_unip_classif}
G.~Lusztig, \emph{Classification of unipotent representations of simple {$p$}-adic groups}, Internat. Math. Res. Notices (1995), no.~11, 517--589.

\bibitem[Lus02]{Lus_unip_classf_2}
\bysame, \emph{Classification of unipotent representations of simple {$p$}-adic groups. {II}}, Represent. Theory \textbf{6} (2002), 243--289.

\bibitem[Lus15]{Lus_unip_cat_centre}
\bysame, \emph{Unipotent representations as a categorical centre}, Represent. Theory \textbf{19} (2015), 211--235.

\bibitem[Mor96]{Mor_uc_extend}
L.~Morris, \emph{Tamely ramified supercuspidal representations}, Ann. Sci. \'Ecole Norm. Sup. (4) \textbf{29} (1996), no.~5, 639--667.

\bibitem[Sol20]{Solleveld_Lparameters}
M.~Solleveld, \emph{Langlands parameters, functoriality and {H}ecke algebras}, Pacific J. Math. \textbf{304} (2020), no.~1, 209--302.

\bibitem[Sol23]{Solleveld_unip_for_ramified}
\bysame, \emph{On unipotent representations of ramified {$p$}-adic groups}, Represent. Theory \textbf{27} (2023), 669--716.

\end{thebibliography}
\end{document}